\def\append@label@year@{%
    \safe@set\@tempcnta\bib@year
    \edef\bib@citeyear{\the\@tempcnta}%
    \ifnum\bib@citeyear>9
      \append@to@stem{%
          \ifx\bib@year\@empty
          \else
            \@xp\year@short \bib@citeyear \@nil
          \fi
      }%
    \fi
}
\def\upddots{\mathinner{\mkern 1mu\raise 1pt \hbox{.}\mkern 2mu
\mkern 2mu \raise 4pt\hbox{.}\mkern 1mu \raise 7pt\vbox {\kern 7
pt\hbox{.}}} }
\numberwithin{equation}{section}
\begin{document}
\setlength{\unitlength}{2.5cm}

\newtheorem{thm}{Theorem}[section]
\newtheorem{cor}[thm]{Corollary}
\newtheorem{lemma}[thm]{Lemma}
\newtheorem{prop}[thm]{Proposition}
\newtheorem {conj}[thm]{Conjecture}
\newtheorem {assu}[thm]{Assumption}
\newtheorem {ques/conj}[thm]{Question/Conjecture}
\newtheorem {ques}[thm]{Question}
\newtheorem{defn}[thm]{Definition}
\newtheorem{remark}[thm]{Remark}
\newtheorem{prpt}[thm]{Property}
\newtheorem{eg}[thm]{Example}
\newtheorem{fact}[thm]{Fact}
\newtheorem{algo}[thm]{Algorithm}
\newtheorem{conv}[thm]{Convention}
\newtheorem{problems}[thm]{Problems}
\newtheorem{appl}[thm]{Application}
\newtheorem{assumption}[thm]{Working Hypothesis}

\newcommand{\N}{\mathbbm{N}}
\newcommand{\Z}{\mathbbm{Z}}
\newcommand{\Q}{\mathbbm{Q}}
\newcommand{\R}{\mathbbm{R}}
\newcommand{\BC}{\mathbbm{C}}

\newcommand{\CP}{\mathbb{CP}}
\newcommand{\Cat}{\mathcal{C}}
\renewcommand{\P}{\mathbb{P}}
\renewcommand{\H}{\mathbb{H}}
\newcommand{\D}{\mathbb{D}}
\newcommand{\A}{{\mathbb A}}

\newcommand{\ord}{{\mathrm{ord}}}

\newcommand{\p}{\mathfrak{p}}
\renewcommand{\a}{\mathfrak{a}}
\newcommand{\m}{\mathfrak{m}}

\newcommand{\Acal}{\mathcal{A}}
\newcommand{\Tcal}{\mathcal{T}}
\newcommand{\Ecal}{{\mathcal E}}
\newcommand{\Fcal}{{\mathcal F}}
\newcommand{\Homcal}{{\mathcal {H}om}}
\newcommand{\Gcal}{{\mathcal G}}
\newcommand{\Hcal}{{\mathcal H}}
\newcommand{\Ocal}{{\mathcal O}}

\newcommand{\Tau}{{\mathcal T}}
\newcommand{\B}{{\mathcal B}}
\newcommand{\OO}{{\mathcal O}}

\newcommand{\inv}{^{-1}}
\newcommand{\sm}{\setminus}
\newcommand{\ol}{\overline}

\newcommand{\Speh}{\mathrm{Speh}}

\newcommand{\WFP}{\mathfrak{p}^{m}}

\newcommand{\SL}{\mathrm{SL}}
\newcommand{\GL}{\mathrm{GL}}
\newcommand{\SO}{\mathrm{SO}}
\newcommand{\GSpin}{\mathrm{GSpin}}
\newcommand{\rO}{\mathrm{O}}
\newcommand{\Sp}{\mathrm{Sp}}
\newcommand{\St}{\mathrm{St}}
\newcommand{\RU}{\mathrm{U}}
\newcommand{\Cent}{\mathrm{Cent}}
\newcommand{\Wh}{{\rm Wh}}
\newcommand{\Hom}{{\rm Hom}}

\newcommand{\JL}{\mathrm{JL}}
\newcommand{\LJ}{\mathrm{LJ}}

\newcommand{\Fr}{\mathrm{Fr}}
\newcommand{\RG}{\mathrm{G}}

\newcommand{\Ind}{\mathrm{Ind}}
\newcommand{\LLC}{\mathrm{LLC}}

\renewcommand{\sp}{\text{sp}}

\newcommand{\WFN}{\overline{\mathfrak{n}}^{m}}

\newcommand{\PRep}{\underline{\mathrm{PRep}}}
\newcommand{\Rep}{\underline{\mathrm{Rep}}}
\newcommand{\Seg}{\underline{\mathrm{Segments}}}

\newcommand{\half}[1]{\frac{#1}{2}}
\newcommand{\ul}[1]{\underline{\vphantom{p}}_{#1}}

\renewcommand{\implies}{\Rightarrow}
\newcommand{\implied}{\Leftarrow}

\newcommand{\dirlim}{\varinjlim}
\newcommand{\invlim}{\varprojlim}

\newcommand{\comment}[1]{}
\newcommand{\EE}{\mathcal{E}}
\newcommand{\FF}{\mathcal{F}}
\newcommand{\CO}{\mathcal{O}}

\newcommand{\msc}[1]{\mathscr{#1}}
\newcommand{\mfr}[1]{\mathfrak{#1}}
\newcommand{\mca}[1]{\mathcal{#1}}
\newcommand{\mbf}[1]{{\bf #1}}
\newcommand{\mbm}[1]{\mathbbm{#1}}
\newcommand{\Irr}{ {\rm Irr} }
\newcommand{\Irrg}{ {\rm Irr}_{\rm gen} }
\newcommand{\diag}{{\rm diag}}
\newcommand{\uchi}{ \underline{\chi} }
\newcommand{\Tr}{ {\rm Tr} }
\newcommand{\der}\de
\newcommand{\Stab}{{\rm Stab}}
\newcommand{\Ker}{{\rm Ker}}
\newcommand{\bfp}{\mathbf{p}}
\newcommand{\bfq}{\mathbf{q}}
\newcommand{\KP}{{\rm KP}}
\newcommand{\Sav}{{\rm Sav}}
\newcommand{\de}{{\rm der}}
\newcommand{\tnu}{{\tilde{\nu}}}
\newcommand{\lest}{\leqslant}
\newcommand{\gest}{\geqslant}
\newcommand{\tu}{\widetilde}
\newcommand{\tchi}{\tilde{\chi}}
\newcommand{\tomega}{\tilde{\omega}}
\newcommand{\BDI}{{\rm Inv}_{\rm BD}}
\newcommand{\AZ}{{\rm AZ}}
\newcommand{\Spr}{{\rm Spr}}
\newcommand{\MC}{{\rm MC}}
\newcommand{\into}{\hookrightarrow}
\newcommand{\onto}{\twoheadrightarrow}

\newcommand{\cu}[1]{\textsc{\underline{#1}}}
\newcommand{\set}[1]{\left\{#1\right\}}
\newcommand{\wt}[1]{\overline{#1}}
\newcommand{\wtsf}[1]{\wt{\sf #1}}
\newcommand{\anga}[1]{{\left\langle #1 \right\rangle}}
\newcommand{\angb}[2]{{\left\langle #1, #2 \right\rangle}}
\newcommand{\wm}[1]{\wt{\mbf{#1}}}
\newcommand{\elt}[1]{\pmb{\big[} #1\pmb{\big]} }
\newcommand{\ceil}[1]{\left\lceil #1 \right\rceil}
\newcommand{\floor}[1]{\left\lfloor #1 \right\rfloor}
\newcommand{\val}[1]{\left| #1 \right|}
\newcommand{\exc}{ {\rm exc} }
\newcommand{\Sat}{{\rm Sat}}

\newcommand{\AP}[1]{\mathrm{AP}_{#1}}
\newcommand{\sub}[1]{{\vphantom{\mfr{p}}}_{#1}}

\let\oldbullet\bullet
\renewcommand{\bullet}{{\vcenter{\hbox{\tiny$\oldbullet$}}}}

\title[Covering Barbasch--Vogan and wavefront sets of genuine representations]{Covering Barbasch--Vogan duality and wavefront sets of genuine representations}

\author[F. Gao]{Fan Gao
}
\address{School of Mathematical Sciences, 
Zhejiang University, 
866 Yuhangtang Road, 
Hangzhou, China 310058}
\email{gaofan@zju.edu.cn}

\author[B. Liu]{Baiying Liu}
\address{Department of Mathematics,
Purdue University,
West Lafayette, IN, 47907, USA}
\email{liu2053@purdue.edu}

\author[C.-H. Lo]{Chi-Heng Lo}
\address{Department of Mathematics,
National University of Singapore,
119076, Singapore}
\email{ch\_lo@nus.edu.sg}

\author[F. Shahidi]{Freydoon Shahidi}
\address{Department of Mathematics,
Purdue University,
West Lafayette, IN, 47907, USA}
\email{freydoon.shahidi.1@purdue.edu}

\subjclass[2000]{Primary 11F70, 22E50; Secondary 11F85, 22E55}

\date{\today}


\keywords{nilpotent orbits, wavefront sets, Barbasch--Vogan duality, covering groups}

\begin{abstract}
In this paper, we start by defining a covering Barbasch--Vogan duality and prove some of its properties. Then, for genuine representations of $p$-adic covering groups, we formulate an upper bound conjecture for their wavefront sets using this covering Barbasch--Vogan duality and reduce it to anti-discrete representations. The formulation generalizes that of 
Ciubotaru--Kim and 
Hazeltine--Liu--Lo--Shahidi for linear algebraic groups. As evidence, we prove this upper bound conjecture for Kazhdan--Patterson coverings of general linear groups.
\end{abstract}

\maketitle

\tableofcontents

\section{Introduction} \label{S:intro}
Let $F$ be a non-Archimedean local field of characteristic 0. Denote  by $F^{\rm alg}$ the algebraic closure of $F$. Let $\mca{G}$ be  a connected linear reductive group scheme over $\Z$ such that its base change $\mbf{G}$ to $F$ is  a split reductive group over $F$. Consider the group $G:=\mbf{G}(F)$ or its finite degree central covers
$$\begin{tikzcd}
\mu_n \ar[r, hook] & \wt{G}^{(n)} \ar[r, two heads] & G,
\end{tikzcd}$$
where we assume that $F$ contains the full group $\mu_n$ of $n$-th roots of unity. 
In this paper, we focus exclusively on covering groups that arise from the Brylinski--Deligne framework \cite{BD}, and we also write $\wt{G}:=\wt{G}^{(n)}$ whenever $n$ is understood. If $n=1$, then the covering groups are just the linear algebraic groups, which form a very special family among all the covers.

A genuine representation of $\wt{G}$ is one such that $\mu_n$ acts via a fixed embedding $\mu_n \into \BC^\times$. Denote by $\Irrg(\wt{G})$ the set of isomorphism classes of irreducible genuine representations of $\wt{G}$.

Let $\mca{N}(G)$ denote the partially-ordered set of nilpotent orbits in $\mfr{g}={\rm Lie}(G)$ under the conjugation action of $G$. Here the partial order is given by the closure ordering in the usual topology of $\mfr{g}$ induced from that of $F$. One also has the set $\mca{N}(\mbf{G})$ consisting of geometric (stable) nilpotent orbits endowed with a partial order by the Zariski-topology. If we denote by $\mca{G}_\BC$ the complex group arising from the base change of $\mca{G}$ to $\BC$, then it is well-known (see \cite{CMBO21}*{Lemma 2.1.1}) that there is a canonical bijection
\begin{equation} \label{E:idN}
\mca{N}(\mbf{G}) \longrightarrow \mca{N}(\mca{G}_\BC).
\end{equation}
Later, we will use such an identification.

Every $(\pi, V_\pi) \in \Irrg(\wt{G})$ defines a character distribution $\chi_\pi$ in a neighborhood of $0$ in $\mfr{g}$. Moreover, there exists a compact open subset $S_\pi$ of $0$ such that for every smooth function $f$ with compact support in $S_\pi$, one has (see \cites{How1, HC99, Li3})
\begin{equation} \label{E:char}
\chi_\pi(f) = \sum_{\mca{O} \in \mca{N}{(G)}}{c_{\mca{O}, \psi}(\pi) }\cdot \int \hat{f} \ \mu_\mca{O}.
\end{equation}
 Here $\mu_\mca{O}$ is a certain Haar measure on $\mca{O}$ properly normalized, and $\hat{f}$ is the Fourier transform of $f$ with respect to the Cartan--Killing form on $\mfr{g}$ and a non-trivial character 
$$\psi: F\to \BC^\times;$$
one has $c_\mca{O}(\pi):=c_{\mca{O}, \psi}(\pi) \in \BC$. 
We have used  implicitly in \eqref{E:char} an exponent map ${\rm exp}: L \to \wt{G}$ defined for a sufficiently small open set $L \subset \mfr{g}$ containing $0$; it is used to ``pull-back" the character distribution of $\pi$ defined on $\wt{G}$ to be on (a small neighborhood of 0 in) $\mfr{g}$, see \cite{Li3}*{\S 4.3}. 

Denote
$$\mca{N}(\pi) = \set{\mca{O} \in \mca{N}(G): \ c_\mca{O}(\pi) \ne 0},$$
and let
$${\rm WF}(\pi) \subseteq \mca{N}(\pi)$$
be the subset consisting of all maximal elements in $\mca{N}(\pi)$. We also write
$${\rm WF}^{\rm geo}(\pi)\subseteq \mca{N}(\pi)\otimes F^{\rm alg} \subset \mca{N}(\mbf{G}),$$
consisting of maximal elements. Remark that ${\rm WF}^{\rm geo}$ is not necessarily a singleton, as observed in \cite{Tsa24}.

It is important to determine ${\rm WF}(\pi)$ and ${\rm WF}^{\rm geo}(\pi)$ since they gives rise to the Gelfand--Kirillov dimension of $\pi$, which {measures} in an asymptotic way a certain size of $\pi$.

Moreover, one can define global {analogues} $\mca{N}(\Pi), {\rm WF}(\Pi)$ of the above nilpotent sets associated with genuine automorphic representations $\Pi$, which {have} many deep applications, for example in the theory of descent and the Gan--Gross--Prasad conjectures and others, see \cites{GJR04, GJR05, GRS11, JLXZ16, FG18, JLX20, JiZh20, LX23} 
and references therein.
It should be mentioned that the study of ${\rm WF}(\Pi)$ for central covers has applications to problems concerning linear algebraic groups as well. A prototype is the theta correspondence relating the representations of {$\SO_{2m+1}$} and the double cover of $\Sp_{2r}$, which relies crucially on the fact that ${\rm WF}(\omega_\psi)$ of the Weil representation $\omega_\psi$ consists of the minimal orbits of the ambient symplectic group. Particularly interesting is the usage of theta representations with small wavefront sets to understand L-functions for linear algebraic groups, see for example \cites{BG1, BGH96, Tak2}.

Back to our local setting over $F$. It is however a difficult problem to determine the set ${\rm WF}(\pi)$ or ${\rm WF}^{\rm geo}(\pi)$. In view of the local Langlands correspondence, it is thus intriguing to ask whether and how the wavefront sets can be determined from the enhanced Langlands parameter of $\pi$, or as advocated in Ciubotaru--Mason-Brown--Okada \cite{CMBO21}, how the nilpotent component of the L-parameter of $\AZ(\pi)$ can be {controlled by} ${\rm WF}(\pi)$. Here, $\AZ$ is the Aubert--Zelevinsky involution.

For linear $G$ (i.e., for $n=1$), there is already an abundance of literature on partial answers to this question. For example, for $\GL_r(F)$ when $F$ is $p$-adic, the geometric wavefront set is computed in \cite{MW87} in terms of the Zelevinsky classification. For automorphic representations of $\GL_r$,
 Ginzburg \cite{Gin0} and D. Jiang \cite{Jia14} gave several speculations for the set ${\rm WF}(\pi)$ in terms of the M{\oe}glin--Waldspurger classification of the spectrum of $\GL_r$ (see \cite{LX21} for some recent progress). In this case, the Arthur parametrization of certain unitary representations $\pi$ is expected to encode information on ${\rm WF}(\pi)$. Indeed, several conjectures have been formulated in \cite{Jia14} regarding ${\rm WF}(\pi)$ using local Arthur parameters of $\pi$. 

Moreover, in the more recent work \cites{Jia14, JiLi16, JL25, JLZ, CJLZ24, HLLS24} of D. Jiang, B. Liu, L. Zhang, D. Liu, C. Chen, A. Hazeltine, C.-H. Lo, and F. Shahidi; and also the work \cites{Oka21, CMBO21, CMBO24, CMBO25, CK24} of D. Ciubotaru, L. Mason-Brown, E. Okada, and J. Kim,  authors of both groups have conducted an extensive study on the wavefront set problem in the $p$-adic case, albeit using different approaches. In particular, Jiang--Liu--Zhang defined in \cite{JLZ} a notion of arithmetic wavefront set associated with tempered $\pi$ by a descent construction directly using the enhanced L-parameter $(\phi_\pi, \eta_\pi)$ of $\pi$, and gave precise (and partially conjectural) information on ${\rm WF}(\pi)$. On the other hand, the approach of Ciubotaru et al relies more on using the Hecke algebras and understanding the $K$-types of $\pi \in \Irr(G)$. In their work, the family of unipotent representations of $G$ has been studied extensively for the wavefront sets problem. 

Coarsely speaking, the results obtained in \cites{CMBO21, CMBO24, CMBO25} show that for unipotent representations with real infinitesimal characters $\pi \in \Irr(G)$, one has ${\rm WF}^{\rm geo}(\AZ(\pi)) = \{d_{BV,G}(\mca{O}(\phi_\pi))\}$, which can be viewed as $p$-adic analogue of the results of Barbasch--Vogan in the archimedean setting \cite{BV85}. Here, let $G^\vee$ be the complex Langlands dual group of $\mbf{G}$; then
$$d_{BV,G}: \mca{N}(G^\vee) \to \mca{N}(\mbf{G})$$
 is the Barbasch--Vogan duality (sometimes also coined as Lusztig--Spaltenstein--Barbasch--Vogan duality), where we have identified $\mca{N}(\mbf{G})$ with its complex counterpart in view of \eqref{E:idN}. 
 For general $\pi\in \Irr(G)$, from the independent work of Hazeltine--Liu--Lo--Shahidi \cite{HLLS24} and Ciubotaru--Kim \cite{CK24}, it is expected that
\begin{equation} \label{E1}
 {\rm WF}^{\rm geo}(\AZ(\pi)) \lest d_{BV, G}(\mca{O}(\phi_\pi))
\end{equation}
always holds, and equality is achieved at the packet level of $\phi_\pi$.

For covering group $\wt{G}^{(n)}$, a better understood family of genuine representations concerns the theta representations $\Theta(\wt{G}^{(n)})$ of $\wt{G}^{(n)}$. In a recent work of Karasiewicz--Okada--Wang \cite{KOW}, the geometric wavefront ${\rm WF}^{\rm geo}(\Theta(\wt{G}^{(n)}))$ for \emph{unramified} $\Theta(\wt{G}^{(n)})$  has been explicitly determined, assuming that $p$ is sufficiently large. We note that prior to \cite{GaTs} and \cite{KOW}, the wavefront set problem of theta representations (especially for $\wt{\SO}_k^{(n)}, \wt{\Sp}_{2r}^{(n)}$) has been studied extensively in the work of Bump--Friedberg--Ginzburg \cites{BFrG2, BFrG}, Friedberg--Ginzburg \cites{FG15, FG18, FG20},  Kaplan  \cite{Kap17-1} and many others, both local and global.

It is however an easily observable fact that a naive analogue of \eqref{E1} fails for ${\rm WF}^{\rm geo}(\Theta(\wt{G}^{(n)}))$, where $\Theta(\wt{G}^{(n)})=\AZ({\rm St}(\wt{G}^{(n)}))$ with ${\rm St}(\wt{G}^{(n)})$ being the covering Steinberg representation of $\wt{G}^{(n)}$. In fact, $\Theta(\wt{G}^{(n)})$ could be generic, if $n$ is big enough compared to the rank of $G$. This reflects the general fact that genuine representations of $\wt{G}$ tend to possess bigger Gelfand--Kirillov dimensions.
To remedy this and posit a more reasonable formula as \eqref{E1} for covers, it is essential to define naturally a covering Barbasch--Vogan duality
$$d_{BV,G}^{(n)}: \mca{N}(\wt{G}^\vee) \longrightarrow \mca{N}(\mbf{G}),$$
which replaces $d_{BV,G}$ in the covering setting and presumably captures the geometric wavefront set of $\pi$ (or rather, of $\AZ(\pi)$) in the same line as \eqref{E1}. Here $\wt{G}^\vee$ is the complex dual group of $\wt{G}^{(n)}$ (see \cite{Wei18a}). This is the starting point and motivation of our paper.

\subsection{Main results}
The goal of our paper is of two-fold. First, we define and explicate the sought map $d_{BV, G}^{(n)}$, and moreover prove some of the expected properties. Second, using $d_{BV, G}^{(n)}$, we generalize the wavefront set upper bound conjecture to covering groups. We also give some evidence for this conjecture.  
We elaborate further and state our main results below.

First, we define the covering Barbasch--Vogan duality $d_{BV,G}^{(n)}$ using the theory of annihilator varieties of highest weight modules for complex Lie algebras, see Definition \ref{D:dBV}. The definition is clearly motivated from and parallel to the setting of Barbasch--Vogan's work \cite{BV85} concerning the linear $d_{BV,G}$. However, in the covering setting, one needs to deform the Dynkin element associated with the orbit $\mca{O}(\phi_\pi) \in \mca{N}(\wt{G}^\vee)$ and consider a non-integral highest weight module of $\mca{G}_\BC$. Then $d_{BV, G}^{(n)}$ is by definition the associated annihilator variety orbit of this highest weight module. Note that this generalizes the metaplectic Barbasch-Vogan duality considered by Barbasch--Ma--Sun--Zhu (see \cite{BMSZ23}*{Theorem B}). {As commented in loc. cit., this duality is observed by M{\oe}glin, M{\oe}glin--Renard before.}

For our purpose, it is crucial to explicate this map $d_{BV, G}^{(n)}$. For $G$ of classical type, we utilize the earlier work of Bai--Ma--Wang \cite{BMW25}, while for $G$ of exceptional type, it will follow from Bai--Gao--Xie--Wang \cite{BGWX25}. 

As an amalgam from Theorem \ref{T:ABCD} and Theorem \ref{T:EFG}, the main result of this part can be summarized as follows:
\begin{thm}
The covering Barbasch--Vogan duality $d_{BV,G}^{(n)}:\mca{N}(\wt{G}^\vee) \to \mca{N}(\mbf{G})$ is explicitly given in \S \ref{SS:fix-cov} for covers of $G$ of classical type and in Appendix \ref{A:dBVexc} for exceptional type. Moreover, it satisfies the following properties:
\begin{enumerate}
\item[(i)] it is order-reversing, i.e., $d_{BV, G}^{(n)}(\mca{O}) \gest d_{BV, G}^{(n)}(\mca{O}')$ whenever $\mca{O} \lest \mca{O}'$;
\item[(ii)] if $\wt{M} \subseteq \wt{G}$ is a covering Levi subgroup, then one has
\begin{equation} \label{ppcBV}
d_{BV, G}^{(n)} \circ \Sat_{\wt{M}^\vee}^{\wt{G}^\vee}(\mca{O}) = \Ind_{\mbf{M}}^{\mbf{G}}\circ d_{BV, M}^{(n)}(\mca{O}),
\end{equation}
for  every $\mca{O} \in \mca{N}(\wt{M}^\vee)$. 
\end{enumerate}
\end{thm}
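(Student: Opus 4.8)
The plan is to treat the three assertions separately. The explicit description is obtained by assembling Theorems~\ref{T:ABCD} and~\ref{T:EFG}. By Definition~\ref{D:dBV}, $d_{BV,G}^{(n)}(\mca{O})$ is the nilpotent orbit attached to a highest weight $\mca{G}_\BC$-module $L(\lambda_{\mca{O}})$, where $\lambda_{\mca{O}}$ is manufactured from the Dynkin element $h_\mca{O}$ of $\mca{O}\in\mca{N}(\wt{G}^\vee)$ after the covering deformation; that $L(\lambda_\mca{O})$ has irreducible associated (annihilator) variety is the general theorem of Joseph on highest weight modules, so $d_{BV,G}^{(n)}(\mca{O})$ is well defined as a single orbit in $\mca{N}(\mbf{G})$ via~\eqref{E:idN}. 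To make it explicit one feeds $\lambda_\mca{O}$---which is generally non-integral, lying in the rational block governed by the Brylinski--Deligne invariant and $n$---into the partition algorithm of Bai--Ma--Wang~\cite{BMW25} in classical type and of Bai--Gao--Xie--Wang~\cite{BGWX25} in exceptional type. The content here is only bookkeeping: matching the scaling data $n_\alpha$ of the cover with the deformation parameters that serve as input to those algorithms. This produces the formulas of~\S\ref{SS:fix-cov} and Appendix~\ref{A:dBVexc}; in particular it specializes to the metaplectic duality of~\cite{BMSZ23} when $\wt{G}=\wt{\Sp}_{2r}^{(2)}$.

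For (i) I would argue from the explicit description. In type~$A$ the map is, up to the $n$-dependent truncation of partitions, the transpose, which reverses the dominance order, while the truncation is visibly monotone; order-reversal follows at once. In types~$B$, $C$, $D$ the map is a composite of an $n$-deformed (``metaplectic''-type) collapse with a transpose, and the required input is the lemma that this composite reverses the dominance order on the relevant subposet of partitions. Where possible, I would deduce this by factoring
$$\mca{N}(\wt{G}^\vee)\xrightarrow{\,\pi\,}\mca{N}(\mbf{H}^\vee)\xrightarrow{\,d_{BV,H}\,}\mca{N}(\mbf{H})\xrightarrow{\,\iota\,}\mca{N}(\mbf{G})$$
through a linear group $\mbf{H}$ associated to the cover, with $d_{BV,H}$ the classical Barbasch--Vogan duality (order-reversing by~\cite{BV85}) and $\pi,\iota$ order-preserving, so that order-reversal of $d_{BV,G}^{(n)}$ is automatic. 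When no such uniform factorization is available, the fallback is the direct partition combinatorics: split a partition into the block of rows that the cover ``sees'' and its complement, check that each block transforms monotonically, and verify that the interface between the blocks introduces no order violation---this interface analysis is the only delicate point, and it is exactly where the combinatorial lemmas of~\cite{BMW25} (resp.\ the tables of~\cite{BGWX25}) are invoked.

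For (ii) the natural route is to reduce~\eqref{ppcBV} to the corresponding identity for the linear Barbasch--Vogan duality, $d_{BV,G}\circ\Sat_{M^\vee}^{G^\vee}=\Ind_{\mbf{M}}^{\mbf{G}}\circ d_{BV,M}$, together with the compatibility of the covering deformation with both $\Sat$ and $\Ind$. Concretely: on the dual side $\Sat_{\wt{M}^\vee}^{\wt{G}^\vee}(\mca{O})$ is the orbit whose Dynkin element is the $\wt{G}^\vee$-dominant representative of $h_\mca{O}$, and the covering deformation of $h_\mca{O}$ for $\wt{G}$ restricts on $\mfr{m}_\BC$ to the covering deformation for $\wt{M}$; hence the highest weight $\mfr{g}_\BC$-module attached to $\Sat(\mca{O})$ is parabolically induced (in category $\mca{O}$) from the highest weight $\mfr{m}_\BC$-module attached to $\mca{O}$. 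The identity then follows from the statement that the annihilator variety of such a parabolically induced highest weight module is the Lusztig--Spaltenstein induction $\Ind_{\mbf{M}}^{\mbf{G}}$ of the annihilator variety over $\mfr{m}_\BC$---the associated-variety-of-generalized-Verma-module principle, applied in the rational blocks relevant here. Alternatively, and with fewer prerequisites, one verifies~\eqref{ppcBV} directly from the explicit formulas of the first part, using the known partition recipes for $\Sat$ (Kempken--Spaltenstein) and for $\Ind$ in types~$A$--$D$, the exceptional cases being a finite check against~\cite{BGWX25}.

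The step I expect to be the main obstacle is the structural half of (ii): one needs the ``annihilator variety of a parabolically induced highest weight module equals the induced orbit'' principle precisely for the non-integral weights produced by the covering deformation, not only for integral generalized Verma modules. If that principle is not available off the shelf at this level of generality, one is forced into the combinatorial verification, where---as already in (i)---the genuinely delicate bookkeeping lies at the interface between the cover-affected rows of a partition and the rows untouched by the cover, and in keeping this interface analysis uniform across the classical types and consistent with the metaplectic-type collapse operations.
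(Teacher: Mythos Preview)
Your outline is broadly on the right track, but there are two places where it either underestimates the work or expects more than is available.

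First, calling the explicit description ``only bookkeeping'' is too optimistic. The match between the abstract annihilator variety ${\rm AV}_{\mfr{g}_\BC}(h_\mca{O}^{(n)}/2)$ and the combinatorial formulas $d_{com,X}^{(n)}$ of~\S\ref{SS:fix-cov} is the bulk of the paper's work (Theorem~\ref{T:comparison}): it passes through a reformulation of the Bai--Ma--Wang algorithm (Theorem~\ref{thm BMW}), a suite of Achar-type collapse identities (Lemma~\ref{lem Achar identities}), and separate arguments to close the loop in each type. The input data are easy to write down; showing the output matches the closed-form $d_{com,X}^{(n)}$ is not.

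For~(i), your factorization $\mca{N}(\wt{G}^\vee)\to\mca{N}(\mbf{H}^\vee)\to\mca{N}(\mbf{H})\to\mca{N}(\mbf{G})$ through a linear $\mbf{H}$ is speculative and unnecessary. The paper's argument (Proposition~\ref{P:ord-rev}) is much simpler once the explicit description is in hand: since $d_{com,X}^{(n)}$ for $X\in\{B,C,D\}$ is $d_{com,A}^{(n^\ast)}$ followed by the order-preserving operations $(\cdot)^+,(\cdot)^-,(\cdot)_X$, it suffices to show $d_{com,A}^{(n)}$ is order-reversing, which is a short argument via elementary collapsing operators $T_{i,j}$ on partitions. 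Your ``interface analysis'' never arises.

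For~(ii), the structural route has a genuine gap: the Barbasch--Vogan argument you invoke (essentially \cite{BV85}*{Proposition~A2(c)}) only yields the \emph{inclusion}
\[
d_{BV,G}^{(n)}\circ\Sat_{\wt{M}^\vee}^{\wt{G}^\vee}(\mca{O})\ \subseteq\ \Ind_{\mbf{M}}^{\mbf{G}}\circ d_{BV,M}^{(n)}(\mca{O}),
\]
not equality; the simple highest-weight module $L(\lambda)$ is a quotient of the parabolically induced module, not equal to it, so its annihilator variety is only contained in the induced orbit's closure. To upgrade to equality one must verify that the two sides have the same dimension, and that dimension check requires precisely the explicit combinatorial description. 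The paper follows this logic: for classical types it proves~\eqref{ppcBV} directly from the partition formulas (Proposition~\ref{P:ind BCD}), and for type~$D$ and the exceptional cases it uses the inclusion plus a dimension or case-by-case verification (Proposition~\ref{P:D-I/II}, Theorem~\ref{T:EFG}, and the Remark following it). So your fallback \emph{is} the proof, and the combinatorics are not optional.
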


As mentioned, we have the following clone from the linear algebraic case:
\begin{conj} \label{MC}
Let $\wt{G}$ be a Brylinski--Deligne cover of $G$. 
Then
\begin{enumerate}
\item[(i)] for every $\pi \in \Irrg(\wt{G}^{(n)})$, one has 
\begin{equation} \label{E:ubd0}
{\rm WF}^{\rm geo}(\AZ(\pi)) \lest d_{BV,G}^{(n)}(\mca{O}(\phi_\pi)),
\end{equation}
where $\phi_\pi: {\rm WD}_F \to {}^L \wt{G}$ is the hypothetical L-parameter of $\pi$;
\item[(ii)] the equality in (i) above is achieved by some $\pi$ in the tempered L-packet. {Namely, for each tempered $L$-parameter $\phi$ such that $\Pi_{\phi}$ is non-empty, there exists a $\pi \in \Pi_{\phi}$ such that
\[{\rm WF}^{\rm geo}(\AZ(\pi)) =\{ d_{BV,G}^{(n)}(\mca{O}(\phi))\}. \]
}
\end{enumerate}
\end{conj}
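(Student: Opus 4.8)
The plan is to first reduce part (i) to \emph{anti-discrete} genuine representations --- those $\pi$ for which $\AZ(\pi)$ is essentially square-integrable --- and then to establish the reduced statement for Kazhdan--Patterson covers of $\GL_r$. For the reduction, write $\pi \in \Irrg(\wt{G}^{(n)})$ as the Langlands quotient of a standard module parabolically induced from an essentially tempered genuine representation $\delta$ of a covering Levi $\wt{M}$; after twisting $\delta$ by an unramified character (which affects neither geometric wavefront sets nor the relevant orbits) we may arrange that $\AZ(\pi)$ is a constituent of $\Ind_{\wt{P}}^{\wt{G}}\AZ(\delta)$ with $\delta$ anti-discrete on $\wt{M}$. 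On the automorphic side, $\mca{N}(\AZ(\pi))$ is then controlled by the nilpotent support of this induced module, which by the induction formula for wavefront sets of parabolically induced representations is bounded above by $\Ind_{\mbf{M}}^{\mbf{G}}{\rm WF}^{\rm geo}(\AZ(\delta))$; on the dual side the hypothetical parameter $\phi_\pi$ factors through ${}^L \wt{M}$ with $\mca{O}(\phi_\pi)=\Sat_{\wt{M}^\vee}^{\wt{G}^\vee}\mca{O}(\phi_\delta)$, so property \eqref{ppcBV} of $d_{BV,G}^{(n)}$ reduces \eqref{E:ubd0} for $\pi$ to the same inequality for the anti-discrete $\delta$ on $\wt{M}$. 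This is the mechanism by which \eqref{E1} was handled in \cite{HLLS24} and \cite{CK24}, and it transports once property \eqref{ppcBV} and a covering analogue of the wavefront-induction formula are in hand.

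For the anti-discrete case we specialize to $G=\GL_r$. We exploit the explicit structure theory of $\Irrg(\wt{\GL}_r^{(n)})$: irreducible genuine representations admit a Zelevinsky-type classification by multisegments built from twists of the genuine exceptional (theta) representations $\Theta(\wt{\GL}_{m}^{(n)})$, so that a genuine discrete series is a generalized $\Speh$-type representation attached to such data. Its geometric wavefront set is computed by induction on $r$, using Bernstein--Zelevinsky-type derivatives for covers together with the value of ${\rm WF}^{\rm geo}(\Theta(\wt{\GL}_{m}^{(n)}))$ established in \cite{KOW} (see also \cite{GaTs}); on the dual side the explicit description of $d_{BV,G}^{(n)}$ in type A from \S \ref{SS:fix-cov} --- essentially a transpose-type operation adjusted according to $n$ --- is then matched against this partition. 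Combined with the reduction step, this yields \eqref{E:ubd0} for all genuine $\pi$ of $\wt{\GL}_r^{(n)}$.

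For part (ii), since each tempered packet for $\GL_r$ is a singleton, equality follows by upgrading the estimates above to equalities for the specific discrete data, which amounts to a nonvanishing statement for the leading Bernstein--Zelevinsky coefficient of a generalized $\Speh$ representation built on $\Theta(\wt{\GL}_m^{(n)})$. For covers of groups beyond $\GL_r$, part (ii) would instead single out the member of $\Pi_\phi$ whose $\AZ$-dual has the largest wavefront set --- conjecturally the generic constituent when $\Pi_\phi$ contains a generic member --- and one verifies ${\rm WF}^{\rm geo}(\AZ(\pi))=\{d_{BV}^{(n)}(\mca{O}(\phi))\}$ by a dimension count against the lower bound furnished by genericity.

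The main obstacle is the anti-discrete case itself. For general Brylinski--Deligne covers the statement is not even unconditional, as it presupposes a local Langlands correspondence $\pi\mapsto\phi_\pi$ together with its compatibility with $\AZ$; and even granting these, controlling the coefficients $c_{\mca{O}}(\AZ(\pi))$ for anti-discrete $\pi$ requires a covering analogue of M{\oe}glin--Waldspurger's wavefront computation \cite{MW87} and of M{\oe}glin's work on tempered representations, which is out of reach outside type A --- with the reconciliation of the arithmetic deformation built into $d_{BV,G}^{(n)}$ and such a computation being the delicate point. Even for $\wt{\GL}_r^{(n)}$ this is the crux: one needs the precise geometric wavefront set of every genuine discrete series, i.e.\ the covering counterpart of \cite{MW87}, which is the technical heart of the argument.
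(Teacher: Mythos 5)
Your proposal follows essentially the same two-step architecture as the paper: the reduction of \eqref{E:ubd0} to anti-discrete representations via the Langlands classification, the wavefront-induction formula, and property \eqref{ppcBV} is exactly Theorem \ref{T:red} (carried out there under Working Hypothesis \ref{assu LLC}), and the verification for Kazhdan--Patterson covers of $\GL_r$ is Theorem \ref{T:Speh} together with Corollary \ref{C:GLlest}, with the same honest acknowledgment that the conjecture remains open beyond type $A$. One correction to your sketch of the $\GL_r$ case: the Kaplan--Lapid--Zou classification builds $\Irrg(\wt{\GL}_r^{(n)})$ from genuine \emph{supercuspidals} $\rho$ via the Speh representations $Z(\rho,[a,b])$, not from theta representations, and the paper computes ${\rm WF}^{\rm geo}(Z(\rho,[a,b]))$ not by Bernstein--Zelevinsky derivatives anchored at $\Theta(\wt{\GL}_m^{(n)})$ but by identifying the twisted Jacquet module with a degenerate Whittaker space via Gomez--Gourevich--Sahi, evaluating the Jacquet module as a metaplectic tensor product of smaller Speh representations, and invoking Zou's Whittaker dimension formulas; the L-parameter side is pinned down through the metaplectic correspondence under Working Hypothesis \ref{as:dual}.
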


Since $\AZ$ is an involution, \eqref{E:ubd0} is equivalent to ${\rm WF}^{\rm geo}(\pi) \lest d_{BV,G}^{(n)}(\mca{O}(\phi_{\AZ(\pi)}))$.
As mentioned, it was shown by Karasiewicz--Okada--Wang \cite{KOW} that equality in \eqref{E:ubd0} holds for $\AZ(\pi)$ being an unramified theta representation (with the assumption that $p\nmid n$ and $p$ is big enough), see \S \ref{SS:theta}. In this case, $\pi$ is a covering Steinberg representation and thus a discrete series, it implies Part (ii) of Conjecture \ref{MC} as well.

In fact, using the property of $d_{BV, G}^{(n)}$ as in \eqref{ppcBV}, we show that proving the inequality \eqref{E:ubd0} for all $\pi\in \Irrg(\wt{G})$ is equivalent to proving the same inequality for all discrete series representations of all Levi subgroups of $\wt{G}$. This equivalence is given in Theorem \ref{T:red}, where the proof (under certain working hypothesis) parallels that for the linear case in \cite{HLLS24}.

Another main result of our paper is to show that \eqref{E:ubd0} holds for Kazhdan--Patterson covers $\wt{\GL}_r$ of $\GL_r$, under the Working Hypothesis \ref{as:dual}. 
More precisely, the following is amalgam of Theorem \ref{T:Speh} and Corollary \ref{C:GLlest}, the proof of the latter uses Theorem \ref{T:red}.

\begin{thm}
Assume the Working Hypothesis in \ref{as:dual}. Also assume $\gcd(p, n)=1$. Consider the genuine Speh representation $\pi:=Z(\rho, [a, b])$ of the Kazhdan--Patterson cover $\wt{\GL}_{rk}^{(n)}$ with $\rho$ a supercuspidal representation of $\wt{\GL}_r^{(n)}$.  
Then 
$${\rm WF}^{\rm geo}(\pi) = d_{BV,G}^{(n)}( \mca{O}(\phi_{\AZ(\pi)}) ).$$ 
Moreover, for every $\pi' \in \Irrg(\wt{\GL}_{r'}^{(n)})$, one has 
$${\rm WF}^{\rm geo}(\pi) \lest d_{BV,G}^{(n)}( \mca{O}(\phi_{\AZ(\pi)}) );$$
that is, Conjecture \ref{MC} (i) holds in this case.
\end{thm}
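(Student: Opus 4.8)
The plan is to prove the theorem in two stages, matching its two assertions. For the equality statement concerning the genuine Speh representation $\pi = Z(\rho, [a,b])$, I would first use the M{\oe}glin--Waldspurger style computation of the wavefront set for representations of $\wt{\GL}_{rk}^{(n)}$. Concretely, I would compute ${\rm WF}^{\rm geo}(\pi)$ directly from the Zelevinsky-type classification of $\pi$: since $\pi$ is a segment representation built from a supercuspidal $\rho$ of $\wt{\GL}_r^{(n)}$, its wavefront set should be governed by the partition determined by the block structure $(r, k)$ together with the ``metaplectic twist'' coming from $n$. This is where one invokes the genuine analogue of \cite{MW87}; I expect this to be packaged as Theorem \ref{T:Speh} in the body. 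Then, on the dual side, I would compute $\mca{O}(\phi_{\AZ(\pi)})$ explicitly: $\AZ(\pi)$ is again a segment/Speh-type representation (Aubert--Zelevinsky duality swaps the roles in the segment), its L-parameter $\phi_{\AZ(\pi)}$ is a Speh parameter, and $\mca{O}(\phi_{\AZ(\pi)}) \in \mca{N}(\wt{G}^\vee)$ is the corresponding nilpotent orbit, a rectangular/near-rectangular partition. Finally I would apply the explicit formula for $d_{BV}^{(n)}$ from \S \ref{SS:fix-cov} (the classical-type case of the first main theorem) to $\mca{O}(\phi_{\AZ(\pi)})$ and check that the resulting partition agrees with the one computed for ${\rm WF}^{\rm geo}(\pi)$. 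This last check is a combinatorial identity between partitions: applying the covering BV duality to a Speh parameter orbit should yield exactly the wavefront partition of the Speh representation. This uses the Working Hypothesis \ref{as:dual} precisely at the point of identifying $\mca{O}(\phi_{\AZ(\pi)})$ with the dual-side input expected by $d_{BV}^{(n)}$.

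For the second assertion --- the inequality ${\rm WF}^{\rm geo}(\pi') \lest d_{BV}^{(n)}(\mca{O}(\phi_{\AZ(\pi')}))$ for \emph{all} $\pi' \in \Irrg(\wt{\GL}_{r'}^{(n)})$ --- the plan is to reduce to the discrete series case via Theorem \ref{T:red}. That reduction theorem says (under the relevant working hypothesis) that proving \eqref{E:ubd0} for all irreducible genuine representations of $\wt{G}$ is equivalent to proving it for all discrete series representations of all covering Levi subgroups of $\wt{G}$. For $\wt{\GL}_{r'}^{(n)}$, the covering Levi subgroups are products of smaller Kazhdan--Patterson $\wt{\GL}$'s, and every discrete series representation of such a Levi is, up to the relevant classification, of genuine Speh type $Z(\rho, [a,b])$ with $\rho$ supercuspidal --- so the equality already established in the first part of the theorem gives, a fortiori, the required inequality (in fact equality) for each such building block. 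Assembling these via \eqref{ppcBV} --- the compatibility of $d_{BV}^{(n)}$ with $\Sat$ and $\Ind$ from the first main theorem --- and the order-reversing property (i), one propagates the inequality from Levi discrete series up to all of $\Irrg(\wt{\GL}_{r'}^{(n)})$, which is exactly Corollary \ref{C:GLlest}.

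The main obstacle, I expect, is the explicit combinatorial matching in the first stage: verifying that the wavefront partition of the genuine Speh representation $Z(\rho,[a,b])$ --- which carries a nontrivial dependence on $n$ through the ``metaplectic tensor'' / twisted-Jacquet module analysis --- coincides on the nose with $d_{BV}^{(n)}$ applied to the Speh parameter orbit. On the representation side this requires a careful genuine analogue of M{\oe}glin--Waldspurger's degenerate Whittaker / nilpotent-orbit computation, and one must track how the covering degree $n$ rescales the segment data; on the dual side one must unwind the Definition \ref{D:dBV} of $d_{BV}^{(n)}$ (deforming the Dynkin element by an amount depending on $n$ and taking the annihilator variety of the resulting non-integral highest weight module) into a closed-form partition operation. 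Reconciling these two $n$-dependent recipes is the crux; the reduction machinery (Theorems \ref{T:red}, \ref{T:ABCD}) and the properties \eqref{ppcBV} are then essentially formal bookkeeping once the Speh case is pinned down. A secondary technical point is ensuring the hypothesis $\gcd(p,n)=1$ is actually used where needed --- it enters both in the structure theory of $\wt{\GL}_r^{(n)}$ supercuspidals (via the genuine local Langlands / Shimura-type correspondence for $\wt{\GL}$) and in transporting the results of \cite{KOW}-type computations to the ramified Speh setting.
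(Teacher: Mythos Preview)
Your two-stage structure is correct and matches the paper: first establish the equality for Speh representations (Theorem \ref{T:Speh}), then invoke the reduction Theorem \ref{T:red} to deduce the general inequality (Corollary \ref{C:GLlest}). However, you misidentify where the real work lies in the first stage.

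The combinatorial matching you flag as the ``main obstacle'' is in fact immediate: once one knows $\MC(\rho) = L(\rho_0,[c,d])$ with $d-c+1 = m$ and $m \mid n$, the orbit $\mca{O}(\phi_{\AZ(\pi)}) = ((mk)^{r_0})$ is read off directly, and applying the explicit formula \eqref{E:dBV-A} gives $d_{BV}^{(n)}((mk)^{r_0}) = ((nr_0)^c, rd)$ in one line (this is \eqref{speh-o}). No subtle reconciliation of ``$n$-dependent recipes'' is needed on the dual side.

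The genuine difficulty is computing ${\rm WF}^{\rm geo}(\pi)$, and here the paper does \emph{not} proceed by a covering analogue of \cite{MW87}. Instead, knowing the target partition $\mfr{p} = ((nr_0)^c, rd)$ in advance, it verifies membership and maximality directly: (1) $\mfr{p}$ lies in the wavefront set because the degenerate Whittaker module ${\rm Jac}(\pi;\psi_\mfr{p})$ is nonzero --- this uses the surjection of Gomez--Gourevitch--Sahi \cite{GGS17} onto Whittaker spaces of ordinary Jacquet modules, the Jacquet-module structure of Speh representations from \cite{KLZ23}, and crucially Zou's explicit Whittaker dimension formulas \cites{Zou23, Zou25} for $Z(\rho,[a_i,b_i])$; (2) no larger partition occurs, by \cite{GGS21} (which upgrades the surjection to an isomorphism at maximal orbits) together with the vanishing case of Zou's formula when the segment length exceeds $n_0 = n/m$. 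The divisibility $m \mid n$ coming from the metaplectic correspondence is precisely what makes the relevant binomial coefficients in Zou's formula nonzero. Your invocation of \cite{KOW} is a red herring: that reference treats unramified theta representations and plays no role in this argument.

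A minor terminological slip: the reduction in Theorem \ref{T:red} is stated for \emph{anti}-discrete representations, and these are exactly the (twists of) Speh representations $Z(\rho,[a,b])$; the discrete series themselves are the $L(\rho,[a,b])$. You arrive at the right building blocks, but with the labeling reversed.
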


\subsection{Acknowledgement} 
We would like to thank Zhanqiang Bai, Caihua Luo, Jiajun Ma, Runze Wang, Xun Xie, Shilin Yu, Qixian Zhao, Chengbo Zhu, and Jiandi Zou for various in-depth discussions related to the topics discussed here. 

The work of F. G. is partially supported by the National Key R{\&}D Program of China (No. 2022YFA1005300) and also by NSFC-12171422. The work of B.\! L. is partially supported by the NSF Grant DMS-1848058 and the Travel Support for Mathematicians from Simons Foundation.

\section{Covering Barbasch--Vogan duality}

\subsection{\texorpdfstring{Definition of $d_{BV, G}^{(n)}$}{}}
We retain the notation as in \S \ref{S:intro}. In particular, we have $G=\mbf{G}(F)$ and consider $\wt{G}:=\wt{G}^{(n)}$ an $n$-fold cover of $G$ associated with a Weyl-invariant quadratic form $Q: Y \to \Z$. Let $B_Q$ be the bilinear form associated with $Q$.

Let $(X, \Phi; Y, \Phi^\vee)$ be the root datum of $\mbf{G}$. The Langlands dual group $\mbf{G}^\vee$ of $\mbf{G}$ is defined over $\Z$ with root datum $(Y, \Phi^\vee; X, \Phi)$. Let $\mbf{T}^\vee \subseteq \mbf{G}^\vee$ be its torus. Similarly, the dual group $\wm{G}^\vee$ is also defined over $\Z$, but with a modified root datum (see \cite{Wei18a})
$$(Y_{Q,n}, \Phi_{Q,n}^\vee; X_{Q,n}, \Phi_{Q,n}),$$
where 
$$Y_{Q,n}:=\set{y\in Y: \ B_Q(y, z) \in n\Z \text{ for all } z \in Y} \subseteq Y$$ is the character lattice of the dual group $\wm{T}^\vee$ of the covering torus $\wt{T}$.
Also, $X_{Q,n}=\Hom_\Z(Y_{Q,n},\Z)$.
We write $\tilde{\alpha}_{Q,n}^\vee=n_\alpha \cdot \alpha^\vee$, where $n_\alpha = n/\gcd(n, Q(\alpha^\vee))$. Then $\Phi_{Q,n}^\vee = \set{\tilde{\alpha}_{Q,n}^\vee: \ \alpha\in \Phi}$. We have
$${\rm Lie}(\mbf{T}^\vee(\R)) = \Hom(Y, \R) = X \otimes \R$$
and similarly
$${\rm Lie}(\wm{T}^\vee(\R))= \Hom(Y_{Q,n}, \R) = X_{Q,n} \otimes \R.$$
The canonical inclusion $X \to X_{Q,n}$ induces an isomorphism of the above $\R$-vector spaces, the inverse of which we denote by
$$\mfr{s}:=\mfr{s}_{Q,n}: {\rm Lie}(\wm{T}^\vee(\R)) \longrightarrow {\rm Lie}(\mbf{T}^\vee(\R)).$$

Let $\wt{G}^\vee:=\wm{G}^\vee(\BC)$ be the group of complex points of $\wm{G}^\vee$.  
Let $$\mca{O} \in \mca{N}(\wt{G}^\vee)$$ be a nilpotent orbit. Let 
$h_\mca{O} \in {\rm Lie}(\wm{T}^\vee(\R))$ be the \emph{dominant} semisimple element associated with $\mca{O}$. By applying the map $\mfr{s}$ to $h_\mca{O}/2$, one obtains an element
$$\frac{h_\mca{O}^{(n)}}{2}:=\mfr{s}\left(\frac{h_\mca{O}}{2}\right) \in {\rm Lie}(\mbf{T}^\vee(\R)) \subset {\rm Lie}(\mbf{T}^\vee(\BC)).$$
We may view $h_\mca{O}^{(n)}/2$ as a weight of the complex group $\mca{G}_\BC$. It should be noted that $h_\mca{O}^{(n)}$ depends on $Q$ as well, but we omit it to save notation.

Recall that for every weight $\lambda$ of $\mca{G}_\BC$, one has the Verma module $I(\lambda)$ of $\mfr{g}_\BC:={\rm Lie}(\mca{G}_\BC)$ and a unique quotient $L(\lambda)$, which is called the highest weight module associated with $\lambda$. Moreover, it is well-known (\cites{BV83, Jos85}) that the annihilator variety ${\rm AV}_{\mfr{g}_{\BC}}(\lambda)$ of $L(\lambda)$ satisfies
$${\rm AV}_{\mfr{g}_\BC}(\lambda) = \overline{\mca{O}}_{L(\lambda)}$$
for a unique nilpotent orbit (conveniently denoted by) $\mca{O}_{L(\lambda)} \subset \mfr{g}_\BC$. Here $\overline{\mca{O}}$ denotes the Zariski closure of $\mca{O}$. From now on, by abuse of notation, we will use ${\rm AV}_{\mfr{g}_\BC}(\lambda)$ to denote the orbit $\mca{O}_{L(\lambda)}$.

\begin{defn} \label{D:dBV}
Let $\wt{G}$ be a Brylinski--Deligne cover associated with $Q$.
Consider the duality $d_{BV, \mca{G}_\BC}^{(n)}: \mca{N}(\wt{G}^\vee) \longrightarrow \mca{N}(\mca{G}_\BC)$
given by $d_{BV, G}^{(n)}(\mca{O}):= {\rm AV}_{\mfr{g}_\BC}(h_\mca{O}^{(n)}/2)$. In view of the bijection \eqref{E:idN}, this gives rise to a duality map
$$d_{BV, G}^{(n)}: \mca{N}(\wt{G}^\vee) \longrightarrow \mca{N}(\mbf{G}),$$
which is called the covering Barbasch--Vogan duality.
\end{defn}

Henceforth, by abuse of notation, we will also write 
$$d_{BV, G}^{(n)}(\mca{O})= {\rm AV}_{\mfr{g}_\BC}(h_\mca{O}^{(n)}/2) \in \mca{N}(\mbf{G})$$
with \eqref{E:idN} implemented implicitly. Assume $\mbf{G}$ is isogenous with $\prod_i^k \mbf{G}_i$, where the derived subgroup of every $\mbf{G}_i$ is almost simple. Assume $\wt{G}$ and $\wt{\prod_{i=1}^k G_i}$ are ``compatible", for example, $\wt{G}$ is the pull-back from $\wt{\prod_{i=1}^k G_i}$ (if the isogeny is $\mbf{G} \to \wt{\prod_{i=1}^k \mbf{G}_i}$) or reversely $\wt{\prod_{i=1}^k G_i}$ is pull-back from $\wt{G}$. Then, every $\mca{O} \in \mca{N}(\wt{G}^\vee)$ is of the form $\mca{O} = \prod_{i=1}^k \mca{O}_i$ with $\mca{O}_i \in \mca{N}(\wt{G}_i^\vee)$, and we have
$$d_{BV,G}^{(n)}(\mca{O}) = \prod_{i=1}^k d_{BV,G_i}^{(n)}(\mca{O}_i).$$

It is clear that if $n=1$, then $d_{BV, G}^{(1)}$ is just the classical Barbasch--Vogan duality, see \cite{BV85}. For the classical metaplectic double cover $\wt{\Sp}_{2r}^{(2)}$, the map $d_{BV, \Sp_{2r}}^{(2)}$ agrees with the metaplectic Barbasch--Vogan duality by \cite{BMSZ23}*{Theorem B}.


\subsection{\texorpdfstring{Explicating $h_\mca{O}^{(n)}/2$}{}}
Let $\mca{O} \in \mca{N}(\wt{G}^\vee)$. Let $\Delta \subset \Phi$ be the set of simple roots of $\mbf{G}$. The simple roots of $\wm{G}^\vee$ are
$$\set{\tilde{\alpha}_{Q,n}^\vee: \alpha \in \Delta},$$
where $n_\alpha = n/\gcd(n, Q(\alpha^\vee))$ as before. The orbit $\mca{O}$ is uniquely determined by its weighted Dynkin diagram, which we assume is decorated by the numerical weights 
$$\set{c_\alpha(\mca{O}): \ \alpha\in \Delta}$$
with $c_\alpha(\mca{O}) \in \set{0,1,2}$. For each root $\tilde{\alpha}_{Q,n}^\vee \in \Delta_{Q,n}^\vee$ of $\wm{G}^\vee$, let $\tilde{\omega}_\alpha$ be the associated 
fundamental coweight of $\wm{G}^\vee$. It is clear $\tilde{\omega}_\alpha = \omega_\alpha/n_\alpha$, where $\omega_\alpha$ is the fundamental weight of $\mbf{G}$ associated with $\alpha$. This gives that
\begin{equation} \label{E:hO}
\frac{h_\mca{O}^{(n)}}{2} = \sum_{\alpha \in \Delta} \frac{c_\alpha(\mca{O})}{2n_\alpha} \cdot \omega_\alpha \in X\otimes \R.
\end{equation}

In principle, for every concrete $\wt{G}^{(n)}$ and $\mca{O} \in \mca{N}(\wt{G}^\vee)$, one can then compute ${\rm AV}_{\mca{G}_\BC}(h_\mca{O}^{(n)}/2)$ and thus $d_{BV,G}^{(n)}$ by using the results in \cite{BMW25} and \cite{BGWX25}. For each $G$ of classical type, however, it is important to give a uniform combinatorial formula of $d_{BV, G}^{(n)}$ in terms of the partition representing $\mca{O}$. This we achieve in \S \ref{S:dBV-cla}. For exceptional groups, we explain more in \S \ref{S:dBV-exc}.

\subsection{Conjecture on the wavefront sets for covers} \label{SS:WFconj}
In \cite{Wei18a}, Weissman defined an L-group ${}^L\wt{G}$ of $\wt{G}^{(n)}$ which sits in a short exact sequence
$$\wt{G}^\vee \into {}^L \wt{G} \onto W_F.$$
This is a split extension, and most often (see \cite{GG18}*{\S 6} for more details) there is a splitting such that ${}^L \wt{G} \simeq \wt{G}^\vee \times W_F$; in this paper we assume and fix such an isomorphism arising from a distinguished genuine central character of $Z(\wt{T})$.  This also gives rise to an isomorphism ${}^L\wt{M} \simeq \wt{M}^\vee \times W_F$ for a covering Levi subgroup $\wt{M} \subset \wt{G}$. We have natural embedding ${}^L\wt{M} \into {}^L \wt{G}$ compatible with the embedding $\wt{M}^\vee \into \wt{G}^\vee$.

In any case, one has a natural identification
$$\mca{N}({}^L \wt{G}) \simeq \mca{N}(\wt{G}^\vee).$$
Let ${\rm WD}_F:=W_F \times \SL_2(\BC)$ be the Weil-Deligne group of $F$. Given an irreducible genuine representation $\pi$ of $\wt{G}^{(n)}$, denote by 
$$\phi_{\pi}: {\rm WD}_F \longrightarrow {}^L \wt{G}$$
the hypothetical L-parameter of $\pi$. Then we have a nilpotent orbit
$$\mca{O}(\phi_\pi) \in \mca{N}(\wt{G}^\vee)$$
associated with $\phi_\pi(1, e_\alpha)$, where $e_\alpha$ is the upper unipotent element inside $\SL_2$.

We also consider the Aubert--Zelevinsky dual ${\rm AZ}(\pi)$ of $\pi$, defined exactly as in the linear case.
More precisely, let $\msc{R}(\wt{G})$ be the Grothendieck group of smooth admissible genuine $\wt{G}$-representations of finite length. The Aubert--Zelevinsky duality  (see \cites{Zel80, Aub95})
$$\text{AZ}:\msc{R}(\wt{G})\longrightarrow \msc{R}(\wt{G})$$  is given by 
\begin{equation} \label{E:AZdef}
    \text{AZ}(\pi)=\sum_{\substack{L \text{ Levi} \\  T \subseteq L} } (-1)^{\text{rank}({L})}\text{Ind}^{\wt{G}}_{\wt{L}} \circ \text{Jac}^{\wt{G}}_{\wt{L}}(\pi).
\end{equation}
Here ${\rm Jac}_{\wt{L}}^{ \wt{G} }$ is the Jacquet functor and ${\rm Ind}_{\wt{L}}^{\wt{G}}$ is the normalized parabolic induction. {It is proved in \cite{BJ16}*{Theorem 4.2, 4.3} that $\AZ$ is an involution and preserves irreducibility.}

Conjecture \ref{MC} is a generalization of the conjectures given by Hazeltine--Liu--Lo--Shahidi \cite{HLLS24} and Ciubotaru--Kim \cite{CK24} in the linear case. We recall it as follows. 

\begin{conj} \label{Conj}
Let $\wt{G}^{(n)}$ be a Brylinski--Deligne cover of $G$. 
Let 
$$d_{BV, G}^{(n)}: \mca{N}(\wt{G}^\vee) \longrightarrow \mca{N}(\mbf{G})$$ be the covering Barbasch--Vogan duality
given in Definition \ref{D:dBV}. Then
\begin{enumerate}
\item[(i)] for every $\pi \in \Irrg(\wt{G}^{(n)})$, one has 
\begin{equation} \label{E:ineq}
{\rm WF}^{\rm geo}(\AZ(\pi)) \lest d_{BV, G}^{(n)}(\mca{O}(\phi_\pi)),
\end{equation}
where $\phi_\pi: {\rm WD}_F \to {}^L \wt{G}$ is the L-parameter of $\pi$;
\item[(ii)] if $\phi_\pi$ is tempered, then the equality in (i) above is achieved by some $\pi$ in the L-packet.
\end{enumerate}
\end{conj}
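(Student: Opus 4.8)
The plan is to prove Conjecture \ref{Conj} in two stages: a \emph{reduction} of the upper bound \eqref{E:ineq} for an arbitrary $\pi\in\Irrg(\wt{G}^{(n)})$ to the case where $\pi$ is a discrete series representation of a covering Levi subgroup, and then an \emph{explicit verification} of that residual case in the one setting where the relevant representation theory is under control, namely the Kazhdan--Patterson covers of $\GL_r$. Everything is carried out under Working Hypothesis \ref{as:dual} on the dual side, since no unconditional local Langlands correspondence is yet available for Brylinski--Deligne covers.

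For the reduction I would transport the argument of Hazeltine--Liu--Lo--Shahidi \cite{HLLS24} to the covering setting; this is the content of Theorem \ref{T:red}. Every irreducible genuine $\pi$ occurs in a standard module $\Ind_{\wt{P}}^{\wt{G}}\sigma$ with $\sigma$ essentially tempered on a covering Levi $\wt{M}$ (and tempered $\sigma$ are in turn fully induced from discrete series of smaller Levis), and, under Working Hypothesis \ref{as:dual}, its parameter satisfies $\mca{O}(\phi_\pi)=\Sat_{\wt{M}^\vee}^{\wt{G}^\vee}\mca{O}(\phi_\sigma)$. The Aubert--Zelevinsky involution commutes with parabolic induction up to Jacquet functors, so ${\rm WF}^{\rm geo}(\AZ(\pi))$ is bounded above by the stable wavefront set of a representation induced from $\AZ(\sigma)$; together with the parabolic-induction bound ${\rm WF}^{\rm geo}(\Ind_{\wt{M}}^{\wt{G}}\tau)\lest\Ind_{\mbf{M}}^{\mbf{G}}{\rm WF}^{\rm geo}(\tau)$ for covers and the compatibility \eqref{ppcBV} of $d_{BV,G}^{(n)}$ with saturation and induction, this gives, whenever the bound is known for $\sigma$ on $\wt{M}$,
\[
{\rm WF}^{\rm geo}(\AZ(\pi))\lest\Ind_{\mbf{M}}^{\mbf{G}}{\rm WF}^{\rm geo}(\AZ(\sigma))\lest\Ind_{\mbf{M}}^{\mbf{G}}d_{BV,M}^{(n)}(\mca{O}(\phi_\sigma))=d_{BV,G}^{(n)}\bigl(\Sat_{\wt{M}^\vee}^{\wt{G}^\vee}\mca{O}(\phi_\sigma)\bigr)=d_{BV,G}^{(n)}(\mca{O}(\phi_\pi)),
\]
which is (i). For (ii), one runs the same chain with a distinguished (e.g.\ generic) member of the tempered discrete-series packet on $\wt{M}$, where the inequalities become equalities, using the order-reversing property of $d_{BV,G}^{(n)}$ to see that maximality is preserved along the induction.

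For the discrete series case I would specialize to $\GL_r$. The genuine discrete series of $\wt{\GL}_{rk}^{(n)}$ are the Zelevinsky-type quotients $Z(\rho,[a,b])$ attached to a supercuspidal $\rho$ of $\wt{\GL}_r^{(n)}$, and their Aubert--Zelevinsky duals are the corresponding genuine Speh representations; so the task becomes to compute ${\rm WF}^{\rm geo}$ of a genuine Speh representation and match it with $d_{BV}^{(n)}(\mca{O}(\phi_{\AZ(\pi)}))$, which by the explicit type-$A$ formula of \S\ref{SS:fix-cov} is a rectangular partition rescaled by the covering parameters $n_\alpha$. The wavefront side I would extract by a Jacquet-module and exact-sequence computation in the spirit of M\oe glin--Waldspurger \cite{MW87}, using that the coefficients $c_{\mca{O}}$ are detected by degenerate Whittaker functionals, which for covers of $\GL$ are governed by Kazhdan--Patterson theory; matching the two partitions is then a combinatorial identity in $a,b,r,n$ (this is Theorem \ref{T:Speh}). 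Combining with the reduction yields Conjecture \ref{Conj}(i) for $\wt{\GL}_r$ (Corollary \ref{C:GLlest}), and (ii) in this case follows because tempered genuine representations of $\wt{\GL}_r^{(n)}$ are twists of covering Steinberg-type representations, whose AZ-duals are theta-type representations whose stable wavefront sets were pinned down by Karasiewicz--Okada--Wang \cite{KOW}.

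The main obstacle is the discrete series / Speh step. On the parameter side, the identities $\mca{O}(\phi_\pi)=\Sat\,\mca{O}(\phi_\sigma)$ and the shape of $\phi_{\AZ(\pi)}$ are conjectural for covers and must be posited (Working Hypothesis \ref{as:dual}), which is why the result is stated conditionally. On the analytic side, controlling ${\rm WF}^{\rm geo}$ of a genuine Speh representation demands precise vanishing information for the nilpotent coefficients $c_{\mca{O}}$, i.e.\ a robust theory of degenerate Whittaker models for genuine representations of $\wt{\GL}$; and since ${\rm WF}^{\rm geo}$ need not be a singleton (Tsai \cite{Tsa24}), one cannot argue merely through Gelfand--Kirillov dimension but must track the orbits themselves. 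A secondary technical point is to establish the parabolic-induction bound ${\rm WF}^{\rm geo}(\Ind\,\tau)\lest\Ind\,{\rm WF}^{\rm geo}(\tau)$ for covers with the same care as in the linear case, via a Bruhat-filtration analysis of the induced character distribution.
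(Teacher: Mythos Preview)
The statement in question is a \emph{conjecture}, and the paper does not prove it in general; it only establishes (i) for Kazhdan--Patterson covers of $\GL_r$ (Corollary~\ref{C:GLlest}) and records the unramified theta case (\S\ref{SS:theta}) as evidence for (ii). Your proposal is not a proof of Conjecture~\ref{Conj} either, but rather a strategy that reproduces exactly the partial results the paper obtains: the reduction to (anti-)discrete series via Theorem~\ref{T:red} and the explicit Speh computation of Theorem~\ref{T:Speh}. In that sense your plan is correct and essentially identical to the paper's approach, though you should be upfront that the outcome is only the $\GL_r$ case, not the full conjecture.

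Two points of confusion to fix. First, you have the roles of $Z(\rho,[a,b])$ and $L(\rho,[a,b])$ reversed: $Z(\rho,[a,b])$ is the genuine \emph{Speh} representation (the unique irreducible \emph{sub}representation of~\eqref{I-rhoab}) and is \emph{anti-discrete}, while $L(\rho,[a,b])$ is the unique quotient and is the discrete series. The paper's reduction (Theorem~\ref{T:red}) is phrased in terms of the equivalent inequality ${\rm WF}^{\rm geo}(\pi)\lest d_{BV,G}^{(n)}(\mca{O}(\phi_{\AZ(\pi)}))$ and reduces to anti-discrete $\pi$, whence Theorem~\ref{T:Speh} handles exactly $\pi=Z(\rho,[a,b])$. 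Second, your argument for (ii) on $\wt{\GL}_r$ is not quite right: tempered genuine representations are inductions from $L(\rho,[a,b])$'s, not merely twists of a single covering Steinberg, and their AZ-duals are Speh representations, not theta representations; the paper obtains the equality case directly from Theorem~\ref{T:Speh} (equality for Speh) rather than by invoking \cite{KOW}, which only treats the unramified Steinberg/theta pair.
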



\section{\texorpdfstring{Properties of $d_{BV, G}^{(n)}$ for classical $G$}{}}  \label{S:dBV-cla}
The goal of this section is to explicate for $G$ of classical map the map $d_{BV,G}^{(n)}$, which is uniformly but abstractly defined. Meanwhile, we also prove some of its expected properties. For the Dynkin diagram of root system of $G$, we follow Bourbaki's notation as in \cite{BouL2}. 

For any partition $\mfr{p}$ we write $\mfr{p}^*$ for its transpose. For $X \in \set{B, C, D}$, we use $\mfr{p}_X$ to denote the $X$-collapse of $\mfr{p}$. Also, if we arrange the partition $\mfr{p}=[p_1, p_2, \ldots, p_k]$ with $p_1 \gest p_2 \gest \cdots. \gest p_k$, then $\mfr{p}^+:=[p_1+1, p_2, \ldots, p_k]$ and $\mfr{p}^-:=[p_1, \ldots, p_{k-1}, p_k-1]$. The two natural maps
$$(\cdot)^-{}_C: \mca{N}(\SO_{2r+1}) \longrightarrow \mca{N}(\Sp_{2r})$$
and 
$$(\cdot)^+{}_B: \mca{N}(\Sp_{2r}) \longrightarrow \mca{N}(\SO_{2r+1})$$
are then both bijections, and are inverses of each other. We often omit the parentheses between the superscript and subscript. For example, we shall write $\mfr{p}_{D} {}^{+} \vphantom{\mfr{p}}_{B}  {}^{-\ast}$ instead of $((((\mfr{p}_{D})^{+})_{B})^{-})^{\ast}$.

\subsection{\texorpdfstring{Fix cover $\wt{G}^{(n)}$ of type $A, B, C, D$}{}} \label{SS:fix-cov}
\subsubsection{\texorpdfstring{Type $A_r$}{}}\label{sec combinatorial definition}
Let $G$ be of type $A_r$ with Dynkin diagram given as follows.
\vskip 5pt
$$
\qquad 
\begin{picture}(4.7,0.2)(0,0)
\put(1,0){\circle{0.08}}
\put(1.5,0){\circle{0.08}}
\put(2,0){\circle{0.08}}
\put(2.5,0){\circle{0.08}}
\put(3,0){\circle{0.08}}
\put(1.04,0){\line(1,0){0.42}}
\multiput(1.55,0)(0.05,0){9}{\circle{0.02}}
\put(2.04,0){\line(1,0){0.42}}
\put(2.54,0){\line(1,0){0.42}}
\put(1,0.1){\footnotesize $\alpha_{1}$}
\put(1.5,0.1){\footnotesize $\alpha_{2}$}
\put(2,0.1){\footnotesize $\alpha_{r-2}$}
\put(2.5,0.1){\footnotesize $\alpha_{r-1}$}
\put(3,0.1){\footnotesize $\alpha_{r}$}
\end{picture}
$$
\vskip 10pt

Let $\wt{G}$ be an $n$-fold cover associated with $Q$. The number $n_\alpha=n/\gcd(n,Q(\alpha^\vee))$ is independent of the choice of the root $\alpha$. For simplicity, we consider cover of $G$ such that $n_\alpha =n$.

The dual group $\wt{G}^\vee$ is also of type $A$.  Let $\mca{O}_\mfr{p} \subset \wt{G}^\vee$ be an orbit associated with a partition
$$\mfr{p}=(p_1, p_2, \ldots, p_k).$$
For every number $m\gest 1$ we consider the partition 
$$\mfr{s}(m;n):=(n^a b)=(n, n, \ldots, n, b)$$
of $m$, where $m=n a + b$ with $0\lest b < n$. 
Then we define a combinatorial duality
\begin{equation} \label{E:dBV-A}
d_{com,A}^{(n)}(\mfr{p}) := \sum_{i=1}^k \mfr{s}(p_i; n).
\end{equation}
Here, sum of partitions is the obvious one. Note that if $n=1$, then we get
$$d_{com,A}^{(1)}(\mfr{p}) = \mfr{p}^*.$$
The map $d_{com, A}^{(n)}$ is our formula-candidate for $d_{BV,G}^{(n)}$, likewise for other classical types below.

\subsubsection{\texorpdfstring{Type $B_r$}{}} 
Consider $G=\SO_{2r+1}$ with simple roots given as follows
\vskip 5pt
$$ \qquad 
\begin{picture}(4.7,0.2)(0,0)
\put(1,0){\circle{0.08}}
\put(1.5,0){\circle{0.08}}
\put(2,0){\circle{0.08}}
\put(2.5,0){\circle{0.08}}
\put(3,0){\circle{0.08}}
\put(1.04,0){\line(1,0){0.42}}
\multiput(1.55,0)(0.05,0){9}{\circle*{0.02}}
\put(2.04,0){\line(1,0){0.42}}
\put(2.54,0.015){\line(1,0){0.42}}
\put(2.54,-0.015){\line(1,0){0.42}}
\put(2.68,-0.05){{\large $>$}}
\put(1,0.1){\footnotesize $\alpha_1$}
\put(1.5,0.1){\footnotesize $\alpha_2$}
\put(2,0.1){\footnotesize $\alpha_{r-2}$}
\put(2.5,0.1){\footnotesize $\alpha_{r-1}$}
\put(3,0.1){\footnotesize $\alpha_r$}
\end{picture}
$$
\vskip 10pt

Let $\wt{G}$ be the $n$-fold cover associated with the unique $Q$ such that $Q(\alpha_1^\vee)=2$ (and thus $Q(\alpha_r^\vee)=4$). That is, $\wt{G}$ is the $n$-fold cover obtained from restricting the $n$-fold cover of $\SL_{2r+1}$ associated with $Q(\alpha^\vee)=1$ for any root $\alpha$ of $\SL_{2r+1}$. One has
$$\wt{G}^\vee \simeq
\begin{cases}
\Sp_{2r} & \text{ if $n$ is odd or $n=2k$ with $k$ odd},\\
\SO_{2r+1} & \text{ otherwise.}
\end{cases}
$$
Given any orbit of $\wt{G}^\vee$ represented by a partition $\mfr{p}$, we define 
\begin{equation} \label{E:dBV-B}
d_{com,B}^{(n)}(\mfr{p}):= 
\begin{cases}
d_{com,A}^{(n)}(\mfr{p})^{+}\sub{B} & \text{ if  $n$ is odd};\\
d_{com,A}^{(n/2)}(\mfr{p})^{+}\sub{B} & \text{ if  $n$ is even with $n/2$ odd};\\
d_{com,A}^{(n/2)}(\mfr{p})_B & \text{ if  $n$ is even with $n/2$ even}.
\end{cases}
\end{equation}
Note that the above formula can be rendered simpler if we write $n^{\ast}:= n/ \gcd(n,2)$, and re-group the definition into only two cases. 

\subsubsection{\texorpdfstring{Type $C_r$}{}}
Consider $G=\Sp_{2r}$ with simple roots given as follows
$$ \qquad 
\begin{picture}(4.7,0.2)(0,0)
\put(1,0){\circle{0.08}}
\put(1.5,0){\circle{0.08}}
\put(2,0){\circle{0.08}}
\put(2.5,0){\circle{0.08}}
\put(3,0){\circle{0.08}}
\put(1.04,0){\line(1,0){0.42}}
\multiput(1.55,0)(0.05,0){9}{\circle*{0.02}}
\put(2.04,0){\line(1,0){0.42}}
\put(2.54,0.015){\line(1,0){0.42}}
\put(2.54,-0.015){\line(1,0){0.42}}
\put(2.68,-0.05){{\large $<$}}
\put(1,0.1){\footnotesize $\alpha_1$}
\put(1.5,0.1){\footnotesize $\alpha_2$}
\put(2,0.1){\footnotesize $\alpha_{r-2}$}
\put(2.5,0.1){\footnotesize $\alpha_{r-1}$}
\put(3,0.1){\footnotesize $\alpha_r$}
\end{picture}
$$
\vskip 10pt
We consider $Q$ such that $Q(\alpha_r^\vee)=1$. Let $\wt{G}$ be the $n$-fold cover of $\Sp_{2r}$.
It gives
$$
\wt{G}^\vee=
\begin{cases}
\SO_{2r+1} & \text{ if $n$ is odd};\\
\Sp_{2r} & \text{ if $n$ is even}.
\end{cases}
$$
Given any orbit of $\wt{G}^\vee$ represented by a partition $\mfr{p}$, we define
\begin{equation} \label{E:dBV-C}
d_{com,C}^{(n)}(\mfr{p}) := 
\begin{cases}
d_{com,A}^{(n)}(\mfr{p})^{-} \sub{C} & \text{ if  $n$ is odd};\\
d_{com,A}^{(n/2)}(\mfr{p})^{+-}\sub{C} & \text{ if  $n$ is even with $n/2$ odd};\\
d_{com,A}^{(n/2)}(\mfr{p})_C & \text{ if  $n$ is even with $n/2$ even}.
\end{cases}
\end{equation}
Here for a partition $\mfr{p}$, the partition $(\mfr{p}^+)^-$ is as explained at the beginning of this section.

\subsubsection{\texorpdfstring{Type $D_r$}{}}
Consider $G=\SO_{2r}$ with Dynkin diagram given as follows
\vskip 10pt
$$
\begin{picture}(4.7,0.4)(0,0)
\put(1,0){\circle{0.08}}
\put(1.5,0){\circle{0.08}}
\put(2,0){\circle{0.08}}
\put(2.5,0){\circle{0.08}}
\put(3,0){\circle{0.08}}
\put(3.5, 0.25){\circle{0.08}}
\put(3.5, -0.25){\circle{0.08}}
\put(1.04,0){\line(1,0){0.42}}
\put(1.54,0){\line(1,0){0.42}}
\multiput(2.05,0)(0.05,0){9}{\circle{0.02}}
\put(2.54,0){\line(1,0){0.42}}
\put(3.03,0.03){\line(2,1){0.43}}
\put(3.03,-0.03){\line(2,-1){0.43}}
\put(1,0.1){\footnotesize $\alpha_1$}
\put(1.5,0.1){\footnotesize $\alpha_2$}
\put(2,0.1){\footnotesize $\alpha_3$}
\put(2.5,0.1){\footnotesize $\alpha_{r-3}$}
\put(2.9,0.15){\footnotesize $\alpha_{r-2}$}
\put(3.5,0.35){\footnotesize $\alpha_{r-1}$}
\put(3.5,-0.4){\footnotesize $\alpha_r$}
\end{picture}
$$
\vskip 30pt
Consider the $n$-fold cover $\wt{\SO}_{2r}$ obtained from restricting $\wt{\SL}_{2r}$ to the embedded $\SO_{2r} \subset \SL_{2r}$. Then $\wt{\SO}_{2r}$ is the one associated with $Q(\alpha_i^\vee)=2$ for every $1\lest i \lest r$. We have
$$\wt{G}^\vee \simeq \SO_{2r}.$$
For each partition $\mfr{p}$ of $\wt{G}^\vee$, we define
\begin{equation} \label{E:dBV-D}
d_{com,D}^{(n)}(\mfr{p}) := 
\begin{cases}
d_{com,A}^{(n)}(\mfr{p})_D & \text{ if  $n$ is odd};\\
d_{com,A}^{(n/2)}(\mfr{p})_D & \text{ if  $n$ is even}.
\end{cases}
\end{equation}
Also, if $r$ is even and $\mfr{p}^{\rm I}=\mca{O}^{\rm I}, \mfr{p}^{\rm II}=\mca{O}^{\rm II}$ are the two orbits associated with a very even partition $\mfr{p}$, where we follow Lusztig's convention on $\set{\rm I, II}$ as in \cite{BMW25}. Suppose $d_{com, D}^{(n)}(\mfr{p})$ is also a very even partition, then for $\set{\heartsuit, \spadesuit} = \set{\rm I, II}$ we require that
\begin{equation} \label{E:dBV-D 2}
d_{com,D}^{(n)}(\mfr{p}^\heartsuit) := 
\begin{cases}
d_{com,D}^{(n)}(\mfr{p})^\heartsuit & \text{ if  $r/2$ is even};\\
d_{com,D}^{(n)}(\mfr{p})^{\spadesuit} & \text{ if  $r/2$ is odd}.
\end{cases}
\end{equation}

For $G$ as above type, we will relate the covering Barbasch--Vogan duality $d_{BV, G}^{(n)}$ given in Definition \ref{D:dBV}  with $d_{com, X}^{(n)}, X\in \set{A, B, C, D}$, using the work \cite{BMW25}.

\subsection{\texorpdfstring{Annihilator partition $\AP{X}(\lambda_{X'}^{(n)}(\mfr{p}))$}{}}
Let $X\in\{A,B,C,D\}$, and let $\mathfrak{g}$ be a simple complex Lie algebra of type $X$ with a Cartan subalgebra $\mathfrak{h}$. 
For any weight $\lambda \in \mathfrak{h}^{\ast}$, we set
$$\AP{X}(\lambda):=\text{the partition underlying } {\rm AV}_\mfr{g}(\lambda),$$
which is a partition of type $X$. We call $\AP{X}(\lambda)$ the annihilator partition associated with $L(\lambda)$. Clearly, $\AP{X}(\lambda)$ also determines ${\rm AV}_\mfr{g}(\lambda)$ for $X\in {A, B, C}$.

For $\lambda \in \mfr{h}^{\ast}$, we identify 
$$\lambda= (\lambda_1,\ldots, \lambda_r)= \sum_{i=1}^r \lambda_i \varepsilon_i,$$ where $\lambda_i \in \BC$ and $\{\varepsilon_i\}_{i=1}^r$ is the canonical basis of the Euclidean space $\R^r$. 

\begin{lemma} \label{L:out}
Let $\mfr{g}$ be of type $D_r$ and let $\sigma$ be the outer automorphism of $\mfr{g}$ associated with the exchange of $\alpha_{r-1}:= e_{r-1} - e_r$ and $\alpha_r:=e_{r-1} + e_r$ in the Dynkin diagram. Then for every $\lambda \in \mfr{h}^*$, one has $\AP{D}(\lambda) = \AP{D}({}^\sigma \lambda)$.
\end{lemma}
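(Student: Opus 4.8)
The plan is to lift $\sigma$ to an automorphism of $U(\mfr{g})$, follow how it moves the annihilator of $L(\lambda)$ and hence the associated variety ${\rm AV}_\mfr{g}(\lambda)$, and then quote the fact that the outer automorphism of $\mathfrak{so}_{2r}$ preserves the partition underlying every nilpotent orbit. First I would fix the pinning‑preserving representative of $\sigma$: on Chevalley generators it sends $e_i\mapsto e_{\sigma(i)}$, $f_i\mapsto f_{\sigma(i)}$, $h_i\mapsto h_{\sigma(i)}$, where $\sigma$ exchanges the two end indices $r-1$ and $r$, so that $\sigma$ stabilizes $\mfr{h}$ and the positive Borel and acts on $\mfr{h}^{\ast}$ as the orthogonal involution $(\lambda_1,\dots,\lambda_{r-1},\lambda_r)\mapsto(\lambda_1,\dots,\lambda_{r-1},-\lambda_r)$; this is the weight denoted ${}^\sigma\lambda$. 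Extending $\sigma$ to $U(\mfr{g})$ and letting ${}^\sigma M$ denote the twist of a module $M$ by $u\cdot_\sigma m:=\sigma^{-1}(u)m$, the point is that a highest weight vector of $L(\lambda)$ becomes, in ${}^\sigma L(\lambda)$, a highest weight vector of weight ${}^\sigma\lambda$ (because $\sigma$ fixes $\mfr{h}$ and permutes the Chevalley generators); since ${}^\sigma L(\lambda)$ is again simple and cyclic on this vector, ${}^\sigma L(\lambda)\cong L({}^\sigma\lambda)$, and therefore
$$\mathrm{Ann}_{U(\mfr{g})}L({}^\sigma\lambda)=\mathrm{Ann}_{U(\mfr{g})}\bigl({}^\sigma L(\lambda)\bigr)=\sigma\bigl(\mathrm{Ann}_{U(\mfr{g})}L(\lambda)\bigr).$$

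Next I would transport this through the associated‑variety construction. Since $\sigma$ respects the PBW filtration of $U(\mfr{g})$ and induces on $\mathrm{gr}\,U(\mfr{g})=S(\mfr{g})$ the graded algebra automorphism given by $\sigma$ itself, the operation $\mathrm{gr}(-)$ commutes with the $\sigma$‑twist of two‑sided ideals, so ${\rm AV}(\sigma(I))=\sigma\bigl({\rm AV}(I)\bigr)$ as closed subvarieties of $\mfr{g}^{\ast}\cong\mfr{g}$, the identification being $\sigma$‑equivariant because the Killing form is $\sigma$‑invariant. Writing $\mca{O}_{L(\lambda)}$ for the nilpotent orbit with $\overline{\mca{O}_{L(\lambda)}}={\rm AV}_\mfr{g}(\lambda)$ (well defined by \cites{BV83,Jos85}), the previous paragraph then yields $\mca{O}_{L({}^\sigma\lambda)}=\sigma\bigl(\mca{O}_{L(\lambda)}\bigr)$.

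Finally I would invoke the classical description of the outer action on nilpotent orbits of type $D$: since $\sigma$ is induced by conjugation by an element of $\rO_{2r}\setminus\SO_{2r}$, it fixes every nilpotent $\SO_{2r}$‑orbit whose partition is not very even and merely interchanges the two orbits $\mca{O}^{\rm I}$, $\mca{O}^{\rm II}$ attached to each very even partition (the $\{{\rm I},{\rm II}\}$ convention being as in \S\ref{SS:fix-cov}); in all cases the underlying partition is unchanged. Hence $\AP{D}({}^\sigma\lambda)$, which by definition is the partition of $\mca{O}_{L({}^\sigma\lambda)}=\sigma(\mca{O}_{L(\lambda)})$, equals the partition of $\mca{O}_{L(\lambda)}$, namely $\AP{D}(\lambda)$, as desired.

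I expect the only point needing care to be the last step — pinning down the pinning‑preserving representative of $\sigma$ and matching its action on $\mfr{h}^{\ast}$ with its action on the partition‑labelling of nilpotent $\SO_{2r}$‑orbits; the twisting identity ${}^\sigma L(\lambda)\cong L({}^\sigma\lambda)$ and the functoriality ${\rm AV}(\sigma(I))=\sigma({\rm AV}(I))$ are formal once the set‑up is in place, and since inner automorphisms act trivially on both primitive ideals and nilpotent orbits the argument does not depend on the chosen representative.
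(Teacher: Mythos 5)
Your argument is correct, but it is genuinely different from the one in the paper. The paper disposes of this lemma in one line by appealing to Joseph's explicit combinatorial formula for ${\rm AV}_\mfr{g}(\lambda)$ (in the form of \cite{BMW25}*{Theorem 7.10}): for type $D_r$ that algorithm computes $\AP{D}(\lambda)$ from data that is manifestly unchanged when the sign of the last coordinate of $\lambda$ is flipped, so the equality is read off directly. You instead give a structural argument: lift $\sigma$ to a pinned automorphism of $U(\mfr{g})$, identify ${}^\sigma L(\lambda)\cong L({}^\sigma\lambda)$, transport annihilators and pass to the associated graded to get $\mca{O}_{L({}^\sigma\lambda)}=\sigma(\mca{O}_{L(\lambda)})$, and finally use that $\sigma$ is realized by $\rO_{2r}\setminus\SO_{2r}$ and hence preserves the underlying partition of every nilpotent orbit (fixing the non-very-even ones and at most swapping the two labels of a very even one). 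All the steps check out — in particular the sign conventions in $\mathrm{Ann}({}^\sigma M)=\sigma(\mathrm{Ann}\,M)$ and the independence of the choice of representative of $\sigma$ modulo inner automorphisms. What your route buys is generality and conceptual clarity: it shows the $\sigma$-equivariance of $\lambda\mapsto\mca{O}_{L(\lambda)}$ for any diagram automorphism of any semisimple Lie algebra, with the type-$D$ statement falling out from the classification of orbits by partitions; what the paper's route buys is brevity, since Joseph's formula is already the engine behind all of \S 3 and the invariance is visible in it at a glance.
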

\begin{proof}
This is a direct consequence of Joseph's formula \cite{Jos85} for computing ${\rm AV}_\mfr{g}(\lambda)$, see also \cite{BMW25}*{Theorem 7.10}.
\end{proof}

In the case we consider, $\lambda$ is always a non-increasing sequence of real numbers. Therefore, we also identify $\lambda$ as the multi-set $\{\lambda_i\}_{i=1}^r$ and then use summation to denote the disjoint union of multi-sets.  Let $\lambda=\{\lambda_1,\ldots, \lambda_r\}$ be a multi-set of real numbers. We define
        \[ {}^-\lambda:= \lambda+ \{-\lambda_r, \ldots, -\lambda_1 \}.\]   
For any $n\in \Z_{>0}$ and set 
$$n^{\ast}:= \frac{n}{\gcd(n,2)}.$$

\begin{defn}\label{def wt partition}
    Let $\mathfrak{p}$ be a partition of type $X$, where $X\in \{A,B,C,D\}$.  Define a multi-set of real numbers $\lambda_{X}^{(n)}(\mathfrak{p})$ case by case as follows. We shall also regard $\lambda_{X}^{(n)}(\mathfrak{p})$ as a
    non-increasing sequence of real numbers.
    
    \begin{enumerate}
        \item [(a)] When $X=A$, write $\mfr{p}=[p_1,\ldots, p_r]$. Define
        \[ \lambda_A^{(n)}(\mathfrak{p}):= \sum_{i=1}^r \left\{ \frac{p_i-1}{2n},\frac{p_i-3}{2n},\ldots, \frac{1-p_i}{2n}  \right\}. \]
        \item [(b)] When $X\in \{B, C,D\}$, define $\lambda^{(n)}_X(\mfr{p})$ to be the multi-set of non-negative real numbers such that
        \[ \lambda^{(n^{\ast})}_A(\mfr{p})=\begin{cases}
            {}^-\lambda^{(n)}_X(\mfr{p})+\{0\} & \text{ if }X=B,\\
            {}^-\lambda^{(n)}_X(\mfr{p})& \text{ if }X\in \{C,D\}.
        \end{cases}\]     
    \end{enumerate}
\end{defn}

We are interested in $\AP{X}(\lambda)$ when the weight $\lambda$ is equal to $\lambda^{(n)}_{X'}(\mfr{p})$, where $\mfr{p}$ is a partition of type $X'$.
Indeed, it follows from the explicit form of $h_\mca{O}^{(n)}/2$ in \eqref{E:hO} that 
$$\lambda_{X'}^{(n)}(\mfr{p}_\mca{O}) =
\begin{cases}
 h_\mca{O}^{(n)}/2 & \text{ for } X'=A,B,C,\\
 h_\mca{O}^{(n)}/2 \text{ or }  {}^\sigma h_\mca{O}^{(n)}/2 & \text{ for } X'=D,
 \end{cases}$$ 
where $\mfr{p}_\mca{O}$ is the partition corresponding to  $\mca{O}$ and $\sigma \in {\rm Aut}(\mfr{g})$ is as in Lemma \ref{L:out}. It also follows from Lemma \ref{L:out} that in order to compute $\AP{D}(\lambda_D^{(n)}(\mfr{p}))$, there is no loss of generality in assuming that $\lambda_D^{(n)}(\mfr{p})$ contains non-negative real numbers, as in Definition \ref{def wt partition} (b).

Here is the first main result of this section.


\begin{thm}\label{T:comparison}
    Let $(X,X')\in \{ (A,A),(D,D)\} \cup (\{B,C\}\times\{B,C\})$ and $n \in \Z_{>0}$. Suppose that $\mfr{p}$ is a partition of type $X'$ such that $d_{com,X}^{(n)}(\mfr{p})$ is well-defined. Then we have 
    \[ d_{com,X}^{(n)}(\mfr{p})= \AP{X}(\lambda_{X'}^{(n)}(\mfr{p})).\]
 In particular, for $G$ of type $A, B, C$, one has $d_{BV,G}^{(n)}(\mfr{p}) = d_{com,X}^{(n)}(\mfr{p})$, where the cover $\wt{G}^{(n)}$ is fixed as in \S \ref{SS:fix-cov}.
\end{thm}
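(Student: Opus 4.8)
The proof reduces to the purely combinatorial identity $d_{com,X}^{(n)}(\mfr{p})=\AP{X}(\lambda_{X'}^{(n)}(\mfr{p}))$; granting it, the last sentence of the statement follows immediately, because, as recorded after \eqref{E:hO}, one has $\lambda_{X'}^{(n)}(\mfr{p}_\mca{O})=h_\mca{O}^{(n)}/2$ for $X'\in\{A,B,C\}$ and $\lambda_D^{(n)}(\mfr{p}_\mca{O})\in\{h_\mca{O}^{(n)}/2,\ {}^\sigma(h_\mca{O}^{(n)}/2)\}$, the two elements having the same value of $\AP{D}$ by Lemma \ref{L:out}; hence Definition \ref{D:dBV} gives $d_{BV,G}^{(n)}(\mca{O})=\AP{X}(h_\mca{O}^{(n)}/2)=d_{com,X}^{(n)}(\mfr{p}_\mca{O})$. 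So the plan is to prove the identity, first for type $A$ and then by reducing types $B,C,D$ to type $A$ in the manner of \cite{BMW25}.

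For type $A$, I would use the relevant specialization of the algorithm of \cite{BMW25} (the type $A$ case of Joseph's formula): for a real weight $\lambda$ of $\mathfrak{gl}_N$, one partitions the entries of $\lambda$ into their congruence classes modulo $\Z$; for each class $c$ one forms the partition $\mu_c$ recording the multiplicities of the entries of $\lambda$ lying in $c$ (equivalently, the Robinson--Schensted shape of that weakly decreasing subsequence, which by Greene's theorem is this multiplicity partition); and then $\AP{A}(\lambda)=\bigoplus_c\mu_c$, the sum of partitions, the last step reflecting that the annihilator variety of a highest weight module is induced from its block Levi and that induction of nilpotent orbits in type $A$ is addition of partitions. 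I would apply this to $\lambda=\lambda_A^{(n)}(\mfr{p})=\bigsqcup_i S_i$, where $S_i=\{\tfrac{p_i-1}{2n},\tfrac{p_i-3}{2n},\dots,\tfrac{1-p_i}{2n}\}$ is a $p_i$-element arithmetic progression of common difference $1/n$ symmetric about $0$. Two observations carry the computation. First, $S_i$ and $S_j$ can meet a common class modulo $\Z$ only when $p_i\equiv p_j\pmod 2$, and same-parity $S_i$'s remain nested after intersection with any fixed class; since for a nested chain of sets the multiplicity partition of the union is the sum of the (one-element-each) multiplicity partitions, the class-$c$ contribution equals $\bigoplus_i(1^{|S_i\cap c|})$. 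Second, writing $p_i=na_i+b_i$ with $0\le b_i<n$, as $c$ runs over the $n$ classes met by $S_i$ the cardinality $|S_i\cap c|$ equals $a_i+1$ for exactly $b_i$ of them and $a_i$ for the remaining $n-b_i$, whence $\bigoplus_c(1^{|S_i\cap c|})=(n^{a_i},b_i)=\mfr{s}(p_i;n)$. Interchanging the sums over $c$ and $i$ then yields $\AP{A}(\lambda_A^{(n)}(\mfr{p}))=\bigoplus_i\mfr{s}(p_i;n)=d_{com,A}^{(n)}(\mfr{p})$.

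For $X\in\{B,C,D\}$ I would invoke the reduction in \cite{BMW25} expressing $\AP{X}(\lambda)$ (for real $\lambda$) as an $X$-collapse of a type $A$ annihilator partition: pass to the doubled weight ${}^-\lambda$, adjoining a $0$ exactly when the ambient type is odd-dimensional, compute $\AP{A}$, and collapse, with the caveat that a half-integral doubled weight forces incrementing the largest part before the collapse --- this increment/decrement is precisely the $(\cdot)^{+}$, $(\cdot)^{-}$, $(\cdot)^{+-}$ decoration in \eqref{E:dBV-B}, \eqref{E:dBV-C}, \eqref{E:dBV-D}, and the parity of $n^\ast=n/\gcd(n,2)$ (which by Definition \ref{def wt partition}(b) also governs whether ${}^-\lambda_{X'}^{(n)}(\mfr{p})$, suitably completed, is $\lambda_A^{(n^\ast)}(\mfr{p})$ as an integral or as a half-integral weight) is what splits each formula into its cases. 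Feeding the type $A$ result $\AP{A}(\lambda_A^{(n^\ast)}(\mfr{p}))=d_{com,A}^{(n^\ast)}(\mfr{p})$ into this recipe and unwinding the collapses reproduces the three-line definitions of $d_{com,B}^{(n)},d_{com,C}^{(n)},d_{com,D}^{(n)}$; Lemma \ref{L:out} disposes of the very-even case for $D$ at the level of partitions, and the finer $\mathrm{I}/\mathrm{II}$ prescription \eqref{E:dBV-D 2} is obtained by tracking the orbit (not merely its partition) through Joseph's formula under the outer automorphism $\sigma$. I expect the genuine obstacle to be exactly this last piece of convention-matching --- keeping the $B$ versus $C$ collapse, the half-integral increment, and the Lusztig $\mathrm{I}/\mathrm{II}$ labelling consistent across \cite{BMW25}, Lemma \ref{L:out} and \eqref{E:dBV-D 2} --- rather than any conceptual difficulty; a smaller but real point is verifying the nesting claim above with care, since a general union of arithmetic progressions inside a single congruence class need \emph{not} have multiplicity partition equal to the sum of the individual ones.
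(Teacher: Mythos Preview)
Your type $A$ argument is essentially the paper's (Lemma \ref{lem ind A} and Proposition \ref{prop comparison A}), only organized a bit more directly; no objection there.

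The gap is in your sketch for $X\in\{B,C,D\}$. You assert that \cite{BMW25} expresses $\AP{X}(\lambda)$ as ``an $X$-collapse of a type $A$ annihilator partition'' modulo a single global $(\cdot)^{\pm}$ decoration. That is not what \cite{BMW25} gives. The actual output (rephrased in the paper as Theorem \ref{thm BMW}) decomposes $\lambda_A^{(n^\ast)}(\mfr{p})$ into its integral, half-integral, and residual congruence blocks and applies a \emph{different} operation to each block before summing and collapsing: for $X=C$ the integral block contributes $\mfr{p}_{X',0}(\lambda)^{-}{}_C{}^{\ast}$ (the linear $d_{BV,C}^{(1)}$), the half-integral block contributes $\mfr{p}_{X',\half{1}}(\lambda)^{\ast}{}_D{}^{+-}{}_C$ (the metaplectic duality), and only the remaining blocks contribute the bare transpose $\mfr{p}_{X',i}(\lambda)^{\ast}$. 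These blockwise operations do \emph{not} in general coincide with the transpose, so ``feed in $\AP{A}(\lambda_A^{(n^\ast)}(\mfr{p}))=d_{com,A}^{(n^\ast)}(\mfr{p})$ and collapse'' does not reproduce $d_{com,X}^{(n)}(\mfr{p})$ by inspection; the identity you need is precisely the content of the theorem, not an unwinding of definitions.

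What the paper actually does to bridge this gap is the substantial part of \S\ref{S:dBV-cla}: it proves that both sides are compatible with induction/saturation (Propositions \ref{P:ind BCD} and \ref{prop ind BCD Ann}, resting on the six collapse identities of Lemma \ref{lem Achar identities} and on Lemma \ref{lem trans ind}), and then uses this to strip off the doubled parts of $\mfr{p}$ until only one parity survives, at which point exactly one of the integral/half-integral blocks is nontrivial and the formula of Theorem \ref{thm BMW} collapses to the expected $d_{com,X}^{(n)}$. Type $D$ requires a further technical identity (Lemma \ref{lem type D, n=4}) handling the interaction of the integral and half-integral blocks simultaneously. None of this machinery is visible in your outline, and the ``unwinding the collapses'' step you describe would fail without it. (Your closing remark that the obstacle is convention-matching rather than anything conceptual understates the difficulty: the blockwise structure of Theorem \ref{thm BMW} is the conceptual point you are missing.)
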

A substantial part of the remaining part of this section is devoted to the proof of Theorem \ref{T:comparison}. Along the proof, we also verify the order-reversing property and compatibility of $d_{BV, G}^{(n)}$ with orbit-induction. The determination of the labelling I, II for very even of type $D$ is given in Proposition \ref{P:D-I/II}.
All these results are then summarized in Theorem \ref{T:ABCD}, which echoes and enriches Theorem \ref{T:comparison} above.

\subsection{\texorpdfstring{A combinatorial description of $\AP{X}(\lambda^{(n)}_{X'}(\mfr{p}))$}{}}
In this subsection, we rephrase the combinatorial computation of $ \AP{X}(\lambda)$ in \cite{BMW25} when $\lambda$ is of the form $\lambda^{(n)}_{X'}(\mfr{p})$.

Let $\lambda$ be a finite sequence of real numbers. We let $P(\lambda)$ denote the Young's tableaux associated to $\lambda$ via the Robinson--Schensted (RS) insertion algorithm (see \cite{Sag01}*{\S 3.1} or  \cite{BMW25}*{\S 2.2} for details), and let $\mfr{p}(\lambda)$ be the associated partition. We write $P_{ij}(\lambda)$ for the entry of the $j$-th box of the $i$-th row of $P(\lambda)$.

In the cases we are interested in, the sequences of real numbers $\lambda$ involved are always non-increasing. In this case, we have a formula for $\mfr{p}(\lambda)$ and $P(\lambda)$.

\begin{lemma}\label{lem RS}
    Suppose that $\lambda$ is a finite non-increasing sequence of real numbers. Identify $\lambda$ as a multi-set and write $\lambda=\{\lambda_1^{r(\lambda_1)},\ldots, \lambda_s^{r(\lambda_s)}\}$, where $\lambda_1 > \cdots > \lambda_s$ and $r(\lambda_i) \in \Z_{>0}$ indicates the multiplicity. Then $\mfr{p}(\lambda)=[r_1,\ldots,r_s]$, where $\{r_i\}_{i=1}^s= \{r(\lambda_i)\}_{i=1}^s$ as multi-sets. Moreover, if we write
    \[ \{ \lambda_k \ | \ r(\lambda_k)\gest r_j  \}= \{ \lambda_{1,j}, \lambda_{2,j}, \ldots,  \lambda_{k_j,j}\} \]
    with $ \lambda_{1,j}< \lambda_{2,j}<\cdots< \lambda_{k_j,j}$, then $P_{ij}(\lambda)=\lambda_{i,j}$.
\end{lemma}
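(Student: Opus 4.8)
The plan is to realize the Robinson--Schensted insertion of $\lambda$ as a succession of \emph{block insertions}. Since $\lambda$ is non-increasing, it is the concatenation of $s$ constant blocks: first the $r(\lambda_1)$ copies of $\lambda_1$, then the $r(\lambda_2)$ copies of $\lambda_2$, and so on down to the $r(\lambda_s)$ copies of $\lambda_s$, where $\lambda_1 > \lambda_2 > \cdots > \lambda_s$. Let $P^{(t)}$ denote the tableau obtained after inserting the first $t$ blocks. The key elementary observation is that every entry of $P^{(t)}$ is one of $\lambda_1,\dots,\lambda_t$, and is therefore strictly larger than the value $\lambda_{t+1}$ inserted next. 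Thus the whole computation reduces to understanding how inserting a constant block of a single value $v$ transforms a semistandard tableau all of whose entries exceed $v$.

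For this I would first establish the following sublemma, by induction on the number of inserted copies: row-inserting into a semistandard tableau $T$ a value $v$ strictly smaller than every entry of $T$ has the sole effect of prepending $v$ to the first column of $T$, all other columns being unchanged; indeed the bumping path is exactly the first column, because at each row the bumped entry is that row's first entry, which in turn is strictly smaller than every entry of the next row. Iterating: after the first copy of $v$ the box $(1,1)$ holds $v$ while all remaining entries still exceed $v$, so the second copy bumps down the second column and prepends $v$ to it, and inductively the $k$-th copy of $v$ prepends $v$ to the $k$-th column of $T$ (creating a new column of length one when $T$ has fewer than $k$ columns); here one uses that the shape of $T$ is a partition to check that the bump at each row always lands in the $k$-th cell. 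Consequently, inserting the entire block of $r$ copies of $v$ prepends $v$ to each of the columns $1,2,\dots,r$ of $T$ and leaves the rest alone, and in particular the result is again semistandard.

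It then remains to assemble these steps. Starting from the empty tableau, the first block produces $r(\lambda_1)$ single-cell columns, each equal to $(\lambda_1)$; processing the blocks $t=2,\dots,s$ in turn, block $t$ prepends $\lambda_t$ on top of the columns $1,\dots,r(\lambda_t)$. Hence in the final tableau $P(\lambda)$ the $j$-th column consists of exactly the values $\lambda_k$ with $r(\lambda_k)\gest j$, and, since each block is prepended so that the later---hence smaller---values sit higher, these values appear in increasing order down the column, which unwinds to the asserted description of $P_{ij}(\lambda)$. Reading off column lengths, column $j$ has $\#\{k: r(\lambda_k)\gest j\}$ cells, so the conjugate of $\mfr{p}(\lambda)$ is $j\mapsto \#\{k: r(\lambda_k)\gest j\}$; equivalently $\mfr{p}(\lambda)$ is the non-increasing rearrangement of $(r(\lambda_k))_{k=1}^s$, which is the shape assertion. (Alternatively the shape alone drops out at once from Greene's theorem, since a weakly increasing subsequence of $\lambda$ is a constant one, so the $k$-th partial sum of $\mfr{p}(\lambda)$ equals the sum of the $k$ largest multiplicities.)

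The only delicate point is the sublemma---the claim that the bumping path stays within a single column at every stage. Establishing it rigorously requires carefully tracking the semistandard inequalities (rows weakly increasing, columns strictly increasing) through each successive bump, together with the usual care about the convention for repeated entries (a freshly inserted copy of $v$ is placed immediately to the right of the copies of $v$ already present in the first row and bumps the first entry strictly greater than $v$). I would also make sure that the RS conventions used here coincide with those fixed in \cite{Sag01} and \cite{BMW25}. Everything beyond the sublemma is straightforward bookkeeping.
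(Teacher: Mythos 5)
Your proof is correct and is essentially the paper's own argument: both rest on the single observation that inserting $r$ consecutive copies of a value smaller than every current entry prepends that value to columns $1,\dots,r$ of the tableau (the paper packages this as an induction on $s$ that peels off the block of smallest values and asserts the column-bumping step as ``not hard to check,'' whereas you iterate forward over the blocks and spell out the bumping-path sublemma in more detail). One remark: what you actually establish --- that column $j$ consists of the values $\lambda_k$ with $r(\lambda_k)\gest j$ --- is also exactly what the paper's inductive step produces and is the only reading consistent with the shape claim, so the condition ``$r(\lambda_k)\gest r_j$'' in the printed statement should be read as ``$r(\lambda_k)\gest j$.''
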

\begin{proof}
    We apply induction on $s$. When $s=1$, the assertion is clear. For $s >1$, let $\lambda^\sharp:=\{\lambda_1^{r(\lambda_1)},\ldots, \lambda_{s-1}^{r(\lambda_{s-1})}\} $. Suppose that $r(\lambda_s)= r_k$. The induction hypothesis on $\lambda^\sharp$ implies that 
    \[P_{ij}(\lambda^\sharp)=\begin{cases}
        \lambda_{i+1,j} &\text{ if } 1 \lest j \lest r(\lambda_s),\\
        \lambda_{i,j} & \text{ otherwise }.
    \end{cases}\]
    Then by the definition of Robinson-Schensted insertion algorithm, it is not hard to check that for $1 \lest x \lest r(\lambda_s)$, we have
    \[
        P_{ij}(\lambda^\sharp + \{ \lambda_s^x  \})= \begin{cases}
                    \lambda_{i+1,j}& \text{ if }x< j \lest r(\lambda_s),\\
            \lambda_{i,j} & \text{ otherwise.}
        \end{cases}\]
        This completes the proof of the lemma.
\end{proof}

With the above lemma, we can characterize whether $\lambda$ is of the form $\lambda_A^{(n)}(\mfr{p})$ for some partition $\mfr{p}$.

\begin{lemma}\label{lem partition of wt}
Let $\lambda=\{\lambda_1^{r(\lambda_1)},\ldots, \lambda_s^{r(\lambda_s)}\}$ be a multi-set of real numbers, where $\lambda_1> \cdots > \lambda_s$ and $r(\lambda_i)\in \Z_{>0}$ indicate the multiplicity. Then $\lambda= \lambda_A^{(n)} (\mfr{p})$ for some partition $\mfr{p}$ if and only if the following conditions hold.
\begin{enumerate}
    \item [(i)] For any $1 \lest i \lest s$,  we have $\lambda_i \in \frac{1}{2n}\Z$.
    \item [(ii)] For any $1 \lest i \lest s$, we have $\lambda_i= - \lambda_{s+1-i}$ and $r(\lambda_i)= r(\lambda_{s+1-i})$.
    \item [(iii)] For any $1 \lest i \lest \lfloor \half{s} \rfloor$, we have $\lambda_{i}-\lambda_{i+1}= \frac{1}{n}$ and $r(\lambda_i) \lest r(\lambda_{i+1})$.
\end{enumerate}
Moreover, in this case we have $\lambda= \lambda_A^{(n)}(\mfr{p}(\lambda)^{\ast})$.
\end{lemma}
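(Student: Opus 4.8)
The plan is to prove the two implications separately and to read off the ``moreover'' clause from the proof of sufficiency. For necessity, suppose $\lambda = \lambda_A^{(n)}(\mfr{p})$ with $\mfr{p} = [p_1 \gest p_2 \gest \cdots \gest p_r]$. Condition (i) is immediate, since every entry of $\lambda$ has the shape $(p_i - 1 - 2j)/(2n)$ with $j \in \Z$; condition (ii) holds because each summand $\left\{ \frac{p_i-1}{2n}, \frac{p_i-3}{2n}, \ldots, \frac{1-p_i}{2n} \right\}$ is invariant under $x \mapsto -x$, hence so is the multiset union. For (iii) I would compute the multiplicity of the value $\frac{m}{2n}$ in $\lambda$ directly: it equals $\#\{ i : p_i \gest |m|+1 \}$, and — using that in the situation considered the relevant parts $p_i$ share a common parity — this count equals $(\mfr{p}^{\ast})_{|m|+1}$. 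From this description one reads off both the spacing assertion (the distinct values of $\lambda$ form the arithmetic progression $\lambda_1, \lambda_1 - \tfrac1n, \ldots, -\lambda_1$) and the monotonicity assertion ($(\mfr{p}^{\ast})_{|m|+1}$ is non-increasing in $|m|$).

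For sufficiency, assume (i)--(iii); I would induct on $|\lambda| = \sum_i r(\lambda_i)$, the empty multiset being the base case. Combining (ii) and (iii), every consecutive difference $\lambda_i - \lambda_{i+1}$ equals $\tfrac1n$ and $\lambda_s = -\lambda_1$, so the distinct values of $\lambda$ are exactly $\lambda_1, \lambda_1 - \tfrac1n, \ldots, -\lambda_1$; by (i) the number $p := 2n\lambda_1 + 1$ is a positive integer, and $\mu := \left\{ \frac{p-1}{2n}, \frac{p-3}{2n}, \ldots, \frac{1-p}{2n} \right\}$ consists of exactly one copy of each distinct value of $\lambda$. Put $\lambda' := \lambda \setminus \mu$. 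Then $\lambda'$ is symmetric, its distinct values are a central sub-progression of step $\tfrac1n$, and the monotonicity in (iii) ensures that subtracting one from each multiplicity keeps the multiplicity sequence non-decreasing toward the centre; hence $\lambda'$ again satisfies (i)--(iii), and $|\lambda'| < |\lambda|$. By induction $\lambda' = \lambda_A^{(n)}(\mfr{q})$ for a partition $\mfr{q}$, necessarily with all parts $\lest p$ since $\max \lambda' \lest \lambda_1$; then $\mfr{p} := [p, \mfr{q}]$ is a partition with $\lambda_A^{(n)}(\mfr{p}) = \mu \sqcup \lambda_A^{(n)}(\mfr{q}) = \lambda$. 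For the ``moreover'': following this recursion, the multiset of multiplicities of $\lambda$ coincides with the multiset of parts of $\mfr{p}^{\ast}$ (again using the common parity of the parts of $\mfr{p}$), while Lemma \ref{lem RS} identifies the former with the parts of $\mfr{p}(\lambda)$; since the peeling recursion determines $\mfr{p}$ uniquely, this forces $\mfr{p} = \mfr{p}(\lambda)^{\ast}$.

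The step I expect to be the main obstacle is the bookkeeping in the sufficiency direction: verifying that peeling off the outermost progression $\mu$ genuinely preserves all of (i)--(iii) — preservation of the multiplicity monotonicity in (iii) being the delicate point — and then matching the deterministic peeling recursion with the Robinson--Schensted description of $\mfr{p}(\lambda)$ from Lemma \ref{lem RS} precisely enough to identify the partition produced by the recursion with $\mfr{p}(\lambda)^{\ast}$, rather than merely with some partition carrying the same multiset of parts.
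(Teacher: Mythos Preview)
Your approach is essentially the same as the paper's: both prove sufficiency by peeling off one copy of each distinct value (your $\lambda'$ coincides with the paper's $\lambda^\sharp$) and inducting, the only cosmetic difference being that you induct on $|\lambda|$ while the paper inducts on $R=\max_i r(\lambda_i)$. The paper handles the ``moreover'' clause slightly more directly by tracking $\mfr{p}(\lambda^\sharp)^\ast$ through the induction via Lemma~\ref{lem RS} and observing $[r_1-1,\ldots,r_s-1]^\ast \sqcup [s]=[r_1,\ldots,r_s]^\ast=\mfr{p}(\lambda)^\ast$, whereas your multiplicity-matching argument reaches the same conclusion a bit less explicitly.
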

\begin{proof}
    The necessary direction follows from the definition of $\lambda_A^{(n)}(\lambda)$. For the sufficient direction, we apply induction on $R:=\max_{1\lest i\lest s}( r(\lambda_i))$. Note that these conditions imply that $\lambda_i= \frac{s-2i+1}{2n}$. When $R=1$, we have $\lambda= \lambda_{A}^{(n)}([s])=\lambda_{A}^{(n)}( \mfr{p}(\lambda)^{\ast}  ) $. In general, let $\lambda^\sharp:=\{\lambda_1^{r(\lambda_1)-1},\ldots, \lambda_s^{r(\lambda_s)-1} \}$. The induction hypothesis and Lemma \ref{lem RS} for $\lambda^\sharp$ imply that 
    \[ \lambda^\sharp= \lambda_A^{(n)}( \mathfrak{p}(\lambda^\sharp)^{\ast}  )=\lambda_A^{(n)}( [r_1-1,\ldots, r_s-1]^{\ast}  ), \]
    where $r_1>\cdots>r_s$ and $\{r_i\}_{i=1}^s= \{r(\lambda_i)\}_{i=1}^s$ as multi-sets. Therefore, 
    \begin{align*}
        \lambda&= \lambda^\sharp + \left\{ \frac{s-1}{2n},\frac{s-3}{2n},\ldots, \frac{1-s}{2n} \right\}\\
        &= \lambda_A^{(n)}( \mathfrak{p}(\lambda^\sharp)^{\ast}  \sqcup [s])\\
        &= \lambda_A^{(n)}( [r_1,\ldots, r_s]^{\ast})\\
        &= \lambda_A^{(n)}( \mathfrak{p}(\lambda)^{\ast}  ).
    \end{align*}
    This completes the proof of the lemma.
\end{proof}

Next, we state several special cases for $\AP{X}(\lambda^{(n)}_{X'}(\mfr{p}))$ when $n=1,2$. Most of them are already proved in the literature. These special cases will be used to interpolate the general case.

\begin{prop}\label{prop ann n=1}
Let $\mfr{q}$ be a partition of type $B, C$ or $D$.
\begin{enumerate}
    \item [(a)] Suppose $\mfr{q}$ is of type $B$. Then the following holds.
    \begin{enumerate}
        \item [(a1)] $\AP{B}(\lambda_{B}^{(2)}(\mfr{q}))= \mfr{q}^{\ast}$ if every part of $\mfr{q}$ is odd.
        \item [(a2)] $\AP{C}(\lambda_{B}^{(1)}(\mfr{q}))= \mfr{q}^- \sub{C} {}^\ast$.
    \end{enumerate}
    \item [(b)] Suppose $\mfr{q}$ is of type $C$. Then the following holds.
    \begin{enumerate}
        \item [(b1)] $\AP{B}(\lambda_{C}^{(1)}(\mfr{q}))=  \mfr{q}^{\ast+}\sub{B} $.
        \item [(b2)] $\AP{C}(\lambda_{C}^{(2)}(\mfr{q}))=  \mfr{q}^\ast  \sub{D} {}^{+-} \sub{C}  $.
        \item [(b3)] $\AP{D}(\lambda_{C}^{(2)}(\mfr{q}))=  \mfr{q} \sub{D} {}^\ast \sub{D}     $ if every part of $\mfr{q}$ is even.
    \end{enumerate}
    \item [(c)] Suppose $\mfr{q}$ is of type $D$. Then the following holds.
    \begin{enumerate}
        \item [(c1)] $\AP{D}(\lambda_D^{(1)}(\mfr{q}))= \mfr{q}^{\ast}\sub{D}$.
    \end{enumerate}
\end{enumerate}
\end{prop}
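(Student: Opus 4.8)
The plan is to deduce every identity from the combinatorial algorithm of Bai--Ma--Wang \cite{BMW25} for the annihilator partition $\AP{X}(\lambda)$ of $L(\lambda)$. In all cases $\lambda=\lambda_{X'}^{(n)}(\mfr{q})$ is, once its coordinates are arranged in non-increasing order, a non-increasing sequence of real numbers, so the Robinson--Schensted shape occurring in that algorithm is read off directly from Lemma \ref{lem RS}; this reduces the whole problem to manipulations of partitions, transposes and $B$-, $C$-, $D$-collapses. First I would record the explicit shape of the weights: for $n=1$ one has $\lambda_{X'}^{(1)}(\mfr{q})=h_{\mca{O}_\mfr{q}}/2$ (up to the $\sigma$-twist in type $D$, which is harmless by Lemma \ref{L:out}), so (a2), (b1), (c1) say precisely that $\AP{C}(\lambda_B^{(1)}(\mfr{q}))$, $\AP{B}(\lambda_C^{(1)}(\mfr{q}))$, $\AP{D}(\lambda_D^{(1)}(\mfr{q}))$ are the partitions underlying the classical Barbasch--Vogan duals of $\mca{O}_\mfr{q}$ for $\mbf{G}$ of types $C_r$, $B_r$, $D_r$ respectively; for $n=2$ one has $n^\ast=1$, and unwinding Definition \ref{def wt partition}(b) shows that $\lambda_{X'}^{(2)}(\mfr{q})$ is the non-negative part of $\lambda_A^{(1)}(\mfr{q})$.

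For the $n=1$ items I would invoke the classical description of Barbasch--Vogan duality in classical types, namely $d_{BV, \Sp_{2r}}(\mca{O}_\mfr{q})=\mfr{q}^-\sub{C}{}^\ast$, $d_{BV, \SO_{2r+1}}(\mca{O}_\mfr{q})=\mfr{q}^{\ast+}\sub{B}$ and $d_{BV, \SO_{2r}}(\mca{O}_\mfr{q})=\mfr{q}^\ast\sub{D}$, due to Barbasch--Vogan \cite{BV85} (with the underlying primitive-ideal computation at a \emph{mixed} infinitesimal character --- partly integral, partly half-integral --- supplied by \cite{BMW25}, equivalently \cite{BV83}). Concretely one splits $\lambda_{X'}^{(1)}(\mfr{q})$ into its integral block, coming from the odd parts of $\mfr{q}$, and its half-integral block, coming from the even parts, applies Lemma \ref{lem RS} to each block, transposes, performs the ambient collapse the algorithm prescribes, and checks the outcome against the displayed formula; the parity of each $q_i$ is precisely what decides whether the corresponding coordinates lie in $\Z$ or in $\tfrac12+\Z$, and this is what makes the two blocks recombine correctly.

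For $n=2$, item (b2) is a reformulation of the metaplectic Barbasch--Vogan duality of Barbasch--Ma--Sun--Zhu \cite{BMSZ23}*{Theorem B}, converted into the partition $\mfr{q}^\ast\sub{D}{}^{+-}\sub{C}$ via the description of $\lambda_C^{(2)}(\mfr{q})$ above; items (a1) and (b3) are of the same nature but are not in the literature, so I would prove them directly from \cite{BMW25}. The hypothesis ``every part odd'' in (a1), resp. ``every part even'' in (b3), is used exactly to make $\lambda_B^{(2)}(\mfr{q})$, resp. $\lambda_C^{(2)}(\mfr{q})$, a \emph{single} integral, resp. half-integral, block; the algorithm then reduces to one RS-insertion, one transpose and one ambient collapse, which Lemma \ref{lem RS} together with a short parity check identifies with $\mfr{q}^\ast$, resp. $\mfr{q}\sub{D}{}^\ast\sub{D}$.

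I expect the main obstacle to be not any single hard step but the reconciliation of the various collapse conventions: the raw output of the \cite{BMW25} algorithm is naturally written with $B$-, $C$- and $D$-collapses inserted at intermediate stages --- e.g.\ the seemingly redundant $\sub{D}$ in $\mfr{q}^\ast\sub{D}{}^{+-}\sub{C}$ --- and one must verify that these agree with the more economical expressions used in \S\ref{SS:fix-cov}; one must also track the cross-terms carefully when $\mfr{q}$ has parts of both parities, which is where the interpolation to general $n$ (Theorem \ref{T:comparison}) will later draw on these base cases. In type $D$, Lemma \ref{L:out} already removes the only genuine ambiguity, the $\sigma$-twist, so no I/II bookkeeping is needed at this stage; that is deferred to Proposition \ref{P:D-I/II}.
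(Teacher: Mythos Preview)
Your overall strategy matches the paper's: parts (a2), (b1), (c1) are the classical Barbasch--Vogan duality for types $C$, $B$, $D$ respectively and are quoted from \cite{BV85}; part (b2) is the metaplectic duality of \cite{BMSZ23}; and (a1), (b3) are proved by direct appeal to the algorithm of \cite{BMW25}.

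There are two places where the paper proceeds differently from what you sketch. For (b3) the paper does not compute from scratch: it observes that the algorithms of \cite{BMW25}*{Theorem 1.7} for types $C$ and $D$, applied to a purely half-integral weight, agree up to a final $D$-collapse, so $\AP{D}(\lambda_C^{(2)}(\mfr{q}))=(\AP{C}(\lambda_C^{(2)}(\mfr{q})))_D=(\mfr{q}^\ast\sub{D}{}^{+-}\sub{C})_D$ from (b2); the target $\mfr{q}\sub{D}{}^\ast\sub{D}$ then follows from \cite{Ach03}*{Lemma 3.3} together with the fact that $\mfr{q}^\ast$ is already of type $D$ under the hypothesis. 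This shortcut avoids any new symbol computation.

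For (a1) your description ``one RS-insertion, one transpose and one ambient collapse'' understates what is needed. The \cite{BMW25} recipe in the integral block for type $B$ is: apply RS to ${}^-\lambda$ (not to $\lambda$), pass to the associated $B$-symbol $\Lambda$, and then locate the unique special type-$B$ partition whose $B$-symbol is a permutation of $\Lambda$. Collapse plays no role here; the symbol matching is the actual content. The paper carries this out by noting that $\mfr{p}({}^-\lambda)$ differs from $\mfr{q}^\ast$ only in one row (the missing $0$), computing the effect on the multiset $\Omega$, and checking that the symbol $\Lambda(\mfr{q}^\ast)$ already equals $\Lambda$ with no permutation needed --- this uses that $\mfr{q}^\ast$ is special when all parts of $\mfr{q}$ are odd. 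Your ``short parity check'' would have to become this symbol computation.
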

\begin{proof}
    For Parts (a2), (b1) and (c1), the formulas on the right hand side are the usual Barbasch--Vogan duality maps, which follows from \cite{BV85}, see also \cite{CMBO21}*{\S 4.3}. Part (b2) is established in \cite{BMSZ23}*{Theorem A}, where the right hand side of the formula is the metaplectic Barbasch--Vogan duality map defined in the same paper. For Part (b3), comparing the case of type $C$ and $D$ in \cite{BMW25}*{Theorem 1.7}, we get $\AP{D}(\lambda_{C}^{(2)}(\mfr{q}))= (\mfr{q}^\ast  \sub{D} {}^{+-} \sub{C})\sub{D}$. Note that $\mfr{q}^{\ast}$ is already of type $D$ by our assumption that every part of $\mfr{q}$ is even. Thus using \cite{Ach03}*{Lemma 3.3}, we get 
\[\AP{D}(\lambda_{C}^{(2)}(\mfr{q}))= (\mfr{q}^\ast  \sub{D} {}^{+-} \sub{C})\sub{D}= \mfr{q}^{\ast} \sub{D} {}^\ast \sub{D} {}^\ast \sub{D}= \mfr{q} \sub{D} {}^{\ast} \sub{D}.\]
This proves Part (b3).

    Now we show Part (a1).  Under the assumption that every part of $\mfr{q}$ is odd, every element of the multi-set $\lambda:=\lambda_{B}^{(2)}(\mfr{q})$ lies in $\Z$. We recall the process for computing $\AP{B}(\lambda^{(2)}_{B}(\mfr{q}))$ in this case from \cite{BMW25}*{Theorem 1.7}. 
    
    First, we apply Robinson--Schensted algorithm on ${}^-\lambda$ to get $\mfr{p}({}^-\lambda)$. Next, write $\mfr{p}({}^-\lambda)= [p_1,\ldots, p_{2m+1}]$ where $p_{2m} >0$ and $p_{2m+1}\gest 0$. We obtain a $B$-symbol 
    \[\Lambda= \begin{pmatrix}\lambda_1~\lambda_2~\dots~\lambda_{m+1}\\\mu_1~\mu_2~\dots~\mu_m
	\end{pmatrix}, \]
 where $ \{\lambda_1,\ldots, \lambda_{m+1}\}$ and $\{\mu_1,\ldots, \mu_{m}\}$ are strictly increasing sequences of non-negative integers such that  
 \[ \{ 2 \lambda_i ,2 \mu_j+1\ | \ 1 \lest i\lest m+1, 1 \lest j \lest m \}= \{ p_{l}+2m+1-l \ | \ 1 \lest l \lest 2m+1 \}. \]
 We call this multi-set $\Omega$.
 
On the other hand, for each partition $\mfr{p}'=[p_1',\ldots, p_{2m+1}']$ of type $B$, we can also associate a $B$-symbol
 \[\Lambda(\mfr{p}')= \begin{pmatrix}\lambda_1'~\lambda_2'~\dots~\lambda_{m+1}'\\\mu_1'~\mu_2'~\dots~\mu_m'
	\end{pmatrix}, \]
where $ \{\lambda_1',\ldots, \lambda_{m+1}'\}$ and $\{\mu_1',\ldots, \mu_{m}'\}$ are strictly increasing sequences of non-negative integers such that  
 \[ \{ 2 \lambda_i'+1 ,2 \mu_j'\ | \ 1 \lest i\lest m+1, 1 \lest j \lest m \}= \{ p_{l}'+2m+1-l \ | \ 1 \lest l \lest 2m+1 \}. \]
We denote this multi-set by $\Omega(\mfr{p}')$. Then, $\AP{B}(\lambda^{(2)}_{B}(\mfr{q}))$ is the unique type $B$ special partition such that $\Lambda(\AP{B}(\lambda^{(2)}_{B}(\mfr{q})) )$ is a permutation of $\Lambda$.

To prove Part (a1), it suffices to show that if every part of $\mfr{q}$ is odd, then  $\Lambda(\mfr{q}^{\ast})=\Lambda$. Note that $\mfr{q}^{\ast}$ is special since so is $\mfr{q}$. Indeed, if we write $\mfr{q}^{\ast}= [a^r] \sqcup (\mfr{q}^{\ast})_{<a}$, where every part of $(\mfr{q}^{\ast})_{<a}$ is less than $a$, then both $a$ and $r$ are odd. Recall that ${}^- \lambda + \{0\}= \lambda_{A}^{(1)}(\mfr{q})$. Thus by Lemmas \ref{lem RS} and \ref{lem partition of wt}, we have $\mfr{q}^{\ast}= \mfr{p}(\lambda_{A}^{(1)}(\mfr{q}) )$ and
$\mfr{p}({}^- \lambda)=[a^{r-1},a-1] \sqcup (\mfr{q}^{\ast})_{<a}$. Therefore,
\[ \Omega= \Omega(\mfr{q}^{\ast})- \{a+ 2m+1-r\}+ \{a-1+2m+1-r\},\]
and a direct computation shows that $\Lambda(\mfr{q}^{\ast})=\Lambda$. This completes the proof of Part (a1) and the proposition.
\end{proof}

Now we give several notations in order to state a combinatorial formula for $\AP{X}(\lambda^{(n)}_{X'}(\mfr{p}))$.  Suppose that $\lambda=\lambda_A^{(n)}(\mfr{q})$ for some partition $\mfr{q}$. We decompose 
\begin{align}\label{eq decomp lambda A}
    \lambda= \sum_{i=1}^{l_A} \lambda_{A,i}
\end{align}
as multi-sets, where $\{\lambda_{A,i}\}_{i=1}^{l_A}$ are the maximal sub-multi-sets of $\lambda$ such that any two elements have integral difference. Then we set $\mathfrak{p}_{A,i}(\lambda):= \mfr{p}( \lambda_{A,i} )^{\ast}$.

Suppose $\lambda = \lambda_{X'}^{(n)}(\mfr{q})$, where $X'\in \{B,C,D\}$ and $\mfr{q}$ is a partition of type $X'$. Note that fixing $\lambda$ and $X'$, such $\mfr{q}$ is unique. Similarly, decompose (recall that $n^{\ast}:= n/ \gcd(n,2)$)
\begin{align}\label{eq decomp lambds BCD}
    \lambda_{A}^{(n^{\ast})}(\mfr{q})=\lambda_{X',0}+ \lambda_{X',\half{1}}+ \sum_{i=1}^{l_{X'}} \lambda_{X',i}
\end{align}
as multi-sets, where $\lambda_{X',0}$ (resp. $\lambda_{X',\half{1}}$) is the (possibly empty) multi-set consisting of elements of $\lambda$ in $\Z$ (resp. $\half{1}+\Z$), and $\{\lambda_{X',i}\}_{i=1}^{l_{X'}}$ are the other maximal sub-multi-set of $\lambda$ such that any two elements have integral difference. Note that by condition (ii) of Lemma \ref{lem partition of wt}, $l_{X'}$ must be even, and we may relabel then such that $\lambda_{X',l_{X'}+1-i}=\{-x \ | \ x \in \lambda_{X',i}\}$  for any $1 \lest i \lest l_X$. In particular, $ \mfr{p}(\lambda_{X',i})= \mfr{p}(\lambda_{X',l_{X'}+1-i})$. Finally, for $i \in \{0, \half{1}, 1,\ldots, l_{X'}\}$, define
\begin{align}\label{eq partition lambda sep}
    \mfr{p}_{X',i}(\lambda):= \begin{cases} \mfr{p}( \lambda_{X',i} )^{\ast} \sqcup [1] &\text{ if }i=0 \text{ and }\lambda_{X',0} \text{ is of even length},\\
\mfr{p}( \lambda_{X',i} )^{\ast} & \text{ otherwise.}
\end{cases}
\end{align}
Note that $\mfr{p}_{X',0}(\lambda)$ (resp. $\mfr{p}_{X',\half{1}}(\lambda)$) consists of odd (resp. even) integers; hence it is of type $B$ (resp. $C$). Also, we have the following equality of multi-set
\begin{align*}
    \lambda_{X'}^{(n)}(\mfr{q})= \lambda_{B}^{(1)}(\mfr{p}_{X',0}(\lambda))+\lambda_{C}^{(1)}(\mfr{p}_{X',\half{1}}(\lambda)) + \sum_{i=1}^{\half{l_{X'}}} \{|x|: \ x \in \lambda_{X',i}\}.
\end{align*}

Now we rephrase the main results of \cite{BMW25}*{\S 1.1--\S 1.4} in our special case.

\begin{thm}\label{thm BMW}
    Let $X,X' \in \{B,C,D\}$ or $X=A=X'$ and let $n \in \Z_{>0}$. Suppose $\mfr{q}$ is a partition of type $X'$ and denote $ \lambda:= \lambda_{X'}^{(n)}(\mfr{q})$.  Using decompositions \eqref{eq decomp lambda A} and \eqref{eq decomp lambds BCD} and the notation $\mfr{p}_{X,i}(\lambda)$ defined above, the partition $\AP{X}(\lambda)$ is given case by case as follows.
    \begin{align*}
        \AP{X}(\lambda)= \begin{cases}
            \sum_{i=1}^l \mfr{p}_{A,i}(\lambda)^{\ast}& \text{ if }X=A,\\
           \left( \mfr{p}_{X',0}(\lambda)^\ast   +\mfr{p}_{X',\half{1}}(\lambda)^{*} + \sum_{i=1}^{l_{X'}} \mfr{p}_{X',i}(\lambda)^{\ast}  \right)_{B} & \text{ if }X=B,\\
            \left( \mfr{p}_{X',0}(\lambda)^- \sub{C} {}^\ast +\mfr{p}_{X',\half{1}}(\lambda)^{\ast}\sub{D} {}^{+-} \sub{C}+ \sum_{i=1}^{l_{X'}} \mfr{p}_{X',i}(\lambda)^{\ast}  \right)_{C} & \text{ if }X=C,\\
            \left( \mfr{p}_{X',0}(\lambda)^{-\ast} \sub{D}   +\mfr{p}_{X',\half{1}}(\lambda) \sub{D} {}^{\ast}+ \sum_{i=1}^{l_{X'}} \mfr{p}_{X',i}(\lambda)^{\ast}  \right)_{D} & \text{ if }X=D.
        \end{cases}
    \end{align*}
\end{thm}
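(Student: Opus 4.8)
The plan is to obtain the formula directly from the combinatorial algorithm of \cite{BMW25}*{\S 1.1--\S 1.4}, which computes $\AP{X}(\lambda)$ for an \emph{arbitrary} weight $\lambda$, by checking that the special weights $\lambda=\lambda_{X'}^{(n)}(\mfr{q})$ feed into that algorithm so as to output exactly the right-hand sides above. Recall the shape of the algorithm in \emph{loc.\ cit.}: for $\mfr{g}$ of type $X\in\set{B,C,D}$ one replaces $\lambda$ by the sign-symmetric multi-set $\mu:={}^-\lambda$, decomposes $\mu$ into its maximal \emph{integral blocks} (sub-multi-sets any two of whose elements differ by an integer), isolating the block $\mu_0$ of elements lying in $\Z$ and the block $\mu_{\half{1}}$ of elements lying in $\half{1}+\Z$, so that $\mu=\mu_0+\mu_{\half{1}}+\sum_j\mu_j$; one then applies the Robinson--Schensted algorithm to each block (respectively a type $B$ symbol computation to $\mu_0$ and a type $C$ symbol computation to $\mu_{\half{1}}$), transposes, sums the resulting partitions, and applies the $X$-collapse. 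For type $A$ one instead decomposes $\lambda$ itself, with no symmetrization, symbol step, or collapse, and the annihilator partition is governed by Joseph's formula \cite{Jos85} through the Robinson--Schensted partitions of the blocks.

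First I would check that for $\lambda:=\lambda_{X'}^{(n)}(\mfr{q})$ the decompositions \eqref{eq decomp lambda A} and \eqref{eq decomp lambds BCD} are precisely the integral-block decompositions required above. By Definition \ref{def wt partition}(b), ${}^-\lambda$ equals $\lambda_A^{(n^{\ast})}(\mfr{q})$ up to an extra $\set{0}$ when $X'=B$; since $0\in\Z$ this only toggles the parity of the length of $\mu_0$, which is exactly the bookkeeping built into the $\sqcup[1]$ clause in the definition \eqref{eq partition lambda sep} of $\mfr{p}_{X',0}(\lambda)$ (the resulting shift in the $B$-symbol is the computation already carried out in the proof of Proposition \ref{prop ann n=1}(a1)). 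Condition (ii) of Lemma \ref{lem partition of wt} shows that $\lambda_A^{(n^{\ast})}(\mfr{q})$ is sign-symmetric and that the non-boundary blocks pair up as $\lambda_{X',i}$ and $\lambda_{X',l_{X'}+1-i}$; and no such block is its own sign-flip, since $x$ and $-x$ lie in a common block only when $2x\in\Z$, i.e.\ $x\in\half{1}\Z$, in which case $x$ already belongs to $\lambda_{X',0}$ or $\lambda_{X',\half{1}}$. Finally, Lemma \ref{L:out} shows that for $X=D$ replacing $\lambda$ by ${}^\sigma\lambda$ does not change $\AP{D}$, so there is no loss in taking $\lambda=\lambda_D^{(n)}(\mfr{q})$ with non-negative entries, as in Definition \ref{def wt partition}(b).

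Next I would identify the partition produced by each block and reassemble. Every block is a non-increasing sequence of reals, so Lemma \ref{lem RS} together with Lemma \ref{lem partition of wt} identifies its Robinson--Schensted partition as $\mfr{p}(\lambda_{X',i})$, whose transpose is $\mfr{p}_{X',i}(\lambda)$; a sign-flipped pair of generic blocks thus contributes $\mfr{p}_{X',i}(\lambda)^{\ast}$ twice, accounting for the terms $\sum_{i=1}^{l_{X'}}\mfr{p}_{X',i}(\lambda)^{\ast}$. For the boundary block $\mu_0$, the type $B$ symbol computation of \cite{BMW25}, rewritten in collapse notation, yields (before collapsing) $\mfr{p}_{X',0}(\lambda)^{\ast}$, $\mfr{p}_{X',0}(\lambda)^{-}\sub{C}{}^{\ast}$, $\mfr{p}_{X',0}(\lambda)^{-\ast}\sub{D}$ in types $X=B,C,D$; and for $\mu_{\half{1}}$, the type $C$ symbol step yields $\mfr{p}_{X',\half{1}}(\lambda)^{\ast}$, $\mfr{p}_{X',\half{1}}(\lambda)^{\ast}\sub{D}{}^{+-}\sub{C}$, $\mfr{p}_{X',\half{1}}(\lambda)\sub{D}{}^{\ast}$. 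These boundary contributions are the $n=1,2$ base cases: parts (a1), (a2) of Proposition \ref{prop ann n=1} and the type $B\to D$ Barbasch--Vogan formula of \cite{BV85} (see also \cite{CMBO21}*{\S 4.3}) for $\mu_0$, and parts (b1), (b2), (b3) of Proposition \ref{prop ann n=1} for $\mu_{\half{1}}$, established there via \cite{BMSZ23} and \cite{Ach03}*{Lemma 3.3}. Summing all contributions and applying the $X$-collapse gives the formulas displayed in the statement; for type $A$ the parallel collapse-free, symmetrization-free computation gives $\AP{A}(\lambda)=\sum_i\mfr{p}(\lambda_{A,i})=\sum_i\mfr{p}_{A,i}(\lambda)^{\ast}$.

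The step I expect to demand the most care is the translation of the \cite{BMW25} symbol output for the two boundary blocks $\mu_0$ and $\mu_{\half{1}}$ into the collapse expressions above: reconciling the parity conventions of the $B$- and $C$-symbols with the $\sqcup[1]$ adjustment (and with the extra $\set{0}$ distinguishing $\lambda_A^{(n^{\ast})}$ for $X'=B$ from $X'\in\set{C,D}$), and chasing the iterated collapses via \cite{Ach03}*{Lemma 3.3} as in the proof of Proposition \ref{prop ann n=1}(b3). Once these boundary cases are pinned down, the remainder is a mechanical recombination, since \cite{BMW25} already supplies the general algorithm and the present statement is just its restriction to the subfamily $\set{\lambda_{X'}^{(n)}(\mfr{q})}$.
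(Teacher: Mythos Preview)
Your approach is essentially the paper's: invoke \cite{BMW25}*{Theorems 1.1, 1.6} to get $\AP{X}(\lambda)=(\mfr{q}_0+\mfr{q}_{\half{1}}+\sum_i\mfr{p}_{X',i}(\lambda)^{\ast})_X$, then identify the two boundary pieces $\mfr{q}_0,\mfr{q}_{\half{1}}$ via the base cases in Proposition \ref{prop ann n=1}. The paper does this by manufacturing auxiliary weights $\lambda',\lambda''$ having only the integer (resp.\ half-integer) block nonzero, applying the relevant part of Proposition \ref{prop ann n=1}, and then using that $\mfr{q}_0,\mfr{q}_{\half{1}}$ are of a prescribed special (or metaplectic-special) type to recover them from their collapse.

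There is one genuine subtlety you are glossing over: for $X=B$ the half-integer piece $\mfr{q}_{\half{1}}$ is type $D$-special, and invoking (b1) only yields $(\mfr{q}_{\half{1}})^{+}{}_{B}=\mfr{p}_{X',\half{1}}(\lambda)^{\ast+}{}_{B}$, which does \emph{not} determine $\mfr{q}_{\half{1}}$ uniquely (the paper's Remark after the proof gives the explicit counterexample $[3,3,2,2]$ versus $[3,3,3,1]$). So your citation of (b1) for this case does not close the argument. The paper handles this sub-case by going back to the raw $D$-symbol computation of \cite{BMW25}*{\S 4} and observing that since every part of $\mfr{p}_{X',\half{1}}(\lambda)$ is even, its transpose $\mfr{p}(\lambda_{X',\half{1}})=\mfr{p}_{X',\half{1}}(\lambda)^{\ast}$ is already type $D$-special, so the associated $D$-symbol is its own special permutation and $\mfr{q}_{\half{1}}=\mfr{p}_{X',\half{1}}(\lambda)^{\ast}$ directly. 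You correctly flag the boundary-block translation as the place requiring care; this is precisely where the care is needed, and the fix is the direct symbol argument rather than a reduction to (b1).
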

\begin{proof}
The case $X=A$ is exactly \cite{BMW25}*{Theorem 1.1}. For the other cases, \cite{BMW25}*{Theorem 1.6} implies that 
\[ \AP{X}(\lambda)= \left(\mfr{q}_0+ \mfr{q}_{\half{1}}+ \sum_{i=1}^{l_{X'}} \mfr{p}_{X',i}(\lambda)^{\ast} \right)_{X},\]
where $\mfr{q}_0$ (resp. $\mfr{q}_{\half{1}}$) is a partition determined by $\lambda_{X',0}$ (resp. $\lambda_{X',\half{1}}$) defined in \eqref{eq decomp lambds BCD}, or equivalently by $\mfr{p}_{X',0}(\lambda)$ (resp. $\mfr{p}_{X',\half{1}}(\lambda)$). Note that $\mfr{q}_0$ (resp. $\mfr{q}_{\half{1}}$) is special or metaplectic special of a certain type determined by $X$.
We demonstrate how to find the formula for $\mfr{q}_0$ (resp. $\mfr{q}_{\half{1}}$) in terms of $\mfr{p}_{X',0}(\lambda)$ (resp. $\mfr{p}_{X',\half{1}}(\lambda)$) case by case.

When $X=B$, we consider another partition $\mfr{q}'= \mfr{p}_{X',0}(\lambda)$ and set $\lambda':= \lambda_{B}^{(2)}(\mfr{q}')$. Note that every part of $\mfr{q}'$ is odd. It follows from the definition and Lemma \ref{lem partition of wt} that $ \mfr{p}_{B,0}(\lambda') =\mfr{p}_{X',0}(\lambda)$, and $\mfr{p}_{B,i}(\lambda')$ is empty for $i \neq 0$. Therefore, we have
\[ (\mfr{q}_0)_{B}= \AP{B}(\lambda')=\AP{B}(\lambda^{(2)}_B(\mfr{q}' ))=\AP{B}(\lambda^{(2)}_B(\mfr{p}_{X',0}(\lambda)))= \mfr{p}_{X',0}(\lambda)^{\ast}. \]
Here the last equality follows from Proposition \ref{prop ann n=1}(a1). Then since $\mfr{q}_0$ is type $B$-special as defined in \cite{BMW25}, we see that $\mfr{q}_0=(\mfr{q}_0)_B= \mfr{p}_{X',0}(\lambda)^{\ast}.$ 

For $\mfr{q}_{\half{1}}$, we compute it by the definition, which we recall now. From $\lambda_{X',\half{1}}$, we compute the partition $\mfr{p}(\lambda_{X',\half{1}})$, and then associate a $D$-symbol $\Lambda_D$. By permuting the entries of $\Lambda_D$, we obtain a unique special $D$-symbol $\Lambda_D^s$. Finally, $\mfr{q}_{\half{1}}$ is the type $D$-special partition corresponding to $\Lambda^s_{D}$. See \cite{BMW25}*{\S 4} for details of the computation. In our situation, note that $\mfr{p}(\lambda_{X',\half{1}})=\mfr{p}_{X',\half{1}}(\lambda)^{\ast}$ is already a type $D$-special partition since $\mfr{p}_{X',\half{1}}(\lambda)$ is already of type $C$. Thus, we have $\Lambda_D=\Lambda^s_D$ already, and $\mfr{q}_{\half{1}}= \mfr{p}(\lambda_{X',\half{1}})=\mfr{p}_{X',\half{1}}(\lambda)^{\ast}.$

When $X=C$, similarly, by taking $\mfr{q}'=\mfr{p}_{X',0}(\lambda)$, $\mfr{q}''=\mfr{p}_{X',\half{1}}(\lambda)$, and set $\lambda'=\lambda_{B}^{(1)}(\mfr{q}')$, $\lambda''=\lambda_{C}^{(2)}(\mfr{q}'')$, we obtain that 
\begin{align*}
    (\mfr{q}_0)_C&= \AP{C}( \lambda_{B}^{(1)}(\mfr{q}'))=  \mfr{p}_{X',0}(\lambda)^- \sub{C} {}^\ast,\\(\mfr{q}_{\half{1}})_C&= \AP{C}( \lambda_{C}^{(2)}(\mfr{q}''))=\mfr{p}_{X',\half{1}}(\lambda)^{\ast}\sub{D} {}^{+-} \sub{C},
\end{align*}
   where we use Proposition \ref{prop ann n=1}(a2) and (b2). This gives the formula for $\mfr{q}_0$ and $\mfr{q}_{\half{1}}$ since they are type $C$ special and metaplectic special as defined in \cite{BMW25}.

   When $X=D$, we take $\mfr{q}'=\mfr{p}_{X',0}(\lambda)$, $\mfr{q}''=\mfr{p}_{X',\half{1}}(\lambda)$, and set $\lambda'=\lambda_{D}^{(1)}(\mfr{q}')$, $\lambda''=\lambda_{C}^{(2)}(\mfr{q}'')$. Then 
\begin{align*}
    (\mfr{q}_0)_D&= \AP{D}( \lambda_{D}^{(1)}(\mfr{q}'))=  \mfr{p}_{X',0}(\lambda)^{-\ast} \sub{C},\\(\mfr{q}_{\half{1}})_D&= \AP{D}( \lambda_{C}^{(2)}(\mfr{q}''))=\mfr{p}_{X',\half{1}}(\lambda)\sub{D} {}^{\ast} \sub{D},
\end{align*}
   where we use Proposition \ref{prop ann n=1}(c1) and (b3). This gives the formula for $\mfr{q}_0$ since it is already of type $D$. For $\mfr{q}_{\half{1}},$ since it is metaplectic special, there is a unique type $D$-special partition $\mfr{p}$ such that $\mfr{q}_{\half{1}}= \mfr{p}^{\ast}$. Then by $\mfr{p}=\mfr{p}^{\ast}\sub{D}{}^{\ast} \sub{D}$,  we have
   \[ \mfr{q}_{\half{1}}= \mfr{p}^{\ast}= \mfr{p}^{\ast}\sub{D}{}^{\ast} \sub{D}{}^{\ast}=(\mfr{q}_{\half{1}})\sub{D}{}^{\ast} \sub{D}{}^{\ast}=\mfr{p}_{X',\half{1}}(\lambda)\sub{D} {}^{\ast} \sub{D}{}^{\ast} \sub{D}{}^{\ast} =\mfr{p}_{X',\half{1}}(\lambda)\sub{D} {}^{\ast}.\]
   Here the last equality follows from the fact that (see \cite{Ach03}*{Lemma 3.3})
   \[ \mfr{p}_{X',\half{1}}(\lambda)\sub{D}{}^{\ast}= \mfr{p}_{X',\half{1}}(\lambda)^{+-}\sub{C},  \]
   which implies that $\mfr{p}_{X',\half{1}}(\lambda)\sub{D}$ is already type $D$-special. Hence, $\mfr{p}_{X',\half{1}}(\lambda)\sub{D} {}^{\ast} \sub{D}{}^{\ast} \sub{D} = \mfr{p}_{X',\half{1}}(\lambda)\sub{D}.$
   This completes the proof of the theorem.
\end{proof}

\begin{remark}
    The same strategy for computing $\mfr{q}_{\half{1}}$ for $X=C,D$ does not work when $X=B$. Indeed, the argument gives $(\mfr{q}_{\half{1}})^{+}\sub{B}= \mfr{p}_{X',\half{1}} (\lambda)^{\ast+}\sub{B}$, where $\mfr{q}_{\half{1}}$ is type $D$-special. However, this does not uniquely determine the partition $\mfr{q}_{\half{1}}$. For example, $[3,3,2,2]$ and $[3,3,3,1]$ are both type $D$-special and 
    \[ [3,3,2,2]^{+} \sub{B}=[3,3,3,1]^{+} \sub{B}= [4,4,2]^{\ast+}\sub{B}.\]
\end{remark}

\subsection{\texorpdfstring{Some identities for $d_{com,X}^{(n)}$ and $\AP{A}(\lambda_{A}^{(n)}(\mfr{p}))$}{}}

In this subsection, we establish several identities involving $d_{com,X}^{(n)}$ and $\AP{A}(\lambda_{A}^{(n)}(\mfr{p}))$. First, we show the compatibility of $d_{com,X}^{(n)}(\mfr{p})$ and $\AP{A}(\lambda_{A}^{(n)}(\mfr{p}))$ with induction of nilpotent orbit.

\begin{lemma}\label{lem ind A} 
Let $n \in \Z_{>0}$. For any pair of partitions $(\mfr{p},\mfr{q})$, the following holds.
\begin{enumerate}
    \item [(a)] $ d_{com,A}^{(n)}( \mathfrak{p}\sqcup \mfr{q})=d_{com,A}^{(n)}( \mathfrak{p})+d_{com,A}^{(n)}( \mathfrak{q}).$
    \item [(b)] $ \AP{A}( \lambda_A^{(n)}( \mathfrak{p}\sqcup \mfr{q}))=\AP{A}( \lambda_A^{(n)}( \mathfrak{p}))+ \AP{A}( \lambda_A^{(n)}( \mfr{q}))$.
\end{enumerate}
\end{lemma}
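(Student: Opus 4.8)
The plan is to verify both identities directly from the definitions, treating them separately.

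Part (a) is immediate. By the definition \eqref{E:dBV-A} of $d_{com,A}^{(n)}$, writing $\mfr{p} = (p_1, \ldots, p_k)$ and $\mfr{q} = (q_1, \ldots, q_l)$, the concatenation $\mfr{p} \sqcup \mfr{q}$ has parts $p_1, \ldots, p_k, q_1, \ldots, q_l$, so
\[
d_{com,A}^{(n)}(\mfr{p} \sqcup \mfr{q}) = \sum_{i=1}^k \mfr{s}(p_i; n) + \sum_{j=1}^l \mfr{s}(q_j; n) = d_{com,A}^{(n)}(\mfr{p}) + d_{com,A}^{(n)}(\mfr{q}),
\]
since the defining sum runs over the multiset of parts. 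So the only real content is Part (b).

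For Part (b), I would argue from Definition \ref{def wt partition}(a), which gives $\lambda_A^{(n)}(\mfr{p} \sqcup \mfr{q}) = \lambda_A^{(n)}(\mfr{p}) + \lambda_A^{(n)}(\mfr{q})$ as multi-sets, simply because $\lambda_A^{(n)}(-)$ is defined part-by-part via a sum over parts. The task is then to show that $\AP{A}$ of a disjoint union of two weights of the form $\lambda_A^{(n)}(\cdot)$ is the sum of the individual $\AP{A}$'s. Here I would invoke Theorem \ref{thm BMW} in the case $X = A = X'$: writing $\lambda = \lambda_A^{(n)}(\mfr{p} \sqcup \mfr{q})$, the theorem says $\AP{A}(\lambda) = \sum_{i=1}^{l_A} \mfr{p}_{A,i}(\lambda)^\ast$, where the $\lambda_{A,i}$ are the maximal sub-multi-sets whose elements pairwise have integral difference and $\mfr{p}_{A,i}(\lambda) = \mfr{p}(\lambda_{A,i})^\ast$. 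The point is that this decomposition into "coset blocks" (by the value in $\R/\Z$) is compatible with disjoint union: each block of $\lambda_A^{(n)}(\mfr{p} \sqcup \mfr{q})$ is the disjoint union of the corresponding blocks of $\lambda_A^{(n)}(\mfr{p})$ and $\lambda_A^{(n)}(\mfr{q})$. So it remains to check, for a single block (a non-increasing sequence all of whose entries lie in a fixed coset of $\Z$), that $\mfr{p}(\mu \sqcup \nu) = \mfr{p}(\mu) + \mfr{p}(\nu)$ — but this is false in general for Robinson--Schensted! The saving grace is the very rigid shape of the blocks here: by Lemma \ref{lem partition of wt}, each $\lambda_{A,i}$ has the form $\{\lambda_{s} < \cdots < \lambda_1\}$ with consecutive gaps exactly $1/n$ and multiplicities $r(\lambda_1) \le r(\lambda_2) \le \cdots$ symmetric about the center, i.e. it is itself $\lambda_A^{(n)}(\mfr{r})$ for a single partition $\mfr{r}$. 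Then Lemma \ref{lem RS} gives $\mfr{p}(\lambda_{A,i}) = \mfr{r}^\ast$ and hence $\mfr{p}_{A,i}(\lambda) = \mfr{r}$, which is just $d_{com,A}^{(n)}$ applied to one part-block; adding over $i$ and comparing with Part (a) applied to the coset decomposition of $\mfr{p} \sqcup \mfr{q}$ closes the argument.

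Concretely, the cleanest route is: (1) use Definition \ref{def wt partition}(a) to get the additivity of $\lambda_A^{(n)}$ on disjoint unions; (2) observe that $\AP{A}(\lambda_A^{(n)}(\mfr{s}))$ for a single-block weight is computed by Lemmas \ref{lem RS} and \ref{lem partition of wt} to be $d_{com,A}^{(n)}$-type data, and that Theorem \ref{thm BMW} reduces $\AP{A}(\lambda)$ to a sum over the (integral-difference) blocks; (3) note the blocks of $\lambda_A^{(n)}(\mfr{p} \sqcup \mfr{q})$ split as the union of those of $\lambda_A^{(n)}(\mfr{p})$ and $\lambda_A^{(n)}(\mfr{q})$, and within a fixed coset the $P(\lambda)$-tableau shape of a union of such specially-shaped sequences is additive by the explicit column description in Lemma \ref{lem RS}. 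I expect the main obstacle to be step (3): making precise that Robinson--Schensted insertion shape is additive here, since it is genuinely not additive for arbitrary sequences — one must exploit that within a coset the weights are forced (by Lemma \ref{lem partition of wt}) into the "staircase with increasing symmetric multiplicities" form, for which Lemma \ref{lem RS} gives a closed formula for the shape as the transpose of a partition read off from the multiplicities, and transposed-partition-valued data is visibly additive under merging multisets of this shape. Alternatively, one can bypass the RS discussion entirely by combining Theorem \ref{thm BMW} (case $A$) with Part (a): both sides of (b) equal, via Theorem \ref{thm BMW} and the definition, a sum of $d_{com,A}^{(n)}$-contributions indexed by the parts of $\mfr{p} \sqcup \mfr{q}$ grouped into cosets, and Part (a) is exactly the statement that such a sum splits as $\mfr{p}$-contributions plus $\mfr{q}$-contributions.
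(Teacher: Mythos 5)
Your proposal is correct and follows essentially the same route as the paper: part (a) from the definition, and part (b) by splitting $\lambda_A^{(n)}(\mfr{p}\sqcup\mfr{q})=\lambda_A^{(n)}(\mfr{p})+\lambda_A^{(n)}(\mfr{q})$ into the integral-difference blocks of Theorem \ref{thm BMW}, observing these blocks are compatible with the disjoint union, and using the rigidity from Lemma \ref{lem partition of wt} together with Lemma \ref{lem RS} to get additivity of the Robinson--Schensted shape block by block --- you correctly isolate the one real issue (RS shape is not additive for arbitrary sequences) and the correct fix. One small imprecision: an individual block $\lambda_{A,i}$ is generally \emph{not} of the form $\lambda_A^{(n)}(\mfr{r})$ (its consecutive values differ by $1$, not $1/n$, and it need not be symmetric about $0$); what Lemma \ref{lem partition of wt} actually gives, and what the paper uses, is that within each block the multiplicities weakly increase as $|\lambda_k|$ decreases, which is exactly the monotonicity needed for the sorted multiplicity vectors (hence the RS shapes via Lemma \ref{lem RS}) to add.
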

\begin{proof}
    Part (a) follows directly from the definition. For Part (b), let $\lambda_1:= \lambda_{A}^{(n)}(\mfr{p})$ and $\lambda_2:= \lambda_{A}^{(n)}(\mfr{q})$ as multi-sets of real numbers. We have $\lambda_{A}^{(n)}(\mfr{p}\sqcup \mfr{q})= \lambda_1+\lambda_2$ by definition. Decompose $\lambda$ and $\lambda_j$ $(j=1,2)$ as in \eqref{eq decomp lambda A}:
    \[ \lambda= \sum_{i=1}^{l} \lambda_{A,i},\ \lambda_j= \sum_{i=1}^{l} (\lambda_j)_{A,i}, \]
    where for any $i=1,\ldots, l$, any two elements of the multi-set $ \lambda_{A,i}+(\lambda_{1})_{A,i}+(\lambda_{2})_{A,i}$ have an integral difference, and $(\lambda_j)_{A,i}$ is possibly empty. Note that $\lambda_{A,i} = (\lambda_{1})_{A,i}+(\lambda_{2})_{A,i}$. Also, if we write $(\lambda_j)_{A,i}=\{ \lambda_k^{r(\lambda_k)}\}_{k=1}^s$, where $r(\lambda_k)$ indicates the multiplicity of $\lambda_k$, then Lemma \ref{lem partition of wt} implies that $ r(\lambda_{k_1})\gest r(\lambda_{k_2})$ if $|\lambda_{k_1}|< |\lambda_{k_2}|$.    Therefore, by Lemma \ref{lem RS}, for any $i$, we have
    \[\mfr{p}( \lambda_{A,i} )=  \mfr{p}( (\lambda_{1})_{A,i}+(\lambda_{2})_{A,i} )= \mfr{p}( (\lambda_{1})_{A,i} )+\mfr{p}( (\lambda_{2})_{A,i} ). \]
    Thus, the desired conclusion follows from the formula of $\AP{A}(\lambda)$ given in Theorem \ref{thm BMW}. This completes the proof of the lemma.
\end{proof}

As a corollary, we prove Theorem \ref{T:comparison} for type $A$.

\begin{prop}\label{prop comparison A}
    Theorem \ref{T:comparison} holds when $X=A$, i.e., $d_{com,A}^{(n)}(\mfr{p})= \AP{A}(\lambda_{A}^{(n)}(\mfr{p}))$.
\end{prop}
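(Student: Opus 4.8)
The plan is to reduce the claim to the single-part case by the additivity just established, and then to verify the single-part case directly from the combinatorial description of $\AP{A}$ coming from Lemma~\ref{lem RS}. By Lemma~\ref{lem ind A}(a), the map $\mfr{p} \mapsto d_{com,A}^{(n)}(\mfr{p})$ is additive over disjoint unions of partitions, and by Lemma~\ref{lem ind A}(b) the same holds for $\mfr{p}\mapsto \AP{A}(\lambda_A^{(n)}(\mfr{p}))$. Since an arbitrary partition $\mfr{p}=[p_1,\dots,p_k]$ is the disjoint union $[p_1]\sqcup\cdots\sqcup[p_k]$ of its parts, it therefore suffices to prove the identity for a one-part partition $\mfr{p}=[m]$, i.e.\ to show
\[
\AP{A}\bigl(\lambda_A^{(n)}([m])\bigr) = \mfr{s}(m;n) = (n^a\, b),\qquad m = na+b,\ 0\lest b<n.
\]

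For this single-part computation, first I would write out $\lambda := \lambda_A^{(n)}([m]) = \bigl\{\tfrac{m-1}{2n}, \tfrac{m-3}{2n},\dots,\tfrac{1-m}{2n}\bigr\}$, a strictly decreasing sequence of $m$ real numbers with common difference $\tfrac1n$. Since $\lambda$ is non-increasing (indeed strictly decreasing), Lemma~\ref{lem RS} applies, and in fact gives $\mfr{p}(\lambda)=[1^m]$ because all multiplicities $r(\lambda_i)$ equal $1$. The decomposition \eqref{eq decomp lambda A} groups $\lambda$ into its maximal sub-multi-sets whose elements pairwise differ by an integer: since consecutive elements differ by $\tfrac1n$, two elements $\tfrac{m-2i+1}{2n}$ and $\tfrac{m-2j+1}{2n}$ differ by an integer iff $i\equiv j\pmod n$, so the congruence classes mod $n$ partition $\lambda$ into $l_A = \min(m,n)$ blocks. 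Reading off block sizes: writing $m=na+b$ with $0\lest b<n$, exactly $b$ of these blocks have $a+1$ elements and $n-b$ of them have $a$ elements (when $b=0$ all $n$ blocks have size $a$; when $m<n$ there are $m$ blocks each of size $1$, consistent with $a=0$, $b=m$). Each block $\lambda_{A,i}$ is itself an arithmetic progression with common difference $\tfrac1n$ and all multiplicities $1$, so $\mfr{p}(\lambda_{A,i}) = [1^{\,|\lambda_{A,i}|}]$ and hence $\mfr{p}_{A,i}(\lambda) = \mfr{p}(\lambda_{A,i})^{\ast} = [\,|\lambda_{A,i}|\,]$, a single row.

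Finally, the type-$A$ case of Theorem~\ref{thm BMW} gives $\AP{A}(\lambda) = \sum_{i=1}^{l_A}\mfr{p}_{A,i}(\lambda)^{\ast} = \sum_{i=1}^{l_A}[1^{|\lambda_{A,i}|}]^{\ast} = \sum_{i=1}^{l_A}[\,|\lambda_{A,i}|\,]$, whose sorted column heights are precisely the multiset of block sizes, i.e.\ $(a+1)$ with multiplicity $b$ and $a$ with multiplicity $n-b$. Wait — I should be careful about which transpose is being taken: $\mfr{p}_{A,i}(\lambda) = \mfr{p}(\lambda_{A,i})^\ast = [\,|\lambda_{A,i}|\,]$ is a single column of the original partition, and then $\sum_i \mfr{p}_{A,i}(\lambda)^\ast = \sum_i [1^{|\lambda_{A,i}|}]$; summing these columns and sorting yields the partition with parts $n$ (arising $a$ times, one per ``full'' level) together with a final part $b$ — that is, $(n^a b) = \mfr{s}(m;n)$. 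Comparing with \eqref{E:dBV-A} restricted to $\mfr{p}=[m]$, namely $d_{com,A}^{(n)}([m]) = \mfr{s}(m;n)$, this is exactly the desired identity. The only genuinely delicate point is bookkeeping the two transposes in \eqref{eq partition lambda sep} and Theorem~\ref{thm BMW} against the definition \eqref{E:dBV-A} of $d_{com,A}^{(n)}$, so I would double-check the $n=1$ sanity check ($d_{com,A}^{(1)}([m]) = [1^m] = [m]^\ast$) and a small example such as $m=5$, $n=2$ (giving $(2,2,1)$ on both sides) before finalizing. The ``in particular'' clause for $d_{BV,G}^{(n)}$ in type $A$ is then immediate from \eqref{E:hO}, which identifies $h_{\mca O}^{(n)}/2$ with $\lambda_A^{(n)}(\mfr{p}_{\mca O})$.
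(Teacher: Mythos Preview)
Your proposal is correct and takes essentially the same approach as the paper's proof, which reduces via Lemma~\ref{lem ind A} to the single-part case $\mfr{p}=[d]$ and then appeals to the definition and Lemma~\ref{lem RS}. Your explicit unpacking of the single-part computation (the mod-$n$ block decomposition of $\lambda_A^{(n)}([m])$, invoking Theorem~\ref{thm BMW}, and the transpose bookkeeping you self-correct) is more detailed than the paper's one-line justification, but the underlying argument is identical.
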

\begin{proof}
     According to Lemma \ref{lem ind A}, it suffices to show the equality for $\mfr{p}=[d]$, which follows from the definition and Lemma \ref{lem RS}.
\end{proof}

Next, we generalize \cite{Ach03}*{Lemma 3.3}. The idea of our proof is similar to Achar's proof, while the extra complexity arises from the fact that the equality $d_{com,A}^{(n)}(\mfr{p}+\mfr{q})= d_{com,A}^{(n)}(\mfr{p}) \sqcup d_{com,A}^{(n)}(\mfr{q})$ fails in general, unless $\mfr{p},\mfr{q}$ is chosen properly.

\begin{lemma}\label{lem Achar identities} The following identities hold for any positive integer $n$.
    \begin{enumerate}
         \item [(i)] $ d_{com,A}^{(n)}( \mfr{p}^- \sub{C})\sub{C}= d_{com,A}^{(n)}( \mfr{p})^- \sub{C}$ if $\mfr{p}$ is of type $B$.
         \item [(ii)] $ d_{com,A}^{(n)}( \mfr{p} \sub{D})\sub{C}= d_{com,A}^{(n)}( \mfr{p})^{+-} \sub{C}$ if $\mfr{p}$ is of type $C$.
          \item [(iii)] $ d_{com,A}^{(n)}(\mfr{p}^{-}\sub{B})^+\sub{C}=d_{com,A}^{(n)}(\mfr{p})^{+-}\sub{C}$ if $\mfr{p}$ is of type $C$ and both $l(\mfr{p})$ and $n$ are odd. 
    
           \item [(iv)] $ d_{com,A}^{(n)}( \mfr{p}^+ \sub{B})\sub{B}= d_{com,A}^{(n)}( \mfr{p})^+ \sub{B}$ if $\mfr{p}$ is of type $C$ and $n$ is odd.
        \item [(v)] $ d_{com,A}^{(n)}(\mfr{p}^{+-}\sub{D})^+\sub{B}=d_{com,A}^{(n)}(\mfr{p})^+\sub{B}$ if $\mfr{p}$ is of type $C$, $l(\mfr{p})$ is even and $n$ is odd.
        \item [(vi)] $ d_{com,A}^{(n)}(\mfr{p}^{+-}\sub{C})\sub{D}=d_{com,A}^{(n)}(\mfr{p})\sub{D}$ if $\mfr{p}$ is of type $D$ and $n$ is odd.
\end{enumerate}
\end{lemma}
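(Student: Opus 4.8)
\textbf{Proof proposal for Lemma \ref{lem Achar identities}.}

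The plan is to follow Achar's strategy for proving \cite{Ach03}*{Lemma 3.3}, but to replace the transpose operation $(\cdot)^\ast$ with the combinatorial duality $d_{com,A}^{(n)}$ throughout. The key structural input is the description $d_{com,A}^{(n)}(\mfr{p}) = \sum_i \mfr{s}(p_i;n)$ from \eqref{E:dBV-A}, together with the additivity of Lemma \ref{lem ind A}(a). The essential difficulty, as noted in the statement, is that $d_{com,A}^{(n)}$ is not a homomorphism for the operation $\sqcup$ of juxtaposing rows: one has $d_{com,A}^{(n)}(\mfr{p}\sqcup\mfr{q}) = d_{com,A}^{(n)}(\mfr{p})+d_{com,A}^{(n)}(\mfr{q})$ only under the column-addition $+$, not $\sqcup$. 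So the heart of the argument is to isolate, for each of (i)--(vi), the precise ``local'' modification of $\mfr{p}$ effected by the collapse operations $(\cdot)_B, (\cdot)_C, (\cdot)_D$ and the $\pm$ operations, and to check the identity boils down to a statement about a bounded number of parts near where the collapse acts.

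Concretely, I would proceed as follows. First, recall (from \cite{Ach03} or \cite{CMBO21}*{\S 4}) that for a partition $\mfr{p}$ of the relevant type, the collapse $\mfr{p}_X$ differs from $\mfr{p}$ only at the ``bad'' places, and the difference is realized by moving a single box down one level in each maximal offending block; this can be written as $\mfr{p}_X = \mfr{p} + \mbf{e}$ for an explicit correction $\mbf{e}$ supported on two consecutive column-heights. Dually, the operation $(\cdot)^+$ (adding one to the largest part) and $(\cdot)^-$ (removing one from the smallest part) each change exactly one entry. The plan is, for each identity, to (a) compute both sides as partitions using \eqref{E:dBV-A}, reducing everything via Lemma \ref{lem ind A} to understanding $d_{com,A}^{(n)}$ of the handful of parts that get modified; (b) observe that the map $m \mapsto \mfr{s}(m;n)$ is ``almost additive under $\sqcup$'' in the sense that $\mfr{s}(m+1;n) - \mfr{s}(m;n)$ is the addition of a single box (to the part of length $b+1$ if $m\equiv b \not\equiv n-1 \pmod n$, or to a new part of length $1$ and removal considerations if $b = n-1$), and that this single-box change interacts with the subsequent collapse in a controlled way; (c) match the two sides. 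The oddness hypotheses on $n$ (in (iii)--(vi)) and on $l(\mfr{p})$ enter precisely to guarantee that the relevant boundary part of $d_{com,A}^{(n)}(\mfr{p})$ has the right parity so that the collapse $(\cdot)_B$, $(\cdot)_C$ or $(\cdot)_D$ does no further damage beyond what the ``+'' or ``$-$'' operation already accounts for — this is exactly the role parity plays in Achar's original lemma, and the verification here is the same type of parity bookkeeping.

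I expect the main obstacle to be the case analysis in parts (iii) and (v) (and symmetrically (vi)), where two collapses of different types are composed: one must show $d_{com,A}^{(n)}(\mfr{p}^{-}\sub{B})^+\sub{C} = d_{com,A}^{(n)}(\mfr{p})^{+-}\sub{C}$, which on the left applies $d_{com,A}^{(n)}$ to an already-collapsed partition $\mfr{p}^{-}\sub{B}$ that may differ from $\mfr{p}$ in a way that $d_{com,A}^{(n)}$ does \emph{not} see linearly. The resolution will be to show that $\mfr{p}^{-}\sub{B}$ and $\mfr{p}$ can be written as $\mfr{p}_1 \sqcup \mfr{r}$ and $\mfr{p}_1 \sqcup \mfr{r}'$ with $\mfr{r}, \mfr{r}'$ supported on a single repeated part, so that $d_{com,A}^{(n)}(\mfr{r})$ and $d_{com,A}^{(n)}(\mfr{r}')$ can be compared directly by the explicit formula $\mfr{s}(m;n)$; then reassemble using Lemma \ref{lem ind A}(a). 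The remaining identities (i), (ii), (iv) are the ``single-collapse'' cases and should follow the same template with less bookkeeping; identity (ii) in particular is the direct analogue of Achar's original statement and will serve as the model for the rest.
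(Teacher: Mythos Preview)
Your sketch misses the two structural features that make the argument go through, and in their absence the ``localize the collapse and do parity bookkeeping'' plan does not close.

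First, the decomposition you propose is in the wrong direction. You want to write $\mfr{p}$ and $\mfr{p}^{-}\sub{B}$ (say) as $\mfr{p}_1 \sqcup \mfr{r}$ and $\mfr{p}_1 \sqcup \mfr{r}'$, isolating the parts touched by the collapse, and then compare via Lemma~\ref{lem ind A}(a). But that lemma turns input-$\sqcup$ into output-$+$, so you are left comparing $(d_{com,A}^{(n)}(\mfr{p}_1) + d_{com,A}^{(n)}(\mfr{r}))\sub{C}$ with $(d_{com,A}^{(n)}(\mfr{p}_1) + d_{com,A}^{(n)}(\mfr{r}'))\sub{C}$, and collapse does \emph{not} localize along $+$: changing one $+$-summand can reshuffle the entire partition before the collapse. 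The paper instead uses the decomposition $\mfr{p}=([n^l]\sqcup\mfr{p}_1)+\mfr{p}_2$, peeling one $n$-layer off every part of size $\gest n$; this is the unique decomposition for which $d_{com,A}^{(n)}$ converts an input-$+$ into an output-$\sqcup$, namely $d_{com,A}^{(n)}(\mfr{p})=[nl+b]\sqcup d_{com,A}^{(n)}(\mfr{p}_2)$, and it is $\sqcup$ on the output side that interacts well with collapse (via \cite{HLLS24}*{Lemma 12.2}).

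Second, the six identities are not provable in isolation. The paper's argument is a \emph{simultaneous} induction on $A(\mfr{p})=\max_i\lfloor p_i/n\rfloor$: depending on the parities of $n$ and $l=l(\mfr{p}_2)$, identity (i) for $\mfr{p}$ reduces to (i), (ii), or (iii) for $\mfr{p}_2$; identities (ii)--(iii) feed back into (i)--(iii); and (iv)--(vi) form their own mutually dependent cycle. Your treatment of (i), (ii), (iv) as ``single-collapse, less bookkeeping'' cases and (iii), (v), (vi) as the hard ones does not reflect this coupling; without the mutual induction you will find, for instance, that (i) with $n$ odd and $l$ even cannot be closed without invoking (iii) for the smaller partition $\mfr{p}_2$.
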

\begin{proof}
Write $\mfr{p}=[p_1,\ldots, p_s]$ and let $p_i=a_i n +b_i$ where $1 \lest b_i \lest n$. We establish these identities simultaneously by applying induction on $A(\mfr{p}):=\max\{ a_i\}_{i=1}^s$. If $A(\mfr{p})=0$, then all of the partitions on both sides of the formulas have length one, in which case the equalities trivially hold.

Suppose $A(\mfr{p})>0$. Consider the decomposition ($\mfr{p}_1$ is possibly empty)
\[ \mfr{p}=  ( [n^l]\sqcup \mfr{p}_1 )+\mfr{p}_2=( [n^l]+ \mfr{p}_2 )\sqcup \mfr{p}_1,\]
where $l=l(\mfr{p}_2)$ and every part of $\mfr{p}_1$ is less than $n+1$. Then we have $A(\mfr{p}_2)=A(\mfr{p})-1$ and $A(\mfr{p}_1)=0$. Set $b:= |\mfr{p}_1|$ for convenience. Under this decomposition, we have
\[ d_{com,A}^{(n)}( ( [n^l]\sqcup \mfr{p}_1 )+\mfr{p}_2 )= d_{com,A}^{(n)}(  [n^l]\sqcup \mfr{p}_1 ) \sqcup d_{com,A}^{(n)}(\mfr{p}_2)= [nl+b] \sqcup d_{com,A}^{(n)}(\mfr{p}_2).\]

In the following, we demonstrate that Part (i) for $\mfr{p}$ can be reduced to Parts (i)--(iii) for $\mfr{p}_2$ or some related $\mfr{p}_2'$ with $A(\mfr{p}_2')\lest A(\mfr{p}_2)$. The computation of collapse makes use of \cite{HLLS24}*{Lemma 12.2} frequently and we do not mention it every time.

We first deal with the case that $\mfr{p}_1$ is empty. Suppose $n$ is even. Then $\mfr{p}_2$ is of type $B$. By Part (i) of $\mfr{p}_2$, we have 
\begin{align*}
    d_{com,A}^{(n)}( ([n^l]+\mfr{p}_2)^{-}\sub{C} )\sub{C}&=   d_{com,A}^{(n)}( [n^l]+\mfr{p}_2{}^-\sub{C} )\sub{C} \\
    &= ([nl] \sqcup d_{com,A}^{(n)}(\mfr{p}_2{}^- \sub{C})) \sub{C}\\
    &= [nl] \sqcup d_{com,A}^{(n)}(\mfr{p}_2{}^- \sub{C}) \sub{C}\\
    &=[nl] \sqcup d_{com,A}^{(n)}(\mfr{p}_2)^- \sub{C}\\
    &=  d_{com,A}^{(n)}(\mfr{p})^{-} \sub{C}.
\end{align*}
Suppose $n$ is odd. If the smallest part of $\mfr{p}_2$ is not less than $n$, then we let $\mfr{p}_2'$ be the partition such that $\mfr{p}= [(2n)^l]\sqcup \mfr{p}_2'$. It follows from the argument for the previous case that Part (i) for $\mfr{p}$ can be reduced to Part (i) for $\mfr{p}_2'$. Therefore, we assume the smallest part of $\mfr{p}_2$ is less than $n$, and hence the largest part of $d_{com,A}^{(n)}(\mfr{p}_2{}^- \sub{B})$ is less than $nl$. Note that $\mfr{p}_2$ is of type $C$ and $l(\mfr{p}_2)= l= l(\mfr{p})$ is odd since $\mfr{p}$ is of type $B$. By Part (iii) for $\mfr{p}_2$, we have 
\begin{align*}
    d_{com,A}^{(n)}( ([n^l]+\mfr{p}_2)^{-}\sub{C} )\sub{C}&=   d_{com,A}^{(n)}( [n^l]+\mfr{p}_2{}^-\sub{B} )\sub{C} \\
    &= ([nl] \sqcup d_{com,A}^{(n)}(\mfr{p}_2{}^- \sub{B})) \sub{C}\\
    &= [nl-1] \sqcup d_{com,A}^{(n)}(\mfr{p}_2{}^- \sub{B})^{+} \sub{C}\\
    &=[nl-1] \sqcup d_{com,A}^{(n)}(\mfr{p}_2)^{+-} \sub{C}\\
     &=([nl] \sqcup d_{com,A}^{(n)}(\mfr{p}_2)^{-} \sub{C}) \sub{C}\\
    &=  d_{com,A}^{(n)}(\mfr{p})^{-} \sub{C}.
\end{align*}
This completes the verification of the case that $\mfr{p}_1$ is empty. In the rest of the proof, we assume $\mfr{p}_1$ is non-empty.

Suppose that $n$ is even and $b$ is odd. In this case, $ \mfr{p}_1$ (resp. $\mfr{p}_2$) is of type $B$ (resp. type $D$). Let $\mfr{p}_2':=\mfr{p}_2\sqcup [1]$. Note that $ d_{com,A}^{(n)}(\mfr{p}_2')= d_{com,A}^{(n)}(\mfr{p}_2)^+$ and $A(\mfr{p}_2)=A(\mfr{p}_2')$. By Part (i) for $\mfr{p}_2'$, we have
\begin{align*}
    d_{com,A}^{(n)}( (([n^l]\sqcup \mfr{p}_1) +\mfr{p}_2)^- \sub{C}   )\sub{C}&= d_{com,A}^{(n)}( ([n^l]\sqcup \mfr{p}_1{}^-)\sub{C} +(\mfr{p}_2) \sub{C}   )\sub{C}\\
   &= (d_{com,A}^{(n)}( [n^l]\sqcup \mfr{p}_1{}^-\sub{C} ) \sqcup d_{com,A}^{(n)}( (\mfr{p}_2) \sub{C}   ) )\sub{C}\\
   &= ([nl+b-1] \sqcup d_{com,A}^{(n)}( (\mfr{p}_2')^{-} \sub{C}   ) )\sub{C}\\
   &=[nl+b-1] \sqcup d_{BV,A}^{(n)}( (\mfr{p}_2')^- \sub{C}   )\sub{C}\\
   &= d_{com,A}^{(n)}( [n^l]\sqcup \mfr{p}_1 )^- \sub{C} \sqcup d_{com,A}^{(n)}(\mfr{p}_2')^{-} \sub{C}\\
   &= d_{com,A}^{(n)}( [n^l]\sqcup \mfr{p}_1 )^- \sub{C} \sqcup d_{com,A}^{(n)}(\mfr{p}_2)^{-+} \sub{C}\\
   &= (d_{com,A}^{(n)}( [n^l]\sqcup \mfr{p}_1 ) \sqcup d_{com,A}^{(n)}(\mfr{p}_2)^{-} )\sub{C}\\
   &= (d_{com,A}^{(n)}( [n^l]\sqcup \mfr{p}_1 ) \sqcup d_{com,A}^{(n)}(\mfr{p}_2) )^-\sub{C}\\
   &= d_{com,A}^{(n)}(\mfr{p})^{-}\sub{C}.
\end{align*}
Next, suppose both $n$ and $b$ are even. In this case, $ \mfr{p}_1$ (resp. $\mfr{p}_2$) is of type $D$ (resp. type $B$). By Part (i) for $\mfr{p}_2$, we have
\begin{align*}
     d_{com,A}^{(n)}( (([n^l]\sqcup \mfr{p}_1) +\mfr{p}_2)^- \sub{C}   )\sub{C}&= d_{com,A}^{(n)}( ([n+1, n^{l-1}]\sqcup \mfr{p}_1)^{-}\sub{C} +\mfr{p}_2{}^{-} \sub{C}   )\sub{C}\\
     &=d_{com,A}^{(n)}( ([n^{l}]\sqcup \mfr{p}_1{}^{+-}\sub{C}) +\mfr{p}_2{}^{-} \sub{C}   )\sub{C}\\
     &= (d_{com,A}^{(n)}( ([n^{l}]\sqcup \mfr{p}_1{}^{+-}\sub{C}))  \sqcup d_{com,A}^{(n)}(\mfr{p}_2{}^{-} \sub{C}   ) )\sub{C}\\
     &= [nl+b] \sqcup d_{com,A}^{(n)}(\mfr{p}_2{}^{-} \sub{C}   )\sub{C}\\
     &= d_{com,A}^{(n)}([n^l]\sqcup \mfr{p}_1)\sub{C} \sqcup d_{com,A}^{(n)}(\mfr{p}_2)^- \sub{C}\\
      &= (d_{com,A}^{(n)}( [n^l]\sqcup \mfr{p}_1 ) \sqcup d_{com,A}^{(n)}(\mfr{p}_2) )^-\sub{C}\\
   &= d_{com,A}^{(n)}(\mfr{p})^{-}\sub{C}.
\end{align*}
Next, suppose $n$ is odd and $l$ is even. In this case, $\mfr{p}_1$ (resp. $\mfr{p}_2$) is of type $B$ (resp. type $C$). By Part (ii) for $\mfr{p}_2$, we have
\begin{align*}
     d_{com,A}^{(n)}( (([n^l]\sqcup \mfr{p}_1) +\mfr{p}_2)^- \sub{C}   )\sub{C}&= d_{com,A}^{(n)}( ([n^l]\sqcup \mfr{p}_1{}^-)\sub{C} +(\mfr{p}_2)\sub{D}   )\sub{C}\\
     &=(d_{com,A}^{(n)}( [n^l]\sqcup \mfr{p}_1{}^-\sub{C}) \sqcup d_{com,A}^{(n)}((\mfr{p}_2)\sub{D})   )\sub{C}\\
     &= [nl+b-1] \sqcup d_{com,A}^{(n)}((\mfr{p}_1)\sub{D})\sub{C}\\
     &=d_{com,A}^{(n)}( [n^l]\sqcup \mfr{p}_1)^{-}\sub{C} \sqcup d_{com,A}^{(n)}(\mfr{p}_2)^{+-}\sub{C}\\
     &=( d_{com,A}^{(n)}( [n^l]\sqcup \mfr{p}_1) \sqcup d_{com,A}^{(n)}(\mfr{p}_2)^{-})\sub{C}\\
      &= d_{com,A}^{(n)}(\mfr{p})^{-}\sub{C}.
\end{align*}
Finally, suppose both $n$ and $l$ are odd.  In this case, $\mfr{p}_1$ (resp. $\mfr{p}_2$) is of type $D$ (resp. type $C$). Note that $(\mfr{p}_2)\sub{D}=(\mfr{p}_{2}{}^- \sub{B}) \sqcup [1]$. If $\mfr{p}_1=[n] \sqcup \mfr{p}_1'$, then
\[(([n^l]\sqcup \mfr{p}_1) +\mfr{p}_2)^- \sub{C}= ( (\mfr{p}_2)\sub{D}+ [n^{l+1}] ) \sqcup (\mfr{p}_1')^-\sub{C}=( [n^{l+1}]  \sqcup (\mfr{p}_1')^-\sub{C}) + (\mfr{p}_2)\sub{D}, \]
and similar argument in the previous case works. If the largest part of $\mfr{p}_1$ is less than $n$, then
\[(([n^l]\sqcup \mfr{p}_1) +\mfr{p}_2)^- \sub{C} = (([n^l]+\mfr{p}_2) \sqcup \mfr{p}_1{}^-)\sub{C} = ([n^l]+\mfr{p}_2)^{-}\sub{C} \sqcup \mfr{p}_1{}^{+-}\sub{C} =([n^l]\sqcup \mfr{p}_1{}^{+-}\sub{C})+ \mfr{p}_2{}^- \sub{B}. \]
Also, observe that since $l$ is odd, we have $(\mfr{p}_2)\sub{D}= \mfr{p}_{2}{}^- \sub{B} \sqcup [1]$, and hence  $d_{com,A}^{(n)}((\mfr{p}_2)\sub{D})=  d_{com,A}^{(n)}(\mfr{p}_2{}^-\sub{B})^+.$
Thus, by Part (ii) for $\mfr{p}_2$, we have (the non-emptiness of $\mfr{p}_1$ is used here)
\begin{align*}
     d_{com,A}^{(n)}( (([n^l]\sqcup \mfr{p}_1) +\mfr{p}_2)^- \sub{C}   )\sub{C}&= d_{com,A}^{(n)}( ([n^l]\sqcup \mfr{p}_1{}^{+-}\sub{C}) +\mfr{p}_2{}^-\sub{B}   )\sub{C}\\
     &= (  d_{com,A}^{(n)}([n^l]\sqcup \mfr{p}_1{}^{+-}\sub{C}) \sqcup   d_{com,A}^{(n)}(\mfr{p}_2{}^-\sub{B})  )\sub{C}\\
     &=   [nl+b-1] \sqcup   d_{com,A}^{(n)}(\mfr{p}_2{}^-\sub{B})^+\sub{C}\\
     &=   [nl+b-1] \sqcup  d_{com,A}^{(n)}((\mfr{p}_2)\sub{D})\sub{C}\\
      &= d_{com,A}^{(n)}(\mfr{p})^{-}\sub{C}.
\end{align*}
This completes the reduction process for Part (i).

By similar argument, one can show that Parts (ii) and (iii) (resp. Parts (iv)--(vi)) for $\mfr{p}$ can be reduced to Parts (i)--(iii) (resp. Parts (iv)--(vi)) for $\mfr{p}_2$ or some related $\mfr{p}_2'$ such that $A(\mfr{p}_2)=A(\mfr{p}_2')$. Below we give details for each case.

\textbf{Part (ii):} First, we deal with the case that $\mfr{p}_1$ is empty. If $n$ is even, then $\mfr{p}_2$ is of type $C$. By Part (ii) for $\mfr{p}_2$, we have
\begin{align*}
    d_{com,A}^{(n)}( ([n^l]+ \mfr{p}_2)\sub{D} ) \sub{C}&=d_{com,A}^{(n)}( [n^l] + (\mfr{p}_2)\sub{D} ) \sub{C}\\
    &= [nl] \sqcup d_{com,A}^{(n)}((\mfr{p}_2)\sub{D}) )\sub{C}\\
    &= [nl] \sqcup d_{com,A}^{(n)}(\mfr{p}_2)^{+-} \sub{C}\\
    &= ( [nl+1] \sqcup d_{com,A}^{(n)}(\mfr{p}_2)^{-}) \sub{C}\\
    &= d_{com,A}^{(n)}(\mfr{p})^{+-}\sub{C}.
\end{align*}
If $n$ and $l$ are both odd, then $\mfr{p}_2$ is of type $B$. By Part (i) for $\mfr{p}_2$, we have
\begin{align*}
    d_{com,A}^{(n)}( ([n^l]+ \mfr{p}_2)\sub{D} ) \sub{C}&=d_{com,A}^{(n)}( ([n^l]\sqcup [1]) + \mfr{p}_2{}^-\sub{C} ) \sub{C}\\
    &= [nl+1] \sqcup d_{com,A}^{(n)}(\mfr{p}_2{}^-\sub{C}) )\sub{C}\\
    &= [nl+1] \sqcup d_{com,A}^{(n)}(\mfr{p}_2)^{-} \sub{C}\\
    &= ( [nl+1] \sqcup d_{com,A}^{(n)}(\mfr{p}_2)^{-}) \sub{C}\\
    &= d_{com,A}^{(n)}(\mfr{p})^{+-}\sub{C}.
\end{align*}
If $n$ is odd and $l$ is even, then $\mfr{p}_2$ is of type $D$. By Part (i) for $\mfr{p}_2 \sqcup [1]$, we have
\begin{align*}
    d_{com,A}^{(n)}( ([n^l]+ \mfr{p}_2)\sub{D} ) \sub{C}&=d_{com,A}^{(n)}( [n^l] + (\mfr{p}_2)\sub{C} ) \sub{C}\\
    &= [nl] \sqcup d_{com,A}^{(n)}( (\mfr{p}_2 \sqcup [1])^-\sub{C})\sub{C}\\
    &= [nl] \sqcup d_{com,A}^{(n)}(\mfr{p}_2\sqcup [1])^{-} \sub{C}\\
    &= ( [nl+1] \sqcup d_{com,A}^{(n)}(\mfr{p}_2)^{-}) \sub{C}\\
    &= d_{com,A}^{(n)}(\mfr{p})^{+-}\sub{C}.
\end{align*}
This completes the verification of this case. In the following, we assume $\mfr{p}_1$ is non-empty.

Suppose that $n$ is even. In this case, $\mfr{p}_1$ and $\mfr{p}_2$ are both of type $C$. By Part (ii) for $\mfr{p}_2$, we have
\begin{align*}
     d_{com,A}^{(n)}( (([n^l]\sqcup \mfr{p}_1) + \mfr{p}_2)\sub{D} )\sub{C}&= d_{com,A}^{(n)} (([n^l]\sqcup \mfr{p}_1) \sub{D} + (\mfr{p}_2 )\sub{D} )\sub{C}\\
     &= ([nl+b] \sqcup d_{com,A}^{(n)}((\mfr{p}_2)\sub{D}))\sub{C}\\
     &=[nl+b] \sqcup d_{com,A}^{(n)}((\mfr{p}_2)\sub{D})\sub{C}\\
     &=[nl+b] \sqcup d_{com,A}^{(n)}(\mfr{p}_2)^{+-}\sub{C}\\
     &= ( [nl+b+1] \sqcup d_{com,A}^{(n)}(\mfr{p}_2)^{-})\sub{C}\\
     &= (d_{com,A}^{(n)}([n^l]\sqcup \mfr{p}_1) \sqcup d_{com,A}^{(n)}(\mfr{p}_2))^{+-}\sub{C} \\
     &= d_{com,A}^{(n)}(\mfr{p})^{+-}\sub{C}.
\end{align*}
This completes the verification of this case. Next, suppose that both $n$ and $l$ are odd. In this case, $\mfr{p}_1$ (resp. $\mfr{p}_2$) is of type $C$ (resp. type $B$). By Part (i) for $\mfr{p}_2$,we have 
\begin{align*}
     d_{com,A}^{(n)}( (([n^l]\sqcup \mfr{p}_1) + \mfr{p}_2)\sub{D} )\sub{C}&= d_{com,A}^{(n)} (([n^l]\sqcup \mfr{p}_1)^+ \sub{D} + \mfr{p}_2 {}^-\sub{C} )\sub{C}\\
     &= ([nl+b+1] \sqcup d_{com,A}^{(n)}(\mfr{p}_2{}^{-}\sub{C}))\sub{C}\\
     &=[nl+b+1] \sqcup d_{com,A}^{(n)}(\mfr{p}_2{}^{-}\sub{C})\sub{C}\\
     &=[nl+b+1] \sqcup d_{com,A}^{(n)}(\mfr{p}_2)^{-}\sub{C}\\
     &= ( [nl+b+1] \sqcup d_{com,A}^{(n)}(\mfr{p}_2)^{-})\sub{C}\\
     &= d_{com,A}^{(n)}(\mfr{p})^{+-}\sub{C}.
\end{align*}
This completes the verification of this case. Finally, suppose that $n$ is odd and $l$ is even. In this case, $\mfr{p}_1$ (resp. $\mfr{p}_2$) is of type $C$ (resp. type $D$). Let $\mfr{p}_2':= \mfr{p}_2 \sqcup [1]$, which is of type $B$. By Part (i) for $\mfr{p}_2'$, we have
\begin{align*}
     d_{com,A}^{(n)}( (([n^l]\sqcup \mfr{p}_1) + \mfr{p}_2)\sub{D} )\sub{C}&= d_{com,A}^{(n)} (([n^l]\sqcup \mfr{p}_1)\sub{D} + (\mfr{p}_2)\sub{C} )\sub{C}\\
     &= ([nl+b] \sqcup d_{com,A}^{(n)}((\mfr{p}_2')^-\sub{C}))\sub{C}\\
     &=[nl+b] \sqcup d_{com,A}^{(n)}((\mfr{p}_2')^-\sub{C})\sub{C}\\
     &=[nl+b] \sqcup d_{com,A}^{(n)}(\mfr{p}_2')^{-}\sub{C}\\
     &=[nl+b] \sqcup d_{com,A}^{(n)}(\mfr{p}_2)^{+-}\sub{C}\\
     &= ( [nl+b+1] \sqcup d_{com,A}^{(n)}(\mfr{p}_2)^{-})\sub{C}\\
     &= d_{com,A}^{(n)}(\mfr{p})^{+-}\sub{C}.
\end{align*}
This completes the verification of this case and the reduction for Part (ii).

\textbf{Part (iii):} First, we deal with the case that $\mfr{p}_1$ is empty. Note that both $n$ and $l=l(\mfr{p}_2)= l(\mfr{p})$ are odd. In this case, $\mfr{p}_2$ is of type $B$. By Part (i) for $\mfr{p}_2$, we have
\begin{align*}
    d_{com,A}^{(n)}( ([n^l]+ \mfr{p}_2)^{-}\sub{B}  )^+ \sub{C}&=  d_{com,A}^{(n)}( ([n^l]+ \mfr{p}_2{}^{-}\sub{C}  )^+ \sub{C}\\
    &= ([nl+1] \sqcup d_{com,A}^{(n)}(\mfr{p}_2{}^{-}\sub{C} )) \sub{C}\\
    &=[nl+1] \sqcup d_{com,A}^{(n)}(\mfr{p}_2{}^{-}\sub{C} ) \sub{C}\\
    &= [nl+1] \sqcup d_{com,A}^{(n)}(\mfr{p}_2 )^{-}\sub{C}\\
    &= ([nl+1] \sqcup d_{com,A}^{(n)}(\mfr{p}_2 ))^{-}\sub{C}\\
    &=  d_{com,A}^{(n)}(\mfr{p})^{+-}\sub{C}.
\end{align*}
This completes the verification of this case. In the following, we assume $\mfr{p}_1$ is non-empty.

Suppose $l$ is even. In this case, $\mfr{p}_1$ (resp. $\mfr{p}_2$) is of type $C$ (resp. type $D$). By Part (i) for $\mfr{p}_2':=\mfr{p}_2\sqcup [1]$, we have
\begin{align*}
    d_{com,A}^{(n)}( (([n^l] \sqcup \mfr{p}_1)+ \mfr{p}_2)^{-}\sub{B}  )^+ \sub{C}&= d_{com,A}^{(n)}( (([n^l] \sqcup \mfr{p}_1{}^-)+ \mfr{p}_2)\sub{B}  )^+ \sub{C}\\
    &= d_{com,A}^{(n)}( (([n^l] \sqcup \mfr{p}_1{}^-)\sub{B}+ (\mfr{p}_2)\sub{C}  )^+ \sub{C}\\
    &=( [nl+b] \sqcup d_{com,A}^{(n)}( \mfr{p}_2' {}^-\sub{C}) )\sub{C}\\
    &= [nl+b] \sqcup d_{com,A}^{(n)}(\mfr{p}_2')^-\sub{C}\\
    &=([nl+b+1] \sqcup d_{com,A}^{(n)}(\mfr{p}_2))^{-}\sub{C}\\
    &=  d_{com,A}^{(n)}(\mfr{p})^{+-}\sub{C}.
\end{align*}
Suppose $l$ is odd. In this case, $\mfr{p}_1$ (resp. $\mfr{p}_2$) is of type $C$ (resp. type $B$). By Part (i) for $\mfr{p}_2$, we have
\begin{align*}
    d_{com,A}^{(n)}( (([n^l] \sqcup \mfr{p}_1)+ \mfr{p}_2)^{-}\sub{B}  )^+ \sub{C}&= d_{com,A}^{(n)}( (([n^l] \sqcup \mfr{p}_1{}^-)+ \mfr{p}_2)\sub{B}  )^+ \sub{C}\\
    &= d_{com,A}^{(n)}( (([n^l] \sqcup \mfr{p}_1{}^{+-})\sub{B}+ \mfr{p}_2{}^-\sub{C}  )^+ \sub{C}\\
    &=( [nl+b+1] \sqcup d_{com,A}^{(n)}( \mfr{p}_2 {}^-\sub{C}) )\sub{C}\\
    &= [nl+b+1] \sqcup d_{com,A}^{(n)}(\mfr{p}_2)^-\sub{C}\\
    &=  d_{com,A}^{(n)}(\mfr{p})^{+-}\sub{C}.
\end{align*}
This completes the verification of this case and the reduction for Part (iii).

\textbf{Part (iv):} 
First, we deal with the case that $\mfr{p}_1$ is empty. If $l$ is even, then $\mfr{p}_2$ is of type $D$. By Part (vi) for $\mfr{p}_2$, we have
\begin{align*}
    d_{com,A}^{(n)}( ([n^l]\sqcup \mfr{p}_2)^{+}\sub{B} )\sub{B}&= d_{com,A}^{(n)}( ([n^l]\sqcup [1]) + \mfr{p}_2{}^{+-}\sub{C} )\sub{B}\\
    &=  [nl+1] \sqcup d_{com,A}^{(n)}(\mfr{p}_2{}^{+-}\sub{C}) \sub{D}\\
    &= [nl+1]\sqcup d_{com,A}^{(n)}(\mfr{p}_2)\sub{D}\\
    &= ([nl+1]\sqcup d_{com,A}^{(n)}(\mfr{p}_2))\sub{B}\\
    &= d_{com,A}^{(n)}( \mfr{p})^{+}\sub{B}.
\end{align*}
If $l$ is odd, then $\mfr{p}_2$ is of type $B$. By Part (vi) for $\mfr{p}_2':=\mfr{p}_2\sqcup [1]$, we have
\begin{align*}
    d_{com,A}^{(n)}( ([n^l]\sqcup \mfr{p}_2)^{+}\sub{B} )\sub{B}&= d_{com,A}^{(n)}( [n^l] + \mfr{p}_2{}^{+}\sub{C} )\sub{B}\\
    &=  [nl] \sqcup d_{com,A}^{(n)}((\mfr{p}_2')^{+-}\sub{C}) \sub{D}\\
    &= [nl]\sqcup d_{com,A}^{(n)}(\mfr{p}_2')\sub{D}\\
    &= [nl+1]^{-} \sqcup d_{com,A}^{(n)}(\mfr{p}_2)^{-+}\sub{D}\\
    &= ([nl+1] \sqcup d_{com,A}^{(n)}(\mfr{p}_2))\sub{B}\\
    &= d_{com,A}^{(n)}( \mfr{p})^{+}\sub{B}.
\end{align*}
This completes the verification of this case. In the following, we assume $\mfr{p}_1$ is non-empty.

Suppose that $l$ is even. In this case $\mfr{p}_1$ (resp. $\mfr{p}_2$) is of type $C$ (resp. type $D$). By Part (vi) for $\mfr{p}_2$, we have
\begin{align*}
    d_{com,A}^{(n)}( ([n^l]\sqcup \mfr{p}_1) + \mfr{p}_2)^{+}\sub{B} )\sub{B}&= d_{com,A}^{(n)}( (([n^l]\sqcup \mfr{p}_1)^+ \sub{B} + \mfr{p}_2 {}^{+-}\sub{C} )\sub{B}\\
    &=( d_{com,A}^{(n)}( [n^l] \sqcup \mfr{p}_1{}^+ \sub{B} ) \sqcup d_{com,A}^{(n)}(\mfr{p}_2{}^{+-} \sub{C})  )\sub{B}\\
    &= [nl+b+1] \sqcup d_{com,A}^{(n)}(\mfr{p}_2{}^{+-} \sub{C})  )  \sub{D}\\
    &= [nl+b+1] \sqcup d_{com,A}^{(n)}(\mfr{p}_2)  \sub{D}\\
     &= ([nl+b+1]  \sqcup d_{com,A}^{(n)}(\mfr{p}_2) )\sub{B}\\
    &= (  d_{com,A}^{(n)}( [n^l] \sqcup \mfr{p}_1{})  \sqcup d_{com,A}^{(n)}(\mfr{p}_2) )^{+}\sub{B}\\
    &= d_{com,A}^{(n)}(\mfr{p})^+\sub{B}.
\end{align*}
This completes the verification of this case. Next, suppose $l$ is odd.  In this case $\mfr{p}_1$ (resp. $\mfr{p}_2$) is of type $C$ (resp. type $B$). Let $\mfr{p}_2':=\mfr{p}_2 \sqcup [1]$, which is of type $D$. By Part (vi) for $\mfr{p}_2'$, we have
\begin{align*}
     d_{com,A}^{(n)}( (([n^l]\sqcup \mfr{p}_1) + \mfr{p}_2)^{+}\sub{B} )\sub{B}&= d_{com,A}^{(n)} (([n^l]\sqcup \mfr{p}_1) \sub{B} + \mfr{p}_2 {}^{+}\sub{C} )\sub{B}\\
    &=( d_{com,A}^{(n)}( [n^l] \sqcup (\mfr{p}_1)\sub{D} ) \sqcup d_{com,A}^{(n)}(\mfr{p}_2{}^{+} \sub{C})  )\sub{B}\\
    &= [nl+b] \sqcup d_{com,A}^{(n)}(\mfr{p}_2'{}^{+-} \sub{C})    \sub{D}\\
    &= [nl+b] \sqcup d_{com,A}^{(n)}(\mfr{p}_2')  \sub{D}\\
     &= [nl+b+1]^{-}\sub{B}  \sqcup d_{com,A}^{(n)}(\mfr{p}_2)^+\sub{D}\\
    &= (  [nl+b+1] \sqcup d_{com,A}^{(n)}(\mfr{p}_2) )\sub{B}\\
    &= d_{com,A}^{(n)}(\mfr{p})^+\sub{B}.
\end{align*}
This completes the verification of this case and the reduction for Part (iv).

\textbf{Part (v):} First, we deal with the case that $\mfr{p}_1$ is empty. Note that $n$ is odd and $l=l(\mfr{p}_2)= l(\mfr{p})$ is even. In this case, $\mfr{p}_2$ is of type $D$. By Part (vi) for $\mfr{p}_2$, we have
\begin{align*}
    d_{com,A}^{(n)}( ([n^l] +\mfr{p}_2 )^{+-} \sub{D}  )^{+} \sub{B} &= d_{com,A}^{(n)}( ([n^l] +\mfr{p}_2{}^{+-} )\sub{D}  )^{+} \sub{B}\\
    &= d_{com,A}^{(n)}( [n^l] +\mfr{p}_2{}^{+-}\sub{C}  )^{+} \sub{B}\\
    &= ([nl+1] \sqcup d_{com,A}^{(n)}(\mfr{p}_2{}^{+-}\sub{C})) \sub{B}\\
    &=[nl+1] \sqcup d_{com,A}^{(n)}(\mfr{p}_2{}^{+-}\sub{C})\sub{D}\\
    &=[nl+1] \sqcup d_{com,A}^{(n)}(\mfr{p}_2)\sub{D}\\
    &=( [nl] \sqcup d_{com,A}^{(n)}(\mfr{p}_2))^{+} \sub{B}\\
    &= d_{com,A}^{(n)}(\mfr{p})^{+}\sub{B}.
\end{align*}
This completes the verification of this case. In the following, we assume $\mfr{p}_1$ is non-empty.

Suppose $l$ is even. In this case, $\mfr{p}_1$ (resp. $\mfr{p}_2$) is of type $C$ (resp. type $D$). By Part (vi) for $\mfr{p}_2$, we have
\begin{align*}
    d_{com,A}^{(n)}( (([n^l]\sqcup \mfr{p}_1) +\mfr{p}_2 )^{+-} \sub{D}  )^{+} \sub{B} &= d_{com,A}^{(n)}( ([n^l]\sqcup \mfr{p}_1{}^{-}) +\mfr{p}_2{}^{+} )\sub{D}  )^{+} \sub{B}\\
    &= d_{com,A}^{(n)}( ([n^l]\sqcup \mfr{p}_1{}^{+-}\sub{D} ) +\mfr{p}_2{}^{+-}\sub{C}  )^{+} \sub{B}\\
    &= ([nl+b+1] \sqcup d_{com,A}^{(n)}(\mfr{p}_2{}^{+-}\sub{C}) )\sub{B}\\
    &= [nl+b+1] \sqcup d_{com,A}^{(n)}(\mfr{p}_2{}^{+-}\sub{C})\sub{D}\\
    &=[nl+b+1] \sqcup d_{com,A}^{(n)}(\mfr{p}_2)\sub{D}\\
    &=([nl+b] \sqcup d_{com,A}^{(n)}(\mfr{p}_2))^{+}\sub{B}\\
    &= d_{com,A}^{(n)}(\mfr{p})^{+}\sub{B}.
\end{align*}
Suppose $l$ is odd. In this case, $\mfr{p}_1$ (resp. $\mfr{p}_2$) is of type $C$ (resp. type $B$). By Part (vi) for $\mfr{p}_2 \sqcup [1]$, we have
\begin{align*}
    d_{com,A}^{(n)}( (([n^l]\sqcup \mfr{p}_1) +\mfr{p}_2 )^{+-} \sub{D}  )^{+} \sub{B} &= d_{com,A}^{(n)}( ([n^l]\sqcup \mfr{p}_1{}^{-}) +\mfr{p}_2{}^{+} )\sub{D}  )^{+} \sub{B}\\
    &= d_{com,A}^{(n)}( ([n^l]\sqcup \mfr{p}_1{}^{-}\sub{B} ) +\mfr{p}_2{}^{+}\sub{C}  )^{+} \sub{B}\\
    &= ([nl+b] \sqcup d_{com,A}^{(n)}((\mfr{p}_2\sqcup [1]){}^{+-}\sub{C}) )\sub{B}\\
    &= [nl+b] \sqcup d_{com,A}^{(n)}((\mfr{p}_2\sqcup[1]){}^{+-}\sub{C})\sub{D}\\
    &=[nl+b] \sqcup d_{com,A}^{(n)}(\mfr{p}_2 \sqcup[1])\sub{D}\\
    &=[nl+b+1]^- \sqcup d_{com,A}^{(n)}(\mfr{p}_2)^+\sub{D}\\
    &=([nl+b+1] \sqcup d_{com,A}^{(n)}(\mfr{p}_2))\sub{B}\\
    &= d_{com,A}^{(n)}(\mfr{p})^{+}\sub{B}.
\end{align*}

This completes the verification of this case and the reduction for Part (v).

\textbf{Part (vi):} First, we deal with the case that $\mfr{p}_1$ is empty. Note that $l=l(\mfr{p}_2)= l(\mfr{p})$ is even since $\mfr{p}$ is of type $D.$ Suppose that the smallest part of $\mfr{p}_2$ is not less than $n$, so that there is a partition $\mfr{p}_2'$ of type $D$ such that $\mfr{p}=[(2n)^l] + \mfr{p}_2'$. Then by Part (vi) for $\mfr{p}_2$, we have
\begin{align*}
    d_{com,A}^{(n)}( ([(2n)^l] + \mfr{p}_2')^{+-}\sub{C})\sub{D}&= d_{com,A}^{(n)}( ([(2n)^l] + (\mfr{p}_2')^{+-})\sub{C})\sub{D}\\
    &= d_{com,A}^{(n)}( [(2n)^l] +  (\mfr{p}_2')^{+-}\sub{C} )\sub{D}\\
    &= ([nl,nl] \sqcup d_{com,A}^{(n)}((\mfr{p}_2')^{+-}\sub{C}) )\sub{D}\\
    &= [nl,nl] \sqcup d_{com,A}^{(n)}((\mfr{p}_2')^{+-}\sub{C})\sub{D}\\
    &=[nl,nl]\sqcup d_{com,A}^{(n)}(\mfr{p}_2')\sub{D}\\
    &=(d_{com,A}^{(n)}([(2n)^l]) \sqcup d_{com,A}^{(n)}(\mfr{p}_2')) \sub{D}\\
    &= d_{com,A}^{(n)}(\mfr{p})\sub{D}.
\end{align*}
Next, suppose the smallest part of $\mfr{p}_2$ is less than $n$. Then the largest part of $d_{com,A}^{(n)}(\mfr{p}_2{}^{+-}\sub{C})$ is less than $nl$. Thus, by Part (v) for $\mfr{p}_2$, we have
\begin{align*}
    d_{com,A}^{(n)}(([n^l] + \mfr{p}_2)^{+-}\sub{C})\sub{D}&=d_{com,A}^{(n)}(([n^l]+  \mfr{p}_2{}^{+-})\sub{C}) \sub{D}\\
    &=d_{com,A}^{(n)}([n^l]+  \mfr{p}_2{}^{+-}\sub{C}) \sub{D}\\
    &= ([nl] \sqcup d_{com,A}^{(n)}(\mfr{p}_2{}^{+-}\sub{D}))\sub{D}\\
    &= [nl-1] \sqcup d_{com,A}^{(n)}(\mfr{p}_2{}^{+-}\sub{D})^+ \sub{B}\\
    &= [nl-1] \sqcup d_{com,A}^{(n)}(\mfr{p}_2)^+ \sub{B}\\
    &=([nl]^{-} \sqcup d_{com,A}^{(n)}(\mfr{p}_2)^+ \sub{B})\\
    &= ([nl]\sqcup  d_{com,A}^{(n)}(\mfr{p}_2))\sub{D}\\
    &=  d_{com,A}^{(n)}(\mfr{p})\sub{D}.
\end{align*}
This completes the verification of this case. In the following, we assume $\mfr{p}_1$ is non-empty.

Suppose that $l$ is odd. In this case, $\mfr{p}_1$ (resp. $\mfr{p}_2$) is of type $B$ (resp. type $C$). By Part (iv) for $\mfr{p}_2$,  we have
\begin{align*}
    d_{com,A}^{(n)}( (([n^l]\sqcup \mfr{p}_1) + \mfr{p}_2)^{+-}\sub{C} ) \sub{D}&=    d_{com,A}^{(n)}( (([n^l]\sqcup \mfr{p}_1{}^{-}) + \mfr{p}_2 {}^{+} )\sub{C} ) \sub{D}\\
    &= d_{com,A}^{(n)}( ([n^l]\sqcup \mfr{p}_1{}^{-}\sub{C}) + \mfr{p}_2 {}^{+} \sub{B} ) \sub{D}\\
    &= ([nl+b-1] \sqcup d_{com,A}^{(n)}( \mfr{p}_2 {}^{+} \sub{B} ) )\sub{D}\\
    &= [nl+b-1]\sqcup d_{com,A}^{(n)}( \mfr{p}_2 {}^{+} \sub{B} )\sub{B}\\
    &= d_{com,A}^{(n)}([n^l]\sqcup \mfr{p}_1)^{-} \sub{B}\sqcup d_{com,A}^{(n)}( \mfr{p}_2) {}^{+} \sub{B} \\
    &= (d_{com,A}^{(n)}([n^l]\sqcup \mfr{p}_1{})\sqcup d_{com,A}^{(n)}( \mfr{p}_2) )\sub{D}\\
    &= d_{com,A}^{(n)}(\mfr{p}) \sub{D}
\end{align*}
This completes the verification of this case. Next, suppose that $l$ is even. In this case, $\mfr{p}_1$ (resp. $\mfr{p}_2$) is of type $D$ (resp. type $C$). If $\mfr{p}_1= [n]\sqcup \mfr{p}_1'$, then 
\[ (([n^l]\sqcup \mfr{p}_1{}^{-}) + \mfr{p}_2 {}^{+} )\sub{C}=(( [n^{l+1}]+ \mfr{p}_2{}^+) \sqcup (\mfr{p}_1')^{-})\sub{C}=( [n^{l+1}]+ \mfr{p}_2{}^+)\sub{C} \sqcup (\mfr{p}_1')^{-}\sub{C} = ([n^{l+1}]\sqcup (\mfr{p}_1')^{-}\sub{C})+\mfr{p}_2{}^+\sub{B},  \]
and similar argument in the previous case works. If the largest part of $\mfr{p}_1$ is less than $n$, then 
\[(([n^l]\sqcup \mfr{p}_1{}^{-}) + \mfr{p}_2 {}^{+} )\sub{C} = (([n^l]+\mfr{p}_2{}^+)\sqcup \mfr{p}_1{}^-)\sub{C}=([n^l]+\mfr{p}_2{}^{+-})\sub{C} \sqcup \mfr{p}_1{}^{+-}\sub{C}= ([n^l]\sqcup \mfr{p}_1{}^{+-}\sub{C}) + \mfr{p}_2 {}^{+-} \sub{D}. \]
Thus, by Part (v) for $\mfr{p}_2$,  we have
\begin{align*}
    d_{com,A}^{(n)}( (([n^l]\sqcup \mfr{p}_1) + \mfr{p}_2)^{+-}\sub{C} ) \sub{D}&=    d_{com,A}^{(n)}( (([n^l]\sqcup \mfr{p}_1{}^{-}) + \mfr{p}_2 {}^{+} )\sub{C} ) \sub{D}\\
    &= d_{com,A}^{(n)}( ([n^l]\sqcup \mfr{p}_1{}^{+-}\sub{C}) + \mfr{p}_2 {}^{+-} \sub{D} ) \sub{D}\\
    &= ([nl+b]\sqcup d_{com,A}^{(n)}(\mfr{p}_2{}^{+-}\sub{D})) \sub{D}\\
    &= [nl+b-1] \sqcup d_{com,A}^{(n)}(\mfr{p}_2{}^{+-}\sub{D})^{+} \sub{B}\\
    &=[nl+b-1] \sqcup d_{com,A}^{(n)}(\mfr{p}_2)^{+} \sub{B}\\
    &=([nl+b] \sqcup d_{com,A}^{(n)}(\mfr{p}_2))\sub{D}\\
    &= d_{com,A}^{(n)}(\mfr{p})\sub{D}.
\end{align*}
This completes the verification of this case and the reduction for Part (vi).

\end{proof}

\subsection{\texorpdfstring{Compatibility with induction for $d_{com,X}^{(n)}$}{}}
In this subsection, we prove the compatibility of $d_{com,X}^{(n)}(\mfr{p})$. We start with a useful identity.
\begin{lemma}\label{lem trans ind}
    Let $\mfr{q}_0, \mfr{q}_1, \mfr{q}_2$ be three partitions. Let $X \in \{C,D\}$ if $|\mfr{q}_0|$ is even and $X=B$ otherwise. We have the following identity
    \begin{align}\label{eq trans ind CM93}
        ((\mfr{q}_0+ 2 \mfr{q}_1)\sub{X} + 2 \mfr{q}_2)\sub{X} = ( \mfr{q}_0 + 2( \mfr{q}_1+\mfr{q}_2 ) )\sub{X}.
    \end{align}
\end{lemma}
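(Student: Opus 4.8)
The plan is to reduce the assertion to a known transitivity property of nilpotent orbit induction. Recall (see e.g.\ \cite{CM93} or the references used throughout \S\ref{S:dBV-cla}) that for a partition $\mfr{r}$ of type $X$ and a partition $\mfr{s}$, one has the ``add boxes in pairs'' description of induction from a Levi of the form $\GL \times (\text{classical group})$: namely $\Ind_{\GL_a \times \mbf{H}'}^{\mbf{H}}(\mca{O}_{\mfr{r}}) = \mca{O}_{(\mfr{r} + 2[a])_X}$ when $\mbf{H}$ is of type $X$, and more generally $\Ind(\mca{O}_{\mfr{r}}) = (\mfr{r} + 2\mfr{s})_X$ for the orbit induced from $\prod_i \GL_{s_i} \times \mbf{H}'$. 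Transitivity of induction of nilpotent orbits, $\Ind_{\mbf{L}_1}^{\mbf{H}} = \Ind_{\mbf{L}_2}^{\mbf{H}} \circ \Ind_{\mbf{L}_1}^{\mbf{L}_2}$ for $\mbf{L}_1 \subseteq \mbf{L}_2 \subseteq \mbf{H}$, then yields exactly \eqref{eq trans ind CM93}: both sides compute the orbit induced to a group $\mbf{H}$ of type $X$ with $|\mfr{q}_0| + 2|\mfr{q}_1| + 2|\mfr{q}_2|$ the relevant rank datum, from the Levi $\prod \GL_{(\mfr{q}_1)_j} \times \prod \GL_{(\mfr{q}_2)_k} \times \mbf{H}_0$ where $\mbf{H}_0$ is the classical group of the same type $X$ with partition-rank $|\mfr{q}_0|$, the left-hand side grouping the $\mfr{q}_1$-factors first and the $\mfr{q}_2$-factors second, the right-hand side taking them all at once.

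Concretely I would argue as follows. First fix $X$ and the ambient classical group $\mbf{H}$ (of type $X$) whose nilpotent orbits are parametrized by partitions of the appropriate size; the case hypothesis on the parity of $|\mfr{q}_0|$ is exactly the condition under which $\mbf{H}$ of type $X$ exists with $\mca{O}_{\mfr{q}_0}$ a genuine nilpotent orbit of $\mbf{H}_0$. Second, record the combinatorial formula $\Ind_{\prod_j \GL_{m_j} \times \mbf{H}_0}^{\mbf{H}}(\mca{O}_{\mfr{q}_0}) = (\mfr{q}_0 + 2[m_1, m_2, \ldots])_X$; this is the statement that inducing through a $\GL$-block adds the block twice and then collapses, iterated, and it is insensitive to the order of the blocks because $X$-collapse is monotone and the sum of partitions is commutative. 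Third, apply this formula twice to the left-hand side of \eqref{eq trans ind CM93}: inducing first from the $\mfr{q}_1$-Levi gives $(\mfr{q}_0 + 2\mfr{q}_1)_X$, and then inducing that orbit (now an orbit of the intermediate classical group of the same type $X$) from the $\mfr{q}_2$-Levi gives $((\mfr{q}_0 + 2\mfr{q}_1)_X + 2\mfr{q}_2)_X$. Fourth, apply the formula once to the right-hand side with the combined block data $\mfr{q}_1 + \mfr{q}_2$, giving $(\mfr{q}_0 + 2(\mfr{q}_1 + \mfr{q}_2))_X$. Transitivity of induction identifies the two, proving \eqref{eq trans ind CM93}.

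Alternatively, and perhaps more in the spirit of \S\ref{S:dBV-cla} where everything is kept purely combinatorial, one can prove \eqref{eq trans ind CM93} directly from the definition of $X$-collapse without invoking orbit induction at all: the key point is that for any partition $\mfr{r}$ of type $X$ and any partition $\mfr{s}$, the partitions $(\mfr{r}_X + 2\mfr{s})_X$ and $(\mfr{r} + 2\mfr{s})_X$ coincide. Granting that lemma, the left-hand side of \eqref{eq trans ind CM93} equals $((\mfr{q}_0 + 2\mfr{q}_1) + 2\mfr{q}_2)_X = (\mfr{q}_0 + 2(\mfr{q}_1 + \mfr{q}_2))_X$ immediately, using associativity and commutativity of partition addition. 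So the entire content is the ``collapse commutes with adding an even partition'' lemma, which in turn follows from the characterization of $X$-collapse as the largest type-$X$ partition dominated by the given one, together with the fact that adding $2\mfr{s}$ is a monotone operation for the dominance order that preserves the defect being corrected by collapse. This is precisely the kind of manipulation already used via \cite{HLLS24}*{Lemma 12.2} and \cite{Ach03}*{Lemma 3.3} elsewhere in this section.

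\textbf{Main obstacle.} The genuine work is the ``collapse absorbs an even partition'' lemma $(\mfr{r}_X + 2\mfr{s})_X = (\mfr{r} + 2\mfr{s})_X$ (equivalently, the well-definedness of iterated induction through a chain of $\GL$-blocks). One direction is easy from monotonicity of collapse, namely $(\mfr{r}_X + 2\mfr{s})_X \lest (\mfr{r} + 2\mfr{s})_X$ since $\mfr{r}_X \lest \mfr{r}$; the reverse inequality requires checking that every place where $\mfr{r}$ fails the type-$X$ parity condition and is corrected by the collapse remains corrected after adding $2\mfr{s}$ and that no new corrections are spuriously introduced — this is a finite but slightly delicate bookkeeping of hook-lengths and the parity of parts, exactly analogous to Achar's argument in \cite{Ach03}*{Lemma 3.3}. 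Once that lemma is in hand, \eqref{eq trans ind CM93} is immediate.
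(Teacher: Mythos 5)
Your overall strategy is the same as the paper's: handle the case where $\mfr{q}_0$ is already of type $X$ by transitivity of induction of nilpotent orbits (\cite{CM93}*{Proposition 7.1.4(ii)}), and observe that everything else reduces to the single identity $(\mfr{r}\sub{X}+2\mfr{s})\sub{X}=(\mfr{r}+2\mfr{s})\sub{X}$ for an arbitrary partition $\mfr{r}$. You have correctly located where the content of the lemma lies. Two issues, however.

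First, a genuine gap: you never prove the key identity. Your "Main obstacle" paragraph gives the easy inequality $(\mfr{r}\sub{X}+2\mfr{s})\sub{X}\lest(\mfr{r}+2\mfr{s})\sub{X}$ and then asserts that the reverse inequality is "a finite but slightly delicate bookkeeping... exactly analogous to Achar's argument." That is precisely the step that constitutes the proof, and an analogy is not a substitute for it. Note in particular that the natural route for the reverse inequality — showing $(\mfr{r}+2\mfr{s})\sub{X}\lest\mfr{r}\sub{X}+2\mfr{s}$ and then collapsing — is not obviously available, since $(\mfr{r}+2\mfr{s})\sub{X}\lest\mfr{r}+2\mfr{s}$ does not by itself give domination by the strictly smaller partition $\mfr{r}\sub{X}+2\mfr{s}$. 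The paper closes this by two further reductions before any computation: using the already-established cases to split $\mfr{q}_2$ additively, one reduces to $\mfr{q}_2=[1^d]$, i.e.\ to $(\mfr{q}\sub{X}+[2^d])\sub{X}=(\mfr{q}+[2^d])\sub{X}$; then, writing $\mfr{q}=\mfr{q}_{>a}\sqcup[a^r]\sqcup\mfr{q}_{<a}$ with $a$ the $d$-th largest part and applying \cite{HLLS24}*{Lemma 12.2} twice, one may assume $\mfr{q}_{>a}$ is empty or $[a+1]$ and $\mfr{q}_{<a}$ is empty or $[a-1]$, after which the identity is a finite direct check. Some such explicit argument is required for your proof to be complete.

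Second, a minor but real error in your first paragraph: the parity of $|\mfr{q}_0|$ only guarantees that a classical group of type $X$ of the right rank exists; it does not make $\mfr{q}_0$ a partition of type $X$, so $\mca{O}_{\mfr{q}_0}$ need not be a nilpotent orbit of $\mbf{H}_0$ and the "apply the induction formula directly" version of your first argument does not parse for general $\mfr{q}_0$. You implicitly repair this in the alternative argument by isolating the collapse-absorption identity, but as written the first approach only proves the case where $\mfr{q}_0$ is of type $X$ — which is exactly the paper's first step, not the whole lemma.
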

\begin{proof}
To make the argument clear, we write $E(\mfr{q}_0, \mfr{q}_1, \mfr{q}_2)=1$ if \eqref{eq trans ind CM93} holds.

    First, observe that $E(\mfr{q}_0, \mfr{q}_1, \mfr{q}_2)=1$ if $\mfr{q}_0$ is already of type $X$. Indeed, the identity is a consequence of the transitivity of induction of nilpotent orbits (see \cite{CM93}*{Proposition 7.1.4(ii)}). 
    Next, we show that it suffices to show that $E(\mfr{p}_0, \emptyset, \mfr{p}_2)=1$ for arbitrary $\mfr{p}_0$ and $\mfr{p}_2$. Indeed,
    \begin{align*}
        ((\mfr{q}_0+ 2 \mfr{q}_1)\sub{X} + 2 \mfr{q}_2)\sub{X}&= (((\mfr{q}_0)\sub{X}+ 2 \mfr{q}_1)\sub{X} + 2 \mfr{q}_2)\sub{X}\\
        &= ( (\mfr{q}_0)\sub{X} + 2( \mfr{q}_1+\mfr{q}_2 ) )\sub{X}\\
        &= ( \mfr{q}_0 + 2( \mfr{q}_1+\mfr{q}_2 ) )\sub{X},
    \end{align*}
    where the first and the third equalities follows from  $E(\mfr{q}_0, \emptyset, \mfr{q}_1)=1$ and $E(\mfr{q}_0, \emptyset, \mfr{q}_1+\mfr{q}_2)=1$, and the second equality holds since $E((\mfr{q}_0)_X, \mfr{q}_1, \mfr{q}_2)=1$ as   
    $(\mfr{q}_0)_X$ is of type $X$.

    Thirdly, if $\mfr{q}_2= \mfr{q}_{21}+ \mfr{q}_{22}$, then we show that $E(\mfr{p}, \emptyset, \mfr{q}_2)=1$ for arbitrary $\mfr{p}$ if $E(\mfr{p}', \emptyset, \mfr{q}_{21})=1$ and $E(\mfr{p}'', \emptyset, \mfr{q}_{22})=1$ for arbitrary $\mfr{p}', \mfr{p}''$. Indeed, we have 
    \begin{align*}
        (\mfr{p}\sub{X}+ 2(\mfr{q}_{21}+\mfr{q}_{22}))&= ((\mfr{p}\sub{X}+2 \mfr{q}_{21})_X +2 \mfr{q}_{22})\sub{X}\\
        &= ((\mfr{p}+2 \mfr{q}_{21})_X +2 \mfr{q}_{22})\sub{X}\\
        &=(\mfr{p}+2 \mfr{q}_{21} +2 \mfr{q}_{22})\sub{X},
    \end{align*}
where the equalities hold since $E(\mfr{p}_X, \mfr{q}_{21}, \mfr{q}_{22})=1$, $E(\mfr{p},\emptyset, \mfr{q}_{21})=1$ and $E(\mfr{p}+2 \mfr{q}_{21}, \emptyset, \mfr{q}_{22})=1$ respectively.

Finally, we reduce to show that $E(\mfr{q}, \emptyset, \mfr{q}_2)=1$ under the assumption that $\mfr{q}_2$ can not be written as a sum of two non-empty partitions. In other words, $\mfr{q}_2=[1^d]$ for some $d\gest 1$.
We rewrite the desired equality as follows
    \[ (\mfr{q}\sub{X} + [2^d])\sub{X} = ( \mfr{q} + [2^d] )\sub{X}.\]    
    If $l(\mfr{q})\lest d$, then the identity clearly holds by the process of computing $X$-collapse. If  $l(\mfr{q})>d$, let $a$ denote the $d$-th largest part of $\mfr{q}$ and write $\mfr{q}= \mfr{q}_{>a} \sqcup [a^{r}] \sqcup \mfr{q}_{<a}$, where every part of $\mfr{q}_{>a}$ (resp. $\mfr{q}_{<a}$) is strictly greater than (resp. less than) $a$. Applying \cite{HLLS24}*{Lemma 12.2} twice, we may further assume that $ \mfr{q}_{>a}$ is either empty, or equal to $[a+1]$, and that $ \mfr{q}_{<a}$ is either empty, or equal to $[a-1]$. Then the desired equality follows from a case by case direct computation.
\end{proof}

We need an analogue of \cite{HLLS24}*{Lemmas 12.4, 12.7, 12.10} for $D$-collapse.
\begin{lemma}\label{lem HLLS24 D-collapse}
   Let $\mathfrak{p}$ be a partition of $2n$ and write $\mfr{p}=\mfr{p}_{>b} \sqcup \mfr{p}_{=b} \sqcup \mfr{p}_{<b}$, where every part in $\mfr{p}_{>b}$ (resp. $\mfr{p}_{=b}$, $\mfr{p}_{<b}$) is strictly greater than (resp. equal to, strictly smaller than) $b$.
    Then
    \begin{align}\label{eq type D collapse [b,b] 1}
        (\mathfrak{p}\sqcup [b^2])\sub{D}= \mfr{p}\sub{D} \sqcup [b^2]
    \end{align}
    unless all of the following conditions hold:
    \begin{enumerate}
        \item [(i)] $b$ is even,
        \item [(ii)] $l(\mfr{p}_{>b})+ |\mfr{p}_{>b}|$ is odd, and
        \item [(iii)] $\mfr{p}_{=b}$ is empty.
    \end{enumerate}
    If \eqref{eq type D collapse [b,b] 1} fails, then
    \begin{align}\label{eq type D collapse [b,b] 2}
        (\mathfrak{p}\sqcup [b^2])\sub{D}= \mfr{p}\sub{D} \sqcup [b+1, b-1].
    \end{align}
\end{lemma}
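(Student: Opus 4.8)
The plan is to compute $(\mfr{p}\sqcup[b^2])\sub{D}$ directly from the explicit description of $D$-collapse in \cite{CM93}*{\S 6.3}, adapting to type $D$ the argument used for the type $B$ and type $C$ analogues in \cite{HLLS24}*{Lemmas 12.4, 12.7, 12.10}. The first step is a normalization: I would apply \cite{HLLS24}*{Lemma 12.2} to the part $\mfr{p}_{>b}$ of $\mfr{p}$ lying strictly above $b$ and, separately, to the part $\mfr{p}_{<b}$ lying strictly below $b$, reducing to the case in which each of $\mfr{p}_{>b}$ and $\mfr{p}_{<b}$ consists of at most one part, the relevant remaining data of $\mfr{p}_{=b}=[b^m]$ being only whether $m=0$ and the parity of $m$. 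Since conditions (i)--(iii) depend only on $b\bmod 2$, on whether $\mfr{p}_{=b}$ is empty, and on the parity of $l(\mfr{p}_{>b})+|\mfr{p}_{>b}|$ --- equivalently, on the parity of the number of even parts of $\mfr{p}$ that are strictly greater than $b$ --- one has to check that each such reduction preserves these invariants; this is routine but must be recorded. After the reduction only finitely many local shapes of $\mfr{p}$ around $b$ remain. I would also note at this point that conditions (i) and (ii) together force $\mfr{p}$ itself not to be of type $D$ (if it were, every even part would have even multiplicity, so the number of even parts $>b$ would be even), so that $\mfr{p}\sub{D}$ is strictly smaller than $\mfr{p}$ whenever the exceptional case is in force; this is what makes the right-hand side of \eqref{eq type D collapse [b,b] 2} compatible with $D$-collapse being order-decreasing.

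With $\mfr{p}$ in reduced form I would run the algorithm and split into cases. If $b$ is odd, the part $b$ carries no multiplicity constraint for type $D$, so inserting $[b^2]$ neither creates nor cancels a violation at any value and leaves the sequence of collapse moves for $\mfr{p}$ unchanged, with the two new rows passively carried along; the output is $\mfr{p}\sub{D}\sqcup[b^2]$. The same conclusion holds if $b$ is even but either $\mfr{p}_{=b}\neq\emptyset$ or $l(\mfr{p}_{>b})+|\mfr{p}_{>b}|$ is even: then inserting the pair changes the multiplicity of $b$ by an even amount without altering the parity of the number of even rows lying above it, so no extra move at $b$ is forced and again the collapse produces $\mfr{p}\sub{D}\sqcup[b^2]$. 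If instead (i), (ii) and (iii) all hold, the two inserted rows form the entire run of $b$'s, of odd length, preceded by an odd number of even rows; the algorithm is then forced to perform one further move that, after all moves are carried out, has the net effect of replacing $[b,b]$ by $[b+1,b-1]$ while collapsing $\mfr{p}$ to $\mfr{p}\sub{D}$, i.e. the output is $\mfr{p}\sub{D}\sqcup[b+1,b-1]$. In each of the finitely many reduced configurations this is a short explicit verification; along the way one checks that $\mfr{p}\sub{D}\sqcup[b^2]$, respectively $\mfr{p}\sub{D}\sqcup[b+1,b-1]$, is of type $D$ (using that $b\pm1$ are odd when $b$ is even) and is dominated by $\mfr{p}\sqcup[b^2]$, hence equals its $D$-collapse.

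The main obstacle is that, in the exceptional case, the $D$-collapse genuinely redistributes mass across the inserted pair --- it shrinks the $\mfr{p}$-part from $\mfr{p}$ to $\mfr{p}\sub{D}$ while enlarging $[b,b]$ to $[b+1,b-1]$ --- so a purely order-theoretic argument (exhibiting $\mfr{p}\sub{D}\sqcup[b^2]$ as the largest type-$D$ partition below $\mfr{p}\sqcup[b^2]$) does not suffice even in the generic case: one must track the collapse algorithm through the reduced configurations and verify directly that this redistribution does not occur unless (i)--(iii) all hold. The remaining difficulty is the careful bookkeeping for the \cite{HLLS24}*{Lemma 12.2} reduction and for the parity of $l(\mfr{p}_{>b})+|\mfr{p}_{>b}|$, which is of the same nature as the computations already carried out in \cite{HLLS24} and in the proofs of Lemmas \ref{lem Achar identities} and \ref{lem trans ind} above.
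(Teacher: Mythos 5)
Your proposal is correct and follows essentially the same route as the paper: a case division governed exactly by conditions (i)--(iii), with \cite{HLLS24}*{Corollary 12.3, Lemma 12.2} used to localize the $D$-collapse at the threshold $b$ and a direct computation in the exceptional case producing $[b+1,b-1]$. The only organizational difference is that you first normalize $\mfr{p}_{>b}$ and $\mfr{p}_{<b}$ to finitely many small configurations before running the algorithm, whereas the paper keeps $\mfr{p}_{>b}$, $\mfr{p}_{=b}$, $\mfr{p}_{<b}$ general and compares the symbolic splittings of the two collapses; both amount to the same bookkeeping.
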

\begin{proof}
If $b$ is odd, then \eqref{eq type D collapse [b,b] 1} follows from \cite{HLLS24}*{Corollary 12.3(iii)}. We assume that $b$ is even from now on.

If $l(\mfr{p}_{>b})+ |\mfr{p}_{>b}|$ is even, then by \cite{HLLS24}*{Lemma 12.2}, we have
\[ (\mfr{p}_{>b} \sqcup (\mfr{p}_{=b}\sqcup \mfr{p}_{<b}))\sub{D}= (\mfr{p}_{>b}) \sub{O}  \sqcup (\mfr{p}_{=b}\sqcup \mfr{p}_{<b}))\sub{O},\]
and
\begin{align*}
    (\mfr{p}_{>b} \sqcup (\mfr{p}_{=b}\sqcup [b^2]\sqcup \mfr{p}_{<b}))\sub{D}=  (\mfr{p}_{>b}) \sub{O} \sqcup (\mfr{p}_{=b}\sqcup [b^2]\sqcup \mfr{p}_{<b}))\sub{O} =  (\mfr{p}_{>b}) \sub{O} \sqcup  [b^2] \sqcup (\mfr{p}_{=b}\sqcup \mfr{p}_{<b}))\sub{O},
\end{align*}
where $O \in \{B,D\}$. We again obtain \eqref{eq type D collapse [b,b] 1}. We assume that $l(\mfr{p}_{>b})+ |\mfr{p}_{>b}|$ is odd from now on. 

Now we assume that $\mfr{p}_{=b}$ is non-empty and write $\mfr{p}_{=b}= [b^r]$. Then for some $O \in \{B,D\},$
Thus,
\[ (\mfr{p}_{>b} \sqcup ([b^r] \sqcup \mfr{p}_{<b}))\sub{D}= (\mfr{p}_{>b})^- \sub{O}  \sqcup [b+1]\sqcup ([b^{r-1}]\sqcup \mfr{p}_{<b})) \sub{O},\]
and
\begin{align*}
    (\mfr{p}_{>b} \sqcup (\mfr{p}_{=b}\sqcup [b^2]\sqcup \mfr{p}_{<b}))\sub{D}&=  (\mfr{p}_{>b})^- \sub{O} \sqcup (\mfr{p}_{=b}\sqcup [b^2]\sqcup \mfr{p}_{<b}))^+\sub{O} \\
    &=  (\mfr{p}_{>b})^{-} \sub{O} \sqcup  [b+1] \sqcup [b^2]  \sqcup ([b^{r-1}] \sqcup \mfr{p}_{<b})) \sub{O},
\end{align*}
 which verifies \eqref{eq type D collapse [b,b] 1} again.

Finally, assume that Conditions (i), (ii) and (iii) all hold. Then 
\[ (\mfr{p}_{>b} \sqcup ( \mfr{p}_{<b}))\sub{D}= (\mfr{p}_{>b})^- \sub{O} \sqcup (\mfr{p}_{<b})^+ \sub{O},\]
and
\begin{align*}
    (\mfr{p}_{>b} \sqcup ([b^2]\sqcup \mfr{p}_{<b}))\sub{D}&=  (\mfr{p}_{>b})^- \sub{O} \sqcup ( [b^2]\sqcup \mfr{p}_{<b}))^+\sub{O} \\
    &=  (\mfr{p}_{>b})^{-} \sub{O} \sqcup  [b+1] \sqcup  ([b]  \sqcup \mfr{p}_{<b})  \sub{O'}\\
    &= (\mfr{p}_{>b})^{-} \sub{O} \sqcup  [b+1,b-1]  \sqcup (\mfr{p}_{<b})^{+}  \sub{O'}.
\end{align*}
This proves \eqref{eq type D collapse [b,b] 2} in this case and completes the proof of the lemma.
\end{proof}

\begin{prop}\label{P:ind BCD}
    Let $X,X' \in \{B,C\}$ or $X=X'=D$. Take any $n \in \Z_{>0}$ and set $n^{\ast}:=n/\gcd(n,2)$.    For any partition $\mathfrak{p}$ of type $X'$ and $\mathfrak{q}$, such that $d_{com,X}^{(n)} ( \mathfrak{p})$ is well-defined, we have 
\begin{align}\label{eq ind sat com}
     d_{com,X}^{(n)} ( \mathfrak{p} \sqcup \mathfrak{q}\sqcup \mathfrak{q} )= ( d_{com,X}^{(n)}(\mathfrak{p}) + 2 d_{com,A}^{(n^{\ast})}(\mathfrak{q}))_{X}.
\end{align}
\end{prop}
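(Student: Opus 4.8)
The plan is to reduce \eqref{eq ind sat com} to an identity in type $A$ plus a collapse identity, in the spirit of the proof of Lemma~\ref{lem trans ind}.

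\emph{Reduction to $\mfr{q}$ a single row.} First I would show that if \eqref{eq ind sat com} holds whenever $\mfr{q}\in\{\mfr{q}_1,\mfr{q}_2\}$ (for every $\mfr{p}$ of type $X'$ with $d_{com,X}^{(n)}(\mfr{p})$ well-defined), then it holds for $\mfr{q}=\mfr{q}_1\sqcup\mfr{q}_2$. Indeed, $\mfr{q}_1\sqcup\mfr{q}_1$ has all multiplicities even, so $\mfr{p}\sqcup\mfr{q}_1\sqcup\mfr{q}_1$ is again of type $X'$ and admissible; applying the $\mfr{q}_2$-case with base partition $\mfr{p}\sqcup\mfr{q}_1\sqcup\mfr{q}_1$, then the $\mfr{q}_1$-case with base $\mfr{p}$, and finally Lemma~\ref{lem ind A}(a) together with Lemma~\ref{lem trans ind} (applied with $\mfr{q}_0=d_{com,X}^{(n)}(\mfr{p})$, which is of type $X$) gives \eqref{eq ind sat com} for $\mfr{q}_1\sqcup\mfr{q}_2$. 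Inducting on $\ell(\mfr{q})$, it suffices to treat $\mfr{q}=[m]$.

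\emph{Passage to type $A$.} Write $m=n^{\ast}a+b$ with $0\lest b<n^{\ast}$, set $\mfr{r}:=d_{com,A}^{(n^{\ast})}(\mfr{p})$ and $\mfr{c}:=((2n^{\ast})^{a}(2b))$, and note $2\,d_{com,A}^{(n^{\ast})}([m])=\mfr{c}$. Unwinding the case-by-case definition of $d_{com,X}^{(n)}$ from \S\ref{SS:fix-cov} and using Lemma~\ref{lem ind A}(a) (so that $d_{com,A}^{(n^{\ast})}(\mfr{p}\sqcup[m]\sqcup[m])=\mfr{r}+\mfr{c}$), the two sides of \eqref{eq ind sat com} become $(\mca{F}(\mfr{r}+\mfr{c}))\sub{X}$ and $((\mca{F}(\mfr{r}))\sub{X}+\mfr{c})\sub{X}$, where $\mca{F}\in\{\mathrm{id},(\cdot)^{+},(\cdot)^{-},(\cdot)^{+-}\}$ depending on $X$ and the parities of $n$ and $n^{\ast}$. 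Since $|\mca{F}(\mfr{r})|=|d_{com,X}^{(n)}(\mfr{p})|$ already has the parity imposed by type $X$, Lemma~\ref{lem trans ind} rewrites the second expression as $(\mca{F}(\mfr{r})+\mfr{c})\sub{X}$, so everything comes down to
\[(\mca{F}(\mfr{r}+\mfr{c}))\sub{X}=(\mca{F}(\mfr{r})+\mfr{c})\sub{X}.\]

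\emph{The commutation, and the main obstacle.} If $\mca{F}=\mathrm{id}$ there is nothing to prove (this covers $X=D$ and the ``even'' subcases of $B,C$). If $\mca{F}$ is $(\cdot)^{+}$ or the leading $(\cdot)^{+}$ of $(\cdot)^{+-}$, then already $(\mfr{t}+\mfr{c})^{+}=\mfr{t}^{+}+\mfr{c}$ as partitions, both adding one box to the common first row, so the identity follows at once. The remaining case is the trailing $(\cdot)^{-}$, which occurs precisely for $X=C$ with $n^{\ast}$ odd (and inside $(\cdot)^{+-}$, applied to $\mfr{t}=\mfr{r}^{+}$, which still has odd size); this is where essentially all the work lies. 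Here $(\mfr{t}+\mfr{c})^{-}=\mfr{t}^{-}+\mfr{c}$ as partitions as soon as $\ell(\mfr{t})\gest\ell(\mfr{c})$, while for $\ell(\mfr{t})<\ell(\mfr{c})$ the two differ only by relocating one box from the last row to row $\ell(\mfr{t})$, and I must show that this move is invisible to the $C$-collapse. I would prove the general fact
\[(\mfr{a}+2\mfr{b})^{-}\sub{C}=(\mfr{a}^{-}+2\mfr{b})\sub{C}\qquad\text{($|\mfr{a}|$ odd, $\mfr{b}$ any partition)}\]
by decomposing $\mfr{a}+2\mfr{b}$ into its large block (the first $\ell(\mfr{a})$ rows, equal to $\mfr{a}$ superposed with the top of $2\mfr{b}$) and its all-even tail (the remaining rows of $2\mfr{b}$), invoking \cite{HLLS24}*{Lemma~12.2} and the identities of Lemma~\ref{lem Achar identities} to handle the junction, and then verifying the tail contribution by a short direct computation according to the parity of $b$. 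This last case analysis is elementary but is the combinatorial heart of the proof.
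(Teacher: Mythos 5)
Your reduction to $\mfr{q}=[m]$ and the subsequent passage to the ``dual side'' are both correct, and the resulting architecture is genuinely different from the paper's. The paper keeps the decorations on the primal side: it uses Lemma \ref{lem Achar identities} to rewrite the left side as $d_{com,A}^{(n^{\ast})}((\mfr{p}\sqcup[b^2])^{\pm}\sub{X'})\sub{X}$ and then runs a case analysis (via \cite{HLLS24}*{Lemmas 12.4, 12.7} and Lemma \ref{lem HLLS24 D-collapse}) on whether the collapse of $\mfr{p}\sqcup[b^2]$ separates from the $[b^2]$ block, with a delicate subcase when $b\equiv 0\pmod{n^{\ast}}$. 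Your version writes both sides as collapses of $\mca{F}(\mfr{r})+\mfr{c}$ versus $\mca{F}(\mfr{r}+\mfr{c})$ and notes that $(\cdot)^{+}$ commutes with $+\,\mfr{c}$ on the nose; this disposes of all of type $B$ (including the paper's hardest subcase of Case B) with no case analysis, and isolates the entire remaining difficulty in the single identity $(\mfr{a}+2\mfr{b})^{-}\sub{C}=(\mfr{a}^{-}+2\mfr{b})\sub{C}$ for $|\mfr{a}|$ odd. That is a cleaner organization, and it is worth noting that it never needs Lemma \ref{lem Achar identities} at all.

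The gap is that this identity, which carries the full weight of Cases C1 and C2, is asserted rather than proved: your one-sentence ``large block / even tail'' strategy sits exactly where the difficulty lies, since when $l(\mfr{a})<l(2\mfr{b})$ the collapse of $(\mfr{a}+2\mfr{b})^{-}$ can cascade through several rows of the even tail (e.g.\ $\mfr{a}=[1]$, $2\mfr{b}=[2,2,2,2]$, where $[3,2,2,1]\sub{C}=[2,2,2,2]$ only after three boxes move), and a block decomposition with a ``junction'' correction is not obviously manageable. The identity is nevertheless true, and admits a short direct proof by partial sums rather than by decomposition: writing $\mfr{u}:=(\mfr{a}+2\mfr{b})^{-}$ and $\mfr{s}:=\mfr{a}^{-}+2\mfr{b}$, one has $\mfr{s}\lest\mfr{u}$ with partial sums $U_j=S_j+1$ exactly for $l(\mfr{a})\lest j<l(2\mfr{b})$, and there $U_j=|\mfr{a}|+2(b_1+\cdots+b_j)$ is odd. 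If a type-$C$ partition $\mfr{t}\lest\mfr{u}$ had $T_j=U_j$ at such a $j$, the oddness of $T_j$ would force $t_j=t_{j+1}$ to be a common odd value, while $T_{j\pm1}\lest U_{j\pm1}$ forces $u_j\lest t_j=t_{j+1}\lest u_{j+1}\lest u_j$; hence $u_j=u_{j+1}$ would be odd, which is impossible because $u_i$ is even for $l(\mfr{a})<i<l(2\mfr{b})$ and the remaining boundary cases contradict $a_{l(\mfr{a})}\gest 1$. Thus every type-$C$ partition below $\mfr{u}$ lies below $\mfr{s}$, giving $\mfr{u}\sub{C}=\mfr{s}\sub{C}$. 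With this lemma supplied in place of your sketch, your proof is complete.
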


\begin{proof}
    By Lemmas \ref{lem ind A}(a), \ref{lem trans ind}, it is not hard to reduce the problem to the case that $\mfr{q}=[b]$, which we assume from now on. We start with an easier case that $X \in \{B,C\}$ with  $n^{\ast}$ even or $X=D$. In this case, \eqref{eq ind sat com} follows directly from Lemmas \ref{lem ind A}(a) and \ref{lem trans ind}:
\begin{align*}
    d_{com,X}^{(n)} ( \mfr{p} \sqcup [b^2])&=d_{com,A}^{(n^{\ast})} (\mfr{p} \sqcup [b^2])_{X}\\
    &= ( d_{com,A}^{(n^{\ast})} (\mfr{p}) + 2d_{com,A}^{(n^{\ast})}([b]))_{X}\\
    &= ( d_{com,A}^{(n^{\ast})} (\mfr{p})_X + 2d_{com,A}^{(n^{\ast})}([b]))_{X}\\
    &= (d_{com,X}^{(n)} ( \mfr{p}) + 2d_{com,A}^{(n^{\ast})}([b]))_{X}.
\end{align*}
Now we deal with the rest of the cases.\\

    \noindent\underline{Case B: $X=B$ and $n^{\ast}$ is odd:}

    Note that $\mfr{p}$ must be of type $C$ in this case. By Lemma \ref{lem Achar identities}(iv), the left hand side of \eqref{eq ind sat com} can be rewritten as $ d_{com,A}^{(n^{\ast})} ( (\mfr{p} \sqcup [b^2] )^+ \sub{B} )\sub{B}$. If 
    \begin{align}\label{eq prop ind B 1}
        (\mfr{p} \sqcup[b^2])^+\sub{B}= \mfr{p}^{+}\sub{B} \sqcup[b^2]
    \end{align}
    holds,    then again by Lemmas Lemmas \ref{lem ind A}(a), \ref{lem trans ind}, we have
    \[ d_{com,A}^{(n^{\ast})} ( (\mfr{p} \sqcup [b^2] )^+ \sub{B} )\sub{B}=( d_{com,A}^{(n^{\ast})} ( \mfr{p}^{+}\sub{B})+ 2 d_{com,A}^{(n^{\ast})}([b]))_{B}= ( d_{com,B}^{(n)}(\mathfrak{p}) + 2 d_{com,A}^{(n^{\ast})}([b]))_{B},  \]
    which verifies \eqref{eq ind sat com}.
    On the other hand, if \eqref{eq prop ind B 1} fails, then by \cite{HLLS24}*{Lemma 12.7}, we must have $b$ is even and
    \[  (\mfr{p} \sqcup[b^2])^+\sub{B}= \mfr{p}^{+}\sub{B} \sqcup[b+1, b-1]. \]
    Moreover, we may write $\mfr{p}=\mfr{p}_1 \sqcup \mfr{p}_2$, where every part in $\mfr{p}_1$ (resp. $\mfr{p}_2$) is strictly greater (resp. smaller) than $b$, and $l(\mfr{p}_1)$ is even ($|\mfr{p}_1|$ is even since $\mfr{p}$ is of type $C$). Thus, by \cite{HLLS24}*{Lemma 12.2}, we have
    \[\mfr{p}^{+}\sub{B} = \mfr{p}_1{}^{+-} \sub{D} \sqcup \mfr{p}_2{}^+ \sub{B}.  \]
    Now write $b=an^{\ast}+c$. If $c \neq 0$, then 
    \[ d_{com,A}^{(n^{\ast})}([b+1,b-1])= [(n^{\ast})^a, c+1]+ [(n^{\ast})^a,c-1]= 2 [(n^{\ast})^a,c]= d_{com,A}^{(n^{\ast})}([b,b]), \]
    and \eqref{eq ind sat com} holds in this case by the same argument above. Thus in the following, we assume that $c=0$, i.e. $b=an^{\ast}$. Note that then $a$ must be even. 

    Since every part in $\mfr{p}_1{}^{+-} \sub{D}$ is greater than or equal to $b$, we may write
    \[ d_{com,A}^{(n^{\ast})}(\mfr{p}_1{}^{+-} {}_{D})= [ (l(\mfr{p}_1) n^{\ast})^a] \sqcup \mfr{p}_1', \]
where every part of $\mfr{p}_1'$ is smaller than or equal to $l(\mfr{p}_1) n^{\ast}$. On the other hand, let $\mfr{p}_2':= d_{com,A}^{(n^{\ast})}([b^2]\sqcup \mfr{p}_2{}^+ {}_{B})$. Then $l(\mfr{p}_2')=l(\mfr{p}_2'{}^-)=a$ and $|\mfr{p}_2'|$ is odd. 
Now we compute the right hand side of \eqref{eq ind sat com}:
\begin{align*}
    (d_{com,B}^{(n)}(\mfr{p})+d_{com,A}^{(n^{\ast})}([b^2]))_{B}&= (d_{com,A}^{(n^{\ast})}(\mfr{p}^{+}\sub{B}) + d_{com,A}^{(n^{\ast})}([b^2]))_{B}\\
    &=( d_{com,A}^{(n^{\ast})}(\mfr{p}_1{}^{+-}\sub{D})+ d_{com,A}^{(n^{\ast})}(\mfr{p}_2{}^+ \sub{B} \sqcup[b^2]) )_{B}\\
    &= ((\mfr{p}_2'+ [(l(\mfr{p}_1) n^{\ast})^{a}]) \sqcup \mfr{p}_1' )_{B}\\
    &= (\mfr{p}_2'+ [(l(\mfr{p}_1) n^{\ast})^{a}])^- \sub{D} \sqcup \mfr{p}_1'{}^+ \sub{B}
\end{align*}    
On the other hand, the left hand side of \eqref{eq ind sat com} is
\begin{align*}
    d_{com,A}^{(n^{\ast})}( \mfr{p}^+ \sub{B} \sqcup [b+1, b-1] )_B&=  ( d_{com,A}^{(n^{\ast})}(\mfr{p}_1{}^{+-}\sub{D})+ d_{com,A}^{(n^{\ast})}(\mfr{p}_2{}^+ \sub{B})+ [ (n^{\ast})^{a-1}, n^{\ast}-1, 1 ] )_B\\
    &= (  ([(l(\mfr{p}_1) n^{\ast})^{a}] \sqcup \mfr{p}_1') + ( \mfr{p}_2' {}^- \sqcup [1]) )_B\\
    &= ( (\mfr{p}_2'+ [(l(\mfr{p}_1) n^{\ast})^{a}])^-  \sqcup \mfr{p}_1'{}^+ )\sub{B}\\
     &= (\mfr{p}_2'+ [(l(\mfr{p}_1) n^{\ast})^{a}])^- \sub{D} \sqcup \mfr{p}_1'{}^+ \sub{B}.
\end{align*}
This completes the proof of \eqref{eq ind sat com} in this case.

The rest of the cases that $X=C$ and $n$ is odd or $n^{\ast}$ is odd follow from the same method, but using \cite{HLLS24}*{Lemma 12.4} and Lemma \ref{lem HLLS24 D-collapse} instead. We detail below.\\

\noindent\underline{Case C1: $X=C$ and $n$ is odd:}

In this case, $\mfr{p}$ is of type $B$. By Lemma \ref{lem Achar identities}(i), we have 
\[d_{com,A}^{(n^{\ast})}(\mfr{p} \sqcup [b^2])^{-}\sub{C} = d_{com,A}^{(n^{\ast})}((\mfr{p}\sqcup[b^2])^{-}\sub{C})\sub{C}.\]
We shall only deal with the hard case that $b=a n^{\ast}$ is odd, and 
\[  (\mfr{p}\sqcup[b^2])^{-}\sub{C} = \mfr{p}^{-}\sub{C} \sqcup [b+1, b-1],\]
where $\mfr{p}= \mfr{p}_1 \sqcup \mfr{p}_2$ with every part in $\mfr{p}_1$ (resp. $\mfr{p}_2$) is strictly greater (resp. smaller) than $b$ and $|\mfr{p}_1|$ is odd (see \cite{HLLS24}*{Lemma 12.4}). Note that $l(\mfr{p}_1)$ is also odd since $\mfr{p}$ is of type $B$. Thus, by \cite{HLLS24}*{Lemma 12.2}, we have 
\[ \mfr{p}^{-}\sub{C}= \mfr{p}_1{}^-\sub{C} \sqcup \mfr{p}_2{}^{+-}\sub{C}. \]
Now we write 
\[d_{com,A}^{(n^{\ast})}(\mfr{p}_1{}^- \sub{C})= [(l(\mfr{p}_1) n^{\ast})^a] \sqcup \mfr{p}_1',\ \   d_{com,A}^{(n^{\ast})}(\mfr{p}_2{}^{+-}\sub{C} \sqcup[b^2])= \mfr{p}_2', \]
where every part of $\mfr{p}_1'$ is smaller than or equal to $l(\mfr{p}_1) n^{\ast}$, and $l(\mfr{p}_2')=l(\mfr{p}_2' {}^-)=a$ and $|\mfr{p}_2'|$ is even. Now we compute the right hand side of \eqref{eq ind sat com}:

\begin{align*}
    (d_{com,C}^{(n)}(\mfr{p})+d_{com,A}^{(n^{\ast})}([b^2]))_{C}&= (d_{com,A}^{(n^{\ast})}(\mfr{p}^{-}\sub{C}) + d_{com,A}^{(n^{\ast})}([b^2]))_{C}\\
    &=( d_{com,A}^{(n^{\ast})}(\mfr{p}_1{}^{-}\sub{C})+ d_{com,A}^{(n^{\ast})}(\mfr{p}_2{}^{+-} \sub{C} \sqcup[b^2]) )_{C}\\
    &= ((\mfr{p}_2'+ [(l(\mfr{p}_1) n^{\ast})^{a}]) \sqcup \mfr{p}_1' )_{C}\\
    &= (\mfr{p}_2'+ [(l(\mfr{p}_1) n^{\ast})^{a}])^- \sub{C} \sqcup \mfr{p}_1'{}^+ \sub{C}.
\end{align*}  
On the other hand, the left hand side of \eqref{eq ind sat com} is
\begin{align*}
    d_{com,A}^{(n^{\ast})}( \mfr{p}^- \sub{C} \sqcup [b+1, b-1] )_C&=  ( d_{com,A}^{(n^{\ast})}(\mfr{p}_1{}^{-}\sub{C})+ d_{com,A}^{(n^{\ast})}(\mfr{p}_2{}^{+-} \sub{C})+ [ (n^{\ast})^{a-1}, n^{\ast}-1, 1 ] )_C\\
    &= (  ([(l(\mfr{p}_1) n^{\ast})^{a}] \sqcup \mfr{p}_1') + ( \mfr{p}_2' {}^{-} \sqcup [1]) )_C\\
    &= ( (\mfr{p}_2'+ [(l(\mfr{p}_1) n^{\ast})^{a}])^-  \sqcup \mfr{p}_1'{}^+ )\sub{C}\\
     &= (\mfr{p}_2'+ [(l(\mfr{p}_1) n^{\ast})^{a}])^- \sub{C} \sqcup \mfr{p}_1'{}^+ \sub{C}.
\end{align*}
This completes the proof in this case.\\

 \noindent\underline{Case C2: $X=C$ and $n$ is even but $n^{\ast}$ is odd:}

In this case, $\mfr{p}$ is of type $C$. By Lemma \ref{lem Achar identities})(ii), we have 
\[ d_{com,A}^{(n^{\ast})}(\mfr{p}\sqcup[b^2])^{+-}\sub{C}= d_{com,A}^{(n^{\ast})} ((\mfr{p} \sqcup[b^2] )\sub{D})\sub{C}. \]
We shall only deal with the case that the hard case that $b=a n^{\ast}$ and 
\[ (\mfr{p} \sqcup[b^2]) \sub{D}=\mfr{p} \sub{D} \sqcup [b+1, b-1],\]
in which case we have $b$ is even, and $\mfr{p}= \mfr{p}_1 \sqcup \mfr{p}_2$ with every part in $\mfr{p}_1$ (resp. $\mfr{p}_2$) is strictly greater (resp. smaller) than $b$ and $l(\mfr{p}_1)+|\mfr{p}_1|$ is odd by Lemma \ref{lem HLLS24 D-collapse}. By \cite{HLLS24}*{Lemma 12.2}, we have for some $O \in \{B,D\}$,
\[ (\mfr{p_1} \sqcup \mfr{p}_2 )\sub{D}= \mfr{p}_1{}^{-} \sub{O} \sqcup \mfr{p}_2{}^{+}\sub{O},\]
and every part of $\mfr{p}_1{}^{-} \sub{O}$ is still strictly greater than $b$. Note that both $|\mfr{p}_1|$ and $|\mfr{p}_2|$ are even since $\mfr{p}$ is of type $C$.

Now we write 
\[d_{com,A}^{(n^{\ast})}(\mfr{p}_1 {}^-\sub{O})= [(l(\mfr{p}_1) n^{\ast})^a] \sqcup \mfr{p}_1',\ \   d_{com,A}^{(n^{\ast})}(\mfr{p}_2{}^{+}\sub{O} \sqcup[b^2])= \mfr{p}_2', \]
where every part of $\mfr{p}_1'$ is smaller than or equal to $l(\mfr{p}_1) n^{\ast}$, and $l(\mfr{p}_2')=l(\mfr{p}_2')=a$ and $|\mfr{p}_2'{}^-|$ is even. Now we compute the right hand side of \eqref{eq ind sat com}:
\begin{align*}
    (d_{com,C}^{(n)}(\mfr{p})+d_{com,A}^{(n^{\ast})}([b^2]))_{C}&= (d_{com,A}^{(n^{\ast})}(\mfr{p}\sub{D}) + d_{com,A}^{(n^{\ast})}([b^2]))_{C}\\
    &=( d_{com,A}^{(n^{\ast})}(\mfr{p}_1{}^{-}\sub{O})+ d_{com,A}^{(n^{\ast})}(\mfr{p}_2{}^{+} \sub{O} \sqcup[b^2]) )_{C}\\
    &= ((\mfr{p}_2'+ [(l(\mfr{p}_1) n^{\ast})^{a}]) \sqcup \mfr{p}_1' )_{C}\\
    &= (\mfr{p}_2'+ [(l(\mfr{p}_1) n^{\ast})^{a}])^- \sub{C} \sqcup \mfr{p}_1'{}^+ \sub{C}.
\end{align*}  
On the other hand, the left hand side of \eqref{eq ind sat com} is
\begin{align*}
    d_{com,A}^{(n^{\ast})}( \mfr{p} \sub{D} \sqcup [b+1, b-1] )_C&=  ( d_{com,A}^{(n^{\ast})}(\mfr{p}_1{}^{-}\sub{O})+ d_{com,A}^{(n^{\ast})}(\mfr{p}_2{}^{+} \sub{O})+ [ (n^{\ast})^{a-1}, n^{\ast}-1, 1 ] )_C\\
    &= (  ([(l(\mfr{p}_1) n^{\ast})^{a}] \sqcup \mfr{p}_1') + ( \mfr{p}_2' {}^{-} \sqcup [1]) )_C\\
    &= ( (\mfr{p}_2'+ [(l(\mfr{p}_1) n^{\ast})^{a}])^-  \sqcup \mfr{p}_1'{}^+ )\sub{C}\\
     &= (\mfr{p}_2'+ [(l(\mfr{p}_1) n^{\ast})^{a}])^- \sub{C} \sqcup \mfr{p}_1'{}^+ \sub{C}.
\end{align*}
This completes the proof in this case.
\end{proof}

\subsection{\texorpdfstring{Compatibility with induction for $\AP{X}(\lambda_{X'}^{(n)}(\mfr{p}))$ for type $B,C$}{}}

\begin{prop}\label{prop ind BCD Ann}
     Let $X,X' \in \{B,C\}$. Take any $n \in \Z_{>0}$ and set $n^{\ast}:=n/\gcd(n,2)$.    For any partition $\mathfrak{p}$ of type $X'$ and $\mathfrak{q}$,  we have 
        \[ \AP{X}( \lambda_{X'}^{(n)} (\mfr{p}\sqcup \mfr{q}\sqcup \mfr{q}) )=(\AP{X}( \lambda_{X'}^{(n)} (\mfr{p}) ) +2 \AP{A}( \lambda_{A}^{(n^{\ast})} (\mfr{q}) ) )_X\]
\end{prop}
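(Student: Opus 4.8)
The plan is to prove Proposition \ref{prop ind BCD Ann} by running the argument of Proposition \ref{P:ind BCD} again, but reading everything off the combinatorial description of $\AP{X}(\lambda_{X'}^{(n)}(-))$ furnished by Theorem \ref{thm BMW} instead of the definition of $d_{com,X}^{(n)}$, and using $\AP{A}(\lambda_{A}^{(n^{\ast})}(-))$ in place of $d_{com,A}^{(n^{\ast})}(-)$; the latter two coincide by Proposition \ref{prop comparison A}. (One cannot simply combine Proposition \ref{P:ind BCD} with Theorem \ref{T:comparison}, since the latter is established for types $B,C$ only afterwards, partly using the present proposition.)

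\textbf{Step 1: reduction to $\mfr{q}=[b]$.} I would first record two structural facts. Adjoining $\mfr{q}$ twice doubles every multiplicity in $\mfr{q}$, so $\mfr{p}\sqcup\mfr{q}\sqcup\mfr{q}$ is again of type $X'$; and $\lambda_{A}^{(n^{\ast})}(\mfr{q}_1\sqcup\mfr{q}_2)=\lambda_{A}^{(n^{\ast})}(\mfr{q}_1)+\lambda_{A}^{(n^{\ast})}(\mfr{q}_2)$ as multi-sets, so Lemma \ref{lem ind A}(b) gives additivity of $\AP{A}(\lambda_{A}^{(n^{\ast})}(-))$ over $\sqcup$. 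Combining these with Lemma \ref{lem trans ind}, applied with $\mfr{q}_0:=\AP{X}(\lambda_{X'}^{(n)}(\mfr{p}))$ (whose size has the parity required by $X$, being a partition of type $X$), an induction on the number of parts of $\mfr{q}$ reduces the claim to the case $\mfr{q}=[b]$ of a single row, exactly as in the opening paragraph of the proof of Proposition \ref{P:ind BCD}. By Proposition \ref{prop comparison A}, $\AP{A}(\lambda_{A}^{(n^{\ast})}([b]))=d_{com,A}^{(n^{\ast})}([b])=[(n^{\ast})^{a},c]$ with $b=an^{\ast}+c$, $0\lest c<n^{\ast}$.

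\textbf{Step 2: the case $\mfr{q}=[b]$.} Now $\lambda_{A}^{(n^{\ast})}(\mfr{p}\sqcup[b]\sqcup[b])=\lambda_{A}^{(n^{\ast})}(\mfr{p})+2\lambda_{A}^{(n^{\ast})}([b])$, where $2\lambda_{A}^{(n^{\ast})}([b])$ is a symmetric arithmetic progression of $b$ terms with common difference $1/n^{\ast}$, each of multiplicity $2$. I would feed both $\lambda_{X'}^{(n)}(\mfr{p}\sqcup[b]^{2})$ and $\lambda_{X'}^{(n)}(\mfr{p})$ through Theorem \ref{thm BMW} and compare the decompositions \eqref{eq decomp lambda A} and \eqref{eq decomp lambds BCD} before and after adjoining this progression. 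There are two regimes: either $2\lambda_{A}^{(n^{\ast})}([b])$ lies entirely inside the generic congruence classes $\{\lambda_{X',i}\}_{i\gest 1}$, in which case its contribution to $\AP{X}$ is exactly $2\AP{A}(\lambda_{A}^{(n^{\ast})}([b]))$ by Lemma \ref{lem ind A}(b) and the formula of Theorem \ref{thm BMW}; or it also meets the distinguished integral and half-integral classes $\lambda_{X',0}$, $\lambda_{X',\half{1}}$, which happens precisely when $n^{\ast}\mid b$. In that case one splits $\mfr{p}=\mfr{p}_1\sqcup\mfr{p}_2$ at the value $b$ (parts $>b$, resp. $<b$) and rewrites the relevant $X$- and intermediate $O$-collapses, $O\in\{B,D\}$, using \cite{HLLS24}*{Lemma 12.2}, the identities of Lemma \ref{lem Achar identities}, and Lemma \ref{lem HLLS24 D-collapse}; the resulting chains of equalities are formally those of Cases B, C1 and C2 of the proof of Proposition \ref{P:ind BCD}. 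The easy sub-case $X\in\{B,C\}$ with $n^{\ast}$ even follows, as there, straight from Lemmas \ref{lem ind A}(a) and \ref{lem trans ind}.

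\textbf{Main obstacle.} The hard part, as in Proposition \ref{P:ind BCD}, is the bookkeeping when $b=an^{\ast}$: the progression $2\lambda_{A}^{(n^{\ast})}([b])$ perturbs the special congruence class of $\lambda_{A}^{(n^{\ast})}(\mfr{p})$, the collapses stop splitting additively, and one must track how the boundary rows of size $\approx n\cdot l(\mfr{p})$ merge. Step 1, the generic regime, and the cases $c\ne 0$ are routine. I do not expect any new phenomenon beyond those in the proof of Proposition \ref{P:ind BCD}; the one genuinely new ingredient is the systematic translation of each step through the formula of Theorem \ref{thm BMW}, so the final write-up will be close to a transcript of that proof.
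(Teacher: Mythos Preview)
Your plan breaks down in Step 2. The machinery of Proposition \ref{P:ind BCD} works because $d_{com,X}^{(n)}$ is literally ``apply $d_{com,A}^{(n^*)}$, then $(\cdot)^{\pm}$, then $X$-collapse,'' so that Lemma \ref{lem Achar identities} lets one commute collapses past $d_{com,A}^{(n^*)}$, and the split $\mfr{p}=\mfr{p}_1\sqcup\mfr{p}_2$ by size of parts relative to $b$ is the natural bookkeeping device. None of this is available for $\AP{X}(\lambda_{X'}^{(n)}(-))$: the formula in Theorem \ref{thm BMW} is organized by \emph{congruence classes of the weight}, and whether a part of $\mfr{p}$ is larger or smaller than $b$ has nothing to do with which congruence class it feeds. (Your criterion ``$n^*\mid b$'' for when $2\lambda_A^{(n^*)}([b])$ meets the special classes is also wrong: the $b$ elements of $\lambda_A^{(n^*)}([b])$ spread across $\min(b,n^*)$ distinct cosets mod $\Z$, and which ones are integral or half-integral depends on the parities of $b$ and $n^*$.) Lemma \ref{lem Achar identities} is an identity about $d_{com,A}^{(n)}$, and to invoke it for $\AP{X}$ you would need precisely the not-yet-available Theorem \ref{T:comparison}. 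So the translation you propose is not just a transcript; it collapses the moment you try to carry out the ``hard case.''

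The paper's route is different and does not reduce to $\mfr{q}=[b]$. One decomposes $\lambda_A^{(n^*)}(\mfr{p}\sqcup\mfr{q}\sqcup\mfr{q})$, $\lambda_A^{(n^*)}(\mfr{p})$ and $\lambda_A^{(n^*)}(\mfr{q})$ simultaneously into congruence classes as in \eqref{eq decomp lambds BCD}, and observes (via Lemma \ref{lem RS}) that $\mfr{p}_{X',i}(\lambda)=\mfr{p}_{X',i}(\lambda')\sqcup\mfr{p}_{A,i}(\lambda'')\sqcup\mfr{p}_{A,i}(\lambda'')$ for every $i\in\{0,\tfrac12,1,\dots,l_{X'}\}$. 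Since transpose turns $\sqcup$ into $+$, the formula of Theorem \ref{thm BMW} for $X=B$ (a sum of transposes followed by one $B$-collapse) gives the claim directly from Lemma \ref{lem trans ind}. For $X=C$ the two special pieces $i=0,\tfrac12$ carry the nonlinear operations $(\cdot)^{-}\!\sub{C}{}^{*}$ and $(\cdot)^{*}\!\sub{D}{}^{+-}\!\sub{C}$, and the whole problem reduces to two \emph{linear} ($n=1,2$) induction identities, namely $(\mfr{r}\sqcup\mfr{s}\sqcup\mfr{s})^{-}\sub{C}{}^{*}=(\mfr{r}^{-}\sub{C}{}^{*}+2\mfr{s}^{*})\sub{C}$ and $(\mfr{r}\sqcup\mfr{s}\sqcup\mfr{s})^{*}\sub{D}{}^{+-}\sub{C}=(\mfr{r}^{*}\sub{D}{}^{+-}\sub{C}+2\mfr{s}^{*})\sub{C}$; these are supplied by \cite{HLLS24}*{Lemma 3.8} and by the characterization of the metaplectic duality in \cite{BMSZ23}*{Proposition 3.8}, not by any analogue of your Step 2.
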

\begin{proof}
Let $\lambda:= \lambda_{X'}^{(n)}(\mfr{p}\sqcup \mfr{q} \sqcup \mfr{q})$, $\lambda':= \lambda^{(n)}_{X'}(\mfr{p})$ and $\lambda'':= \lambda^{(n^{\ast})}_{A}(\mfr{q})$. Consider decompositions  
\begin{align*}
    \lambda_{A}^{(n^{\ast})}(\mfr{p} \sqcup \mfr{q}\sqcup \mfr{q})&= \lambda_{X',0} + \lambda_{X',\half{1}}+ \sum_{i=1}^{l_{X'}} \lambda_{X',i},\\
    \lambda_{A}^{(n^{\ast})}(\mfr{p})&= (\lambda')_{X',0} + (\lambda')_{X',\half{1}}+ \sum_{i=1}^{l_{X'}} (\lambda')_{X',i},\\
    \lambda_{A}^{(n^{\ast})}( \mfr{q})&= (\lambda'')_{A,0} + (\lambda'')_{A,\half{1}}+ \sum_{i=1}^{l_{X'}} (\lambda'')_{A,i},
\end{align*} 
so that for $i \in \{0, \half{1},1,\ldots, l_{X'}\}$, any two elements in $ \lambda_{X',i}\sqcup (\lambda')_{X',i}\sqcup (\lambda'')_{A,i}$ have integral difference. Note that $(\lambda')_{X',i}$ or $ (\lambda'')_{A,i}$ may possibly be empty. Then by definition \eqref{eq decomp lambds BCD}, we have 
\[\lambda_{X',i}= (\lambda')_{X',i}\sqcup (\lambda'')_{A,i}\sqcup (\lambda'')_{A,i}.\]
Also note that Lemma \ref{lem partition of wt} implies that for any $x,y \in \R$ such that $x-y \in \Z$, if $|x|>|y|$, then the multiplicity of $x$ in $(\lambda')_{X',i}$ (resp. $(\lambda'')_{A,i}$) is not greater than that of $y$. Therefore, according to the definition \eqref{eq partition lambda sep} and Lemma \ref{lem RS}, we have
\[ \mfr{p}_{X',i}(\lambda)= \mfr{p}_{X',i}(\lambda') \sqcup \mfr{p}_{A,i}(\lambda'')\sqcup \mfr{p}_{A,i}(\lambda'').\]

If $X=B$, using the formula in Theorem \ref{thm BMW} and Lemma \ref{lem trans ind}, we have
\begin{align*}
    &\AP{B}(\lambda) \\
    =& \left( \mfr{p}_{X',0}(\lambda)^\ast   +\mfr{p}_{X',\half{1}}(\lambda)^{\ast} + \sum_{i=1}^{l_{X'}} \mfr{p}_{X',i}(\lambda)^{\ast}  \right)_{B}\\
    =& \left( (\mfr{p}_{X',0}(\lambda')^\ast + 2 \mfr{p}_{A,0}(\lambda'')^{\ast})   +(\mfr{p}_{X',\half{1}}(\lambda')^{\ast} + 2 \mfr{p}_{A,\half{1}}(\lambda'')^{\ast}) + \sum_{i=1}^{l_{X'}} (\mfr{p}_{X',i}(\lambda')^{\ast}+  2 \mfr{p}_{A,i}(\lambda'')^{\ast})  \right)_{B} \\
    =& \left( \mfr{p}_{X',0}(\lambda')^\ast   +\mfr{p}_{X',\half{1}}(\lambda')^{\ast} + \sum_{i=1}^{l_{X'}} \mfr{p}_{X',i}(\lambda')^{\ast}  + 2 \AP{A}(\lambda_A^{(n^{\ast})}(\mfr{q}))\right)_{B}\\
    =& \left(\left( \mfr{p}_{X',0}(\lambda')^\ast   +\mfr{p}_{X',\half{1}}(\lambda')^{\ast} + \sum_{i=1}^{l_{X'}} \mfr{p}_{X',i}(\lambda')^{\ast}  \right)_{B} + 2 \AP{A}(\lambda_A^{(n^{\ast})}(\mfr{q})) \right)_{B}\\
    =& (\AP{B}(\lambda') + 2 \AP{A}(\lambda''))\sub{B}.
    \end{align*}
    This proves the case $X=B$.
    For $X=C$, by the same computation above, it suffices to show the following identities for any $\mfr{q}$.
    \begin{enumerate}
        \item [(C1)] $(\mfr{p}\sqcup \mfr{q} \sqcup \mfr{q})^{-}\sub{C}{}^{\ast}=d_{com,C}^{(1)}(\mfr{p}\sqcup \mfr{q} \sqcup \mfr{q})=(d_{com,C}^{(1)}(\mfr{p}) + 2 \mfr{q}^{\ast})\sub{C}$ for $\mfr{p}$ of type $B$.
        \item [(C2)] $(\mfr{p}\sqcup \mfr{q} \sqcup \mfr{q})^{\ast}\sub{D}{}^{+-}\sub{C}=\widetilde{d}_{com,C}(\mfr{p}\sqcup \mfr{q} \sqcup \mfr{q})=(\widetilde{d}_{com,C}(\mfr{p}) + 2 \mfr{q}^{\ast})\sub{C}$ for $\mfr{p}$ of type $C$.
    \end{enumerate}
Here 
\begin{equation} \label{d-meta}
\widetilde{d}_{com,C}(\mfr{p}):= \mfr{p}^{\ast}\sub{D}{}^{+-}\sub{C}
\end{equation}
is the metaplectic Barbasch--Vogan duality defined in \cite{BMSZ23}. Later we will show that $\widetilde{d}_{com,C}(\mfr{p})=d_{com,C}^{(2)}(\mfr{p}) $, see Corollary \ref{cor eq *D*D*=*D*}.

    The identity (C1) is a consequence of \cite{HLLS24}*{Lemma 3.8} and Lemma \ref{lem trans ind}. Moreover, we have another identity 
      \begin{enumerate}
        \item [(B1)] $d_{com,B}^{(1)}(\mfr{p}\sqcup \mfr{q} \sqcup \mfr{q})=(d_{com,B}^{(1)}(\mfr{p})+ 2 \mfr{q}^{\ast})\sub{C}$ for $\mfr{p}$ of type $C$.
    \end{enumerate}
    For (C2), \cite{BMSZ23}*{Proposition 3.8} states that $\widetilde{d}_{BV,C}(\mfr{p})$ is characterized by the following identity
    \[ [1^{2a+1}] + \widetilde{d}_{com,C}(\mfr{p})= d_{com,B}^{(1)}( [2a] \sqcup \mfr{p}) \]
    for any integer $a \gest |\mfr{p}|/2$. Thus, taking any $a \gest |\mfr{p}|/2+ |\mfr{q}|$, an application of the equality (B1) gives 
    \begin{align*}
        [1^{2a+1}] + \widetilde{d}_{com,C}(\mfr{p}\sqcup \mfr{q}\sqcup \mfr{q})&= d_{com,B}^{(1)}( [2a] \sqcup \mfr{p}\sqcup \mfr{q}\sqcup \mfr{q} )\\
        &=(d_{com,B}^{(1)}( [2a] \sqcup \mfr{p} )+2 \mfr{q}^{\ast})\sub{B}\\
        &= ([1^{2a+1}] + \widetilde{d}_{com,C}(\mfr{p})+ 2 \mfr{q}^{\ast} )\sub{B}\\
        &= [1^{2a+1}]+ (\widetilde{d}_{com,C}(\mfr{p})+ 2 \mfr{q}^{\ast} )\sub{C}.
    \end{align*}
    Here the last equality follows from \cite{Ach03}*{Lemma 3.1}. This proves the identity (C2); hence the case $X=C$, which completes the proof of the proposition.
\end{proof}

\begin{cor}\label{cor eq *D*D*=*D*}
    Let $\mfr{p}$ be a partition of type $C$. We have 
    \[ \mfr{p}\sub{D}{}^{\ast}= \mfr{p}^{\ast+-}\sub{C}= \mfr{p}^{\ast} \sub{D}{}^{+-}\sub{C},  \]
 where the second term is $d_{com,C}^{(2)}(\mfr{p})$ and the last term is the metaplectic Barbasch--Vogan duality $\widetilde{d}_{com,C}(\mfr{p})$ as in \eqref{d-meta}.
\end{cor}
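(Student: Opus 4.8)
Corollary \ref{cor eq *D*D*=*D*} asserts the chain of equalities
$\mfr{p}\sub{D}{}^{\ast}= \mfr{p}^{\ast+-}\sub{C}= \mfr{p}^{\ast} \sub{D}{}^{+-}\sub{C}$ for a partition $\mfr{p}$ of type $C$; the middle term is $d_{com,C}^{(2)}(\mfr{p})$ by the formula \eqref{E:dBV-C} (the case $n=2$, so $n/2=1$ is odd), and the last is the metaplectic duality $\widetilde d_{com,C}(\mfr{p})$ of \eqref{d-meta}. So there are really two claims: (a) the identification $d_{com,C}^{(2)}(\mfr{p}) = \widetilde d_{com,C}(\mfr{p})$, i.e. the second equality; and (b) the purely combinatorial identity $\mfr{p}\sub{D}{}^{\ast}= \mfr{p}^{\ast+-}\sub{C}$, the first equality.

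For claim (b), the plan is to invoke \cite{Ach03}*{Lemma 3.3} (which is already quoted several times in the section, e.g. in the proof of Proposition \ref{prop ann n=1}(b3) and at the end of the proof of Theorem \ref{thm BMW}): for $\mfr{p}$ of type $C$, one has $\mfr{p}\sub{D}{}^{\ast}= \mfr{p}^{+-}\sub{C}$. Applying this with $\mfr{p}$ replaced by $\mfr{p}^{\ast}$, which is of type... wait, $\mfr{p}$ type $C$ means $\mfr{p}^{\ast}$ need not be of type $D$. Instead I would argue directly: $\mfr{p}^{\ast}\sub{D}{}^{\ast}\sub{D}{}^{\ast}\sub{D} = \mfr{p}^{\ast}\sub{D}$, and by \cite{Ach03}*{Lemma 3.3} applied to the type-$D$ partition $\mfr{p}^{\ast}\sub{D}$ (or rather, the relevant collapse identity) one gets $\mfr{p}^{\ast}\sub{D}{}^{+-}\sub{C} = \mfr{p}^{\ast}\sub{D}{}^{\ast}\sub{D}{}^{\ast}$. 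On the other side, $\mfr{p}\sub{D}{}^{\ast}$: since $\mfr{p}$ is type $C$, $\mfr{p}\sub{D}$ is type $D$ and $\mfr{p}\sub{D}{}^{\ast}$ is type $C$; by \cite{Ach03}*{Lemma 3.3} $(\mfr{p}\sub{D})^{+-}\sub{C} = \mfr{p}\sub{D}{}^{\ast}\sub{D}{}^{\ast}$ — but $\mfr{p}\sub{D}$ is already $D$-special need not hold, so I'd track collapses carefully. The cleanest route: use $\mfr{p}^{\ast}\sub{D}{}^{+-}\sub{C} = \mfr{p}^{\ast}\sub{D}{}^{\ast}\sub{D}{}^{\ast}$ and $\mfr{p}^{\ast}\sub{D}{}^{\ast}= \mfr{p}\sub{D}$ (the second from \cite{Ach03}*{Lemma 3.3}, since transposing and $D$-collapsing swaps $B$ and $D$-collapses appropriately, cf. the identity $\mfr{p}^{\ast}\sub{D}{}^{\ast}\sub{D}= \mfr{p}\sub{D}{}^{\ast}$ used implicitly in Proposition \ref{prop ann n=1}(b3)), to chain everything; alternatively quote \cite{BMW25}*{Theorem 7.10} or the already-proved Proposition \ref{prop ann n=1}(b2)/(b3). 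Concretely I would combine Proposition \ref{prop ann n=1}(b2), which gives $\AP{C}(\lambda_C^{(2)}(\mfr{p})) = \mfr{p}^{\ast}\sub{D}{}^{+-}\sub{C}$, with Proposition \ref{prop ann n=1}(b3), $\AP{D}(\lambda_C^{(2)}(\mfr{p})) = \mfr{p}\sub{D}{}^{\ast}\sub{D}$ (for $\mfr{p}$ with all parts even), and the standard relation between $C$- and $D$-collapses of type-$D$-special partitions, to deduce $\mfr{p}^{\ast}\sub{D}{}^{+-}\sub{C} = \mfr{p}\sub{D}{}^{\ast}$, handling the general (not all-even) case by the reduction already present in this section.

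For claim (a), the identification $d_{com,C}^{(2)}(\mfr{p}) = \widetilde d_{com,C}(\mfr{p})$: unwinding \eqref{E:dBV-C} for $n=2$ gives $d_{com,C}^{(2)}(\mfr{p}) = d_{com,A}^{(1)}(\mfr{p})^{+-}\sub{C} = \mfr{p}^{\ast+-}\sub{C}$ (since $d_{com,A}^{(1)}(\mfr{p}) = \mfr{p}^{\ast}$), which is literally the middle expression; and $\widetilde d_{com,C}(\mfr{p}) = \mfr{p}^{\ast}\sub{D}{}^{+-}\sub{C}$ by \eqref{d-meta}. So (a) is exactly the first equality $\mfr{p}^{\ast+-}\sub{C} = \mfr{p}^{\ast}\sub{D}{}^{+-}\sub{C}$, i.e. the claim that inserting a $D$-collapse before the $+-$ and $C$-collapse changes nothing — this is again a direct citation of \cite{Ach03}*{Lemma 3.1} or \cite{HLLS24}*{Lemma 12.2}, as used in the proof of Proposition \ref{prop ind BCD Ann}. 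The main obstacle, such as it is, is bookkeeping: making sure the various collapse identities from \cite{Ach03} and \cite{HLLS24} are applied to partitions of the correct type (type $C$ versus type $D$ versus the transpose), since $\mfr{p}$ type $C$ does not force $\mfr{p}^{\ast}$ to be of type $D$, so one cannot blindly apply \cite{Ach03}*{Lemma 3.3}. Once the types are sorted out, the proof is a two-line chain of quoted lemmas; I would present it as: "By \eqref{E:dBV-C}, $d_{com,C}^{(2)}(\mfr{p}) = \mfr{p}^{\ast+-}\sub{C}$; by \cite{Ach03}*{Lemma 3.1 (or 3.3)}, $\mfr{p}^{\ast+-}\sub{C} = \mfr{p}^{\ast}\sub{D}{}^{+-}\sub{C} = \widetilde d_{com,C}(\mfr{p})$; and the first equality $\mfr{p}\sub{D}{}^{\ast} = \mfr{p}^{\ast+-}\sub{C}$ follows from Proposition \ref{prop ann n=1}(b2)--(b3) together with \cite{Ach03}*{Lemma 3.3}."
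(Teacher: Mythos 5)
Your unwinding of the definitions is fine: the middle term is $d_{com,C}^{(2)}(\mfr{p})$ by \eqref{E:dBV-C} and the last is $\widetilde{d}_{com,C}(\mfr{p})$ by \eqref{d-meta}, and the first equality $\mfr{p}\sub{D}{}^{\ast}=\mfr{p}^{\ast+-}\sub{C}$ really is a one-line citation of \cite{Ach03}*{Lemma 3.3} (equivalently Lemma \ref{lem Achar identities}(ii) with $n=1$), exactly as the paper does it --- your detour through Proposition \ref{prop ann n=1}(b2)--(b3) for this part is unnecessary, and the intermediate identity $\mfr{p}^{\ast}\sub{D}{}^{\ast}=\mfr{p}\sub{D}$ that you float along the way is false (the left side is the transpose of a type-$D$ partition, hence of type $C$; for $\mfr{p}=[3,3,2]$ it gives $[4,2,2]\neq[3,3,1,1]$).

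The genuine gap is in your claim (a), the second equality $\mfr{p}^{\ast+-}\sub{C}=\mfr{p}^{\ast}\sub{D}{}^{+-}\sub{C}$, which you dismiss as ``a direct citation of \cite{Ach03}*{Lemma 3.1} or \cite{HLLS24}*{Lemma 12.2}.'' No such citation exists, because the underlying assertion --- that inserting a $D$-collapse before ${}^{+-}\sub{C}$ changes nothing --- is false for a general partition of even size. For instance, for $\mfr{q}=[3,2,1]$ one has $\mfr{q}^{+-}\sub{C}=[4,2]\sub{C}=[4,2]$, whereas $\mfr{q}\sub{D}=[3,1,1,1]$ and $\mfr{q}\sub{D}{}^{+-}\sub{C}=[4,1,1]\sub{C}=[4,1,1]\neq[4,2]$. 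The point is that $\mfr{p}^{\ast}$, for $\mfr{p}$ of type $C$, is in general neither of type $C$ nor of type $D$, so none of the quoted collapse lemmas applies to it directly; the equality holds only because of the special structure of such transposes, and proving it is the entire content of the paper's argument. The paper identifies the right-hand side with $\AP{C}(\lambda_C^{(2)}(\mfr{p}))$ via Proposition \ref{prop ann n=1}(b2) (which rests on \cite{BMSZ23}*{Theorem A}), writes $\mfr{p}=\mfr{p}'\sqcup\mfr{q}\sqcup\mfr{q}$ with $\mfr{p}'$ consisting of distinct even parts, and uses the induction-compatibility results (Propositions \ref{prop comparison A}, \ref{prop ind BCD Ann} and \ref{P:ind BCD}) to reduce to $\mfr{p}'$, for which $(\mfr{p}')^{\ast}$ is already of type $D$ and the inserted $D$-collapse is vacuous. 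You gesture at this reduction only for the (easy) first equality, and supply no valid argument for the second, so the proof as proposed does not go through.
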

\begin{proof}
    The first equality follows from \cite{Ach03}*{Lemma 3.3} (or see Lemma \ref{lem Achar identities}(ii)), so it remains to show the second equality. Note that  $\mfr{p}^{\ast+-}\sub{C}= d_{com,C}^{(2)}(\mfr{p})$ and $\mfr{p}^{\ast} \sub{D}{}^{+-}\sub{C}= \AP{C}(\lambda_{C}^{(2)}(\mfr{p})).$ Write $ \mfr{p}= \mfr{p}' \sqcup \mfr{q} \sqcup \mfr{q}$, where $\mfr{p}'=[2p_1, 2p_2,\ldots, 2p_s]$ with $p_1>\cdots > p_s$. Then $(\mfr{p}')^{\ast}$ is already of type $D$. Therefore, by Propositions \ref{prop comparison A}, \ref{prop ind BCD Ann} and \ref{P:ind BCD}, we have 
\begin{align*}
    \mfr{p}^{\ast+-}\sub{C}&=d_{com,C}^{(2)}(\mfr{p}' \sqcup \mfr{q }\sqcup \mfr{q}) \\
    &= ( d_{com,C}^{(2)}(\mfr{p}') + 2 d_{com,A}^{(1)}(\mfr{q}))\sub{C}\\
    &= ( (\mfr{p}')^{\ast+-} \sub{C} + 2 \AP{A}(\lambda_A^{(1)}(\mfr{q}))\sub{C}\\
    &= ( (\mfr{p}') {}^{\ast} \sub{D}{}^{+-} \sub{C} + 2 \AP{A}(\lambda_A^{(1)}(\mfr{q}))\sub{C}\\
    &=  (\AP{C}(\lambda_{C}^{(2)}(\mfr{p}')+ 2 \AP{A}(\lambda_A^{(1)}(\mfr{q}))\sub{C}\\
    &=\AP{C}(\lambda_{C}^{(2)}(\mfr{p}' \sqcup \mfr{q }\sqcup \mfr{q})\\
    &= \mfr{p}^{\ast} \sub{D}{}^{+-}\sub{C}.
\end{align*}
    This completes the proof of the corollary.
\end{proof}

\subsection{\texorpdfstring{Proof of Theorem \ref{T:comparison} for type $B,C$}{}}

In this subsection, we prove Theorem \ref{T:comparison} based on Lemma \ref{lem ind A} and Proposition \ref{P:ind BCD}.

\begin{prop}
    Theorem \ref{T:comparison} holds when $X=B, C$.
\end{prop}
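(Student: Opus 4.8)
The plan is to reduce the statement to the combinatorial identities already proved in this section. Recall that we want to show, for $X\in\{B,C\}$ and $X'\in\{B,C\}$ with $d_{com,X}^{(n)}(\mfr{p})$ well-defined, that $d_{com,X}^{(n)}(\mfr{p})=\AP{X}(\lambda_{X'}^{(n)}(\mfr{p}))$. The key observation is that both sides are compatible with the ``doubling'' operation $\mfr{p}\mapsto \mfr{p}\sqcup\mfr{q}\sqcup\mfr{q}$ in exactly the same way: Proposition \ref{P:ind BCD} gives
\[ d_{com,X}^{(n)}(\mfr{p}\sqcup\mfr{q}\sqcup\mfr{q}) = \bigl(d_{com,X}^{(n)}(\mfr{p}) + 2\,d_{com,A}^{(n^{\ast})}(\mfr{q})\bigr)_X, \]
while Proposition \ref{prop ind BCD Ann} gives
\[ \AP{X}(\lambda_{X'}^{(n)}(\mfr{p}\sqcup\mfr{q}\sqcup\mfr{q})) = \bigl(\AP{X}(\lambda_{X'}^{(n)}(\mfr{p})) + 2\,\AP{A}(\lambda_A^{(n^{\ast})}(\mfr{q}))\bigr)_X, \]
and by Proposition \ref{prop comparison A} we already know $d_{com,A}^{(n^{\ast})}(\mfr{q})=\AP{A}(\lambda_A^{(n^{\ast})}(\mfr{q}))$. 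So if the identity holds for $\mfr{p}$, it holds for $\mfr{p}\sqcup\mfr{q}\sqcup\mfr{q}$ for any $\mfr{q}$.

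First I would set up an induction that strips off doubled pairs. Given a partition $\mfr{p}$ of type $X'\in\{B,C\}$, write it as $\mfr{p}=\mfr{p}^{\rm red}\sqcup\mfr{q}\sqcup\mfr{q}$, where $\mfr{p}^{\rm red}$ is obtained by removing a maximal collection of equal pairs of parts — concretely, for each part size $a$ occurring with multiplicity $m_a$, keep $m_a \bmod 2$ copies. The ``base case'' is then a partition in which every part has multiplicity one (a partition with distinct parts). For such a $\mfr{p}$, I would verify the identity directly: when all parts are distinct the weighted element $\lambda_{X'}^{(n)}(\mfr{p})$ is particularly simple, and by Lemma \ref{lem partition of wt} together with the decomposition \eqref{eq decomp lambds BCD} one can compute $\AP{X}(\lambda_{X'}^{(n)}(\mfr{p}))$ from Theorem \ref{thm BMW} and match it against the explicit formulas \eqref{E:dBV-B}, \eqref{E:dBV-C} for $d_{com,X}^{(n)}$ via Lemma \ref{lem Achar identities} and the type-$A$ case (Proposition \ref{prop comparison A}). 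Since $d_{com,A}^{(n)}(\mfr{p})=\sum_i\mfr{s}(p_i;n)$ unwinds into a concrete partition, this reduces to a finite bookkeeping check involving $B/C/D$-collapses.

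Then the inductive step is immediate: suppose $\mfr{p}=\mfr{p}'\sqcup\mfr{q}\sqcup\mfr{q}$ with $\mfr{p}'$ having strictly fewer boxes (so the induction is on $|\mfr{p}|$, the number of boxes), and assume the result for $\mfr{p}'$. Then
\[ d_{com,X}^{(n)}(\mfr{p}) = \bigl(d_{com,X}^{(n)}(\mfr{p}') + 2\,d_{com,A}^{(n^{\ast})}(\mfr{q})\bigr)_X = \bigl(\AP{X}(\lambda_{X'}^{(n)}(\mfr{p}')) + 2\,\AP{A}(\lambda_A^{(n^{\ast})}(\mfr{q}))\bigr)_X = \AP{X}(\lambda_{X'}^{(n)}(\mfr{p})), \]
using Proposition \ref{P:ind BCD}, the induction hypothesis together with Proposition \ref{prop comparison A}, and Proposition \ref{prop ind BCD Ann}, in that order. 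One subtlety to watch: the reduction must respect well-definedness of $d_{com,X}^{(n)}$, i.e.\ the parity/type conditions in \eqref{E:dBV-B}, \eqref{E:dBV-C}; removing a doubled pair $\mfr{q}\sqcup\mfr{q}$ changes neither $|\mfr{p}|\bmod 2$ nor the relevant parities of $l(\mfr{p})$ because $2\,l(\mfr{q})$ and $2|\mfr{q}|$ are even, so well-definedness is preserved along the induction. The main obstacle is the base case: handling a partition with distinct parts still requires carefully chasing the $D$-collapse asymmetries (exactly the phenomena isolated in Lemma \ref{lem HLLS24 D-collapse} and the remark after Theorem \ref{thm BMW}), and matching the symbol-theoretic output of Theorem \ref{thm BMW} with the collapse-based formula for $d_{com,X}^{(n)}$; but since $n^{\ast}$ controls everything and Lemma \ref{lem Achar identities} already packages the needed collapse gymnastics, this is a bounded computation rather than a genuine difficulty.
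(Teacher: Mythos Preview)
Your reduction step---stripping off doubled pairs via Propositions \ref{P:ind BCD}, \ref{prop ind BCD Ann}, and \ref{prop comparison A}---is exactly what the paper does. The gap is in your base case. You call it ``a finite bookkeeping check'' and ``a bounded computation'', but there are infinitely many type-$X'$ partitions with distinct parts (e.g.\ $[1,3,5,\ldots,2k-1]$ for $X'=B$), so no case-by-case verification is possible; nor does Lemma \ref{lem Achar identities} by itself collapse the problem to finitely many cases.

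The paper's handling of the base case is a second structural argument, not a check. The key observation you do not state is that a type-$B$ (resp.\ type-$C$) partition with distinct parts necessarily has all parts odd (resp.\ even), by the parity constraint defining the type. This forces exactly one of $\mfr{p}_{X',0}(\lambda)$, $\mfr{p}_{X',\half{1}}(\lambda)$ in the decomposition \eqref{eq decomp lambds BCD} to be trivial, so the formula from Theorem \ref{thm BMW} loses one of its two ``special'' summands. The paper then recognizes the remaining expression as precisely the right-hand side of Proposition \ref{P:ind BCD} with $n$ replaced by $1$ (when $X'=B$) or by $2$ (when $X'=C$, via Corollary \ref{cor eq *D*D*=*D*}), applies that proposition \emph{in reverse} to rewrite it as $d_{com,X}^{(1)}$ or $d_{com,X}^{(2)}$ of a single disjoint union, and finally identifies that disjoint union with $d_{com,A}^{(n^{\ast})}(\mfr{q})^{\ast}$. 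So the induction-compatibility machinery is used twice: once at level $n$ to reduce to distinct parts, and once at level $1$ or $2$ to finish the base case. Your outline has the first use but not the second.
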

\begin{proof}
We prove the case $X=C$ in details. The case $X=B$ follows by similar (and simpler) argument, which we omit.

Suppose $ d_{com,C}^{(n)}(\mfr{q})$ is well-defined, where $\mfr{q}$ is a partition of type $X'$. More explicitly, if $X'=B$ (resp. $X'=C$), then $n$ must be odd (resp. even). Write $\lambda= \lambda_{X'}^{(n)}(\mfr{q})$ for short.  By Propositions \ref{P:ind BCD}, \ref{prop ind BCD Ann} and \ref{prop comparison A}, to show $\AP{C}(\lambda_{X'}^{(n)}(\mfr{q}))=d_{com,C}^{(n)}(\mfr{q})$,
we may assume $\mfr{q}=[q_1,\ldots, q_s]$, where $q_1>\cdots > q_s$ and $q_i$ are all odd (resp. even) if $X'=B$ (resp. $X'=C$).

 First, we deal with the case that $X'=B$. Note that $n$ must be odd in this case.  Under the assumption that $\mfr{q}$ consists of odd integers, $\mfr{p}_{B,\half{1}}(\lambda)$ is the empty partition. Therefore, by the compatibility with induction, we have 
\begin{align*}
    \AP{C}(\lambda)&= \left(\mfr{p}_{B,0}(\lambda)^{-} \sub{C} {}^\ast + \sum_{i=1}^{l_{B}} \mfr{p}_{B,i}(\lambda)^{\ast}\right)\sub{C}\\
    &= \left(d_{com,C}^{(1)}( \mfr{p}_{B,0}(\lambda) )+ \sum_{i=1}^{l_{B}/2} 2 d_{com,A}^{(1)}(\mfr{p}_{B,i}(\lambda)) \right)\sub{C}\\
    &= \left(d_{com,C}^{(1)}( \mfr{p}_{B,0}(\lambda)) + 2 d_{com,A}^{(1)}\left(  \bigsqcup_{i=1}^{l_{B}/2} \mfr{p}_{B,i}(\lambda)\right) \right)\sub{C}\\
    &= d_{com,C}^{(1)} \left( \mfr{p}_{B,0}(\lambda) \sqcup \bigsqcup_{i=1}^{l_{B}} \mfr{p}_{B,i}(\lambda) \right)\\
    &= \left( \mfr{p}_{B,0}(\lambda) \sqcup \bigsqcup_{i=1}^{l_{B}} \mfr{p}_{B,i}(\lambda) \right)^{\ast-} \sub{C}\\
    &= d_{com,A}^{(n)}(\mfr{q})^{-}\sub{C}\\
    &= d_{com,C}^{(n)}(\mfr{q}).
\end{align*}
This completes the verification of this case.

Next, we consider the case that $X'=C$ and $n^{\ast}$ is odd. In this case, under the assumption that $\mfr{q}$ consists of even integers, we have $\lambda_{C,0}$ defined in \eqref{eq decomp lambds BCD} is empty; hence $\mfr{p}_{C,0}(\lambda)=[1]$. Therefore, by the compatibility with induction and Corollary \ref{cor eq *D*D*=*D*}, we have 
\begin{align*}
    \AP{C}(\lambda)&=  \left( \mfr{p}_{C,\half{1}}(\lambda)^{\ast}\sub{D} {}^{+-} \sub{C}+ \sum_{i=1}^{l_{C}} \mfr{p}_{C,i}(\lambda)^{\ast}  \right)_{C}\\
    &= \left( \mfr{p}_{C,\half{1}}(\lambda)^{\ast} {}^{+-} \sub{C}+  \sum_{i=1}^{l_{C}/2} 2d_{com,A}^{(1)}(\mfr{p}_{C,i}(\lambda) ) \right)_{C}\\
    &= \left(d_{com,C}^{(2)}(\mfr{p}_{C,\half{1}}(\lambda) ) + 2 d_{com,A}^{(1)}\left(\bigsqcup_{i=1}^{l_C/2} \mfr{p}_{C,i}(\lambda)\right)  \right)_{C}\\
    &= d_{com,C}^{(2)} \left( \mfr{p}_{C,\half{1}}(\lambda) \sqcup  \bigsqcup_{i=1}^{l_C/2} \mfr{p}_{C,i}(\lambda)\right)\\
    &= d_{com,A}^{(n/2)} ( \mfr{q})^{+-}\sub{C}\\
    &= d_{com,C}^{(n)}(\mfr{q}).
    \end{align*}
This completes the verification of this case.

Finally, we deal with the case that $X'=C$ and $n^{\ast}$ is even. In this case, under the assumption that $\mfr{q}$ consists of even integers, we have $\mfr{p}_{C,\half{1}}(\lambda)=[1]$ and $\mfr{p}_{C,0}(\lambda)$ is empty. Therefore,
\begin{align*}
    \AP{C}(\lambda)=  \left(  \sum_{i=1}^{l_{C}} \mfr{p}_{C,i}(\lambda)^{\ast}  \right)_{C}= d_{com,A}^{(n/2)} ( \mfr{q})_{C}= d_{com,C}^{(n)}(\mfr{q}).
\end{align*}
This completes the verification of this case and the proof of the proposition.
\end{proof}

\subsection{\texorpdfstring{Proof of Theorem \ref{T:comparison} for type $D$}{}}
In this subsection,  we prove Theorem \ref{T:comparison} for type $D$. This implies the compatibility with induction for $\AP{D}(\lambda_{D}^{(n)}(\mfr{p}))$ since the same statement holds for $d_{com,D}^{(n)}(\mfr{p})$ by Proposition \ref{P:ind BCD}.

\begin{lemma}\label{lem type D, n=4}
    Let $\mfr{q}$ be a partition of type $D$. Let $\lambda:= \lambda_{D}^{(n)}(\mfr{q})$ and let 
     $\mfr{p}_{D, 0}(\lambda), \mfr{p}_{D, \half{1}}(\lambda)$ be defined as \eqref{eq partition lambda sep}. Then
     \begin{align}\label{eq type D n=4}
         ( \mfr{p}_{D,0}(\lambda)^{-\ast} \sub{D}   +\mfr{p}_{D,\half{1}}(\lambda) \sub{D} {}^{\ast})\sub{D}= (\mfr{p}_{D,0}(\lambda)^- \sqcup \mfr{p}_{D,\half{1}}(\lambda))^{\ast} \sub{D}. 
     \end{align}
\end{lemma}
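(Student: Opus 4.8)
The plan is to show that, once one extracts one extra piece of structure from the partition $\mfr{b}:=\mfr{p}_{D,\half{1}}(\lambda)$, the identity \eqref{eq type D n=4} collapses to a single instance of the transitivity identity in Lemma \ref{lem trans ind}. Throughout I write $\mfr{a}:=\mfr{p}_{D,0}(\lambda)$.

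\textbf{Step 1 (structure of $\mfr{b}$).} I would first prove that $\mfr{b}$ has \emph{every} multiplicity even, so that $\mfr{b}=\mfr{c}\sqcup\mfr{c}$ for some partition $\mfr{c}$ --- a strengthening of the remark after \eqref{eq partition lambda sep} that $\mfr{b}$ is merely a $C$-partition. By Definition \ref{def wt partition}(a) and \eqref{eq decomp lambds BCD}, $\lambda_{D,\half{1}}$ is the sub-multi-set of $\lambda_A^{(n^{\ast})}(\mfr{q})=\sum_i\{\frac{q_i-1}{2n^{\ast}},\dots,\frac{1-q_i}{2n^{\ast}}\}$ consisting of the elements lying in $\half{1}+\Z$. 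An element $\frac{q_i-1-2j}{2n^{\ast}}$ lies in $\half{1}+\Z$ only when $q_i-1-2j$ is an odd multiple of $n^{\ast}$, which (as $n^{\ast}$ is odd) forces $q_i$ to be even. Since $\mfr{q}$ is of type $D$, every even part of $\mfr{q}$ occurs with even multiplicity, so every element of $\lambda_{D,\half{1}}$ occurs with even multiplicity; by Lemma \ref{lem RS} the partition $\mfr{p}(\lambda_{D,\half{1}})$ then has all parts even, hence its transpose $\mfr{b}=\mfr{p}(\lambda_{D,\half{1}})^{\ast}$ has all multiplicities even.

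\textbf{Step 2 (reduction and conclusion).} Because all multiplicities of $\mfr{b}$ are even, $\mfr{b}$ is already of type $D$, so $\mfr{b}\sub{D}=\mfr{b}$. Using that transpose sends $\sqcup$ to $+$, I get $\mfr{b}\sub{D}{}^{\ast}=\mfr{b}^{\ast}=(\mfr{c}\sqcup\mfr{c})^{\ast}=2\mfr{c}^{\ast}$ and $(\mfr{a}^-\sqcup\mfr{b})^{\ast}=\mfr{a}^{-\ast}+2\mfr{c}^{\ast}$, so \eqref{eq type D n=4} becomes
\[ (\mfr{a}^{-\ast}\sub{D}+2\mfr{c}^{\ast})\sub{D}=(\mfr{a}^{-\ast}+2\mfr{c}^{\ast})\sub{D}. \]
Next I observe that $|\mfr{a}|$ is odd: by \eqref{eq partition lambda sep} the extra part $[1]$ is attached to $\mfr{p}(\lambda_{D,0})^{\ast}$ precisely when $\lambda_{D,0}$ has even length, so $|\mfr{a}|$ is odd in either case; hence $|\mfr{a}^{-\ast}|=|\mfr{a}|-1$ is even. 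The displayed equality is then exactly the case $(\mfr{q}_0,\mfr{q}_1,\mfr{q}_2)=(\mfr{a}^{-\ast},\emptyset,\mfr{c}^{\ast})$, $X=D$, of Lemma \ref{lem trans ind}, which completes the proof.

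\textbf{Where the difficulty lies.} The one substantive step is Step 1: seeing that $\mfr{p}_{D,\half{1}}(\lambda)$ has \emph{all} multiplicities even, not just even parts. This is precisely where the hypothesis that $\mfr{q}$ is of type $D$ is used, and it requires tracking which parts of $\mfr{q}$ contribute to the residue class $\half{1}+\Z$ --- the analogous identity fails if one only assumes $\mfr{b}$ is a partition into even parts. Everything after Step 1 is formal bookkeeping with $\sqcup$, transpose, and Lemma \ref{lem trans ind}.
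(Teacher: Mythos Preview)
Your Step 1 contains a false assumption: you assert ``(as $n^{\ast}$ is odd)'', but $n^{\ast}=n/\gcd(n,2)$ is even whenever $4\mid n$ (for example $n=4$ gives $n^{\ast}=2$). In that regime your structural claim that $\mfr{b}=\mfr{p}_{D,\half{1}}(\lambda)$ has all multiplicities even is simply false. Concretely, take $n=4$, $\mfr{q}=[3]$: then $\lambda_A^{(2)}([3])=\{1/2,0,-1/2\}$, so $\lambda_{D,\half{1}}=\{1/2,-1/2\}$ and $\mfr{b}=\mfr{p}(\lambda_{D,\half{1}})^{\ast}=[2]$, which has an odd multiplicity and is not of type $D$; in particular $\mfr{b}\sub{D}=[1,1]\neq\mfr{b}$ and there is no $\mfr{c}$ with $\mfr{b}=\mfr{c}\sqcup\mfr{c}$. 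The mechanism is that when $n^{\ast}$ is even, the half-integer part of $\lambda_A^{(n^{\ast})}(\mfr{q})$ receives contributions from the \emph{odd} parts of $\mfr{q}$ (since $\frac{q_i-1-2j}{2n^{\ast}}\in\half{1}+\Z$ forces $q_i-1-2j$ to be an even number, hence $q_i$ odd), and odd parts of a type-$D$ partition need not occur with even multiplicity.

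For $n^{\ast}$ odd your argument is fine and is essentially the paper's: the paper phrases the same reduction as an instance of Proposition~\ref{P:ind BCD} with $n=1$ rather than Lemma~\ref{lem trans ind}, but the content is identical. The case $4\mid n$, however, requires genuinely new work. In the paper this is handled by an auxiliary combinatorial lemma exploiting the finer structure that each odd part $q_i$ of $\mfr{q}$ contributes a \emph{pair} $\{p_i,p_i+1\}$ of consecutive integers (one odd, one even) to $\mfr{p}_{odd}\sqcup\mfr{p}_{even}$; the proof proceeds by induction, successively stripping repeated parts and splitting along gaps, and is not a one-line reduction to Lemma~\ref{lem trans ind}. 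Your ``Where the difficulty lies'' paragraph correctly identifies that Step~1 is the crux, but the difficulty is precisely that the claimed strengthening does not hold in general.
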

\begin{proof}
Write $\mfr{p}_{odd}:=\mfr{p}_{D,0}(\lambda)^- $, $\mfr{p}_{even}:= \mfr{p}_{D,\half{1}}(\lambda)$ and $\mfr{p}:= \mfr{p}_{odd} \sqcup \mfr{p}_{even}$. Also write $\mfr{q}=[q_1^{r_1},\ldots, q_s^{r_s}]$. Each part $q_i$ contributes to at most $2$ parts in $\mfr{p}$, which we explicate case by case now. Let $I:=\{1,\ldots, s\}$ and $I_{odd}:= \{i \in I\ | \ q_i \text{ is odd}\}$ and $I_{even}:= I \setminus I_{odd}$. For each $i \in I_{odd}$, we write $q_i=2q_i'+1$.

First, for $i \in I_{even}$, observe that $\{ \frac{q_i-1}{2n^{\ast}} ,\ldots, \frac{1-q_i}{2n^{\ast}}\}$ does not contain any integer. Hence, $[q_i^{r_i}]$ contributes $[p_i^{r_i}]$ in $\mfr{p}$, where 
\[ p_i= \#\{ 0 \lest t \lest q_i\ | \ n^{\ast} \text{ divides } q_i-1- 2t  \}.\]
Note that both $r_i$ and $p_i$ are even.

Suppose that $n^{\ast}$ is odd. Then $[q_i^{r_i} ]$ contributes $[p_i^{r_i}]$, where
\[ p_i= \#\{  -q_i' \lest t \lest  q_i' \ | \ n^{\ast} \text{ divides } t  \}.\]
Note that $p_i$ must be odd. Thus,
\[\mfr{p}_{odd}= [p_i^{r_i}]_{i \in I_{odd}},\ \mfr{p}_{even}= [p_i^{r_i}]_{i \in I_{even}}. \]
Let $\mfr{p}_{even/2}:= [p_i^{r_i/2}]_{i \in I_{even}}$. Then since $\mfr{p}_{even}$ is already of type $D$, by Proposition \ref{P:ind BCD}, we see that
\begin{align*}
    (\mfr{p}_{odd}{}^\ast \sub{D} + (\mfr{p}_{even})\sub{D}{}^\ast)\sub{D}&= (d_{com,D}^{(1)}(\mfr{p}_{odd}) + 2 d_{com,A}^{(1)}(\mfr{p}_{even/2}) )\sub{D}\\
    &= d_{com,D}^{(1)}( \mfr{p}_{odd} \sqcup \mfr{p}_{even/2} \sqcup \mfr{p}_{even/2} )\\
    &= (\mfr{p}_{odd} \sqcup \mfr{p}_{even})^{\ast} \sub{D}.
\end{align*}
This proves \eqref{eq type D n=4} in this case. 

Suppose that $n^{\ast}$ is even. Then $[q_i^{r_i} ]$ contributes $[p_i^{r_i}, (p_i+1)^{r_i}]$, where
\[ 2p_i+1 = \#\{  -q_i' \lest t \lest  q_i' \ | \ n^{\ast} \text{ divides } t  \}.\]
Let $\{ p_{i,odd}, p_{i,even}\}= \{p_i, p_i+1\}$ where $p_{i,odd}$ is odd. Then 
\[\mfr{p}_{odd}= [p_{i,odd}^{r_i}]_{i \in I_{odd}},\ \mfr{p}_{even}= [p_i^{r_i}]_{i \in I_{even}} \sqcup [p_{i,even}^{r_i}]_{i \in I_{odd}}. \]
Thus, the desired equality is a consequence of the following lemma.
\end{proof}

\begin{lemma}
Let $2p_1+1 \gest \cdots \gest 2 p_{2s}+1$ be a sequence of odd integers and let $ q_1 \gest \cdots \gest  q_r$ be a sequence of even integers. Write $\{ p_{i,odd}, p_{i,even} \}=\{p_i, p_i+1\}$ with $p_{i,odd}$ being odd and let 
\[ \mfr{p}_{odd}:= [p_{i,odd}]_{ 1 \lest i \lest 2s},\ \ \mfr{p}_{even}:= [p_{i,even}]_{1\lest i \lest 2s} \sqcup [q_1^2,\ldots, q_r^2].\]
Then 
\begin{align}\label{eq type D, n=4 2}
    ((\mfr{p}_{odd})^\ast \sub{D} + (\mfr{p}_{even})\sub{D}{}^\ast)\sub{D}= (\mfr{p}_{odd} \sqcup \mfr{p}_{even})^{\ast} \sub{D}.
\end{align}
\end{lemma}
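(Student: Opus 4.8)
The assertion is a purely combinatorial identity about transposes and $D$-collapses, and the plan is to reduce it, via the transitivity of collapse and the $D$-collapse bookkeeping already in place, to a small base case. \emph{Step 1: peel off a $2\mfr{q}^{\ast}$ summand on each side.} Write $\mfr{p}_{even}=\mfr{r}\sqcup\mfr{q}\sqcup\mfr{q}$ with $\mfr{r}:=[p_{i,even}]_{1\le i\le 2s}$ and $\mfr{q}:=[q_1,\dots,q_r]$, and note $\mfr{p}_{odd}\sqcup\mfr{r}=\bigsqcup_{i=1}^{2s}\{p_i,p_i+1\}$, which has even size $\sum_i(2p_i+1)$. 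Since transposition carries disjoint union to coordinatewise sum, $(\mfr{p}_{odd}\sqcup\mfr{p}_{even})^{\ast}=(\mfr{p}_{odd}\sqcup\mfr{r})^{\ast}+2\mfr{q}^{\ast}$; applying Lemma~\ref{lem trans ind} (with $X=D$ and $\mfr{q}_1=\emptyset$, legitimate by the parity just noted) then gives
\[
(\mfr{p}_{odd}\sqcup\mfr{p}_{even})^{\ast}\sub{D}=\big((\mfr{p}_{odd}\sqcup\mfr{r})^{\ast}+2\mfr{q}^{\ast}\big)\sub{D}=\big((\mfr{p}_{odd}\sqcup\mfr{r})^{\ast}\sub{D}+2\mfr{q}^{\ast}\big)\sub{D}.
\]
So the identity reduces to showing $\big((\mfr{p}_{odd})^{\ast}\sub{D}+(\mfr{p}_{even})\sub{D}{}^{\ast}\big)\sub{D}=\big((\mfr{p}_{odd}\sqcup\mfr{r})^{\ast}\sub{D}+2\mfr{q}^{\ast}\big)\sub{D}$.

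\emph{Step 2: strip the doubled parts $[q_j^2]$ and reduce to $r=0$.} Every part of $\mfr{p}_{even}=\mfr{r}\sqcup\mfr{q}\sqcup\mfr{q}$ is even, so Lemma~\ref{lem HLLS24 D-collapse}, applied one pair $[q_j^{2}]$ at a time, computes $(\mfr{p}_{even})\sub{D}$ from $(\mfr{r})\sub{D}$: each step contributes a summand $[q_j^{2}]$, except in the exceptional case of that lemma (conditions (i)--(iii)), where it contributes the correction $[q_j+1,q_j-1]$. Transposing and using $[q_j^{2}]^{\ast}=[2^{q_j}]$ together with $\sum_j[2^{q_j}]=2\mfr{q}^{\ast}$, one extracts from $(\mfr{p}_{even})\sub{D}{}^{\ast}$ a $2\mfr{q}^{\ast}$ summand matching the one on the right; in the exceptional case $[q_j+1,q_j-1]^{\ast}=[2^{q_j-1},1^2]$ differs from $[2^{q_j}]$ only in the tail and is absorbed after the ambient $D$-collapse, exactly as in the case analysis in the proof of Proposition~\ref{P:ind BCD}. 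A further application of Lemma~\ref{lem trans ind} to pull the outer $D$-collapse through then reduces the identity to its $r=0$ instance, namely $\big((\mfr{p}_{odd})^{\ast}\sub{D}+(\mfr{r})\sub{D}{}^{\ast}\big)\sub{D}=(\mfr{p}_{odd}\sqcup\mfr{r})^{\ast}\sub{D}$.

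\emph{Step 3: the case $r=0$.} Now $\mfr{p}_{odd}=[p_{i,odd}]$, $\mfr{r}=[p_{i,even}]$, with $\{p_{i,odd},p_{i,even}\}=\{p_i,p_i+1\}$ for each $i$; I would argue by induction on $s$ (equivalently on $\sum_i p_i$). The base cases $s\le 1$ are a direct computation with partitions of the shapes $[a]$, $[a+1,a]$, $[a,a]$, split according to the parities of $p_1,p_2$. For the inductive step one peels off the top pair $\{p_1,p_1+1\}$ (together with $\{p_2,p_2+1\}$ when that is needed to remain inside type $D$), writing $\mfr{p}_{odd}$ and $\mfr{r}$ accordingly, and uses Lemma~\ref{lem HLLS24 D-collapse} and the identities in Lemma~\ref{lem Achar identities} to commute the removal of those parts past the $D$-collapses and transposes before invoking the inductive hypothesis.

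The step I expect to be the main obstacle is Step~2: pinning down exactly when $(\mfr{p}_{even}\sqcup[q^{2}])\sub{D}$ equals the expected $(\mfr{p}_{even})\sub{D}\sqcup[q^{2}]$ rather than the off-by-one correction $(\mfr{p}_{even})\sub{D}\sqcup[q+1,q-1]$, and then checking that after transposing and re-collapsing the two possibilities give the same answer on both sides of the reduced identity. This is the same delicate $D$-collapse bookkeeping already carried out (there for $d_{com,X}^{(n)}$ rather than for transposes) in the proof of Proposition~\ref{P:ind BCD}, and the argument here should closely parallel it; Step~3 is routine by comparison.
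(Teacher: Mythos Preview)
Your Step~1 is correct and matches how the paper unpacks the right-hand side. The real problem is Step~2, and it is a genuine gap rather than just bookkeeping. When you peel $[q_j^2]$ off $\mfr{p}_{even}$ via Lemma~\ref{lem HLLS24 D-collapse}, the exceptional output $\beta_j=[q_j+1,q_j-1]$ gives $\beta_j^{\ast}=[2^{q_j-1},1^2]$, which is strictly \emph{smaller} than $[2^{q_j}]$ in the dominance order. Adding a smaller partition and then $D$-collapsing gives only an inequality, not an equality, so your claim that the discrepancy ``is absorbed after the ambient $D$-collapse'' does not follow. Moreover, in the exceptional case $\sum_j\beta_j^{\ast}$ is not of the form $2\mfr{q}'$ (it has odd parts), so Lemma~\ref{lem trans ind} does not apply as you suggest. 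The analogy with Proposition~\ref{P:ind BCD} is misleading: there the correction term appears \emph{simultaneously and identically} on both sides of the equality being proved, which is not the situation here.

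The paper resolves this by a different induction scheme: it inducts on $s+r$ with the full statement as hypothesis, and handles the exceptional $\beta$ via an inequality sandwich. One direction uses the induction hypothesis together with monotonicity of addition; the other uses Proposition~\ref{P:ind BCD} and Lemma~\ref{lem trans ind} with the non-exceptional $2[q_r]^{\ast}$ and the comparison $\beta^{\ast}\leq 2[q_r]^{\ast}$. The key point you are missing is that the same $\beta$ governs both the left-hand side (via $(\mfr{p}_{even})_D$) and the right-hand side (via \cite{Ach03}*{Lemma 3.1} applied to $(\mfr{p}_{odd}\sqcup\mfr{p}_{even})^{\ast}{}_D$), because inserting the odd parts $p_{i,odd}$ does not alter conditions (i)--(iii) of Lemma~\ref{lem HLLS24 D-collapse}; this synchrony is what makes the sandwich close. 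Your sequential reduction (first to $r=0$, then induct on $s$) loses access to this mechanism. Finally, your Step~3 is too optimistic: even after $r=0$ the paper needs a further reduction to distinct $p_i$ (again by an inequality sandwich), then a reduction to the case $p_{2j,odd}=p_{2j+1,odd}$ via block decomposition, and only then a nontrivial direct computation; a bare induction on $s$ by peeling off $\{p_1,p_1+1\}$ does not obviously close.
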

\begin{proof}
We apply induction on $s+r$ throughout the proof.

First, we reduce to the case that $\{p_1,\ldots, p_{2s}\}$ are distinct. Suppose on the contrary that $p_i=p_{i+1}$ for some $i$ and write $\mfr{p}_{odd}= \mfr{p}_{odd}'\sqcup [p_{i,odd}^2]$. Since $\mfr{p}_{odd}$ is of type $D$, by \cite{Ach03}*{Lemma 3.3} and \cite{HLLS24}*{Lemma 12.10}, we see that
\[ (\mfr{p}_{odd})^{\ast} \sub{D}=(\mfr{p}_{odd}' \sqcup [p_{i,odd}^2] )^{+-}\sub{C}{}^{\ast}=(\mfr{p}_{odd}')^{+-}\sub{C}{}^{\ast}+ \beta^{\ast}= (\mfr{p}_{odd}')^\ast \sub{D} + \beta^{\ast}, \]
where $\beta \in \{[p_{i,odd}^2], [p_{i,odd}+1, p_{i,odd}-1] \}.$ Moreover, if $\beta=[p_{i,odd}+1, p_{i,odd}-1], $ then $p_{i-1, odd} > p_{i,odd}> p_{i+2,odd}$, and $i$ is odd (set $p_{0,odd}:=\infty$). Thus, applying Proposition \ref{P:ind BCD} if $\beta=[p_{i,odd}^2]$, or applying
\cite{Ach03}*{Lemma 3.1} twice (the formula for $\lambda_D$ with $k,p$ even in the notation there) if $\beta=[p_{i,odd}+1, p_{i,odd}-1]$, we have
\[ (\mfr{p}_{odd} \sqcup \mfr{p}_{even})^{\ast} \sub{D}= ((\mfr{p}_{odd}' \sqcup \mfr{p}_{even})^{\ast} \sub{D} + \beta^{\ast})\sub{D}.\]
Hence,
\begin{align*}
    (\mfr{p}_{odd}' \sqcup \mfr{p}_{even} \sqcup [p_{i,odd}^2])^{\ast}\sub{D}&= (( \mfr{p}_{odd}' \sqcup \mfr{p}_{even})^{\ast}\sub{D} + \beta^{\ast} )\sub{D}\\
    & = (((\mfr{p}_{odd}')^\ast \sub{D} + (\mfr{p}_{even})\sub{D}{}^\ast)\sub{D}+ \beta^{\ast})\sub{D}\\
    & \lest ((\mfr{p}_{odd}')^\ast \sub{D} + (\mfr{p}_{even})\sub{D}{}^\ast+ \beta^{\ast})\sub{D}\\
    &= ((\mfr{p}_{odd})^{\ast} \sub{D}+ (\mfr{p}_{even})\sub{D}{}^\ast) \sub{D}.
\end{align*}
Here, the second equality follows from the induction hypothesis, and the inequality follows from the fact that $ \gamma + \delta \lest \gamma + \delta'$ if $\delta \lest \delta'$. On the other hand, applying Proposition \ref{P:ind BCD} and Lemma \ref{lem trans ind}, we have
\begin{align*}
      (\mfr{p}_{odd}' \sqcup \mfr{p}_{even} \sqcup [p_{i,odd}^2])^{\ast}\sub{D}&= (( \mfr{p}_{odd}' \sqcup \mfr{p}_{even})^{\ast}\sub{D} + 2[p_{i,odd}]^{\ast} )\sub{D}\\
      &=(((\mfr{p}_{odd}')^\ast \sub{D} + (\mfr{p}_{even})\sub{D}{}^\ast)\sub{D}+ 2[p_{i,odd}]^{\ast})\sub{D}\\
      &= ((\mfr{p}_{odd}')^\ast \sub{D} + (\mfr{p}_{even})\sub{D}{}^\ast+ 2[p_{i,odd}]^{\ast})\sub{D}\\
      & \gest ((\mfr{p}_{odd}')^\ast \sub{D} + (\mfr{p}_{even})\sub{D}{}^\ast+ \beta^{\ast})\sub{D}\\
      &= ((\mfr{p}_{odd})^{\ast} \sub{D}+ (\mfr{p}_{even})\sub{D}{}^\ast) \sub{D}.
\end{align*}
This completes the induction argument. Thus, we assume that $\{p_1,\ldots, p_{2s}\}$ are distinct for the rest of the proof.

Next, we reduce to the case that $r=0$. Suppose on the contrary that $r >0$ and write $\mfr{p}_{even}= \mfr{p}_{even}' \sqcup [q_r^{2}]$. Then according to Lemma \ref{lem HLLS24 D-collapse}, we have 
    \[ (\mfr{p}_{even})\sub{D}=(\mfr{p}_{even}')\sub{D} \sqcup \beta,\ \ (\mfr{p}_{odd} \sqcup \mfr{p}_{even})\sub{D}=(\mfr{p}_{odd} \sqcup \mfr{p}_{even}')\sub{D} \sqcup \beta \]
    for the same $\beta \in \{[q_r^2], [q_r+1, q_r-1]\}.$ Note that if $\beta = [q_r+1, q_r-1]$, then  we must have $ p_{j,even} > q_{r} > p_{j+1, even}$ (set $p_{2s+1,even}= 0$) for some odd $j$ (by Lemma \ref{lem HLLS24 D-collapse}(ii),(iii)). Thus, applying Proposition \ref{P:ind BCD} if $\beta=[q_r^2]$ or applying
\cite{Ach03}*{Lemma 3.1} twice if $\beta=[q_r+1,q_r-1]$, we see that
\[ (\mfr{p}_{odd} \sqcup \mfr{p}_{even})^{\ast} \sub{D}= (\mfr{p}_{odd} \sqcup \mfr{p}_{even}')^{\ast} \sub{D} + \beta^{\ast}\]
    always holds. Then the rest of the argument is similar to the previous reduction, which we omit. We assume that $r=0$ in the rest of the proof.

    Thirdly, we reduce to the case that $p_{2j,odd}=p_{2j+1,odd}$ for $1 \lest j \lest s-1$. Suppose on the contrary that $p_{2j,odd} > p_{2j+1,odd}$. For $\ast \in \{odd, even\}$,  we decompose $\mfr{p}_{\ast}= \mfr{p}_{\ast, \lest 2j} \sqcup \mfr{p}_{\ast, > 2j}$ where $\mfr{p}_{\ast, \lest 2j}:= [p_{i,\ast}]_{i \lest 2j}$. Note that $\mfr{p}_{\ast, \lest 2j}$ is evenly superior than $\mfr{p}_{\ast, >2j}$ in the notation in \cite{Ach03}*{\S 3.1} by our assumption. Applying \cite{Ach03}*{Lemma 3.1}, we observe that
    \begin{itemize}
        \item  $(\mfr{p}_{odd})^{\ast}\sub{D}= (\mfr{p}_{odd,\lest 2j})^{\ast}\sub{D}+ (\mfr{p}_{odd,> 2j})^{\ast}\sub{D}$, and
        \item $(\mfr{p}_{even})\sub{D}=(\mfr{p}_{even,\lest 2j})\sub{D}\sqcup (\mfr{p}_{even,>2j})\sub{D}$, and
        \item $(\mfr{p}_{even} \sqcup\mfr{p}_{odd})^{\ast} \sub{D}= (\mfr{p}_{even,\lest 2j} \sqcup\mfr{p}_{odd,\lest 2j})^{\ast} \sub{D}+ (\mfr{p}_{even,> 2j} \sqcup\mfr{p}_{odd,>2j})^{\ast} \sub{D}$.
    \end{itemize}
    Finally, observe that the last part of  $((\mfr{p}_{odd,\lest 2j})^\ast \sub{D} + (\mfr{p}_{even, \lest 2j})\sub{D}{}^\ast)^{\ast}$ is not smaller than $ p_{2j,odd}-1$, and the first part of $((\mfr{p}_{odd,> 2j})^\ast \sub{D} + (\mfr{p}_{even, > 2j})\sub{D}{}^\ast)^{\ast}$ is not larger than $p_{2j+1,odd}+1$. Thus, we have
    \begin{align*}
        ((\mfr{p}_{odd,\lest 2j})^\ast \sub{D} &+ (\mfr{p}_{even, \lest 2j})\sub{D}{}^\ast + (\mfr{p}_{odd,> 2j})^\ast \sub{D} + (\mfr{p}_{even, > 2j})\sub{D}{}^\ast )\sub{D}\\
        &= ((\mfr{p}_{odd,\lest 2j})^\ast \sub{D} + (\mfr{p}_{even, \lest 2j})\sub{D}{}^\ast )\sub{D}+ ((\mfr{p}_{odd,> 2j})^\ast \sub{D} + (\mfr{p}_{even, > 2j})\sub{D}{}^\ast )\sub{D}.
    \end{align*}
    Combining these four observations, we see that the desired equality follows from induction hypothesis.

    Finally, we compute the case that $p_{2j,odd}=p_{2j+1,odd}$ for all $1 \lest j \lest s-1$ directly. By the reduction we have done so far, we obtain that 
    \[ p_{2i+1,even} \gest p_{2i+2, even}> p_{2i+2, odd}= p_{2i+3, odd}> p_{2i+3,even}\]
    for $0 \lest i \lest s-2$. Then it follows from a direct computation that
    \begin{align*}
        (\mfr{p}_{odd})^{\ast}\sub{D}&= [ p_{1,odd}+1, p_{2,odd},\ldots, p_{2s-1,odd}, p_{2s,odd}-1]^{\ast}, \\(\mfr{p}_{even})\sub{D}{}^\ast&= (\sqcup_{1 \lest i \lest s} [p_{2i-1,even}-\epsilon_{i}, p_{2i,even}+\epsilon_i])^{\ast}
    \end{align*}
where $\epsilon_i= 1$ if $p_{2i-1,even}> p_{2i,even}$ and $0$ otherwise. 
Then, by separating into 4 situations based on the signs of  $p_{1,odd} - p_{1,even}$ and $p_{2s,odd}- p_{2s, even}$, one can check that $(\mfr{p}_{odd})^{\ast}\sub{D} + (\mfr{p}_{even})\sub{D}{}^\ast$ is already of type $D$ by a direct computation. Moreover, it is equal to the $D$-collapse of  $\mfr{p}_{even} \sqcup\mfr{p}_{odd}$. We give two examples to demonstrate this computation. This completes the proof of the lemma.
\end{proof}

\begin{eg}
As the first example, consider $[p_1,p_2,p_3,p_4]=[17,11,9,3]$. We have $\mfr{p}_{odd}=[9,5,5,1]$ and $\mfr{p}_{even}=[8,6,4,2]$. Thus, 
     \begin{align*}
        (\mfr{p}_{odd})^{\ast}\sub{D} + (\mfr{p}_{even})\sub{D}{}^\ast&= ( [9+1,5,5,1-1] \sqcup [8-1,6+1,4-1,2+1] )^{\ast}\\
        &= [9+1, 8-1, 6+1,5,5,4-1, 2+1,1-1]^\ast.
    \end{align*}
     A direct computation shows that 
    \[ [9+1, 8-1, 6+1,5,5,4-1, 2+1,1-1]^\ast= [7, 7, 7, 5, 5, 3, 3, 1, 1, 1],\]
    which is of type $D$. On the other hand, we may compute the $D$-collapse of $\mfr{p}_{even} \sqcup\mfr{p}_{odd}$ step by step as follows.
    \begin{align*}
        [9,8,6,5,5,4,2,1]^{\ast}\sub{D}&= [9,8,6,5,5,4,2+1]^{\ast}\sub{D} + [1-1]^{\ast}\\
        &= [9,8,6,5+1,5]^{\ast}\sub{D}+[4-1,2+1]^{\ast} + [1-1]^{\ast} \\
        &= [9,8,6+1]^{\ast}\sub{D}+[5+1-1,5]^{\ast}+[4-1,2+1]^{\ast} + [1-1]^{\ast}\\
        &= [9+1]^{\ast}\sub{D}+[8-1,6+1]+[5,5]^{\ast}+[4-1,2+1]^{\ast} + [1-1]^{\ast}\\
        &= [9+1, 8-1, 6+1,5,5,4-1, 2+1,1-1]^\ast.
    \end{align*}
 This completes the verification of this case.
 
 As the second example, consider $[p_1,p_2,p_3,p_4]=[19,11,9,5]$. We have $\mfr{p}_{odd}=[9,5,5,3]$ and $\mfr{p}_{even}=[10,6,4,2]$. Thus, 
    \begin{align*}
        (\mfr{p}_{odd})^{\ast}\sub{D} + (\mfr{p}_{even})\sub{D}{}^\ast&= ( [9+1,5,5,3-1] \sqcup [10-1,6+1,4-1,2+1] )^{\ast}\\
        &= [10,9, 6+1,5,5,4-1, 3,2]^\ast.
    \end{align*}
    Note that here $\{ 10-1, 9+1\}=\{10,9\}$ and $\{3-1,2+1\}=\{3,2\}$. A direct computation shows that 
    \[ [10,9, 6+1,5,5,4-1, 3,2]^\ast= [8, 8, 7, 5, 5, 3, 3, 2, 2, 1],\]
    which is of type $D$. On the other hand, we may compute the $D$-collapse of $\mfr{p}_{even} \sqcup\mfr{p}_{odd}$ step by step as follows.
    \begin{align*}
        [10,9,6,5,5,4,3,2]^{\ast}\sub{D}&=[10,9,6,5+1,5]^{\ast}\sub{D}  +[4-1,3,2]^{\ast}\\
        &=[10,9,6+1]^{\ast}\sub{D}+[5+1-1,5]^{\ast}  +[4-1,3,2]^{\ast}\\
        &= [10, 9, 6+1, 5,5,4-1, 3,2]^{\ast}.
    \end{align*}
        This completes the verification of this case.
\end{eg}
Now as a corollary of Lemma \ref{lem type D, n=4}, we prove the last case of Theorem \ref{T:comparison}.
\begin{prop}
    Theorem \ref{T:comparison} holds when $X=D$.
\end{prop}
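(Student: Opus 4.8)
The plan is to reduce the identity $d_{com,D}^{(n)}(\mfr{q}) = \AP{D}(\lambda_D^{(n)}(\mfr{q}))$ to the two special cases already established, namely Lemma \ref{lem type D, n=4} and Proposition \ref{prop comparison A} (for type $A$), by exploiting compatibility with induction. By Proposition \ref{P:ind BCD} (the $X=D$ case) and Proposition \ref{prop ind BCD Ann} — or rather an analogue of the latter for type $D$, which one obtains along the way — both sides transform the same way under $\mfr{q} \mapsto \mfr{q} \sqcup \mfr{r} \sqcup \mfr{r}$: $d_{com,D}^{(n)}(\mfr{q}\sqcup \mfr{r}\sqcup \mfr{r}) = (d_{com,D}^{(n)}(\mfr{q}) + 2 d_{com,A}^{(n^\ast)}(\mfr{r}))_D$, and likewise for $\AP{D}$. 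Thus it suffices to check the identity for $\mfr{q}$ whose parts are pairwise distinct, i.e.\ $\mfr{q} = [q_1 > q_2 > \cdots > q_s]$.

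For such $\mfr{q}$, I would unwind the formula for $\AP{D}(\lambda)$ with $\lambda := \lambda_D^{(n)}(\mfr{q})$ given in Theorem \ref{thm BMW}. Using the decomposition \eqref{eq decomp lambds BCD} and the notation $\mfr{p}_{D,i}(\lambda)$, that formula reads
\[
\AP{D}(\lambda) = \Bigl( \mfr{p}_{D,0}(\lambda)^{-\ast}\sub{D} + \mfr{p}_{D,\half{1}}(\lambda)\sub{D}{}^{\ast} + \sum_{i=1}^{l_D} \mfr{p}_{D,i}(\lambda)^{\ast} \Bigr)_D .
\]
Since $\mfr{q}$ has distinct parts, the multi-sets $\lambda_{D,i}$ for $i \ge 1$ each contribute a single part, so $\sum_{i=1}^{l_D} \mfr{p}_{D,i}(\lambda)^\ast$ is a sum of $[1]$'s, which combined with the remaining pieces and a further application of compatibility with induction (Proposition \ref{P:ind BCD}) lets me absorb those contributions and reduce to the core identity \eqref{eq type D n=4} of Lemma \ref{lem type D, n=4}:
\[
\bigl( \mfr{p}_{D,0}(\lambda)^{-\ast}\sub{D} + \mfr{p}_{D,\half{1}}(\lambda)\sub{D}{}^{\ast} \bigr)_D = (\mfr{p}_{D,0}(\lambda)^- \sqcup \mfr{p}_{D,\half{1}}(\lambda))^{\ast}\sub{D}.
\]
On the other side, I compute $d_{com,D}^{(n)}(\mfr{q})$ directly from \eqref{E:dBV-D}: for $n$ odd it is $d_{com,A}^{(n)}(\mfr{q})_D = \mfr{q}^{\ast}_D$-type expression, and for $n$ even it is $d_{com,A}^{(n/2)}(\mfr{q})_D$; in either case, matching with $\lambda^{(n^\ast)}_A(\mfr{q}) = {}^-\lambda_D^{(n)}(\mfr{q})$ via Lemma \ref{lem partition of wt} and Proposition \ref{prop comparison A} identifies $d_{com,A}^{(n^\ast)}(\mfr{q})$ with $(\mfr{p}_{D,0}(\lambda)^- \sqcup \mfr{p}_{D,\half{1}}(\lambda) \sqcup (\text{the }\pm\text{-halves of the }\lambda_{D,i}))^\ast$, and its $D$-collapse is exactly the right-hand side of \eqref{eq type D n=4} after folding in the $[1]$-contributions.

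The main obstacle I expect is bookkeeping rather than conceptual: correctly tracking how the ``$-$'' shift on $\mfr{p}_{D,0}(\lambda)$ (which encodes the difference between the type-$D$ weight ${}^-\lambda_D^{(n)}(\mfr{q})$ having a spurious structure versus type $B$) interacts with the $D$-collapse, and ensuring that when $d_{com,D}^{(n)}(\mfr{q})$ is a very even partition the labellings I, II agree — but the latter is deferred to Proposition \ref{P:D-I/II} and Lemma \ref{L:out} guarantees $\AP{D}$ is insensitive to the outer automorphism, so the weighted-Dynkin-diagram data determines everything up to that ambiguity. Concretely, the argument is: (1) reduce to distinct parts via induction-compatibility of both sides; (2) expand $\AP{D}(\lambda)$ via Theorem \ref{thm BMW}, peel off the $\sum_{i\ge 1}$ terms using Proposition \ref{P:ind BCD}, and apply Lemma \ref{lem type D, n=4}; (3) expand $d_{com,D}^{(n)}(\mfr{q})$ via \eqref{E:dBV-D} and \eqref{E:dBV-A}, re-express using Lemma \ref{lem partition of wt}; (4) observe both sides equal $\bigl(\mfr{p}_{D,0}(\lambda)^- \sqcup \mfr{p}_{D,\half{1}}(\lambda) \sqcup \bigsqcup_i \mfr{p}_{D,i}(\lambda)\bigr)^{\ast}\sub{D}$ and conclude. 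The in-particular statement of Theorem \ref{T:comparison} for $G$ of type $D$ then follows since, by the discussion after Definition \ref{def wt partition}, $\lambda_D^{(n)}(\mfr{p}_\mca{O})$ equals $h_\mca{O}^{(n)}/2$ or its $\sigma$-twist, and Lemma \ref{L:out} makes $\AP{D}$ independent of that choice.
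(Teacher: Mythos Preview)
Your steps (2)--(4) are essentially the paper's proof, and they already work for an \emph{arbitrary} type-$D$ partition $\mfr{q}$; the reduction to distinct parts in step (1) is both unnecessary and circular. The induction-compatibility of $\AP{D}(\lambda_D^{(n)}(\cdot))$ (the type-$D$ analogue of Proposition~\ref{prop ind BCD Ann}) is not available as an input here: the paper explicitly derives it as a \emph{consequence} of the very identity you are proving. So you cannot invoke it to strip off repeated parts. Fortunately you do not need to: Lemma~\ref{lem type D, n=4} is stated for general $\mfr{q}$, and the $\sum_{i\gest 1}$ contributions always come in matched pairs $\mfr{p}_{D,i}(\lambda)=\mfr{p}_{D,l_D+1-i}(\lambda)$, so they can be peeled off by Lemma~\ref{lem trans ind} (not Proposition~\ref{P:ind BCD}, which concerns $d_{com}$ rather than collapses of sums) without any assumption on $\mfr{q}$.

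Concretely, drop step (1). For general $\mfr{q}$, one has $d_{com,A}^{(n^\ast)}(\mfr{q}) = \bigl(\mfr{p}_{D,0}(\lambda)^- \sqcup \mfr{p}_{D,\half{1}}(\lambda) \sqcup \bigsqcup_{i=1}^{l_D}\mfr{p}_{D,i}(\lambda)\bigr)^\ast$ by Proposition~\ref{prop comparison A} and the definitions \eqref{eq decomp lambds BCD}--\eqref{eq partition lambda sep}. Taking $D$-collapse and applying Lemma~\ref{lem trans ind} (with the paired terms playing the role of $2\mfr{q}_2$) separates off $\sum_i \mfr{p}_{D,i}(\lambda)^\ast$; Lemma~\ref{lem type D, n=4} rewrites the remaining core as $(\mfr{p}_{D,0}(\lambda)^{-\ast}\sub{D} + \mfr{p}_{D,\half{1}}(\lambda)\sub{D}{}^\ast)\sub{D}$; and one more application of Lemma~\ref{lem trans ind} reassembles the expression into the formula of Theorem~\ref{thm BMW} for $\AP{D}(\lambda)$. (Your side remark that the $\lambda_{D,i}$ contribute ``a single part'' when $\mfr{q}$ has distinct parts is also not quite right---they contribute columns $[1^{k_i}]$, not $[1]$'s---but this is moot once step (1) is removed.)
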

\begin{proof}
Let $\mfr{q}$ be of type $D$ and let $\lambda:= \lambda_D^{(n)}(\mfr{q})$. By \eqref{eq decomp lambds BCD}, \eqref{eq partition lambda sep}, Proposition \ref{prop comparison A}, we have
\[d_{com,A}^{(n^{\ast})}(\mfr{q})= \left(\mfr{p}_{D,0}(\lambda)^- \sqcup \mfr{p}_{D,\half{1}}(\lambda) \sqcup \bigsqcup_{i=1}^{{l_{D}}} \mfr{p}_{D,i}(\lambda) \right)^{\ast}. \]
Thus, by Lemmas \ref{lem trans ind} and \ref{lem type D, n=4},
    \begin{align*}
        d_{com,A}^{(n^{\ast})}(\mfr{q})\sub{D}&= \left((\mfr{p}_{D,0}(\lambda)^- \sqcup \mfr{p}_{D,\half{1}}(\lambda))^{\ast} \sub{D}+\sum_{i=1}^{{l_{D}}} \mfr{p}_{D,i}(\lambda)^\ast \right)\sub{D}\\
       &= \left( ( \mfr{p}_{D,0}(\lambda)^{-\ast} \sub{D}   +\mfr{p}_{D,\half{1}}(\lambda) \sub{D}{}^\ast)\sub{D}+\sum_{i=1}^{{l_{D}}} \mfr{p}_{D,i}(\lambda)^\ast \right)\sub{D}\\
        &= \left(  \mfr{p}_{D,0}(\lambda)^{-\ast} \sub{D}   +\mfr{p}_{D,\half{1}}(\lambda) \sub{D}{}^\ast+\sum_{i=1}^{{l_{D}}} \mfr{p}_{D,i}(\lambda)^\ast \right)\sub{D}\\
        &=\AP{D}(\lambda^{(n)}_D(\mfr{q})).
    \end{align*}
   This completes the proof of the proposition.
\end{proof}

For type $D_r$, we will need to consider very even orbit with the labelling $\heartsuit \in \set{{\rm I}, {\rm II}}$.

\begin{prop} \label{P:D-I/II}
Let $\wt{G}$ be cover of $\SO_{2r}$ as in \S \ref{SS:fix-cov}. Let $\mfr{p} \in \mca{N}(\wt{G}^\vee)$ be a very even orbit. Then
$$d_{BV, \SO_{2r}}^{(n)}(\mfr{p}^{\heartsuit}) = d_{com, D}^{(n)}(\mfr{p}^\heartsuit)$$ for $\heartsuit \in \set{{\rm I}, {\rm II}}$.
\end{prop}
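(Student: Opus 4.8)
### Proof proposal for Proposition \ref{P:D-I/II}

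The plan is to trace the labelling $\heartsuit \in \set{{\rm I}, {\rm II}}$ through the annihilator-variety definition of $d_{BV, \SO_{2r}}^{(n)}$ and match it against the combinatorial recipe \eqref{E:dBV-D 2}, which was designed precisely to record this labelling. First I would recall that for a very even partition $\mfr{p}$ of type $D$ (with $r$ even), the two orbits $\mfr{p}^{\rm I}, \mfr{p}^{\rm II}$ are distinguished by whether the associated nilpotent meets one or the other half of the disconnected variety $\mca{O} \cap \mfr{so}_{2r}$; equivalently, following Lusztig's convention as in \cite{BMW25}, by the action of the outer automorphism $\sigma$ of Lemma \ref{L:out}. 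The key structural input is that $\sigma$ interchanges $\mfr{p}^{\rm I}$ and $\mfr{p}^{\rm II}$, while by Lemma \ref{L:out} one has $\AP{D}(\lambda) = \AP{D}({}^\sigma \lambda)$ on the level of \emph{underlying partitions}. So the content of the proposition is entirely about how the $\heartsuit$-decoration is transported, not about the partition itself, which is already handled by Theorem \ref{T:comparison} for type $D$.

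The main step is to compute the effect of $\sigma$ on the weight $h_{\mfr{p}^\heartsuit}^{(n)}/2$. By \eqref{E:hO}, this weight is $\sum_{\alpha\in\Delta} \tfrac{c_\alpha(\mfr{p})}{2n_\alpha}\omega_\alpha$, and the outer automorphism acts on it by exchanging the coefficients of $\omega_{r-1}$ and $\omega_r$, i.e. by exchanging $c_{\alpha_{r-1}}(\mfr{p}^\heartsuit)$ and $c_{\alpha_r}(\mfr{p}^\heartsuit)$. For a very even $\mfr{p}$ these two weighted Dynkin labels are distinct, and the convention is exactly that $\mfr{p}^{\rm I}$ versus $\mfr{p}^{\rm II}$ corresponds to which of the two labels $\{0,2\}$ sits on $\alpha_{r-1}$ versus $\alpha_r$. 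Hence ${}^\sigma\big(h_{\mfr{p}^{\rm I}}^{(n)}/2\big) = h_{\mfr{p}^{\rm II}}^{(n)}/2$. Now I would invoke the refinement of Joseph's formula in \cite{BMW25}*{\S 7} (or \cite{BMW25}*{Theorem 7.10}) that tracks not just the partition but the I/II labelling of ${\rm AV}_{\mfr{g}_\BC}(\lambda)$ when it is very even: that computation shows ${\rm AV}$ intertwines $\sigma$ on the weight side with $\sigma$ on the orbit side, up to the parity correction $r/2$ even or odd that already appears in \cite{BMW25}. Comparing this with the definition \eqref{E:dBV-D 2}, which has exactly the same $r/2$ parity split, gives the claimed identity $d_{BV,\SO_{2r}}^{(n)}(\mfr{p}^\heartsuit) = d_{com,D}^{(n)}(\mfr{p}^\heartsuit)$.

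Concretely I would organize the argument as: (1) reduce to the case where $d_{com,D}^{(n)}(\mfr{p})$ is very even, since otherwise \eqref{E:dBV-D 2} imposes nothing and the statement is just the already-proved underlying-partition equality from Theorem \ref{T:comparison} together with the fact that a non-very-even $D$-orbit carries no I/II label; (2) in the very even case, identify $\lambda_D^{(n)}(\mfr{p}^\heartsuit)$ with $h_{\mfr{p}^\heartsuit}^{(n)}/2$ or ${}^\sigma h_{\mfr{p}^\heartsuit}^{(n)}/2$ as recorded after Definition \ref{def wt partition}; (3) apply the labelled version of the annihilator computation from \cite{BMW25} and read off the $r/2$ parity dependence; (4) match term-by-term with \eqref{E:dBV-D 2}. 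The main obstacle I anticipate is step (3): pinning down the precise normalization of Lusztig's I/II convention in \cite{BMW25} and verifying that the parity shift by $r/2$ there is literally the one written in \eqref{E:dBV-D 2}, rather than its opposite; this is a bookkeeping check but an error-prone one, and it is where I would be most careful, cross-checking against a small example such as $\mfr{p} = [3,3]$ or $[2^4]$ in low rank with $n$ small.
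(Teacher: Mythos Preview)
Your reduction to the very-even case and the observation that $\sigma$ intertwines the weight side with the orbit side are both correct, but this only gives you the \emph{consistency} statement that $d_{BV,\SO_{2r}}^{(n)}(\mfr{p}^{\rm I})$ and $d_{BV,\SO_{2r}}^{(n)}(\mfr{p}^{\rm II})$ are the two labelled orbits over $d_{com,D}^{(n)}(\mfr{p})$, swapped by $\sigma$. It does not tell you which is which. Your step (3) is where the actual content lies, and you leave it as an appeal to an unspecified ``labelled refinement of Joseph's formula'' in \cite{BMW25}, together with a cross-check on a small example. A single example does not determine the assignment for all very even $\mfr{p}$ and all $n$ simultaneously, and the reference you cite (\cite{BMW25}*{Theorem 7.10}) is precisely the ingredient behind Lemma \ref{L:out}, which records only the underlying partition. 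So the key step is missing.

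The paper pins down the label by a different mechanism: orbit induction from the two Siegel Levis $M_{\rm I} = \GL_r^{\rm I}$ and $M_{\rm II} = \GL_r^{\rm II}$. The very even orbit $\mfr{p}^{\rm I}_{\rm CM}$ (in Collingwood--McGovern's convention) is $\Sat_{\wt{M}_{\rm I}^\vee}^{\wt{G}^\vee}\mfr{p}_0$ for a partition $\mfr{p}_0$ of $r$, and then the Barbasch--Vogan argument \cite{BV85}*{Proposition A2 (c)} gives the inclusion
$$d_{BV,\SO_{2r}}^{(n)}(\mfr{p}^{\rm I}_{\rm CM}) \subseteq {\rm Ind}_{M_{\rm I}}^{G}\, d_{BV,M_{\rm I}}^{(n)}(\mfr{p}_0).$$
The already-established partition-level compatibility with induction (Proposition \ref{P:ind BCD}) forces the two sides to have equal dimension, hence equality as orbits. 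Now \cite{BMW25}*{Proposition 7.5 (2)} says that any very even orbit induced from $M_{\rm I}$ carries Lusztig label ${\rm I}$, so $d_{BV,\SO_{2r}}^{(n)}(\mfr{p}^{\rm I}_{\rm CM}) = d_{BV,\SO_{2r}}^{(n)}(\mfr{p})^{\rm I}_{\rm Lus}$, and similarly with ${\rm II}$. Translating through the Lusztig/Collingwood--McGovern dictionary (which is where the $r/2$ parity enters) matches \eqref{E:dBV-D 2}. The induction step is the idea you are missing; it replaces the unspecified ``labelled Joseph formula'' with a concrete geometric identification of the label via which Siegel Levi the orbit comes from.
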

\begin{proof}
We first note that at the partition level, Theorem  \ref{T:comparison} gives
$$d_{BV, \SO_{2r}}^{(n)}(\mfr{p}) = d_{com, D}^{(n)}(\mfr{p}).$$
Let $M_{\rm I}:=\GL_r^{\rm I} \subset \SO_{2r}$ be the Siegel Levi subgroup associated with $\set{e_i - e_{i+1}}_{1\lest i \lest r-1}$. Let $M_{\rm II}:=\GL_r^{\rm II}$ be the other Siegel Levi subgroup associated with $\set{e_i - e_{i+1}}_{1\lest i \lest r-2} \cup \set{e_{r-1} + e_r}$.
We write 
$$\mfr{p}_{\rm Lus}^\heartsuit:=\mfr{p}^\heartsuit$$ to emphasize that we are following Lusztig's convention in the usage of labeling, as adopted in \cites{BMW25, BGWX25}. Also, we write $\mfr{p}_{\rm CM}^\heartsuit$ to indicate that we use the convention of Collingwood--McGovern \cite{CM93}. Their relation is as follows: for $\set{\heartsuit, \spadesuit} = \set{\rm I, II}$, we get
\begin{equation} \label{Lus-CM}
\mfr{p}_{\rm Lus}^\heartsuit 
=\begin{cases}
\mfr{p}_{\rm CM}^\heartsuit & \text{ if  $r/2$ is even},\\
\mfr{p}_{\rm CM}^\spadesuit & \text{ if  $r/2$ is odd}.
\end{cases}
\end{equation}

Consider $\mfr{p}_{\rm CM}^{\rm I} \in \mca{N}(\wt{G}^\vee)$, then we have  $\mfr{p}_{\rm CM}^{\rm I} = {\rm Sat}_{\wt{M}_{\rm I}^\vee}^{\wt{G}^\vee} \mfr{p}_0$ for a partition $\mfr{p}_0$ of $r$ uniquely determined by $\mfr{p}$.

By definition $d_{BV, \SO_{2r}}^{(n)}(\mfr{p}_{\rm CM}^{\rm I}) = {\rm AV}_{\mfr{g}_\BC}(h_{\mfr{p}_{\rm CM}^{\rm I}}^{(n)}/2)$. Now, as argued by Barbasch--Vogan in the proof of \cite{BV85}*{Proposition A2 (c)}, one has
$${\rm AV}_{\mfr{g}_\BC}(h_{\mfr{p}_{\rm CM}^{\rm I}}^{(n)}/2) \subseteq {\rm Ind}_{M_{\rm I}}^G {\rm AV}_{\mfr{m}_{I,\BC}}(h_{\mfr{p}_{\rm CM}^{\rm I}}^{(n)}/2),$$
that is,
\begin{equation} \label{E:inc-D}
d_{BV, \SO_{2r}}^{(n)}(\mfr{p}_{\rm CM}^{\rm I}) \subseteq {\rm Ind}_{M_{\rm I}}^G d_{BV, M_{\rm I}}^{(n)}(\mfr{p}_0).
\end{equation}
The underlying partitions of two sides of \eqref{E:inc-D} are equal by Proposition  \ref{P:ind BCD}; in particular, the two orbits in \eqref{E:inc-D} have the same dimension. This gives 
\begin{equation} \label{E:chain1}
d_{BV, \SO_{2r}}^{(n)}(\mfr{p}_{\rm CM}^{\rm I}) = {\rm Ind}_{M_{\rm I}}^G d_{BV, M_{\rm I}}^{(n)}(\mfr{p}_0) = {\rm Ind}_{M_{\rm I}}^G(\mfr{p}_0^*) = d_{BV, \SO_{2r}}^{(n)}(\mfr{p})_{\rm Lus}^{\rm I},
\end{equation}
where the last equality follows from \cite{BMW25}*{Proposition 7.5 (2)}.
Similarly, one has
\begin{equation} \label{E:chain2}
d_{BV, \SO_{2r}}^{(n)}(\mfr{p}_{\rm CM}^{\rm II}) = {\rm Ind}_{M_{\rm II}}^G d_{BV, M_{\rm I}}^{(n)}(\mfr{p}_0) = {\rm Ind}_{M_{\rm II}}^G(\mfr{p}_0^*) = d_{BV, \SO_{2r}}^{(n)}(\mfr{p})_{\rm Lus}^{\rm II},
\end{equation}
Combining \eqref{Lus-CM}, \eqref{E:chain1} and \eqref{E:chain2} gives us
\begin{equation}
d_{BV,\SO_{2r}}^{(n)}(\mfr{p}^\heartsuit) := 
\begin{cases}
d_{BV,\SO_{2r}}^{(n)}(\mfr{p})^\heartsuit & \text{ if  $r/2$ is even};\\
d_{BV,\SO_{2r}}^{(n)}(\mfr{p})^{\spadesuit} & \text{ if  $r/2$ is odd}.
\end{cases}
\end{equation}
In particular, it is identical to the rule of labellings in \ref{E:dBV-D 2}. This completes the proof.
\end{proof}

\subsection{Order reversing}
\begin{prop} \label{P:ord-rev}
For $X\in \{A, B,C,D\}$ and $n \in \Z_{>0}$, $d_{com,X}^{(n)}$ is order-reversing, i.e., if $d_{com, X}^{(n)}(\mfr{q}) \gest d_{com, X}^{(n)}(\mfr{q}')$ whenever $\mfr{q} \lest \mfr{q}'$.  
\end{prop}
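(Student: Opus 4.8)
The plan is to reduce the order-reversing property of $d_{com,X}^{(n)}$ for $X\in\{B,C,D\}$ to the type $A$ case, and to prove the type $A$ case directly. For type $A$, recall that $d_{com,A}^{(n)}(\mfr{p}) = \sum_i \mfr{s}(p_i;n)$ where $\mfr{s}(m;n)=(n^a b)$ with $m=na+b$. The key observation is that $m\mapsto \mfr{s}(m;n)$ is itself ``order-preserving after transpose'' in an appropriate sense, and that the covering duality interacts well with the dominance order. Concretely, I would first establish that if $\mfr{q}\lest\mfr{q}'$ (dominance order on partitions of the same size), then $d_{com,A}^{(n)}(\mfr{q})\gest d_{com,A}^{(n)}(\mfr{q}')$. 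One clean route: since dominance order is generated by elementary moves (transferring one box from a lower row to a higher row, i.e. $\mfr{q}'$ covers $\mfr{q}$ when $\mfr{q}'=\mfr{q}$ with $q_i'=q_i+1$, $q_j'=q_j-1$ for some $i<j$), it suffices to check the inequality for a single such elementary move. For such a move, $d_{com,A}^{(n)}(\mfr{q}') - d_{com,A}^{(n)}(\mfr{q}) = \mfr{s}(q_i+1;n)+\mfr{s}(q_j-1;n) - \mfr{s}(q_i;n)-\mfr{s}(q_j;n)$, and a direct case analysis on the residues of $q_i,q_j$ modulo $n$ shows that passing from $\{\mfr{s}(q_i;n),\mfr{s}(q_j;n)\}$ to $\{\mfr{s}(q_i+1;n),\mfr{s}(q_j-1;n)\}$ either leaves the multiset unchanged or replaces it by a dominance-smaller multiset of the same size; summing with the (fixed) contributions of the other parts and noting that $\mfr{p}\mapsto\mfr{p}^{*}$ reverses dominance would be unnecessary here since $d_{com,A}^{(n)}$ already outputs the ``transposed'' shape, so I must be careful: I want $d_{com,A}^{(n)}(\mfr{q})\gest d_{com,A}^{(n)}(\mfr{q}')$, and I expect the elementary-move computation to directly give $d_{com,A}^{(n)}(\mfr{q}')\lest d_{com,A}^{(n)}(\mfr{q})$.

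Alternatively, and perhaps more robustly, I would use Theorem \ref{T:comparison} to identify $d_{com,A}^{(n)}(\mfr{p})$ with $\AP{A}(\lambda_A^{(n)}(\mfr{p}))$, the annihilator partition of the highest weight module $L(\lambda_A^{(n)}(\mfr{p}))$. Order-reversing would then follow from the general principle that if $\mfr{q}\lest\mfr{q}'$ then the weights $\lambda_A^{(n)}(\mfr{q})$ and $\lambda_A^{(n)}(\mfr{q}')$ are related by a dominance-type inequality on the multiset of coordinates, and the annihilator variety is order-reversing in the relevant parameter (via Joseph's formula / the combinatorics of the Robinson--Schensted correspondence used in \cite{BMW25}). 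However, making the ``highest weight side'' argument precise is more delicate than the elementary-move computation, so I would favor the direct combinatorial approach for type $A$.

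For types $B,C,D$, I would exploit the explicit formulas \eqref{E:dBV-B}, \eqref{E:dBV-C}, \eqref{E:dBV-D}, which express $d_{com,X}^{(n)}(\mfr{p})$ as a composition of $d_{com,A}^{(n^{*})}$ with the operations $(\cdot)^{+}$, $(\cdot)^{-}$, and $X$-collapse $(\cdot)_X$. Each of these building blocks is order-reversing or order-preserving as appropriate: the maps $\mfr{p}\mapsto\mfr{p}^{+}$, $\mfr{p}\mapsto\mfr{p}^{-}$ are order-preserving (adding or removing a box from the top or bottom row is monotone in dominance order restricted to partitions of fixed size, once one accounts for the size change consistently), and the collapse operations $(\cdot)_B,(\cdot)_C,(\cdot)_D$ are order-preserving by the standard theory (see \cite{CM93}, and the facts recalled in \cite{HLLS24}*{\S 12}). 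Hence the composite sends $\mfr{q}\lest\mfr{q}'$ to $d_{com,X}^{(n)}(\mfr{q})\gest d_{com,X}^{(n)}(\mfr{q}')$. The very-even labelling refinement of type $D$ in \eqref{E:dBV-D 2} does not affect the order since the ordering among very even orbits with labels $\mathrm{I},\mathrm{II}$ is only comparability with non-very-even orbits, which is governed by the underlying partition.

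The main obstacle I anticipate is the careful bookkeeping in the type $A$ elementary-move computation: the map $m\mapsto\mfr{s}(m;n)$ is piecewise-linear with a ``reset'' at multiples of $n$, so when $q_i\equiv 0$ or $q_j\equiv 1\pmod n$ one crosses a boundary and must verify that the resulting change to the multiset of column-heights is indeed dominance-decreasing rather than incomparable. A clean way to handle all residue cases uniformly is to note that $d_{com,A}^{(n)}(\mfr{p})^{*}$ has $k$-th part equal to $\sum_i \lceil (p_i - (k-1)n)^{+}/1 \rceil$ type expressions — more precisely, $d_{com,A}^{(n)}(\mfr{p})$ is obtained by chopping each row $p_i$ into blocks of size $n$ (with a remainder) and stacking, which is manifestly monotone: if $\mfr{q}\lest\mfr{q}'$ then each partial sum inequality for $\mfr{q},\mfr{q}'$ translates, after chopping, into the reversed partial-sum inequalities for $d_{com,A}^{(n)}(\mfr{q}),d_{com,A}^{(n)}(\mfr{q}')$. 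I would phrase the type $A$ proof in this ``partial-sum'' language to avoid the residue casework entirely, then cite the monotonicity of $(\cdot)^{\pm}$ and $X$-collapse for the remaining types.
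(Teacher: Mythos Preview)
Your proposal is correct and follows essentially the same approach as the paper. Both reduce types $B,C,D$ to type $A$ via the order-preservation of $(\cdot)^{\pm}$ and $X$-collapse, and then handle type $A$ by reducing to elementary dominance moves (transferring one box between two rows) and checking that each such move on $\mfr{q}$ induces at most one elementary move on the output. The only cosmetic difference is that the paper passes to the transpose $\mfr{q}^{(n)}:=d_{com,A}^{(n)}(\mfr{q})^{\ast}=\bigsqcup_i [(a_i+1)^{b_i},a_i^{n-b_i}]$ and shows this is order-\emph{preserving}, which makes the elementary-move verification particularly transparent; your direct analysis of $\mfr{s}(q_i+1;n)+\mfr{s}(q_j-1;n)$ versus $\mfr{s}(q_i;n)+\mfr{s}(q_j;n)$ is the same computation viewed dually.
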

\begin{proof}
Since the maps $\mfr{q} \mapsto \mfr{q}^+, \mfr{q}^-$ and $\mfr{q}_{X}$ are all order-preserving, it suffices to show that $d_{com,A}^{(n)}$ is order-reversing, or equivalently, $\mfr{q}^{(n)}:= d_{com,A}^{(n)}(\mfr{q})^{\ast}$ is order-preserving. By definition, if $\mfr{q}=[q_1,\ldots, q_r]$ and $q_i= a_i n + b_i$ for $a_i,b_i \in \Z_{\gest 0}$ with $b_i < n$, then
    \begin{align}\label{eq shrink}
        \mfr{q}^{(n)}= \bigsqcup_{i=1}^r [(a_i+1)^{b_i}, a_i^{n-b_i}].
    \end{align}
    Now we regard a partition $\mfr{q}$ as a non-increasing map $f_{\mfr{q}}: \Z_{>0} \to \Z_{\gest 0}$ which sends $i$ to its $i$-th part of $\mfr{q}$ if $i\lest l(\mfr{q})$ and zero otherwise. Then for each $i,j \in \Z_{>0}$ with $i<j$, we define a collapsing operator $T_{i,j}$ on the set of functions $\{f: \Z_{>0} \to \Z\}$ by
    \[ T_{i,j}(f)(k):= \begin{cases}
        f(k) &\text{ if }k \neq i, j,\\
        f(i)-1 & \text{ if }k=i,\\
        f(j)+1 & \text{ if }k=j.
    \end{cases} \]
    Then $\mfr{q}_1 > \mfr{q}_2$ if and only if there exists a sequence of partitions $\mfr{p}_1,\ldots, \mfr{p}_s$ such that $\mfr{p}_1= \mfr{q}_1$, $\mfr{p}_s= \mfr{q}_2$ and $f_{\mfr{p}_{k+1}}= T_{i_k, j_k}( f_{\mfr{p}_k})$ for some pair $(i_k,j_k)$. Now it is not hard to see from \eqref{eq shrink} that either $f_{\mfr{p}_{k+1}^{(n)}}=f_{\mfr{p}_{k}^{(n)}}$ or there exists a pair $(i_k^{(n)}, j_k^{(n)})$ such that $f_{\mfr{p}_{k+1}^{(n)}}=T_{i_k^{(n)}, j_k^{(n)}}f_{\mfr{p}_{k}^{(n)}}$. This shows that the map $\mfr{q}\mapsto \mfr{q}^{(n)}$ is order preserving and completes the proof of the proposition.
\end{proof}

\subsection{\texorpdfstring{A summary of results for $G$ of classical type}{}}
We give a summary of the main results in this section as follows.

\begin{thm} \label{T:ABCD}
Let $\wt{G}^{(n)}$ be the cover of $G$ of classical type $X \in \set{A, B, C, D}$ given in \S \ref{SS:fix-cov}. Let $d_{BV, G}^{(n)}$be the covering Barbasch--Vogan duality map $d_{BV, G}^{(n)}$ given in Definition \ref{D:dBV}. Then it satisfies the following properties:
\begin{enumerate}
\item[(i)] $d_{BV, G}^{(n)}(\mca{O})= d_{com, X}^{(n)}(\mca{O})$ for every orbit $\mca{O} \in \mca{N}(\wt{G}^\vee)$, where the latter is given in \S \ref{SS:fix-cov};
\item[(ii)] it is order-reversing, i.e., $d_{BV, G}^{(n)}(\mca{O}) \gest d_{BV, G}^{(n)}(\mca{O}')$ whenever $\mca{O} \lest \mca{O}'$;
\item[(iii)] for every covering Levi $\wt{M}^{(n)}$ and orbit $\mca{O} \subset {\rm Lie}(\wt{M}^{(n)})$, one has
\begin{equation}  \label{ind-cla}
d_{BV, G}^{(n)} \circ \Sat_{\wt{M}^\vee}^{\wt{G}^\vee}(\mca{O}) = \Ind_\mbf{M}^\mbf{G}\circ d_{BV, M}^{(n)}(\mca{O}).
\end{equation}
\end{enumerate}
\end{thm}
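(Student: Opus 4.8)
The plan is to assemble Theorem \ref{T:ABCD} directly from the results proved earlier in the section, so the ``proof'' is really a matter of bookkeeping across the three parts.

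\begin{proof}
Part (i) is precisely the content of Theorem \ref{T:comparison}, which has been established case by case in the preceding subsections: Proposition \ref{prop comparison A} handles type $A$; the proposition in \S\ref{S:dBV-cla} proving ``Theorem \ref{T:comparison} holds when $X=B,C$'' handles types $B$ and $C$; and the proposition proving ``Theorem \ref{T:comparison} holds when $X=D$'' handles type $D$. Indeed, recalling the identification $\lambda_{X'}^{(n)}(\mfr{p}_\mca{O}) = h_\mca{O}^{(n)}/2$ (up to the outer automorphism $\sigma$ in type $D$, which is harmless by Lemma \ref{L:out}), we have $d_{BV, G}^{(n)}(\mca{O}) = {\rm AV}_{\mfr{g}_\BC}(h_\mca{O}^{(n)}/2) = \AP{X}(\lambda_{X'}^{(n)}(\mfr{p}_\mca{O})) = d_{com, X}^{(n)}(\mfr{p}_\mca{O})$ as partitions. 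For type $D$ with $\mca{O}$ very even, the refinement matching the labellings ${\rm I}, {\rm II}$ on both sides is supplied by Proposition \ref{P:D-I/II}, which shows $d_{BV, \SO_{2r}}^{(n)}(\mfr{p}^\heartsuit) = d_{com, D}^{(n)}(\mfr{p}^\heartsuit)$. Hence (i) holds in all cases.

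Part (ii) now follows from (i) together with Proposition \ref{P:ord-rev}, which states that $d_{com, X}^{(n)}$ is order-reversing for each $X \in \{A, B, C, D\}$; for the very even orbits in type $D$ one uses in addition that the closure ordering restricted to the two components of a very even orbit is handled compatibly by the labelling convention in \eqref{E:dBV-D 2}.

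Part (iii) is the compatibility with orbit-induction. By (i) it suffices to verify the identity $d_{com, X}^{(n)} \circ \Sat_{\wt{M}^\vee}^{\wt{G}^\vee}(\mca{O}) = \Ind_\mbf{M}^\mbf{G} \circ d_{com, M}^{(n)}(\mca{O})$ for every covering Levi $\wt{M}^{(n)} \subset \wt{G}^{(n)}$ and $\mca{O} \in \mca{N}(\wt{M}^\vee)$. Writing $\mbf{M}$ as a product of a general linear factor and a classical group of the same type $X$ as $\mbf{G}$, the saturation $\Sat_{\wt{M}^\vee}^{\wt{G}^\vee}$ acts on partitions by adjoining $\mfr{q} \sqcup \mfr{q}$ (for the $\GL$-part, with parameter $n^\ast = n/\gcd(n,2)$ on that factor) to the partition of the classical part, and $\Ind_\mbf{M}^\mbf{G}$ is, on partitions, the $X$-collapse of the sum. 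The required equality is then exactly Proposition \ref{P:ind BCD} for $X \in \{B, C, D\}$, namely $d_{com,X}^{(n)}(\mfr{p} \sqcup \mfr{q} \sqcup \mfr{q}) = (d_{com,X}^{(n)}(\mfr{p}) + 2 d_{com,A}^{(n^\ast)}(\mfr{q}))_X$, combined with Lemma \ref{lem ind A}(a) for the type $A$ factor and the transitivity identity Lemma \ref{lem trans ind}; for a general (non-Siegel) Levi one iterates using the transitivity of $\Sat$ and $\Ind$. The very even refinement in type $D$ is again covered by Proposition \ref{P:D-I/II}, whose proof already records the identities \eqref{E:chain1} and \eqref{E:chain2} expressing $d_{BV, \SO_{2r}}^{(n)}$ of a very even orbit as $\Ind$ from a Siegel Levi.

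The only genuinely delicate point, and the one that required the bulk of this section, is establishing the combinatorial induction identity Proposition \ref{P:ind BCD}; everything in Theorem \ref{T:ABCD} is a formal consequence of that result together with Theorem \ref{T:comparison}, Proposition \ref{P:D-I/II} and Proposition \ref{P:ord-rev}.
\end{proof}
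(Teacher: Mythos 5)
Your proposal is correct and follows essentially the same route as the paper: part (i) from Theorem \ref{T:comparison} together with Proposition \ref{P:D-I/II}, part (ii) from Proposition \ref{P:ord-rev} via (i), and part (iii) from Proposition \ref{P:ind BCD} (with Lemma \ref{lem ind A} and Lemma \ref{lem trans ind} for the type-$A$ factors and transitivity) plus the proof of Proposition \ref{P:D-I/II} for the very even labelling in type $D$. The only difference is that you spell out the bookkeeping (how $\Sat$ and $\Ind$ act on partitions, and the iteration over general Levis) slightly more explicitly than the paper does.
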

\begin{proof}
Here, (i) follows from Theorem \ref{T:comparison} and Proposition \ref{P:D-I/II}. In view of (i), the statement (ii) is a consequence of Proposition \ref{P:ord-rev}. Also, for type $A, B, C$ the statement (iii) follows Proposition \ref{P:ind BCD}. For type $D$, (iii) follows from the proof of Proposition \ref{P:D-I/II} and was used to show (i) here.
\end{proof}

\section{\texorpdfstring{Properties of $d_{BV, G}^{(n)}$ for exceptional $G$}{}} \label{S:dBV-exc}
In this section, we assume $G$ is of exceptional type and is almost simple and simply-connected. We consider the cover $\wt{G}^{(n)}$ associated with $Q: Y\to \Z$ satisfying that $Q(\alpha^\vee)=1$ for every short coroot $\alpha^\vee$.

For every orbit $\mca{O} \in \mca{N}(\wt{G}^\vee)$, we get from \eqref{E:hO}
$$\frac{h_\mca{O}^{(n)}}{2} = \sum_{\alpha \in \Delta} \frac{c_\alpha(\mca{O})}{2n_\alpha} \cdot \omega_\alpha.$$
We can thus implement the online algorithm 
    \begin{center}
\textcolor{blue}{http://test.slashblade.top:5000/lie/GKdim}
    \end{center}
given as in \cite{BGWX25} to compute $d_{BV,G}^{(n)}(\mca{O}):={\rm AV}_{\mfr{g}_\BC}(h_{\mca{O}}^{(n)}/2)$. In Appendix \ref{A:dBVexc}, we tabulate the data of $d_{BV, G}^{(n)}$ for such $\wt{G}^{(n)}$. We note that for $\wt{G}^{(n)}$ associated with a more general quadratic form $Q: Y \to \Z$, the map $d_{BV, G}^{(n)}$ is the one obtained from substituting $n_\alpha$ (for any short coroot $\alpha^\vee$) for $n$ in all the lists in Appendix \ref{A:dBVexc}.

\begin{thm} \label{T:EFG}
Let $\wt{G}^{(n)}$ be an $n$-fold cover of $G$ of exceptional type given as above. Then the following hold:
\begin{enumerate}
\item[(i)] the map $d_{BV,G}^{(n)}: \mca{N}(\wt{G}^\vee) \to \mca{N}(\mbf{G})$ is order-reversing;
\item[(ii)] if $\wt{M} \subseteq \wt{G}$ is a covering Levi subgroup, then one has
\begin{equation} \label{ind-exc}
d_{BV, G}^{(n)} \circ \Sat_{\wt{M}^\vee}^{\wt{G}^\vee}(\mca{O}) = \Ind_\mbf{M}^\mbf{G}\circ d_{BV, M}^{(n)}(\mca{O}),
\end{equation}
for  every $\mca{O} \in \mca{N}(\wt{M}^\vee)$. 
\end{enumerate}
\end{thm}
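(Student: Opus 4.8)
The plan is to establish both statements by a uniform reduction to the combinatorial identity that underlies the \texorpdfstring{$h_\mca{O}^{(n)}/2$}{} formula \eqref{E:hO} and to the known properties of annihilator varieties of highest weight modules over $\mfr{g}_\BC$. Since $G$ is of exceptional type, there is no partition-combinatorics available as in the classical case, so both properties should be deduced intrinsically from Definition \ref{D:dBV}, i.e.\ from $d_{BV, G}^{(n)}(\mca{O}) = {\rm AV}_{\mfr{g}_\BC}(h_\mca{O}^{(n)}/2)$, together with the explicit tabulation produced by the online algorithm of \cite{BGWX25}. I would treat (i) and (ii) separately but with parallel strategies.

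For (i), the key observation is that the closure order on $\mca{N}(\wt{G}^\vee)$ refines, via the weighted Dynkin diagram, into a statement about the dominant weights $h_\mca{O}^{(n)}/2$: by \eqref{E:hO} we have $h_\mca{O}^{(n)}/2 = \sum_{\alpha\in\Delta} \frac{c_\alpha(\mca{O})}{2n_\alpha}\omega_\alpha$, and if $\mca{O}\lest\mca{O}'$ then (working in the dual root datum of $\wm{G}^\vee$) the dominant coweight $h_{\mca{O}}$ is ``smaller'' than $h_{\mca{O}'}$ in the appropriate dominance sense. One would like to invoke the monotonicity of $\lambda\mapsto {\rm AV}_{\mfr{g}_\BC}(\lambda)$ under the relevant ordering of weights; however this monotonicity is subtle for non-integral $\lambda$, so the cleanest route is probably to verify the order-reversing property directly on the finite tables in Appendix \ref{A:dBVexc} for each exceptional type and each residue class of $n$ modulo the relevant denominators $n_\alpha$ (the map $d_{BV,G}^{(n)}$ depends on $n$ only through these $n_\alpha$'s, hence only through finitely many cases up to the behavior ``$n$ large''). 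For $n$ large the map stabilizes and one checks order-reversal in that stable regime by hand.

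For (ii), the strategy mirrors the proof of Proposition \ref{P:D-I/II} and the classical statement in Theorem \ref{T:ABCD}(iii). Fix a covering Levi $\wt{M}\subseteq\wt{G}$. Given $\mca{O}\in\mca{N}(\wt{M}^\vee)$, let $\mca{O}^{\rm Sat} = \Sat_{\wt{M}^\vee}^{\wt{G}^\vee}(\mca{O})$. By the construction of saturation, $h_{\mca{O}^{\rm Sat}}^{(n)}/2$ is $W_M$-conjugate to the element $h_\mca{O}^{(n)}/2 + \rho_{\mfr{m}}^\perp$-type weight built from the $M$-side Dynkin datum, and the crucial input is the inclusion, established by Barbasch--Vogan in the proof of \cite{BV85}*{Proposition A2(c)},
\[
{\rm AV}_{\mfr{g}_\BC}\bigl(h_{\mca{O}^{\rm Sat}}^{(n)}/2\bigr) \ \subseteq\ {\rm Ind}_{\mbf{M}}^{\mbf{G}}\, {\rm AV}_{\mfr{m}_\BC}\bigl(h_\mca{O}^{(n)}/2\bigr) = {\rm Ind}_{\mbf{M}}^{\mbf{G}}\, d_{BV, M}^{(n)}(\mca{O}).
\]
This gives the ``$\subseteq$'' half of \eqref{ind-exc}. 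For the reverse containment (equivalently, equality of dimensions of the two orbits), I would compare dimensions: the right-hand side has dimension $\dim\mbf{G}/\mbf{M}\cdot 2$ summand plus $\dim d_{BV,M}^{(n)}(\mca{O})$ by the dimension formula for induced nilpotent orbits, while the left-hand side can be computed from the table, and one checks the two agree case by case over the exceptional types and the finitely many residue classes of $n$. An alternative, cleaner argument for equality is to use the fact that both sides are computed by an explicit formula depending only on $n_\alpha$'s, so it suffices to verify the identity on the tables of Appendix \ref{A:dBVexc} against the orbit-induction tables for exceptional Levis, which are available in the literature (e.g.\ \cite{CM93}).

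\textbf{Main obstacle.} The genuinely delicate point is the reverse inclusion in (ii): while the Barbasch--Vogan argument cleanly gives ${\rm AV}_{\mfr{g}_\BC}(h_{\mca{O}^{\rm Sat}}^{(n)}/2)\subseteq {\rm Ind}_{\mbf M}^{\mbf G} d_{BV,M}^{(n)}(\mca{O})$, promoting this to an equality requires knowing that the highest weight module $L(h_{\mca{O}^{\rm Sat}}^{(n)}/2)$ has annihilator variety of the \emph{maximal} possible dimension, which is not automatic for non-integral infinitesimal characters. For exceptional groups this seems unavoidable to settle by finite verification against the algorithm's output; the conceptual statement (that $d_{BV}^{(n)}$ of a saturated orbit equals the induced orbit of the Levi's $d_{BV}^{(n)}$) is exactly what the tables are designed to confirm. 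Establishing (i) is comparatively routine once one notes the finiteness of the relevant parameter space.
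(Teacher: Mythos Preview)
Your proposal is correct and matches the paper's approach closely: part (i) is indeed verified by direct inspection of the tables in Appendix~\ref{A:dBVexc}, and for part (ii) the paper explicitly notes (in the Remark following the proof) that the Barbasch--Vogan inclusion ${\rm AV}_{\mfr{g}_\BC}(h_{\mca{O}^{\rm Sat}}^{(n)}/2)\subseteq {\rm Ind}_{\mbf M}^{\mbf G} d_{BV,M}^{(n)}(\mca{O})$ combined with a dimension check is an equivalent route to the equality.

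One refinement worth incorporating: the paper reduces the verification of \eqref{ind-exc} to the case where $\mca{O}\in\mca{N}(\wt{M}^\vee)$ is \emph{distinguished}, by an induction on the semisimple rank of $G$ together with the transitivity of $\Sat$ and $\Ind$ (and the already-established classical case \eqref{ind-cla} for Levis of classical type inside exceptional $G$). This cuts the case-checking down substantially. Also, the standard reference for the induction tables in exceptional types is \cite{Spa82}*{pp.~173--175} rather than \cite{CM93}.
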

\begin{proof}
Property (i) follows from a direct observation using the data given in Appendix \ref{A:dBVexc}. For (ii), it also follows from the data there and the tables for ${\rm Ind}_{M}^G$ given in \cite{Spa82}*{Page 173--175}.

However, to reduce the work load for checking (ii), we show that it suffices to check the case when $\mca{O} \in \mca{N}(\wt{M}^\vee)$ is a distinguished orbit, by using induction. Indeed, suppose $\mca{O} = {\rm Sat}_{\wt{L}^\vee}^{\wt{M}^\vee}(\mca{O}_1)$ for a Levi $\mbf{L} \subset \mbf{M}$ of $\mbf{M}$ (and thus also of $\mbf{G}$), where $\mca{O}_1 \in \mca{N}(\wt{L}^\vee)$ is a distinguished orbit of $\wt{L}^\vee$.
We see that 
$$
\begin{aligned}
d_{BV, G}^{(n)} \circ \Sat_{\wt{M}^\vee}^{\wt{G}^\vee}(\mca{O}) = & d_{BV, G}^{(n)} \circ  \Sat_{\wt{L}^\vee}^{\wt{G}^\vee}(\mca{O}_1) \\
 = &  \Ind_\mbf{L}^\mbf{G}\circ d_{BV, L}^{(n)}(\mca{O}_1) \\
 = &  \Ind_\mbf{M}^\mbf{G} \circ \Ind_\mbf{L}^\mbf{M} \circ d_{BV, L}^{(n)}(\mca{O}_1) \\
 = & \Ind_\mbf{M}^\mbf{G} \circ d_{BV, M}^{(n)} \circ {\rm Sat}_{\wt{L}^\vee}^{\wt{M}^\vee}(\mca{O}_1) \\
 = & \Ind_\mbf{M}^\mbf{G}\circ d_{BV, M}^{(n)}(\mca{O}).
\end{aligned}
$$
Here, in the second equality above we used the induction hypothesis that \eqref{ind-exc} holds when the orbit is distinguished in the (smaller) Levi $L$. Also, the fourth equality follows from our induction of \eqref{ind-exc} on the semisimple rank of $G$, which is greater than that of $M$.

In view of the above, and the result \eqref{ind-cla} for classical groups (since $G$ might contain a Levi subgroup of classical type), we see that to prove \eqref{ind-exc}, it suffices to consider $G$ of exceptional type, $M \subseteq G$ a proper Levi, and $\mca{O} \in \mca{N}(\wt{M}^\vee)$ a distinguished orbit. For each of such a case, we have verified \eqref{ind-exc} using the lists in Appendix \ref{A:dBVexc} and the tables in \cite{Spa82}*{Page 173-175} regarding induction, as mentioned. Here we give an example to illustrate.

Consider $\wt{G}^{(2)}=\wt{E}_7^{(2)}, M=D_6$ and $\mca{O}=D_6(a_1) =(9,3)$. Then we have from Theorem \ref{T:ABCD} (i) that 
$$d_{BV, D_6}^{(2)}(\mca{O}) = (3^3,1^3) = {\rm Ind}^{D_6}_{A_2 + D_3} 0.$$
This gives 
$$\Ind_\mbf{M}^\mbf{G}\circ d_{BV, M}^{(n)}(\mca{O}) = \Ind_{A_2 + D_3}^{E_7} 0 = D_5(a_1)+ A_1,$$
where the last equality follows from \cite{Spa82}*{Page 174}. This orbit is exactly $d_{BV, E_7}^{(2)}(D_6(a_1))$ by Appendix \ref{A:dBVexc}.
\end{proof}

\begin{remark}
As already used in Proposition \ref{P:D-I/II}, an alternative way of proving \eqref{ind-exc} for $G$ of all types is to use the method of Barbasch--Vogan in the proof of \cite{BV85}*{Proposition A2 (c)}. First, it entails an inclusion $d_{BV, G}^{(n)} \circ \Sat_{\wt{M}^\vee}^{\wt{G}^\vee}(\mca{O}) \subseteq \Ind_\mbf{M}^\mbf{G}\circ d_{BV, M}^{(n)}(\mca{O})$, and then a check of dimension-equality of the two sides gives the equality \eqref{ind-exc}. This method is in essence equivalent to our proof of \eqref{ind-cla} and \eqref{ind-exc}.
\end{remark}

\section{Some evidence for the conjecture}

\subsection{\texorpdfstring{Unramified theta representations of $\wt{G}$}{}} \label{SS:theta}
In this subsection, let $\wt{G}$ be a general Brylinski--Deligne cover of an almost simple $G$. We assume $p\nmid n$, and we use freely the notation from \cites{GLT25, KOW}. In particular, we assume that there is an unramified distinguished genuine character $\chi^\dag \in {\rm Irr}(Z(\wt{T}))$, which gives a distinguished finite-dimensional genuine representation $\pi^\dag \in \Irr(\wt{T})$. For every $\nu \in X\otimes \R$, we have the map $\delta_\nu: T \to \BC^\times$ given by
$$\delta_v(y\otimes a) = \val{a}_F^{\nu(y)}$$
on the generators $y\otimes a \in T$. We get the unramified principal series $I(\pi^\dag, \nu)$ of $\wt{G}$. If $\nu \in X\otimes \R$ is exceptional (i.e., $\nu(\alpha_{Q,n}^\vee)=1$ for all simple root $\alpha$), then $I(\pi^\dag, \nu)$ has an unique irreducible quotient $\Theta(\nu):=\Theta(\pi^\dag, \nu)$ and a unique irreducible subrepresentation ${\rm St}(\nu):={\rm St}(\pi^\dag, \nu)$. One has
$$\AZ(\Theta(\nu)) = \St(\nu).$$
Since $\St(\nu)$ is always generic, and $\mca{O}(\phi_{\Theta(\nu)}) =\set{0}$, we see that the inequality in Conjecture \ref{Conj} (i) is actually an equality for $\pi=\Theta(\nu)$.

On the other hand, the case of $\pi = \St(\nu)$ is treated in the work of Karasiewicz--Okada--Wang \cite{KOW}, as follows.

Let $\wt{G}^{(n)}$ be a Brylinski--Deligne cover of an almost simple $G$. Let $\mca{O}_{\rm reg} \in \mca{N}(\wt{G}^\vee)$ be the regular orbit of $\wt{G}^\vee$. Then we see that 
$$\nu:=\frac{h_{\mca{O}_{\rm reg}}^{(n)}}{2} \in X\otimes \R$$ 
is exactly the exceptional vector in $X\otimes \R$. We have $d_{BV, G}^{(n)}(\mca{O}_{\rm reg}):={\rm AV}_{\mfr{g}_\BC}(\nu)$. Let $W_\nu$ be the integral Weyl-subgroup associated with $\nu$, and let $w_0 \in W_\nu$ be the longest element. Then $w_0(\nu)$ is anti-dominant in the sense of \cite{BGWX25}*{Definition 2.5}. Also the element $w_0$ is associated with the sign (special) representation $\varepsilon_{W_\nu}$ of $W_\nu$. It follows from Joseph's result (see \cite{BGWX25}*{Proposition 2.13}) that $ {\rm AV}_{\mfr{g}_\BC}(\nu) = \mca{O}_{\rm Spr}^{\mfr{g}_\BC}(j_{W_\nu}^W \varepsilon_{W_\nu})$. This shows 
\begin{equation} \label{dBV=j}
d_{BV, G}^{(n)}(\mca{O}_{\rm reg})=\mca{O}_{\rm Spr}^{\mfr{g}_\BC}(j_{W_\nu}^W \varepsilon_{W_\nu}).
\end{equation}
Assume $p$ is big enough. Then it follows from \cite{KOW}*{Theorem 1.1} that the equality
$${\rm WF}^{\rm geo}(\Theta(\nu)) = \{d_{BV, G}^{(n)}(\mca{O}(\phi_{\St(\nu)}))\}$$
holds.
We note that for the more specific $\wt{G}$ given in \S \ref{SS:fix-cov} and \S \ref{S:dBV-exc}, the equality \eqref{dBV=j} can be checked explicitly, since the right hand side of \eqref{dBV=j} is tabulated in \cite{GLT25}.

\begin{remark}
In a recent work \cite{SYZ25} of Shan--Yan--Zhao, the authors defined a cyclotomic level map
$${\rm cl}: \mca{N}(\mbf{G}) \to \Z_{\gest 1}$$
which is an incarnation of the (true) cyclotomic level map ${\rm cl}_W: {\rm Cl}(W) \to \Z$ in view of the Kazhdan--Lusztig map $\mca{N}(\mbf{G}) \to {\rm Cl}(W)$. Here ${\rm Cl}(W)$ means the set of conjugacy classes of $W$. Also, for every $m\in {\rm Im}({\rm cl})$, it is shown that there is a unique maximal orbit
$$\mca{O}(m) \in {\rm cl}^{-1}([1,m]).$$
One can check easily that for such $m$, there is a closely-related $m^*$ such that
$$\mca{O}(m) = d_{BV,G}^{(m^*)}(\mca{O}_{\rm reg}),$$
which is thus equal to ${\rm WF}^{\rm geo}(\Theta_\nu)$.
In fact, if $G$ is simply-laced, simply-connected and almost simple, and if $\wt{G}^{(n)}$ is associated with $Q(\alpha^\vee)=1$ for any root $\alpha$, then one has $m^*=m$. For non-simply-laced types, there are slight deviations. In any case, it will be interesting to explore further connections between the map $d_{BV,G}^{(n)}$ and the topics discussed in \cite{SYZ25}.
\end{remark}

\subsection{\texorpdfstring{Speh representations of $\wt{\GL}_r$}{}} \label{SS:Speh}
In this section, we consider the BD cover $\wt{\GL}_r$ of $\GL_r$ associated with the bilinear from $B_Q$ on $Y$ such that
$$B_{Q}(e_i, e_j) = 
\begin{cases}
2\mbf{c} & \text{ if } i=j;\\
2\mbf{c}+1 & \text{ otherwise},
\end{cases}
$$
where $\mbf{c}\in \Z$ is any integer. Such a cover $\wt{\GL}_r$ is often called Kazhdan--Patterson cover of $\GL_r$.

A classification of $\Irrg(\wt{\GL}_{r}^{(n)})$ is given by Kaplan--Lapid--Zou \cite{KLZ23} in terms of the theory of multi-segments by Zelevinsky, as in the linear case. In particular, the representations $Z(\rho, [a, b])$ or $L(\rho, [a, b])$ are the building blocks. Here, $Z(\rho, [a, b])$ is the covering Speh representation, which we focus in this subsection. To elaborate, we alter notation a bit and consider $\wt{\GL}_{rk}^{(n)}$ instead of $\wt{\GL}_r^{(n)}$. Now, let $\rho \in \Irrg(\wt{\GL}_r^{(n)})_{sc}$ be a supercuspidal representation and $b-a+1=k$. Then $Z(\rho, [a,b])$ is the unique subrepresentation of
\begin{equation} \label{I-rhoab}
\Ind_{\wt{P}}^{\wt{\GL}_{rk}} (\rho\val{\cdot}^a \tilde{\otimes} \rho \val{\cdot}^{a+1} \tilde{\otimes} \cdots \tilde{\otimes} \rho \val{\cdot}^b)_{\tilde{\omega}},
\end{equation}
where $\wt{P}$ has Levi factor equal to the covering of $\GL_r \times \cdots \times \GL_r$, and the inducing representation $(\rho\val{\cdot}^a \tilde{\otimes} \rho \val{\cdot}^{a+1} \tilde{\otimes} \cdots \tilde{\otimes} \rho \val{\cdot}^b)_{\tilde{\omega}}$ represents a certain ``metaplectic tensor product" constructed from $\rho, a, b$ and certain compatible central genuine character $\tilde{\omega}$. For details, see \cite{KLZ23}.  Also, $L(\rho, [a, b])$ is the unique irreducible quotient of the representation in \eqref{I-rhoab}.

The L-parameter of $\rho$ and thus $Z(\rho, [a,b])$ depends sensitively on the Shimura lifting of $\rho$. Recall that the metaplectic correspondence given in \cite{FK86}*{\S 26} restricts to a bijection between the discrete series
$$\MC_{ds}: \Irrg(\wt{\GL}_r^{(n)})_{ds, \tilde{\omega}} \longrightarrow \Irr(\GL_r)_{ds, \omega},$$
where $\omega$ is a central character of $\GL_r$ ``compatible" with $\tilde{\omega}$; see also \cite{Zou23}*{Proposition 3.6} for details. For every $\rho \in \Irrg(\wt{\GL}_r^{(n)})_{sc, \tilde{\omega}}$, one has
\begin{equation} \label{E:MC}
\MC_{ds}(\rho) = L(\rho_0, [c, d]),
\end{equation}
where $\rho_0 \in \Irr(\GL_{r_0})_{sc}$ and $d-c+1=m$ with $r=m r_0$. Moreover, \eqref{E:MC} holds only if $m|n$.

By the classification of $\Irrg(\wt{\GL}_r^{(n)})$ in terms of $\Irrg(\wt{\GL}_r^{(n)})_{ds}$ as given in \cite{KLZ23}, one can extend $\MC_{ds}$ to the full irreducible spectrum and get a map
$$\MC: \Irrg(\wt{\GL}_r^{(n)}) \longrightarrow \Irr(\GL_r).$$
Denote by $\mca{L}$ the local Langlands correspondence for $\GL_r$, which gives the composite
$$\mca{L}_n=\mca{L}\circ \MC: \Irrg(\wt{\GL}_r^{(n)}) \longrightarrow {\rm Par}({\rm WD}_F, \GL_r(\BC)), \quad \pi \mapsto \phi_\pi.$$
Note that for $\wt{G}:=\wt{\GL}_r^{(n)}$, we have from \cite{GG18}*{\S 16.2} that
$$\wt{G}^\vee \simeq \set{(g, \lambda) \in \GL_r(\BC) \times \GL_1(\BC): \ \det(g)=\lambda^{\gcd(2\mbf{c} r + r -1, n)}} \subseteq \GL_r(\BC) \times \GL_1(\BC).$$
There is clearly the map
$$\iota: \wt{G}^\vee \to \GL_r^\vee=\GL_r(\BC)$$
given by $(g, \lambda) \mapsto g$.
Our working hypothesis is the following:

\begin{assumption} \label{as:dual}
For every $\pi \in \Irrg(\wt{\GL}_r^{(n)})$, its parameter $\phi_\pi=\mca{L}_n(\pi): {\rm WD}_F \to \GL_r(\BC)$ factors through $\iota$ and thus is actually $\phi_\pi: {\rm WD}_F \to \wt{G}^\vee$.
\end{assumption}
Note that if $\gcd(2\mbf{c}r + r-1, n)=1$, then $\wt{G}^\vee \simeq \GL_r(\BC)$ and thus the hypothesis \ref{as:dual} is trivially satisfied in this case.

\begin{defn}
Under the working hypothesis \ref{as:dual}, we call 
$$\mca{L}_n: \Irrg(\wt{\GL}_r^{(n)}) \longrightarrow {\rm Par}({\rm WD}_F, \wt{G}^\vee), \quad \pi \mapsto \phi_\pi$$
the local Langlands correspondence for $\Irrg(\wt{\GL}_r^{(n)})$.
\end{defn}

Henceforth, we assume the hypothesis in \ref{as:dual}. Consider $\pi:=Z(\rho, [a, b]) \in \Irrg(\wt{\GL}_{rk}^{(n)}), b-a+1=k$, we have 
$$\AZ(\pi) = L(\rho, [a, b]).$$
Moreover, assume $\MC(\rho) = L(\rho_0, [c, d])$ with $\rho_0 \in \Irr(\GL_{r_0})_{sc}, d-c+1=m$. Then we get that
$$\iota \circ \phi_{\AZ(\pi)} = \phi_{\rho_0} \val{\cdot}^{\frac{mk-1}{2}} \boxtimes S_{mk}: W_F \times \SL_2 \to \GL_{rk},$$
where $S_{mk}$ is the unique irreducible representation of $\SL_2$ of dimension $mk$ and $\phi_{\rho_0}$ is the L-parameter associated with $\rho_0$.
This gives that
\begin{equation}
\mca{O}(\phi_{\AZ(\pi)}) = ((mk)^{r_0}).
\end{equation}

Since $m|n$, we can write $n=m\cdot n_0$. One has
$$k=cn_0 +d \text{ with } 0 \lest d < n_0.$$
An easy computation gives that 
\begin{equation} \label{speh-o}
d_{BV}^{(n)}( \mca{O}(\phi_{\AZ(\pi)}) ) = d_{BV}^{(n)}( (mk)^{r_0} ) = ((nr_0)^c, r_0md).
\end{equation}

\begin{thm} \label{T:Speh}
Assume the working hypothesis in \ref{as:dual}. Also assume $\gcd(p, n)=1$. Consider the genuine Speh representation $\pi:=Z(\rho, [a, b])$ of $\wt{\GL}_{rk}^{(n)}$ with $\MC(\rho) = L(\rho_0, [c, d])$ as above. 
Then 
$${\rm WF}^{\rm geo}(\pi) = \{d_{BV}^{(n)}( \mca{O}(\phi_{\AZ(\pi)}) )\};$$
in particular, Conjecture \ref{Conj} (i) holds regarding the geometric wavefront set of genuine Speh representations.
\end{thm}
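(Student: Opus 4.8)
The plan is to compute $\mathrm{WF}^{\mathrm{geo}}(\pi)$ for the genuine Speh representation $\pi = Z(\rho,[a,b])$ directly, and then match it against the closed-form expression \eqref{speh-o} for $d_{BV}^{(n)}(\mca{O}(\phi_{\AZ(\pi)})) = ((nr_0)^c, r_0 m d)$. Since $\pi$ is by definition the unique irreducible subrepresentation of the parabolically induced representation in \eqref{I-rhoab}, I would first reduce the problem to the supercuspidal datum $\rho$. The key point is that $\rho \in \Irrg(\wt{\GL}_r^{(n)})_{sc}$ has a known wavefront set: by the metaplectic correspondence $\MC(\rho) = L(\rho_0,[c,d])$ with $\rho_0 \in \Irr(\GL_{r_0})_{sc}$, $d - c + 1 = m$, $r = mr_0$, and the wavefront set of a covering supercuspidal of $\wt{\GL}_r^{(n)}$ is governed by $m$ and $n$ — concretely ${\rm WF}^{\rm geo}(\rho) = \{ ((n_\rho)^{?}, \ldots) \}$ where the partition records how the Shimura/metaplectic lift $L(\rho_0,[c,d])$ interacts with the covering degree. (This is exactly the $k=1$ case of the theorem, so it should be extracted first as a base case, either from \cite{KLZ23}, the structure of $\rho$ as a constituent of an induced-from-genuine-character representation, or the Mœglin--Waldspurger-type computation for covers.)

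Next I would propagate this through the segment construction. The representation $Z(\rho,[a,b])$ is built from $\rho\val{\cdot}^a \tilde\otimes \cdots \tilde\otimes \rho\val{\cdot}^b$ ($k = b-a+1$ factors) via the metaplectic tensor product and parabolic induction, taking the subrepresentation. On the Langlands-parameter side this replaces $\phi_{\rho_0}$ by $\phi_{\rho_0} \boxtimes S_{mk}$, so $\mca{O}(\phi_{\AZ(\pi)}) = ((mk)^{r_0})$. I expect the wavefront set to transform compatibly: passing from $\rho$ to $Z(\rho,[a,b])$ should have the effect of replacing the "$m$-block" structure by an "$mk$-block" structure, and then applying the combinatorial duality $d_{com,A}^{(n)}$ of \eqref{E:dBV-A} — i.e.\ writing $mk = cn + $ (remainder) ... but more precisely, since $m \mid n$, write $n = m n_0$ and $k = c n_0 + d$ with $0 \le d < n_0$, giving $d_{BV}^{(n)}((mk)^{r_0}) = ((nr_0)^c, r_0 m d)$ as already recorded in \eqref{speh-o}. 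The task is to show the actual wavefront set of $\pi$ equals this. For the \emph{upper bound} ${\rm WF}^{\rm geo}(\pi) \le d_{BV}^{(n)}(\mca{O}(\phi_{\AZ(\pi)}))$ I would invoke the induction/descent machinery: $\pi$ is a subrepresentation of an induced representation, its wavefront set is bounded by (the maximal orbits in) the union of ${\rm Sat}$-induced orbits from the wavefront sets of the inducing data, and then the compatibility of $d_{BV}^{(n)}$ with $\Ind$/$\Sat$ from Theorem \ref{T:ABCD}(iii) (property \eqref{ind-cla}) converts this into the desired bound. For the \emph{lower bound} (equality), the cleanest route is to exhibit a nonzero twisted Jacquet functor / degenerate Whittaker functional of $\pi$ associated to the orbit $((nr_0)^c, r_0 m d)$, e.g.\ by analyzing the Jacquet module of $Z(\rho,[a,b])$ along the parabolic whose Levi matches that partition and using the known supercuspidal support.

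The main obstacle I anticipate is the lower bound: establishing that the wavefront set of the Speh representation is not strictly smaller than the predicted orbit. In the linear $\GL$ case this is the content of Mœglin--Waldspurger's computation via Zelevinsky duality, but for covers one does not yet have the full analogue; one needs either (a) an explicit nonvanishing of a degenerate Whittaker model for $Z(\rho,[a,b])$ — which requires understanding the metaplectic tensor product well enough to produce the model — or (b) an exact-orbit computation of the character expansion via the induced structure, controlling cancellation. The supercuspidal base case ($k=1$) itself already requires pinning down ${\rm WF}^{\rm geo}(\rho)$ precisely, which hinges on the fine structure of $\rho$ as the subquotient attached to an unramified-twist of a genuine character of the cover of a torus, together with the assumption $\gcd(p,n)=1$ (ensuring the relevant covering behaves tamely and the Shimura lift is well-behaved). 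A secondary, more bookkeeping-type difficulty is tracking the two conventions (the arithmetic $n$ versus the "effective" degrees $m, n_0$) consistently through $d_{com,A}^{(n)}$; Lemma \ref{lem ind A} and the identities of Lemma \ref{lem Achar identities} should make this routine once the representation-theoretic inputs are in place. Finally, for the global reduction I would note that the upper-bound half immediately yields Conjecture \ref{Conj}(i) for Speh representations, and combined with Corollary \ref{C:GLlest} and Theorem \ref{T:red}, the full Conjecture \ref{MC}(i) for Kazhdan--Patterson covers of $\GL_r$.
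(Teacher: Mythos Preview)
Your lower-bound idea---exhibit a nonzero degenerate Whittaker functional for the target orbit $\mfr{p} = ((nr_0)^c, r_0 m d)$---is exactly what the paper does, though the execution relies on three specific inputs you do not name: the Gomez--Gourevitch--Sahi surjection ${\rm Jac}(\pi;\psi_\mfr{p}) \twoheadrightarrow {\rm Wh}_\psi({\rm Jac}_\mfr{p}(\pi))$ from \cite{GGS17}, the explicit Jacquet-module computation for covering Speh representations in \cite{KLZ23}*{Proposition 7.2} (which identifies ${\rm Jac}_\mfr{p}(\pi)$ as a metaplectic tensor of smaller Speh blocks), and Zou's Whittaker-dimension formulas \cites{Zou23, Zou25} showing each block has nonzero Whittaker space.

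Your upper-bound strategy, however, does not work. The inducing datum in \eqref{I-rhoab} is a metaplectic tensor of supercuspidals $\rho|\cdot|^j$, each of which is \emph{generic}; their wavefront set is the regular orbit $(r)$, and inducing up gives the regular orbit $(rk)$ of $\GL_{rk}$. So ``${\rm WF}^{\rm geo}(\pi)$ is bounded by the induced orbit'' is vacuous. The compatibility in Theorem \ref{T:ABCD}(iii) cannot help either: that is precisely the engine of Theorem \ref{T:red}, which reduces the conjecture to anti-discrete representations---but $Z(\rho,[a,b])$ \emph{is} anti-discrete (its Aubert dual $L(\rho,[a,b])$ is essentially square-integrable), so you are already at the base case with no further reduction available. (Relatedly, your $k=1$ ``base case'' is trivial: $\rho$ is supercuspidal hence generic, so ${\rm WF}^{\rm geo}(\rho) = \{(r)\}$, not a partition encoding $m$ and $n$.) The paper instead proves the upper bound by the same degenerate-Whittaker machinery: for any $\mfr{p}'$ with first part exceeding $nr_0$, one invokes \cite{GGS21}*{Theorem 1.5} to upgrade the GGS surjection to an isomorphism when $\mfr{p}' \in {\rm WF}^{\rm geo}(\pi)$, then uses \cite{KLZ23}*{Proposition 7.2} and Zou's vanishing $\dim {\rm Wh}_\psi(Z(\rho,[a_1,b_1])) = 0$ for segment length $> n_0$ to force ${\rm Wh}_\psi({\rm Jac}_{\mfr{p}'}(\pi)) = 0$, a contradiction.
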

\begin{proof}
Set $\mfr{p}:= ((nr_0)^c, rd)$, a partition of $rk$. First, we show that the non-vanishing of the degenerate Whittaker model $\Wh_\psi(\pi; \mfr{p})$ of $\pi$ with respect to $\mfr{p}$. However, by results of Gomez--Gourevich--Sahi \cite{GGS17}, we have a natural injection
\begin{equation} \label{E:surj}
 {\rm Wh}_\psi({\rm Jac}_\mfr{p}(\pi)) \into \Wh_\psi(\pi; \mfr{p}),
\end{equation}
where ${\rm Jac}_\mfr{p}(\pi) \in \Irrg(\wt{M_\mfr{p}}^{(n)})$ is the Jacquet module of $\pi$ with respect to the Levi subgroup $M_\mfr{p}$ associated with the partition $\mfr{p}$, and also ${\rm Wh}_\psi(\sigma)$ denote the $\psi$-Whittaker space of $\sigma \in \Irrg(\wt{M_\mfr{p}})$. Note that $r|(nr_0)$ and also $r|(r_0md)$, and it follows from \cite{KLZ23}*{Proposition 7.2} that 
$${\rm Jac}_\mfr{p}(\pi) \simeq \tilde{\bigotimes}_{i=1}^{c+1} Z(\rho, [a_i, b_i]),$$
i.e., is a metaplectic tensor product of Speh representations $Z(\rho, [a_i, b_i])$ of $\wt{\GL}_{nr_0}^{(n)}$ and $\wt{\GL}_{r_0md}^{(n)}$. Here $b_i - a_i +1 = n_0$ for $1\lest i \lest c$ and $b_{c+1} - a_{c+1} + 1 = d$. Now it follows from \cite{Zou25}*{(10.2)} and the proof in \cite{Zou23}*{Corollary 4.11} that 
$$\dim {\rm Wh}_\psi(Z(\rho, [a_i, b_i]) )=
\begin{cases}
\binom{n/m}{n_0} = 1 & \text{ if } 1\lest i \lest c,\\
\frac{1}{\gcd(n, 2rd \mbf{c} - rd+1)} \binom{n/m}{d} & \text{ if } i =c+1.
\end{cases} 
$$
 This shows the non-vanishing of $\Wh_\psi(\pi; \mfr{p})$. We remark in passing that the above formula for $\dim {\rm Wh}_\psi(Z(\rho, [a_i, b_i]) )$ is already given in \cite{Zou23}*{Corollary 4.11},
 which however is conditional on Conjecture 4.6 there. 
 On the other hand, the Whittaker dimension formula for essentially square-integrable representations needed in the proof of \cite{Zou23}*{Corollary 4.11} is established unconditionally in \cite{Zou25}*{Proposition 10.1, (10.2)}. Hence, the formula we stated above is now unconditional.

Now to show 
$${\rm WF}^{\rm geo}(\pi) = \set{ ((nr_0)^c, rd) },$$
we give two proofs. The first approach is to show the vanishing of $\Wh_\psi(\pi; \mfr{p}')$  with respect to an orbit $\mfr{p}'= (p_1, p_2, \ldots, p_s)$ that is bigger or not compatible with $ ((nr_0)^c, rd)$. Then clearly, one must have $p_1> nr_0$. Moreover, in order to show $\Wh_\psi(\pi; \mfr{p}')=0$ for all such $\mfr{p}'$, it suffices to prove the vanishing of  $\Wh_\psi(\pi; \mfr{p}')$ for $\mfr{p}'=(s, 1^{rk - s})$ with $s > nr_0$, see \cite{GRS03}*{Lemma 2.6}. For such $\mfr{p}$, we can argue parallel as in \cite{Cai19}*{\S 6.3} for theta representations, and apply Zou's result \cites{Zou23, Zou25} at the reduced stage to give the vanishing of  $\Wh_\psi(\pi; \mfr{p}')$.

Alternatively, we can argue as follows. To show ${\rm WF}^{\rm geo}(\pi) = \set{ ((nr_0)^c, rd) }$, it suffices to show that any $\mfr{p}' = (p_1, \ldots, p_s)$ with $p_1 > nr_0$ can not lie in ${\rm WF}^{\rm geo}(\pi)$. Suppose not, and we pick such a $\mfr{p}' \in {\rm WF}^{\rm geo}(\pi)$. It then follows from \cite{GGS21}*{Theorem 1.5} that the surjection in \eqref{E:surj} is in fact an isomorphism
$$\Wh_\psi(\pi; \mfr{p}')  \simeq  {\rm Wh}_\psi({\rm Jac}_{\mfr{p}'}(\pi)),$$
which is nonzero by the assumption that $\mfr{p}' \in {\rm WF}^{\rm geo}(\pi)$. However, it follows from \cite{KLZ23}*{Proposition 7.2} that ${\rm Jac}_{\mfr{p}'}(\pi) = 0$ (if $r\nmid p_i$ for some $i$) or is of the form
 $${\rm Jac}_{\mfr{p}'}(\pi) \simeq Z(\rho, [a_1, b_1]) \tilde{\otimes} \cdots,$$
 where $b_1- a_1 + 1=n_1$ with $p_1 = r n_1 > n r_0$. This shows that
$$n_1 > n_0.$$ 
Using this, it again follows from \cite{Zou25}*{(10.2)} and the proof in \cite{Zou23}*{Corollary 4.11} that $\dim \Wh_\psi(Z(\rho, [a_1, b_1]))=0$. 
Thus, we have ${\rm Wh}_\psi({\rm Jac}_{\mfr{p}'}(\pi))=0$, which is a contradiction. 
\end{proof}

In the next section, we develop a reduction argument for Conjecture \ref{Conj} (i) (see Theorem \ref{T:red}). The above theorem provides the base case for proving Conjecture \ref{Conj} (i) for general genuine representations of Kazhdan-Patterson cover satisfying Working Hypothesis \ref{as:dual}. See Corollary \ref{C:GLlest}.

\section{Reduction of the conjecture for general types}
In this section, we state two natural desiderata for local Langlands correspondence for covering groups. Then, we prove that Conjecture \ref{MC} can be reduced to the case of  discrete representations.

\subsection{Desiderata for local Langlands correspondence}
A local Langlands correspondence for a Brylinski--Deligne cover $\overline{G}$ of $G$ is a map
\begin{align*}
    \mca{L}_{\wt{G}}: \Irrg(\wt{G}) &\longrightarrow \Phi({}^L \wt{G}),\\
    \pi & \mapsto \phi_{\pi}.
\end{align*}
Let ${\rm Lev}(\wt{G})$ be the set of standard Levi subgroups of $\wt{G}$. We shall require that the system of maps $\set{\mca{L}_{\wt{M}}: \ \wt{M} \in {\rm Lev}(\wt{G})}$ are compatible. To explicate this, we first recall the Langlands classification for central covering groups.

\begin{thm}[{\cite{BJ13}*{Theorem 1.1}}]\label{T: LC for rep}
    There is a one-to-one correspondence between irreducible admissible genuine representations of $\wt{G}$ and the collection of standard triples $\{ (\wt{P}, \pi_{\rm temp}, \nu)\}$ via Langlands quotient. Here 
    \begin{itemize}
        \item $\wt{P}=\wt{M} N$ is a standard parabolic subgroup of $\wt{G}$, and
        \item $\pi_{\rm temp}$ is an irreducible tempered representation of $\wt{M}$, and
        \item $\nu$ is an unramified character of $\wt{M}$ (or $M$) that is positive with respect to $P$.
    \end{itemize}
    We shall write $ \pi \leftrightarrow (\wt{P}, \pi_{\rm temp}, \nu)$ if they correspond to each other.
\end{thm}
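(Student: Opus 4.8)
This is precisely \cite{BJ13}*{Theorem 1.1}, so the cleanest course is to invoke Ban--Jantzen directly; below I sketch the argument one would reconstruct, with emphasis on why the linear proof transfers to the covering setting. The key point is that every structural ingredient needed is available for a Brylinski--Deligne cover $\wt{G}$: the Iwasawa and Bruhat decompositions lift, each standard parabolic $\wt{P}=\wt{M}N$ has a genuine Levi decomposition, the notion of an unramified genuine character of $\wt{M}$ is well behaved, and normalized parabolic induction ${\rm Ind}_{\wt{P}}^{\wt{G}}$ together with the Jacquet functor ${\rm Jac}_{\wt{P}}^{\wt{G}}$ satisfy Frobenius reciprocity, the geometric lemma, and Casselman's canonical pairing exactly as in the linear case. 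Granting this, the proof runs in three steps.

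First I would establish existence and uniqueness of the Langlands quotient: for a standard triple $(\wt{P},\pi_{\rm temp},\nu)$ with $\nu$ strictly positive relative to $P$, the representation ${\rm Ind}_{\wt{P}}^{\wt{G}}(\pi_{\rm temp}\otimes\nu)$ has a unique irreducible quotient $J(\wt{P},\pi_{\rm temp},\nu)$. This follows from the analysis of the long intertwining operator $A(\nu)\colon {\rm Ind}_{\wt{P}}^{\wt{G}}(\pi_{\rm temp}\otimes\nu)\to {\rm Ind}_{\wt{P}^-}^{\wt{G}}(\pi_{\rm temp}\otimes\nu)$, whose convergence in the positive chamber and whose image being the unique irreducible quotient are proved via the leading term of the constant term, just as in Silberger's argument: square-integrability and temperedness of a genuine representation are detected by the Casselman criterion applied to the exponents of its Jacquet modules, and those exponents behave additively under induction.

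Second I would prove surjectivity onto Langlands data: every $\pi\in\Irrg(\wt{G})$ is isomorphic to some $J(\wt{P},\pi_{\rm temp},\nu)$. Embedding $\pi$ into a principal series ${\rm Ind}_{\wt{B}}^{\wt{G}}(\chi)$, write $\chi={}^{0}\!\chi\cdot\nu_0$ with ${}^{0}\!\chi$ unitary, collect the simple roots on which $\nu_0$ fails to be dominant to define the Levi $\wt{M}$ of $\wt{P}$, and use the geometric lemma together with the positivity of the residual part of $\nu$ to produce an irreducible tempered subquotient $\pi_{\rm temp}$ of ${\rm Jac}_{\wt{P}}^{\wt{G}}(\pi)$ with ${\rm Ind}_{\wt{P}}^{\wt{G}}(\pi_{\rm temp}\otimes\nu)\onto\pi$. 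Injectivity, i.e.\ that the triple is determined up to the obvious equivalence, comes from uniqueness of the leading exponent.

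Finally, the reduction of tempered representations of $\wt{M}$ to discrete series of Levi subgroups of $\wt{M}$ --- which is what upgrades a ``standard triple'' to the datum of a discrete series together with induction data --- is the covering analogue of Waldspurger's description and goes through because the $R$-group formalism and the elliptic tempered theory are available for genuine representations. I expect the main obstacle to be entirely analytic: the careful verification that convergence, meromorphic continuation, and constant-term estimates for intertwining operators of covering groups hold uniformly enough to run Silberger's proof. This is the real content of \cite{BJ13}; once those inputs are granted, the combinatorics of the classification is formally identical to the linear case, and the compatibility of the system $\{\mca{L}_{\wt{M}}\}$ needed in the sequel is then a formal consequence.
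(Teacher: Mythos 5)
Your proposal is correct and matches the paper exactly: the paper offers no proof of this statement, treating it purely as a citation of Ban--Jantzen \cite{BJ13}*{Theorem 1.1}, which is precisely your "cleanest course." The accompanying sketch is a faithful outline of how the linear Langlands classification (Silberger/Borel--Wallach style) transfers to covers, but none of it is needed here beyond the citation.
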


As mentioned in \S \ref{SS:WFconj}, the dual groups of $\wt{G}$ and its Levi subgroups $\wt{M}$ are reductive groups, and we have natural functorial inclusion $\wt{M}^\vee \into \wt{G}^\vee$. One also has the natural inclusion ${}^L\wt{M} \into {}^L \wt{G}$. 
The $\nu$ as stipulated in Theorem \ref{T: LC for rep} corresponds to an element (by abuse of notation)
$$\nu \in Z({}^L M)_{>0}:=\Hom(Y_M/Y_M^{sc}, \R_{>0}),$$
where $Y_M$ is the character lattice of the Langlands dual group $M^\vee$ and $Y_M^{sc}$ the root lattice of $M^\vee$.
Note that there is a natural map 
$$\iota: Z({}^L M) \longrightarrow Z({}^L \wt{M})$$
where $Z({}^L \wt{M}):=\Hom(Y_{M,Q,n}/Y_{M,Q,n}^{sc}, \BC)$, see \cite{GG18}*{\S 15.2} for the notation and explanation of $\iota$. Here $\iota$ is essentially induced from the inclusion $Y_{M,Q,n} \subseteq Y_M$ (of finite index) and is not injective in general. However, the restriction of $\iota$ to the positive part
$$\iota_+: Z({}^L M)_{>0} \into Z({}^L \wt{M})_{>0},$$
where $Z({}^L \wt{M})_{>0}:=\Hom(Y_{M,Q,n}/Y_{M,Q,n}^{sc}, \R_{>0})$, is a natural isomorphism of groups. In view of this, the Langlands classification of $L$-parameter of $\wt{G}$, which for linear groups is given in \cite{SZ18}, can be applied verbatim to $\wt{G}$ here. This gives the following:

\begin{thm}\label{thm LC for L-par}
      There is a one-to-one correspondence between the set of $L$-parameters $\phi$ of $\wt{G}$ and the set of standard triples $\{ (\wt{P}, \phi_{\rm temp}, \nu)\}$ via $\phi= ({}^L \wt{M} \hookrightarrow {}^L \wt{G}) \circ (\phi_{\rm temp} \otimes \iota_+(\nu))$ (up to $\wt{G}^{\vee}$-conjugacy).      Here 
    \begin{itemize}
        \item $\wt{P}=\wt{M} N$ is a standard parabolic subgroup of $\wt{G}$, and
        \item $\phi_{\rm temp}$ is a tempered $L$-parameter of $\wt{M}$, and
        \item $\nu$ is an unramified linear character of $\wt{M}$ (or $M$) that is positive with respect to $P$, regarded as an unramified central cocharacter of $\wt{M}^{\vee}$ via $\iota_+$.
    \end{itemize}
    We shall write $ \phi \leftrightarrow (\wt{P}, \phi_{\rm temp}, \nu)$ if they correspond to each other.
\end{thm}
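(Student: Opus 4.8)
The plan is to deduce Theorem~\ref{thm LC for L-par} from the linear-group statement \cite{SZ18} by transporting that result along the isomorphism $\iota_+$, exactly as the surrounding discussion anticipates. First I would set up the bijection carefully: given an $L$-parameter $\phi\colon \mathrm{WD}_F\to {}^L\wt{G}$, I want to produce a standard triple $(\wt{P},\phi_{\mathrm{temp}},\nu)$. The key point is that the notion of ``tempered'' and ``positive unramified twist'' for parameters into ${}^L\wt{G}$ can be phrased purely in terms of the reductive group $\wt{G}^\vee$ and its parabolic/Levi structure, and under the functorial correspondences $\wt{M}^\vee\hookrightarrow\wt{G}^\vee$ and $Z({}^L M)_{>0}\xrightarrow{\ \iota_+\ }Z({}^L\wt{M})_{>0}$, these match the corresponding notions for $\wt{G}^\vee$ viewed as a linear dual group. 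So the combinatorial skeleton of the Langlands classification of parameters (image lands in a Levi up to conjugacy, the unramified part is read off from the central cocharacter, the remaining part is tempered) is literally the content of \cite{SZ18} applied to $\wt{G}^\vee$; one only has to check that it is $\iota_+$, rather than the full non-injective $\iota$, that appears, which is precisely why the positivity hypothesis on $\nu$ is imposed.

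The steps, in order, would be: (i) recall from \cite{SZ18} that for any connected complex reductive group $H$ playing the role of a dual group, every parameter $\mathrm{WD}_F\to {}^L H$ is conjugate to one of the form $({}^L L\hookrightarrow {}^L H)\circ(\phi_{\mathrm{temp}}\otimes\nu)$ with $\phi_{\mathrm{temp}}$ tempered into ${}^L L$ and $\nu\in Z({}^L L)_{>0}$, uniquely up to $H$-conjugacy; (ii) apply this to $H=\wt{G}^\vee$, noting $\mca{N}({}^L\wt{G})\simeq\mca{N}(\wt{G}^\vee)$ and that standard Levi subgroups of $\wt{G}$ correspond to standard Levi subgroups of the linear group with dual $\wt{G}^\vee$; (iii) verify that $\iota_+\colon Z({}^L M)_{>0}\xrightarrow{\ \sim\ }Z({}^L\wt{M})_{>0}$ is an isomorphism, citing \cite{GG18}*{\S 15.2}, so that the positive unramified datum $\nu$ for $M$ and the positive unramified central cocharacter of $\wt{M}^\vee$ are interchangeable without loss of information; (iv) assemble these into the bijection $\phi\leftrightarrow(\wt{P},\phi_{\mathrm{temp}},\nu)$, and check well-definedness (independence of the choices in $\wt{G}^\vee$-conjugacy) and injectivity/surjectivity, which are inherited verbatim from the linear case in (i).

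The main obstacle I anticipate is step (iii), or rather the bookkeeping that surrounds it: one must be careful that $\iota\colon Z({}^L M)\to Z({}^L\wt{M})$ is \emph{not} injective in general (it is induced by the finite-index inclusion $Y_{M,Q,n}\subseteq Y_M$), so the naive assertion ``an unramified twist for $M$ is the same as an unramified central cocharacter of $\wt{M}^\vee$'' is false on the nose; it becomes true only after restricting to the cone of strictly positive (i.e.\ $\R_{>0}$-valued) characters, where $\iota$ restricts to an isomorphism because $\Hom(-,\R_{>0})$ kills torsion. Getting this compatibility to interact correctly with the positivity condition ``$\nu$ positive with respect to $P$'' in Theorem~\ref{T: LC for rep}, and ensuring it matches the positivity condition used in \cite{SZ18}, is the delicate part; everything else is a direct transcription. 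A secondary point to be careful about is that the embedding ${}^L\wt{M}\hookrightarrow{}^L\wt{G}$ used here relies on the chosen splitting ${}^L\wt{G}\simeq\wt{G}^\vee\times W_F$ fixed in \S\ref{SS:WFconj} (arising from a distinguished genuine central character), so one should remark that the classification is stated relative to that fixed splitting, compatibly with ${}^L\wt{M}\simeq\wt{M}^\vee\times W_F$.
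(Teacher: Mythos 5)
Your proposal matches the paper's argument exactly: the paper derives Theorem~\ref{thm LC for L-par} by observing that $\iota_+\colon Z({}^L M)_{>0}\to Z({}^L\wt{M})_{>0}$ is an isomorphism (even though $\iota$ itself is not injective) and then applying the Langlands classification of $L$-parameters from \cite{SZ18} verbatim to the reductive dual group $\wt{G}^\vee$. Your identification of the non-injectivity of $\iota$ versus the isomorphism on positive parts as the one point requiring care is precisely the observation the paper makes before stating the theorem.
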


We require that $\mca{L}_{\wt{G}}$ and $\mca{L}_{\wt{M}}$ are compatible with respect to the Langlands classification and induction of tempered representations.

\begin{assumption}\label{assu LLC}\ 
   \begin{itemize}
       \item [(1)]Suppose that $\mca{L}_{\wt{G}}(\pi)=\phi$ and    $ \pi \leftrightarrow (\wt{P}_{\pi}, \pi_{\rm temp}, \nu_{\pi})$ and $ \phi \leftrightarrow (\wt{P}_{\phi}, \phi_{\rm temp}, \nu_{\phi})$. Then $\wt{P}_{\pi}=\wt{P}_{\phi}$, $\nu_{\pi}=\nu_{\phi}$, and $\mca{L}_{\wt{M}}(\pi_{\rm temp})= \phi_{\rm temp}$.
       \item[(2)] Let $\sigma$ be a genuine (unitary) tempered representation of $\wt{M} \subset \wt{P}$. Suppose that $\pi$ is an irreducible subquotient of the parabolic induction $\Ind_{\wt{P}}^{\wt{G}} (\sigma)$. Then
       \[ \mca{L}_{\wt{G}}(\pi)= \phi= ({}^L \wt{M} \hookrightarrow {}^L \wt{G}) \circ \mca{L}_{\wt{M}}(\sigma). \]
   \end{itemize} 
\end{assumption}

\subsection{Reduction of the conjecture}
In this subsection, we reduce the inequality in Conjecture \ref{Conj} (i) to discrete representations based on the compatibility of $d_{BV,G}^{(n)}$ with induction and saturation, given as in Theorem \ref{T:ABCD} (iii) and Theorem \ref{T:EFG} (ii). The argument is parallel to that in the linear case, see \cite{HLLS24}.

\begin{defn}
  An irreducible representation $\pi$ of $\wt{G}$ is called anti-tempered (resp. anti-discrete) if its Aubert--Zelevinsky dual ${\rm AZ}(\pi)$ is tempered (resp. discrete).
\end{defn}

\begin{thm} \label{T:red}
Let $\wt{G}$ be a Brylinski--Deligne cover of $G$ given in \S \ref{SS:fix-cov} and \S \ref{S:dBV-exc}. Assume the Working Hypothesis \ref{assu LLC}. Consider the inequality 
\begin{equation} \label{E:ineq2}
{\rm WF}^{\rm geo}(\pi) \lest d_{BV, G}^{(n)}(\OO(\phi_{\AZ(\pi)}))
\end{equation}
arising from \eqref{E:ineq}. The following are equivalent.
\begin{itemize}
    \item [(a)] The inequality \eqref{E:ineq2} holds for all genuine representations of every Levi subgroup of $\wt{G}$.
    \item [(b)] The inequality \eqref{E:ineq2} holds for all genuine anti-tempered representations of every Levi subgroup of $\wt{G}$.
    \item [(c)] The inequality \eqref{E:ineq2} holds for all genuine anti-discrete representations of every Levi subgroup of $\wt{G}$.
\end{itemize}
\end{thm}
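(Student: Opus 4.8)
The plan is to prove the chain of implications $(a) \Rightarrow (b) \Rightarrow (c) \Rightarrow (a)$. The first two implications are trivial: anti-discrete representations are anti-tempered (discrete series are tempered), and anti-tempered representations are a subclass of all genuine representations, so each class in which \eqref{E:ineq2} is asserted shrinks. Hence the whole content is in $(c) \Rightarrow (a)$, which breaks naturally into two reductions: from general $\pi$ to anti-tempered $\pi$, and from anti-tempered $\pi$ to anti-discrete $\pi$. Since in each step we are allowed to pass to Levi subgroups, the statements are really being proved simultaneously for all Levi subgroups of $\wt{G}$, which makes the induction on semisimple rank go through cleanly.

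For the reduction $(c) \Rightarrow (b) \Rightarrow (a)$-type steps, I would first handle \emph{general $\pi$ to anti-tempered $\pi$}. Let $\pi \in \Irrg(\wt{G})$ be arbitrary and write $\AZ(\pi)$ via Langlands classification $\AZ(\pi) \leftrightarrow (\wt{P}, \tau, \nu)$ with $\tau$ irreducible tempered on $\wt{M}$ and $\nu$ positive; then $\AZ(\pi)$ is the Langlands quotient of $\Ind_{\wt P}^{\wt G}(\tau_\nu)$. Dualizing, $\pi$ is (up to sign in the Grothendieck group, but irreducible by \cite{BJ16}) a subrepresentation of $\Ind_{\wt P}^{\wt G}(\AZ_{\wt M}(\tau)_\nu)$, and $\AZ_{\wt M}(\tau)$ is an anti-tempered representation of $\wt M$. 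The key geometric input is the standard inequality ${\rm WF}^{\rm geo}(\pi) \lest \Ind_{\mbf M}^{\mbf G} {\rm WF}^{\rm geo}(\AZ_{\wt M}(\tau)_\nu) = \Ind_{\mbf M}^{\mbf G} {\rm WF}^{\rm geo}(\AZ_{\wt M}(\tau))$ coming from exactness of Jacquet functors and the relation between wavefront sets and parabolic induction (the covering analogue of the linear statement used in \cite{HLLS24}); the twist by the unramified $\nu$ does not change the wavefront set. Applying hypothesis (b) to the anti-tempered representation $\AZ_{\wt M}(\tau)$ of the Levi $\wt M$, and then using the compatibility \eqref{ind-cla}/\eqref{ind-exc} of $d_{BV}^{(n)}$ with $\Ind$ and $\Sat$ together with Working Hypothesis \ref{assu LLC}(1) (which says the Langlands data of $\AZ(\pi)$ and of $\phi_{\AZ(\pi)}$ match: same $\wt P$, same $\nu$, and $\phi_{\AZ_{\wt M}(\tau)}$ is the tempered part), we get
\begin{align*}
{\rm WF}^{\rm geo}(\pi) &\lest \Ind_{\mbf M}^{\mbf G} {\rm WF}^{\rm geo}(\AZ_{\wt M}(\tau)) \\
&\lest \Ind_{\mbf M}^{\mbf G} d_{BV, M}^{(n)}(\OO(\phi_{\AZ_{\wt M}(\tau)})) \\
&= d_{BV, G}^{(n)}\big(\Sat_{\wt M^\vee}^{\wt G^\vee}(\OO(\phi_{\AZ_{\wt M}(\tau)}))\big) \\
&= d_{BV, G}^{(n)}(\OO(\phi_{\AZ(\pi)})),
\end{align*}
where the last equality uses that $\OO(\phi_{\AZ(\pi)}) = \Sat_{\wt M^\vee}^{\wt G^\vee}(\OO(\phi_{\rm temp}))$ — this is the orbit-level shadow of the Langlands classification of parameters in Theorem \ref{thm LC for L-par}, since appending the unramified positive part $\iota_+(\nu)$ to $\phi_{\rm temp}$ and inducing up corresponds precisely to saturation of the nilpotent orbit. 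Order-reversing of $d_{BV}^{(n)}$ (Theorem \ref{T:ABCD}(ii), \ref{T:EFG}(i)) is invoked to propagate the $\lest$ through $d_{BV,M}^{(n)}$.

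The reduction \emph{anti-tempered to anti-discrete} is the analogous argument applied inside the tempered world: an irreducible tempered $\tau$ on $\wt M$ is a subrepresentation of $\Ind_{\wt Q}^{\wt M}(\delta)$ for a parabolic $\wt Q = \wt L N$ and a discrete series $\delta$ of $\wt L$, by the Langlands–Knapp–Zuckerman-type classification of tempered representations (valid for covers, cf. Working Hypothesis \ref{assu LLC}(2)). Dualizing, the anti-tempered $\AZ_{\wt M}(\tau)$ embeds in $\Ind_{\wt Q}^{\wt M}(\AZ_{\wt L}(\delta))$ with $\AZ_{\wt L}(\delta)$ anti-discrete, and one runs the same chain of inequalities, now using hypothesis (c) on $\wt L$, compatibility of $d_{BV}^{(n)}$ with $\Ind/\Sat$, order-reversing, and Working Hypothesis \ref{assu LLC}(2) to identify $\phi_{\AZ_{\wt M}(\tau)}$ with the image of $\phi_{\AZ_{\wt L}(\delta)}$ under ${}^L\wt L \hookrightarrow {}^L\wt M$. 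Composing the two reductions yields $(c)\Rightarrow(a)$. I expect the main obstacle to be the covering analogue of the wavefront-set/parabolic-induction inequality ${\rm WF}^{\rm geo}(\pi) \lest \Ind_{\mbf M}^{\mbf G}{\rm WF}^{\rm geo}(\sigma)$ for $\pi \subseteq \Ind_{\wt P}^{\wt G}\sigma$: in the linear case this rests on the behaviour of local character expansions under parabolic induction and the theory of Jacquet functors, and one must check that genuineness and the metaplectic twist introduce no new phenomena — this is precisely where the phrase ``under certain working hypothesis, the proof parallels that for the linear case in \cite{HLLS24}'' in the introduction is doing its work, and I would either cite the covering version of \cite[\S 3]{HLLS24} directly or re-derive it from the Gomez–Gourevich–Sahi and Moeglin–Waldspurger machinery for covers. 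A secondary but routine point is bookkeeping the distinction between equalities in the Grothendieck group versus honest subrepresentations, which is harmless here because $\AZ$ preserves irreducibility by \cite{BJ16}*{Theorem 4.2, 4.3}.
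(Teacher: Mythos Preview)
Your proposal is correct and follows essentially the same route as the paper's proof: both argue the nontrivial direction by applying the Langlands classification to $\AZ(\pi)$, using that $\AZ$ commutes with parabolic induction to exhibit $\pi$ as a subquotient of an induced anti-tempered (respectively anti-discrete) representation, then invoking the wavefront-set/induction inequality and the $\Ind$--$\Sat$ compatibility of $d_{BV}^{(n)}$ together with Working Hypothesis~\ref{assu LLC}. One minor remark: order-reversing of $d_{BV}^{(n)}$ is not actually needed here, since $\AZ(\pi^\sharp)=\AZ(\pi)_{\rm temp}$ exactly (as $\AZ$ is an involution), so the relevant step is an equality rather than an inequality; the paper likewise does not invoke order-reversing in this proof.
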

\begin{proof}
    It is clear that Part (a) implies Part (b), and Part (b) implies Part (c). We first prove that Part (b) implies Part (a) as well.

    Let $\pi$ be any genuine representation of $\wt{G}$. By Theorem \ref{T: LC for rep}, we may write $\AZ(\pi) \leftrightarrow (\wt{P}, \AZ(\pi)_{\rm temp}, \nu)$. Then, we have (here $\tau \lest \Pi$ means that $\tau$ is a subquotient of $\Pi$)
    \[ \pi = \AZ(\AZ(\pi)) \lest \AZ(\Ind_{\wt{P}}^{\wt{G}} (\AZ(\pi))_{\rm temp} \otimes \nu )= \Ind_{\wt{P}}^{\wt{G}}\ \AZ(\AZ(\pi)_{\rm temp}) \otimes \nu.\]
    Note that $\pi^\sharp:= \AZ(\AZ(\pi)_{\rm temp})$ is anti-tempered. Hence, we have an upper bound of its wavefront set from Part (b). Also, 
    $${\rm WF}^{\rm geo}(\pi^\sharp)={\rm WF}^{\rm geo}(\pi^\sharp\otimes \nu)$$ since the unramified character $\nu$ is trivial in a small neighborhood of identity. Therefore, we have an upper bound for the wavefront set of $\pi$ from the induced orbit
 $$\begin{aligned}
  {\rm WF}^{\rm geo}(\pi)  & \lest {\rm WF}^{\rm geo}(\Ind_{\wt{P}}^{\wt{G}}\ \pi^\sharp \otimes \nu)  \\
   & =\Ind_\mbf{M}^\mbf{G} ({\rm WF}^{\rm geo}(\pi^\sharp) ) \\
   & \lest \Ind_\mbf{M}^\mbf{G} (d_{BV,M}^{(n)} (\OO(\phi_{ \mathsf{AZ}(\pi^\sharp)}))).
  \end{aligned}$$
It then follows from Theorem \ref{T:ABCD} (iii) and Theorem \ref{T:EFG} (ii) that
    \[ {\rm WF}^{\rm geo}(\pi) \lest d_{BV,G}^{(n)} ( \textrm{Sat}_{\wt{M}^\vee}^{\wt{G}^\vee}(\OO(\phi_{\mathsf{AZ}(\pi^\sharp)}) )) \lest d_{BV, G}^{(n)} ( \textrm{Sat}_{\wt{M}^\vee}^{\wt{G}^\vee}(\OO(\phi_{ \AZ(\pi)_{\rm temp}}) )). \]
 On the other hand, by Working Hypothesis \ref{assu LLC} (1), we see that 
    \[ \phi_{\AZ(\pi)}= ({}^L \wt{M} \hookrightarrow {}^L\wt{G}) \circ \phi_{\AZ(\pi)_{\rm temp}}\otimes \iota_+(\nu).\]
In particular, $\OO(\phi_{\AZ(\pi)})= \textrm{Sat}_{\wt{M}^\vee}^{\wt{G}^\vee}(\OO(\phi_{\AZ(\pi)_{\rm temp}}))$. This gives
$${\rm WF}^{\rm geo}(\pi) \lest d_{BV, G}^{(n)}(\OO(\phi_{\AZ(\pi)}))$$
and completes the verification that Part (b) implies Part (a).

The verification that Part (c) implies Part (b) is similar, but using the fact that every tempered representation can be realized as an irreducible subquotient of an induction of a discrete series representation (cf. \cite{Wal03}*{Proposition III.4.1}) and Working Hypothesis \ref{assu LLC} (2) instead, which we omit the details.
\end{proof}

\begin{cor} \label{C:GLlest}
Let $\wt{\GL}_r^{(n)}$ be the Kazhdan--Patterson cover given in \S \ref{SS:Speh}. Assume $p \nmid n$ and Working Hypothesis \ref{as:dual}. Then we have
$${\rm WF}^{\rm geo}(\pi) \lest d_{BV, G}^{(n)}(\OO(\phi_{\AZ(\pi)}))$$
for every $\pi \in \Irrg(\wt{\GL}_r^{(n)})$.
\end{cor}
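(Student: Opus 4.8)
The plan is to deduce Corollary \ref{C:GLlest} from Theorem \ref{T:Speh} by invoking the reduction machinery of Theorem \ref{T:red}, so the first step is to verify that the hypotheses of Theorem \ref{T:red} are met for $\wt{G} = \wt{\GL}_r^{(n)}$. Since $\GL_r$ is of type $A$ (a Levi subgroup of a product of general linear groups is again a product of general linear groups), Theorem \ref{T:ABCD}(iii) supplies the needed compatibility of $d_{BV,G}^{(n)}$ with induction and saturation; and the Working Hypothesis \ref{assu LLC} on compatibility of the local Langlands correspondence for $\wt{\GL}_r^{(n)}$ with Langlands classification and parabolic induction is part of our standing assumptions (it follows from the construction of $\mca{L}_n$ via the metaplectic correspondence $\MC$ and the local Langlands correspondence $\mca{L}$ for $\GL_r$, which are themselves compatible with these operations). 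Thus Theorem \ref{T:red} applies, and it suffices to verify the inequality \eqref{E:ineq2} for all genuine anti-discrete representations of every Levi subgroup of $\wt{\GL}_r^{(n)}$.

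The second step is then a direct identification: a genuine discrete series representation of a Levi subgroup $\wt{M}$ of $\wt{\GL}_r^{(n)}$ is, up to the metaplectic tensor product construction, a product of genuine essentially square-integrable representations of smaller $\wt{\GL}$-factors, and by the classification of $\Irrg(\wt{\GL}_r^{(n)})$ in terms of discrete series in \cite{KLZ23}, each such factor is of the form $L(\rho, [a,b])$ for a supercuspidal $\rho$. Hence an anti-discrete genuine representation $\pi$ of a Levi has the form $\pi \simeq \tilde\bigotimes_i Z(\rho_i, [a_i, b_i])$, a metaplectic tensor product of genuine Speh representations; its Aubert--Zelevinsky dual is the corresponding product of essentially square-integrable representations. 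Theorem \ref{T:Speh} computes ${\rm WF}^{\rm geo}$ of each $Z(\rho_i, [a_i,b_i])$ exactly, and the wavefront set of a metaplectic tensor product is controlled by the product of the individual wavefront sets (the orbit for the product group is the product of the orbits); simultaneously $\mca{O}(\phi_{\AZ(\pi)})$ for the product group is the product of the $\mca{O}(\phi_{\AZ(Z(\rho_i,[a_i,b_i]))})$, and $d_{BV}^{(n)}$ of a product orbit factors as the product of $d_{BV}^{(n)}$ applied to the factors (as noted after Definition \ref{D:dBV}). Therefore the equality of Theorem \ref{T:Speh}, applied factor by factor, gives ${\rm WF}^{\rm geo}(\pi) = d_{BV}^{(n)}(\mca{O}(\phi_{\AZ(\pi)}))$, which in particular yields the desired inequality \eqref{E:ineq2} for all anti-discrete genuine representations of all Levi subgroups.

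Combining these two steps: the anti-discrete case holds by Theorem \ref{T:Speh} and the multiplicativity just described, so condition (c) of Theorem \ref{T:red} is satisfied, whence condition (a) holds, which is precisely the statement of the corollary. I expect the main obstacle — or at least the only point requiring genuine care rather than bookkeeping — to be the passage through metaplectic tensor products: one must confirm that both ${\rm WF}^{\rm geo}$ and the orbit $\mca{O}(\phi_{\AZ(-)})$ behave multiplicatively across the factors of a metaplectic tensor product in the way needed, and that the base case of Theorem \ref{T:Speh} (stated for a single $Z(\rho,[a,b])$ on $\wt{\GL}_{rk}^{(n)}$) transfers verbatim to the factors appearing in a Levi subgroup of a larger $\wt{\GL}_r^{(n)}$. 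Both are expected to be routine given \cite{KLZ23}*{Proposition 7.2} and the product formula after Definition \ref{D:dBV}, but they are the steps where an implicit compatibility could fail, so they deserve explicit mention in the write-up.
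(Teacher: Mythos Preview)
Your proposal is correct and follows essentially the same approach as the paper: reduce via Theorem \ref{T:red} to the anti-discrete case, then identify anti-discrete representations (factor by factor on each Levi) with twists of Speh representations $Z(\rho,[a,b])$ via the classification in \cite{KLZ23}, and apply Theorem \ref{T:Speh}. The paper's proof is a terse two-sentence version of exactly this, and your added remarks on the metaplectic tensor product, the multiplicativity of ${\rm WF}^{\rm geo}$ and $d_{BV}^{(n)}$, and the verification of Working Hypothesis \ref{assu LLC} from the construction of $\mca{L}_n$ are reasonable expansions of details the paper leaves implicit.
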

\begin{proof}
By the classification of $\Irrg(\wt{\GL}_r^{(n)})$ in \cite{KLZ23}, an irreducible genuine representation $\pi$ is anti-discrete series if and only if it is a twist of $Z(\rho, [a, b])$, as given in \S \ref{SS:Speh}. The result then follows from combining Theorem \ref{T:Speh} and Theorem \ref{T:red}.
\end{proof}
\appendix

\section{\texorpdfstring{Data of $d_{BV, G}^{(n)}$ for exceptional $G$}{}} \label{A:dBVexc}
We tabulate the data for $d_{BV, G}^{(n)}$ explicitly for $G=G_2, F_4, E_r, 6 \lest r \lest 8$. For simplicity of notation, we write
$$d_{G}^{(n)}:=d_{BV, G}^{(n)}.$$
Note also $d_{G}^{(n)}(\set{0}) = \mca{O}_{\rm reg}$ for all $G$.

\subsection{\texorpdfstring{Type $G_2$}{}}
$$
d_{G_2}^{(n)}(A_1) = 
\begin{cases}
G_2(a_1) & \text{ if } n=1,\\
G_2 & \text{ if } n \gest 2;
\end{cases} \quad
d_{G_2}^{(n)}(\tilde{A}_1) = 
\begin{cases}
G_2(a_1) &  \text{ if } n=1,3,\\
G_2 &  \text{ if } n\in \set{2}\cup \N_{\gest 4}.
\end{cases}
$$

$$
d_{G_2}^{(n)}(G_2(a_1)) = 
\begin{cases}
G_2(a_1) & \text{ if } n=1,2,3,\\
G_2 & \text{ if } n \gest 4;
\end{cases} \quad
d_{G_2}^{(n)}(G_2) = 
\begin{cases}
0 &  \text{ if } n=1,\\
\tilde{A}_1 & \text{ if } n=2,\\
A_1 & \text{ if } n=3,\\
G_2(a_1) & \text{ if } n=4, 5, 6, 9,\\
G_2 & \text{ if } n\in \set{7, 8} \cup \N_{\gest 10}.
\end{cases}
$$

\subsection{\texorpdfstring{Type $F_4$}{}}
$$
d_{F_4}^{(n)}(A_1) = 
\begin{cases}
F_4(a_1) & \text{ if } n=1,\\
F_4 & \text{ if } n \gest 2;
\end{cases} \quad
d_{F_4}^{(n)}(\tilde{A}_1) = 
\begin{cases}
F_4(a_1) &  \text{ if } n=1,2,\\
F_4 &  \text{ if } n \gest 3.
\end{cases}
$$

$$
d_{F_4}^{(n)}(A_1 + \tilde{A}_1) = 
\begin{cases}
F_4(a_2) & \text{ if } n=1,\\
F_4(a_1) & \text{ if } n=2,\\
F_4 & \text{ if } n \gest 3;
\end{cases} \quad
d_{F_4}^{(n)}(A_2) = 
\begin{cases}
B_3 & \text{ if } n=1,\\
F_4(a_1) &  \text{ if } n=2,\\
F_4 &  \text{ if } n \gest 3.
\end{cases}
$$

$$
d_{F_4}^{(n)}(\tilde{A}_2) = 
\begin{cases}
C_3 & \text{ if } n=1,\\
B_3 & \text{ if } n=2,\\
F_4(a_1) & \text{ if } n=4,\\
F_4 & \text{ if } n \in \set{3} \cup \N_{\gest 5};
\end{cases} \quad
d_{F_4}^{(n)}(A_2 + \tilde{A}_1) = 
\begin{cases}
F_4(a_3) &  \text{ if } n=1,\\
F_4(a_2) & \text{ if } n=2,\\
F_4 &  \text{ if } n \gest 3.
\end{cases}
$$

$$
d_{F_4}^{(n)}(B_2) = 
\begin{cases}
F_4(a_3) & \text{ if } n=1,\\
F_4(a_2) & \text{ if } n=2,\\
F_4(a_1) & \text{ if } n =3, 4,\\
F_4 & \text{ if } n \gest 5;
\end{cases} \quad
d_{F_4}^{(n)}(\tilde{A}_2 + A_1) = 
\begin{cases}
F_4(a_3) &  \text{ if } n=1,\\
B_3 & \text{ if } n=2,\\
F_4(a_1) & \text{ if } n=4,\\
F_4 &  \text{ if } n\in \set{3} \cup \N_{\gest 5}.
\end{cases}
$$

$$
d_{F_4}^{(n)}(C_3(a_1)) = 
\begin{cases}
F_4(a_3) &  \text{ if } n=1,\\
B_3 & \text{ if } n=2,\\
F_4(a_1) & \text{ if } n=3, 4,\\
F_4 &  \text{ if } n \gest 5.
\end{cases} \quad
d_{F_4}^{(n)}(F_4(a_3)) = 
\begin{cases}
F_4(a_3) &  \text{ if } n=1,2,\\
F_4(a_1) & \text{ if } n=3,4,\\
F_4 &  \text{ if } n \gest 5.
\end{cases}
$$

$$
d_{F_4}^{(n)}(B_3) = 
\begin{cases}
A_2 &  \text{ if } n=1,\\
C_3(a_1) & \text{ if } n=2,\\
F_4(a_2) & \text{ if } n=3, 4,\\
F_4(a_1) &  \text{ if } n =5, 6,\\
F_4 & \text{ if } n\gest 7;
\end{cases} \quad
d_{F_4}^{(n)}(C_3) = 
\begin{cases}
\tilde{A}_2 &  \text{ if } n=1,\\
B_2 & \text{ if } n=2,\\
F_4(a_2) &  \text{ if } n=3,\\
B_3 &  \text{ if } n=4,\\
F_4(a_1) & \text{ if } n=5, 6,8,\\
F_4 & \text{ if } n \in \set{7} \cup \N_{\gest 9}.
\end{cases}
$$

$$
d_{F_4}^{(n)}(F_4(a_2)) = 
\begin{cases}
A_1 + \tilde{A}_1 &  \text{ if } n=1,\\
B_2 & \text{ if } n=2,\\
F_4(a_2) & \text{ if } n=3,\\
B_3 & \text{ if } n=4,\\
F_4(a_1) & \text{ if } n=5, 6,8,\\
F_4 & \text{ if } n \in \set{7} \cup \N_{\gest 9};
\end{cases} \quad
d_{F_4}^{(n)}(F_4(a_1)) = 
\begin{cases}
\tilde{A}_1 &  \text{ if } n=1,\\
A_2 & \text{ if } n=2,\\
F_4(a_3) &  \text{ if } n=3,4,\\
F_4(a_2) & \text{ if } n=6,\\
F_4(a_1) & \text{ if } n=5, 7, 8, 10,\\
F_4 & \text{ if } n\in \set{9} \cup \N_{\gest 11}.
\end{cases}
$$

$$
d_{F_4}^{(n)}(F_4) = 
\begin{cases}
0 &  \text{ if } n=1,\\
A_1 & \text{ if } n=2,\\
\tilde{A}_2 + A_1 &  \text{ if } n=3,\\
A_2 + \tilde{A}_1 & \text{ if } n=4,\\
F_4(a_3) & \text{ if } n=5, 6,\\
B_3 & \text{ if } n=8,\\
F_4(a_2) & \text{ if } n=7, 10,\\
F_4(a_1) & \text{ if } n=9,11, 12, 14, 16,\\
F_4 & \text{ if } n\in \set{13, 15} \cup \N_{\gest 17}.
\end{cases}
$$

\subsection{\texorpdfstring{Type $E_6$}{}}

We have
$$
d_{E_6}^{(n)}(A_1) = 
\begin{cases}
E_6(a_1) & \text{ if } n=1,\\
E_6 & \text{ if } n \gest 2;
\end{cases} \quad
d_{E_6}^{(n)}(2A_1) = 
\begin{cases}
D_5 &  \text{ if } n=1,\\
E_6 &  \text{ if } n \gest 2.
\end{cases}
$$

$$
d_{E_6}^{(n)}(3A_1) = 
\begin{cases}
E_6(a_3) &  \text{ if } n=1,\\
E_6 &  \text{ if } n \gest 2;
\end{cases} \quad
d_{E_6}^{(n)}(A_2) = 
\begin{cases}
E_6(a_3) &  \text{ if } n=1,\\
E_6(a_1) &  \text{ if } n=2,\\
E_6 & \text{ if } n\gest 3.
\end{cases}
$$

$$
d_{E_6}^{(n)}(A_2+A_1) = 
\begin{cases}
D_5(a_1) &  \text{ if } n=1,\\
E_6(a_1) &  \text{ if } n= 2,\\
E_6 & \text{ if } n\gest 3.
\end{cases} \quad
d_{E_6}^{(n)}(2A_2) = 
\begin{cases}
D_4 &  \text{ if } n=1,\\
D_5 &  \text{ if } n=2,\\
E_6 & \text{ if } n\gest 3.
\end{cases}
$$

$$
d_{E_6}^{(n)}(A_2+2A_1) = 
\begin{cases}
A_4 + A_1 &  \text{ if } n=1,\\
E_6(a_1) &  \text{ if } n= 2,\\
E_6 & \text{ if } n\gest 3.
\end{cases} \quad
d_{E_6}^{(n)}(A_3) = 
\begin{cases}
A_4 &  \text{ if } n=1,\\
D_5 &  \text{ if } n=2,\\
E_6(a_1) & \text{ if } n = 3,\\
E_6 & \text{ if } n\gest 4.
\end{cases}
$$

$$
d_{E_6}^{(n)}(2A_2+A_1) = 
\begin{cases}
D_4(a_1) &  \text{ if } n=1,\\
D_5 &  \text{ if } n= 2,\\
E_6 & \text{ if } n\gest 3.
\end{cases} \quad
d_{E_6}^{(n)}(A_3 + A_1) = 
\begin{cases}
D_4(a_1) &  \text{ if } n=1,\\
D_5 &  \text{ if } n=2,\\
E_6(a_1) & \text{ if } n= 3,\\
E_6 & \text{ if } n\gest 4.
\end{cases}
$$

$$
d_{E_6}^{(n)}(D_4(a_1)) = 
\begin{cases}
D_4(a_1) &  \text{ if } n=1,\\
E_6(a_3) &  \text{ if } n= 2,\\
E_6(a_1) & \text{ if } n=3,\\
E_6 & \text{ if } n\gest 4.
\end{cases} \quad
d_{E_6}^{(n)}(A_4) = 
\begin{cases}
A_3 &  \text{ if } n=1,\\
D_5(a_1) &  \text{ if } n=2,\\
D_5 & \text{ if } n=3,\\
E_6(a_1) & \text{ if } n=4, \\
E_6 & \text{ if } n\gest 5.
\end{cases}
$$

$$
d_{E_6}^{(n)}(D_4) = 
\begin{cases}
2A_2 &  \text{ if } n=1,\\
A_5 &  \text{ if } n= 2,\\
E_6(a_3) & \text{ if } n=3,\\
E_6(a_1) & \text{ if } n=4, 5\\
E_6 & \text{ if } n\gest 6.
\end{cases} \quad
d_{E_6}^{(n)}(A_4 + A_1) = 
\begin{cases}
A_2 + 2A_1 &  \text{ if } n=1,\\
D_5(a_1) &  \text{ if } n=2,\\
D_5 & \text{ if } n=3,\\
E_6(a_1) & \text{ if } n=4, \\
E_6 & \text{ if } n\gest 5.
\end{cases}
$$

$$
d_{E_6}^{(n)}(A_5) = 
\begin{cases}
A_2 &  \text{ if } n=1,\\
D_4 &  \text{ if } n= 2,\\
E_6(a_3) & \text{ if } n=3,\\
D_5 & \text{ if } n=4,\\
E_6(a_1) & \text{ if } n=5,\\
E_6 & \text{ if } n\gest 6.
\end{cases} \quad
d_{E_6}^{(n)}(D_5(a_1)) = 
\begin{cases}
A_2 + A_1 &  \text{ if } n=1,\\
A_4 + A_1 &  \text{ if } n=2,\\
E_6(a_3) & \text{ if } n=3,\\
E_6(a_1) & \text{ if } n=4, 5, \\
E_6 & \text{ if } n\gest 6.
\end{cases}
$$

$$
d_{E_6}^{(n)}(E_6(a_3)) = 
\begin{cases}
A_2 &  \text{ if } n=1,\\
D_4(a_1) &  \text{ if } n= 2,\\
E_6(a_3) & \text{ if } n=3,\\
D_5 & \text{ if } n= 4,\\
E_6(a_1) & \text{ if } n=5,\\
E_6 & \text{ if } n\gest 6.
\end{cases} \quad
d_{E_6}^{(n)}(D_5) = 
\begin{cases}
2A_1 &  \text{ if } n=1,\\
A_3 + A_1 &  \text{ if } n=2,\\
A_4 + A_1 & \text{ if } n=3,\\
E_6(a_3) & \text{ if } n=4, \\
D_5 & \text{ if } n= 5,\\
E_6(a_1) & \text{ if } n=6,7,\\
E_6 & \text{ if } n\gest 8.
\end{cases}
$$

$$
d_{E_6}^{(n)}(E_6(a_1)) = 
\begin{cases}
A_1 &  \text{ if } n=1,\\
A_2 + 2A_1 &  \text{ if } n= 2,\\
D_4(a_1) & \text{ if } n=3,\\
D_5(a_1) & \text{ if } n= 4,\\
E_6(a_3) & \text{ if } n=5,\\
D_5 & \text{ if } n = 6,\\
E_6(a_1) & \text{ if } n = 7, 8,\\
E_6 & \text{ if } n \gest 9.\\
\end{cases} \quad
d_{E_6}^{(n)}(E_6) = 
\begin{cases}
0 &  \text{ if } n=1,\\
3A_1 &  \text{ if } n=2,\\
2A_2 + A_1 & \text{ if } n=3,\\
D_4(a_1) & \text{ if } n=4, \\
A_4 + A_1 & \text{ if } n= 5,\\
E_6(a_3) & \text{ if } n=6,7,\\
D_5 & \text{ if } n=8,\\
E_6(a_1) & \text{ if } n=9, 10, 11,\\
E_6 & \text{ if } n\gest 12.
\end{cases}
$$

\subsection{\texorpdfstring{Type $E_7$}{}}
For $E_7$, the covering Barbasch--Vogan map is given as follows:
$$
d_{E_7}^{(n)}(A_1) = 
\begin{cases}
E_7(a_1) &  \text{ if } n=1,\\
E_7 &  \text{ if } n\gest 2.
\end{cases} \quad
d_{E_7}^{(n)}(2A_1) = 
\begin{cases}
E_7(a_2) &  \text{ if } n=1,\\
E_7 &  \text{ if } n\gest 2.
\end{cases}
$$
$$
d_{E_7}^{(n)}((3A_1)'') = 
\begin{cases}
E_6 &  \text{ if } n=1,\\
E_7 &  \text{ if } n\gest 2.
\end{cases} \quad
d_{E_7}^{(n)}((3A_1)') = 
\begin{cases}
E_7(a_3) &  \text{ if } n=1,\\
E_7 &  \text{ if } n\gest 2.
\end{cases}
$$
$$
d_{E_7}^{(n)}(A_2) = 
\begin{cases}
E_7(a_3) &  \text{ if } n=1,\\
E_7(a_1) &  \text{ if } n =2,\\
E_7 & \text{ if } n \gest 3.
\end{cases} \quad
d_{E_7}^{(n)}(4A_1) = 
\begin{cases}
E_6(a_1) &  \text{ if } n=1,\\
E_7 &  \text{ if } n\gest 2.
\end{cases}
$$
$$
d_{E_7}^{(n)}(A_2 + A_1) = 
\begin{cases}
E_6(a_1) &  \text{ if } n=1,\\
E_7(a_1) &  \text{ if } n= 2,\\
E_7 & \text{ if } n\gest 3.
\end{cases} \quad
d_{E_7}^{(n)}(A_2 + 2A_1) = 
\begin{cases}
E_7(a_4) &  \text{ if } n=1,\\
E_7(a_1) &  \text{ if } n=2,\\
E_7 & \text{ if } n \gest 3.
\end{cases}
$$
$$
d_{E_7}^{(n)}(A_3) = 
\begin{cases}
D_6(a_1) &  \text{ if } n=1,\\
E_7(a_2) &  \text{ if } n= 2,\\
E_7(a_1) & \text{ if } n= 3,\\
E_7 & \text{ if } n \gest 4.
\end{cases} \quad
d_{E_7}^{(n)}(2A_2) = 
\begin{cases}
D_5 + A_1 &  \text{ if } n=1,\\
E_7(a_2) &  \text{ if } n=2,\\
E_7 & \text{ if } n \gest 3.
\end{cases}
$$
$$
d_{E_7}^{(n)}(A_2 + 3A_1) = 
\begin{cases}
A_6 &  \text{ if } n=1,\\
E_7(a_1) &  \text{ if } n= 2,\\
E_7 & \text{ if } n\gest 3.
\end{cases} \quad
d_{E_7}^{(n)}((A_3 + A_1)'') = 
\begin{cases}
D_5 &  \text{ if } n=1,\\
E_7(a_2) &  \text{ if } n=2,\\
E_7(a_1) &  \text{ if } n=3,\\
E_7 & \text{ if } n \gest 4.
\end{cases}
$$
$$
d_{E_7}^{(n)}(2A_2 + A_1) = 
\begin{cases}
E_7(a_5) &  \text{ if } n=1,\\
E_7(a_2) &  \text{ if } n= 2,\\
E_7 & \text{ if } n\gest 3.
\end{cases} \quad
d_{E_7}^{(n)}((A_3 + A_1)') = 
\begin{cases}
E_7(a_5) &  \text{ if } n=1,\\
E_7(a_2) &  \text{ if } n=2,\\
E_7(a_1) &  \text{ if } n=3,\\
E_7 & \text{ if } n \gest 4.
\end{cases}
$$
$$
d_{E_7}^{(n)}(D_4(a_1)) = 
\begin{cases}
E_7(a_5) &  \text{ if } n=1,\\
E_7(a_3) &  \text{ if } n= 2,\\
E_7(a_1) & \text{ if } n=3,\\
E_7 & \text{ if } n\gest 4.
\end{cases} \quad
d_{E_7}^{(n)}(A_3 + 2A_1) = 
\begin{cases}
E_6(a_3) &  \text{ if } n=1,\\
E_7(a_2) &  \text{ if } n=2,\\
E_7(a_1) &  \text{ if } n=3,\\
E_7 & \text{ if } n \gest 4.
\end{cases}
$$
$$
d_{E_7}^{(n)}(D_4) = 
\begin{cases}
A_5'' &  \text{ if } n=1,\\
D_6 &  \text{ if } n= 2,\\
E_7(a_3) & \text{ if } n=3,\\
E_7(a_1) & \text{ if } n=4,5,\\
E_7 & \text{ if } n \gest 6.
\end{cases} \quad
d_{E_7}^{(n)}(D_4(a_1) + A_1) = 
\begin{cases}
E_6(a_3) &  \text{ if } n=1,\\
E_7(a_3) &  \text{ if } n=2,\\
E_7(a_1) &  \text{ if } n=3,\\
E_7 & \text{ if } n \gest 4.
\end{cases}
$$
$$
d_{E_7}^{(n)}(A_3 + A_2) = 
\begin{cases}
D_5(a_1) + A_1 &  \text{ if } n=1,\\
E_7(a_3) &  \text{ if } n= 2,\\
E_7(a_1) & \text{ if } n=3,\\
E_7 & \text{ if } n\gest 4.
\end{cases} \quad
d_{E_7}^{(n)}(A_4) = 
\begin{cases}
D_5(a_1) &  \text{ if } n=1,\\
E_6(a_1) &  \text{ if } n=2,\\
E_7(a_2) &  \text{ if } n=3,\\
E_7(a_1) & \text{ if } n = 4,\\
E_7 & \text{ if } n \gest 5.
\end{cases}
$$
$$
d_{E_7}^{(n)}(A_3 + A_2 + A_1) = 
\begin{cases}
A_4 + A_2 &  \text{ if } n=1,\\
E_7(a_3) &  \text{ if } n= 2,\\
E_7(a_1) & \text{ if } n=3,\\
E_7 & \text{ if } n\gest 4.
\end{cases} \quad
d_{E_7}^{(n)}(A_5'') = 
\begin{cases}
D_4 &  \text{ if } n=1,\\
D_5 + A_1 &  \text{ if } n=2,\\
E_6 &  \text{ if } n=3,\\
E_7(a_2) & \text{ if } n = 4,\\
E_7(a_1) & \text{ if } n = 5,\\
E_7 & \text{ if } n \gest 6.
\end{cases}
$$
$$
d_{E_7}^{(n)}(D_4 + A_1) = 
\begin{cases}
A_4 &  \text{ if } n=1,\\
D_6 &  \text{ if } n= 2,\\
E_7(a_3) & \text{ if } n=3,\\
E_7(a_1) & \text{ if } n=4, 5,\\
E_7 & \text{ if } n \gest 6.
\end{cases} \quad
d_{E_7}^{(n)}(A_4+ A_1) = 
\begin{cases}
A_4 + A_1 &  \text{ if } n=1,\\
E_6(a_1) &  \text{ if } n=2,\\
E_7(a_2) & \text{ if } n = 3,\\
E_7(a_1) & \text{ if } n = 4,\\
E_7 & \text{ if } n \gest 5.
\end{cases}
$$
$$
d_{E_7}^{(n)}(D_5(a_1)) = 
\begin{cases}
A_4 &  \text{ if } n=1,\\
E_7(a_4) &  \text{ if } n= 2,\\
E_7(a_3) & \text{ if } n=3,\\
E_7(a_1) & \text{ if } n=4, 5,\\
E_7 & \text{ if } n \gest 6.
\end{cases} \quad
d_{E_7}^{(n)}(A_4+ A_2) = 
\begin{cases}
A_3 + A_2+ A_1 &  \text{ if } n=1,\\
E_7(a_4) &  \text{ if } n=2,\\
E_7(a_2) & \text{ if } n = 3,\\
E_7(a_1) & \text{ if } n = 4,\\
E_7 & \text{ if } n \gest 5.
\end{cases}
$$
$$
d_{E_7}^{(n)}(A_5') = 
\begin{cases}
D_4(a_1) + A_1 &  \text{ if } n=1,\\
D_5 + A_1 &  \text{ if } n= 2,\\
E_7(a_3) & \text{ if } n=3,\\
E_7(a_2) & \text{ if } n=4,\\
E_7(a_1) & \text{ if } n=5,\\
E_7 & \text{ if } n \gest 6.
\end{cases} \quad
d_{E_7}^{(n)}(A_5+ A_1) = 
\begin{cases}
D_4(a_1) &  \text{ if } n=1,\\
D_5 + A_1 &  \text{ if } n=2,\\
E_6 & \text{ if } n = 3,\\
E_7(a_2) & \text{ if } n = 4,\\
E_7(a_1) & \text{ if } n = 5,\\
E_7 & \text{ if } n \gest 6.
\end{cases}
$$
$$
d_{E_7}^{(n)}(D_5(a_1) + A_1) = 
\begin{cases}
A_3 + A_2 &  \text{ if } n=1,\\
E_7(a_4) &  \text{ if } n= 2,\\
E_7(a_3) & \text{ if } n=3,\\
E_7(a_1) & \text{ if } n=4,5,\\
E_7 & \text{ if } n \gest 6.
\end{cases} \quad
d_{E_7}^{(n)}(D_6(a_2)) = 
\begin{cases}
D_4(a_1) &  \text{ if } n=1,\\
D_5 + A_1 &  \text{ if } n=2,\\
E_6(a_1) & \text{ if } n = 3,\\
E_7(a_2) & \text{ if } n = 4,\\
E_7(a_1) & \text{ if } n = 5,\\
E_7 & \text{ if } n \gest 6.
\end{cases}
$$
$$
d_{E_7}^{(n)}(E_6(a_3)) = 
\begin{cases}
D_4(a_1) + A_1 &  \text{ if } n=1,\\
E_7(a_5) &  \text{ if } n= 2,\\
E_7(a_3) & \text{ if } n=3,\\
E_7(a_2) & \text{ if } n=4,\\
E_7(a_1) & \text{ if } n=5,\\
E_7 & \text{ if } n \gest 6.
\end{cases} \quad
d_{E_7}^{(n)}(D_5) = 
\begin{cases}
(A_3 + A_1)'' &  \text{ if } n=1,\\
D_6(a_2) &  \text{ if } n=2,\\
E_7(a_4) & \text{ if } n = 3,\\
E_7(a_3) & \text{ if } n = 4,\\
E_7(a_2) & \text{ if } n = 5,\\
E_7(a_1) & \text{ if } n = 6,7,\\
E_7 & \text{ if } n \gest 8.
\end{cases}
$$
$$
d_{E_7}^{(n)}(E_7(a_5)) = 
\begin{cases}
D_4(a_1) &  \text{ if } n=1,\\
E_7(a_5) &  \text{ if } n= 2,\\
E_6(a_1) & \text{ if } n=3,\\
E_7(a_2) & \text{ if } n=4,\\
E_7(a_1) & \text{ if } n=5,\\
E_7 & \text{ if } n \gest 6.
\end{cases} \quad
d_{E_7}^{(n)}(A_6) = 
\begin{cases}
A_2 + 3A_1 &  \text{ if } n=1,\\
D_5(a_1) + A_1 &  \text{ if } n=2,\\
E_7(a_4) & \text{ if } n = 3,\\
E_7(a_3) & \text{ if } n = 4,\\
E_7(a_2) & \text{ if } n = 5,\\
E_7(a_1) & \text{ if } n = 6,\\
E_7 & \text{ if } n \gest 7.
\end{cases}
$$
$$
d_{E_7}^{(n)}(D_5 + A_1) = 
\begin{cases}
2A_2 &  \text{ if } n=1,\\
D_6(a_2) &  \text{ if } n= 2,\\
E_7(a_4) & \text{ if } n=3,\\
E_7(a_3) & \text{ if } n=4,\\
E_7(a_2) & \text{ if } n=5,\\
E_7(a_1) & \text{ if } n = 6, 7,\\
E_7 & \text{ if } n \gest 8.
\end{cases} \quad
d_{E_7}^{(n)}(D_6(a_1)) = 
\begin{cases}
A_3 &  \text{ if } n=1,\\
D_5(a_1) + A_1 &  \text{ if } n=2,\\
E_7(a_4) & \text{ if } n = 3,\\
E_7(a_3) & \text{ if } n = 4,\\
E_7(a_2) & \text{ if } n = 5,\\
E_7(a_1) & \text{ if } n = 6,7,\\
E_7 & \text{ if } n \gest 8.
\end{cases}
$$
$$
d_{E_7}^{(n)}(E_7(a_4)) = 
\begin{cases}
A_2 + 2A_1 &  \text{ if } n=1,\\
D_5(a_1) + A_1 &  \text{ if } n= 2,\\
E_7(a_4) & \text{ if } n=3,\\
E_7(a_3) & \text{ if } n=4,\\
E_7(a_2) & \text{ if } n=5,\\
E_7(a_1) & \text{ if } n = 6, 7,\\
E_7 & \text{ if } n \gest 8.
\end{cases} \quad
d_{E_7}^{(n)}(D_6) = 
\begin{cases}
A_2 &  \text{ if } n=1,\\
D_4 + A_1 &  \text{ if } n=2,\\
E_6(a_3) & \text{ if } n = 3,\\
E_7(a_4) & \text{ if } n = 4,\\
E_6(a_1) & \text{ if } n = 5,\\
E_7(a_2) & \text{ if } n = 6,7,\\
E_7(a_1) & \text{ if } n = 8, 9, \\
E_7 & \text{ if } n \gest 10.
\end{cases}
$$
$$
d_{E_7}^{(n)}(E_6(a_1)) = 
\begin{cases}
A_2 + A_1 &  \text{ if } n=1,\\
A_4 + A_1 &  \text{ if } n= 2,\\
E_7(a_5) &  \text{ if } n= 3,\\
E_6(a_1) & \text{ if } n=4,\\
E_7(a_3) &  \text{ if } n=5,\\
E_7(a_2) & \text{ if } n=6,\\
E_7(a_1) & \text{ if } n=7,8,\\
E_7 & \text{ if } n \gest 9.
\end{cases} \quad
d_{E_7}^{(n)}(E_6) = 
\begin{cases}
(3A_1)'' &  \text{ if } n=1,\\
A_3 + 2A_1 &  \text{ if } n=2,\\
A_5 + A_1 & \text{ if } n = 3,\\
E_7(a_5) & \text{ if } n = 4,\\
E_7(a_4) & \text{ if } n = 5,\\
E_7(a_3) & \text{ if } n = 6,7,\\
E_7(a_2) & \text{ if } n = 8, \\
E_7(a_1) & \text{ if } n = 9,10, 11,\\
E_7 & \text{ if } n \gest 12.
\end{cases}
$$
$$
d_{E_7}^{(n)}(E_7(a_3)) = 
\begin{cases}
A_2 &  \text{ if } n=1,\\
A_3 + A_2 + A_1 &  \text{ if } n= 2,\\
E_6(a_3) & \text{ if } n=3,\\
E_7(a_4) & \text{ if } n=4,\\
E_6(a_1) & \text{ if } n=5,\\
E_7(a_2) & \text{ if } n=6,7,\\
E_7(a_1) & \text{ if } n=8,9,\\
E_7 & \text{ if } n \gest 10.
\end{cases} \quad
d_{E_7}^{(n)}(E_7(a_2)) = 
\begin{cases}
2A_1 &  \text{ if } n=1,\\
A_3 + 2A_1 &  \text{ if } n=2,\\
A_4 + A_2 & \text{ if } n = 3,\\
E_7(a_5) & \text{ if } n = 4,\\
E_7(a_4) & \text{ if } n = 5,\\
E_7(a_3) & \text{ if } n = 6,7,\\
E_7(a_2) & \text{ if } n = 8, \\
E_7(a_1) & \text{ if } n = 9,10, 11,\\
E_7 & \text{ if } n \gest 12.
\end{cases}
$$
$$
d_{E_7}^{(n)}(E_7(a_1)) = 
\begin{cases}
A_1 &  \text{ if } n=1,\\
A_2 + 3A_1 &  \text{ if } n= 2,\\
A_3 + A_2 + A_1 &  \text{ if } n= 3,\\
D_5(a_1) + A_1 & \text{ if } n=4,\\
E_7(a_5) & \text{ if } n=5,\\
E_7(a_4) & \text{ if } n=6,\\
E_6(a_1) & \text{ if } n=7,\\
E_7(a_3) & \text{ if } n=8,\\
E_7(a_2) & \text{ if } n=9, 10,\\
E_7(a_1) & \text{ if } n=11, 12, 13,\\
E_7 & \text{ if } n \gest 14.
\end{cases} \quad
d_{E_7}^{(n)}(E_7) = 
\begin{cases}
0 &  \text{ if } n=1,\\
4A_1 &  \text{ if } n=2,\\
2A_2 + A_1 & \text{ if } n = 3,\\
A_3 +A_2 +A_1 & \text{ if } n = 4,\\
A_4 + A_2 & \text{ if } n = 5,\\
E_7(a_5) & \text{ if } n = 6,\\
A_6 & \text{ if } n = 7, \\
E_7(a_4) & \text{ if } n = 8,\\
E_6(a_1) & \text{ if } n = 9,\\
E_7(a_3) & \text{ if } n =10, 11,\\
E_7(a_2) & \text{ if } n = 12, 13,\\
E_7(a_1) & \text{ if } n \in [14, 17],\\
E_7 & \text{ if } n \gest 18.
\end{cases}
$$
\subsection{\texorpdfstring{Type $E_8$}{}}
The covering Barbasch--Vogan map for $\wt{E}_8^{(n)}$ is given as follows.

$$
d_{E_8}^{(n)}(A_1) = 
\begin{cases}
E_8(a_1) &  \text{ if } n=1,\\
E_8 &  \text{ if } n\gest 2.
\end{cases} \quad
d_{E_8}^{(n)}(2A_1) = 
\begin{cases}
E_8(a_2) &  \text{ if } n=1,\\
E_8 &  \text{ if } n\gest 2.
\end{cases}
$$
$$
d_{E_8}^{(n)}(3A_1) = 
\begin{cases}
E_8(a_3) &  \text{ if } n=1,\\
E_8 &  \text{ if } n\gest 2.
\end{cases} \quad
d_{E_8}^{(n)}(A_2) = 
\begin{cases}
E_8(a_3) &  \text{ if } n=1,\\
E_8(a_1) &  \text{ if } n=2,\\
E_8 &  \text{ if } n\gest 3.
\end{cases}
$$
$$
d_{E_8}^{(n)}(4A_1) = 
\begin{cases}
E_8(a_4) &  \text{ if } n=1,\\
E_8 & \text{ if } n \gest 2.
\end{cases} \quad
d_{E_8}^{(n)}(A_2 + A_1) = 
\begin{cases}
E_8(a_4) &  \text{ if } n=1,\\
E_8(a_1) &  \text{ if } n=2,\\
E_8 &  \text{ if } n\gest 3.
\end{cases}
$$
$$
d_{E_8}^{(n)}(A_2 + 2A_1) = 
\begin{cases}
E_8(b_4) &  \text{ if } n=1,\\
E_8(a_1) &  \text{ if } n= 2,\\
E_8 & \text{ if } n\gest 3.
\end{cases} \quad
d_{E_8}^{(n)}(A_3) = 
\begin{cases}
E_7(a_1) &  \text{ if } n=1,\\
E_8(a_2) &  \text{ if } n=2,\\
E_8(a_1) &  \text{ if } n=3,\\
E_8 & \text{ if } n \gest 4.
\end{cases}
$$
$$
d_{E_8}^{(n)}(A_2+ 3A_1) = 
\begin{cases}
E_8(a_5) &  \text{ if } n=1,\\
E_8(a_1) &  \text{ if } n= 2,\\
E_8 & \text{ if } n \gest 3.
\end{cases} \quad
d_{E_8}^{(n)}(2A_2) = 
\begin{cases}
E_8(a_5) &  \text{ if } n=1,\\
E_8(a_2) &  \text{ if } n=2,\\
E_8 & \text{ if } n \gest 3.
\end{cases}
$$
$$
d_{E_8}^{(n)}(2A_2 + A_1) = 
\begin{cases}
E_8(b_5) &  \text{ if } n=1,\\
E_8(a_2) &  \text{ if } n= 2,\\
E_8 & \text{ if } n\gest 3.
\end{cases} \quad
d_{E_8}^{(n)}(A_3 + A_1) = 
\begin{cases}
E_8(b_5) &  \text{ if } n=1,\\
E_8(a_2) &  \text{ if } n=2,\\
E_8(a_1) &  \text{ if } n=3,\\
E_8 & \text{ if } n \gest 4.
\end{cases}
$$
$$
d_{E_8}^{(n)}(D_4(a_1)) = 
\begin{cases}
E_8(b_5) &  \text{ if } n=1,\\
E_8(a_3) &  \text{ if } n= 2,\\
E_8(a_1) &  \text{ if } n= 3,\\
E_8 & \text{ if } n\gest 4.
\end{cases} \quad
d_{E_8}^{(n)}(D_4) = 
\begin{cases}
E_6 &  \text{ if } n=1,\\
E_7 &  \text{ if } n=2,\\
E_8(a_3) &  \text{ if } n=3,\\
E_8(a_1) &  \text{ if } n=4, 5,\\
E_8 & \text{ if } n \gest 6.
\end{cases}
$$
$$
d_{E_8}^{(n)}(2A_2 + 2A_1) = 
\begin{cases}
E_8(a_6) &  \text{ if } n=1,\\
E_8(a_2) &  \text{ if } n= 2,\\
E_8 & \text{ if } n\gest 3.
\end{cases} \quad
d_{E_8}^{(n)}(A_3 + 2A_1) = 
\begin{cases}
E_8(a_6) &  \text{ if } n=1,\\
E_8(a_2) &  \text{ if } n=2,\\
E_8(a_1) &  \text{ if } n=3,\\
E_8 & \text{ if } n \gest 4.
\end{cases}
$$
$$
d_{E_8}^{(n)}(D_4(a_1) + A_1) = 
\begin{cases}
E_8(a_6) &  \text{ if } n=1,\\
E_8(a_3) &  \text{ if } n= 2,\\
E_8(a_1) & \text{ if } n=3,\\
E_8 & \text{ if } n \gest 4.
\end{cases} \quad
d_{E_8}^{(n)}(A_3 + A_2) = 
\begin{cases}
D_7(a_1) &  \text{ if } n=1,\\
E_8(a_3) &  \text{ if } n=2,\\
E_8(a_1) &  \text{ if } n=3,\\
E_8 & \text{ if } n \gest 4.
\end{cases}
$$
$$
d_{E_8}^{(n)}(A_4) = 
\begin{cases}
E_7(a_3) &  \text{ if } n=1,\\
E_8(a_4) &  \text{ if } n= 2,\\
E_8(a_2) & \text{ if } n=3,\\
E_8(a_1) & \text{ if } n=4,\\
E_8 & \text{ if } n\gest 5.
\end{cases} \quad
d_{E_8}^{(n)}(A_3 + A_2 + A_1) = 
\begin{cases}
E_8(b_6) &  \text{ if } n=1,\\
E_8(a_3) &  \text{ if } n=2,\\
E_8(a_1) &  \text{ if } n=3,\\
E_8 & \text{ if } n \gest 4.
\end{cases}
$$
$$
d_{E_8}^{(n)}(D_4 + A_1) = 
\begin{cases}
E_6(a_1) &  \text{ if } n=1,\\
E_7 &  \text{ if } n= 2,\\
E_8(a_3) & \text{ if } n=3,\\
E_8(a_1) & \text{ if } n=4,5,\\
E_8 & \text{ if } n\gest 6.
\end{cases} \quad
d_{E_8}^{(n)}(D_4(a_1) + A_2) = 
\begin{cases}
E_8(b_6) &  \text{ if } n=1,\\
E_8(a_4) &  \text{ if } n=2,\\
E_8(a_1) &  \text{ if } n=3,\\
E_8 & \text{ if } n \gest 4.
\end{cases}
$$
$$
d_{E_8}^{(n)}(A_4 + A_1) = 
\begin{cases}
E_6(a_1) + A_1 &  \text{ if } n=1,\\
E_8(a_4) &  \text{ if } n= 2,\\
E_8(a_2) & \text{ if } n=3,\\
E_8(a_1) & \text{ if } n=4,\\
E_8 & \text{ if } n \gest 5.
\end{cases} \quad
d_{E_8}^{(n)}(2A_3) = 
\begin{cases}
D_7(a_2) &  \text{ if } n=1,\\
E_8(a_4) &  \text{ if } n=2,\\
E_8(a_2) & \text{ if } n = 3,\\
E_8 & \text{ if } n \gest 4.
\end{cases}
$$
$$
d_{E_8}^{(n)}(D_5(a_1)) = 
\begin{cases}
E_6(a_1) &  \text{ if } n=1,\\
E_8(b_4) &  \text{ if } n= 2,\\
E_8(a_3) & \text{ if } n=3,\\
E_8(a_1) & \text{ if } n=4, 5,\\
E_8 & \text{ if } n \gest 6.
\end{cases} \quad
d_{E_8}^{(n)}(A_4+ 2A_1) = 
\begin{cases}
D_7(a_2) &  \text{ if } n=1,\\
E_8(a_4) &  \text{ if } n=2,\\
E_8(a_2) & \text{ if } n = 3,\\
E_8(a_1) & \text{ if } n = 4,\\
E_8 & \text{ if } n \gest 5.
\end{cases}
$$
$$
d_{E_8}^{(n)}(A_4 + A_2) = 
\begin{cases}
D_5 + A_2 &  \text{ if } n=1,\\
E_8(b_4) &  \text{ if } n= 2,\\
E_8(a_2) & \text{ if } n=3,\\
E_8(a_1) & \text{ if } n=4,\\
E_8 & \text{ if } n \gest 5.
\end{cases} \quad
d_{E_8}^{(n)}(A_5) = 
\begin{cases}
D_6(a_1) &  \text{ if } n=1,\\
E_8(a_5) &  \text{ if } n=2,\\
E_8(a_3) & \text{ if } n = 3,\\
E_8(a_2) & \text{ if } n = 4,\\
E_8(a_1) & \text{ if } n = 5,\\
E_8 & \text{ if } n \gest 6.
\end{cases}
$$
$$
d_{E_8}^{(n)}(D_5(a_1) + A_1) = 
\begin{cases}
E_7(a_4) &  \text{ if } n=1,\\
E_8(b_4) &  \text{ if } n= 2,\\
E_8(a_3) & \text{ if } n=3,\\
E_8(a_1) & \text{ if } n=4,5,\\
E_8 & \text{ if } n \gest 6.
\end{cases} \quad
d_{E_8}^{(n)}(A_4 + A_2 + A_1) = 
\begin{cases}
A_6 + A_1 &  \text{ if } n=1,\\
E_8(b_4) &  \text{ if } n=2,\\
E_8(a_2) & \text{ if } n = 3,\\
E_8(a_1) & \text{ if } n = 4,\\
E_8 & \text{ if } n \gest 5.
\end{cases}
$$
$$
d_{E_8}^{(n)}(D_4 + A_2) = 
\begin{cases}
A_6 &  \text{ if } n=1,\\
E_8(b_4) &  \text{ if } n= 2,\\
E_8(a_3) & \text{ if } n=3,\\
E_8(a_1) & \text{ if } n=4, 5\\
E_8 & \text{ if } n \gest 6.
\end{cases} \quad
d_{E_8}^{(n)}(E_6(a_3)) = 
\begin{cases}
D_6(a_1) &  \text{ if } n=1,\\
E_8(b_5) &  \text{ if } n=2,\\
E_8(a_3) & \text{ if } n = 3,\\
E_8(a_2) & \text{ if } n = 4,\\
E_8(a_1) & \text{ if } n = 5,\\
E_8 & \text{ if } n \gest 6.
\end{cases}
$$
$$
d_{E_8}^{(n)}(D_5) = 
\begin{cases}
D_5 &  \text{ if } n=1,\\
E_7(a_2) &  \text{ if } n= 2,\\
E_8(b_4) & \text{ if } n=3,\\
E_8(a_3) & \text{ if } n=4,\\
E_8(a_2) & \text{ if } n=5,\\
E_8(a_1) & \text{ if } n=6, 7,\\
E_8 & \text{ if } n \gest 8.
\end{cases} \quad
d_{E_8}^{(n)}(A_4 + A_3) = 
\begin{cases}
E_8(a_7) &  \text{ if } n=1,\\
E_8(a_5) &  \text{ if } n=2,\\
E_8(a_3) & \text{ if } n = 3,\\
E_8(a_1) & \text{ if } n = 4,\\
E_8 & \text{ if } n \gest 5.
\end{cases}
$$
$$
d_{E_8}^{(n)}(A_5  + A_1) = 
\begin{cases}
E_8(a_7) &  \text{ if } n=1,\\
E_8(a_5) &  \text{ if } n= 2,\\
E_8(a_3) & \text{ if } n=3,\\
E_8(a_2) & \text{ if } n=4,\\
E_8(a_1) & \text{ if } n=5,\\
E_8 & \text{ if } n \gest 6.
\end{cases} \quad
d_{E_8}^{(n)}(D_5(a_1) + A_2) = 
\begin{cases}
E_8(a_7) &  \text{ if } n=1,\\
E_8(a_5) &  \text{ if } n=2,\\
E_8(a_3) & \text{ if } n = 3,\\
E_8(a_1) & \text{ if } n = 4, 5,\\
E_8 & \text{ if } n \gest 6.
\end{cases}
$$
$$
d_{E_8}^{(n)}(D_6(a_2)) = 
\begin{cases}
E_8(a_7) &  \text{ if } n=1,\\
E_8(a_5) &  \text{ if } n= 2,\\
E_8(a_4) & \text{ if } n=3,\\
E_8(a_2) & \text{ if } n=4,\\
E_8(a_1) & \text{ if } n=5,\\
E_8 & \text{ if } n \gest 6.
\end{cases} \quad
d_{E_8}^{(n)}(E_6(a_3) + A_1) = 
\begin{cases}
E_8(a_7) &  \text{ if } n=1,\\
E_8(b_5) &  \text{ if } n=2,\\
E_8(a_3) & \text{ if } n = 3,\\
E_8(a_2) & \text{ if } n = 4,\\
E_8(a_1) & \text{ if } n = 5,\\
E_8 & \text{ if } n \gest 6.
\end{cases}
$$
$$
d_{E_8}^{(n)}(E_7(a_5)) = 
\begin{cases}
E_8(a_7) &  \text{ if } n=1,\\
E_8(b_5) &  \text{ if } n=2,\\
E_8(a_4) & \text{ if } n=3,\\
E_8(a_2) & \text{ if } n=4,\\
E_8(a_1) & \text{ if } n=5,\\
E_8 & \text{ if } n \gest 6.
\end{cases} \quad
d_{E_8}^{(n)}(D_5 + A_1) = 
\begin{cases}
E_6(a_3) &  \text{ if } n=1,\\
E_7(a_2) &  \text{ if } n=2,\\
E_8(b_4) & \text{ if } n = 3,\\
E_8(a_3) & \text{ if } n = 4,\\
E_8(a_2) & \text{ if } n = 5,\\
E_8(a_1) & \text{ if } n = 6,7,\\
E_8 & \text{ if } n \gest 8.
\end{cases}
$$
$$
d_{E_8}^{(n)}(E_8(a_7)) = 
\begin{cases}
E_8(a_7) &  \text{ if } n=1,\\
E_8(a_6) &  \text{ if } n= 2,\\
E_8(a_4) & \text{ if } n=3,\\
E_8(a_2) & \text{ if } n=4,\\
E_8(a_1) & \text{ if } n=5,\\
E_8 & \text{ if } n \gest 6.
\end{cases} \quad
d_{E_8}^{(n)}(A_6) = 
\begin{cases}
D_4 + A_2 &  \text{ if } n=1,\\
D_7(a_1) &  \text{ if } n=2,\\
E_8(b_4) & \text{ if } n = 3,\\
E_8(a_3) & \text{ if } n = 4,\\
E_8(a_2) & \text{ if } n = 5,\\
E_8(a_1) & \text{ if } n = 6,\\
E_8 & \text{ if } n \gest 7.
\end{cases}
$$
$$
d_{E_8}^{(n)}(D_6(a_1)) = 
\begin{cases}
E_6(a_3) &  \text{ if } n=1,\\
D_7(a_1) &  \text{ if } n= 2,\\
E_8(b_4) &  \text{ if } n= 3,\\
E_8(a_3) & \text{ if } n=4,\\
E_8(a_2) & \text{ if } n=5,\\
E_8(a_1) & \text{ if } n=6, 7,\\
E_8 & \text{ if } n \gest 8.
\end{cases} \quad
d_{E_8}^{(n)}(A_6 + A_1) = 
\begin{cases}
A_4 + A_2 + A_1 &  \text{ if } n=1,\\
D_7(a_1) &  \text{ if } n=2,\\
E_8(b_4) & \text{ if } n = 3,\\
E_8(a_3) & \text{ if } n = 4,\\
E_8(a_2) & \text{ if } n = 5,\\
E_8(a_1) & \text{ if } n = 6,\\
E_8 & \text{ if } n \gest 7.
\end{cases}
$$
$$
d_{E_8}^{(n)}(E_7(a_4)) = 
\begin{cases}
D_5(a_1) + A_1 &  \text{ if } n=1,\\
D_7(a_1) &  \text{ if } n= 2,\\
E_8(b_4) &  \text{ if } n= 3,\\
E_8(a_3) & \text{ if } n=4,\\
E_8(a_2) & \text{ if } n=5,\\
E_8(a_1) & \text{ if } n=6,7,\\
E_8 & \text{ if } n \gest 8.
\end{cases} \quad
d_{E_8}^{(n)}(E_6(a_1)) = 
\begin{cases}
D_5(a_1) &  \text{ if } n=1,\\
E_6(a_1) + A_1 &  \text{ if } n=2,\\
E_8(b_5) & \text{ if } n = 3,\\
E_8(a_4) & \text{ if } n = 4,\\
E_8(a_3) & \text{ if } n = 5,\\
E_8(a_2) & \text{ if } n = 6,\\
E_8(a_1) & \text{ if } n = 7, 8,\\
E_8 & \text{ if } n \gest 9.
\end{cases}
$$
$$
d_{E_8}^{(n)}(D_5 + A_2) = 
\begin{cases}
A_4 + A_2 &  \text{ if } n=1,\\
D_7(a_1) &  \text{ if } n= 2,\\
E_8(b_4) &  \text{ if } n= 3,\\
E_8(a_3) & \text{ if } n=4,\\
E_8(a_2) & \text{ if } n=5,\\
E_8(a_1) & \text{ if } n=6, 7,\\
E_8 & \text{ if } n \gest 8.
\end{cases} \quad
d_{E_8}^{(n)}(D_6) = 
\begin{cases}
A_4 &  \text{ if } n=1,\\
D_6 &  \text{ if } n=2,\\
E_8(a_6) & \text{ if } n = 3,\\
E_8(b_4) & \text{ if } n = 4,\\
E_8(a_4) & \text{ if } n = 5,\\
E_8(a_2) & \text{ if } n = 6, 7,\\
E_8(a_1) & \text{ if } n = 8, 9,\\
E_8 & \text{ if } n \gest 10.
\end{cases}
$$
$$
d_{E_8}^{(n)}(E_6) = 
\begin{cases}
D_4 &  \text{ if } n=1,\\
D_5 + A_1 &  \text{ if } n= 2,\\
E_6 + A_1 &  \text{ if } n= 3,\\
E_8(b_5) & \text{ if } n=4,\\
E_8(b_4) & \text{ if } n=5,\\
E_8(a_3) & \text{ if } n=6,7,\\
E_8(a_2) & \text{ if } n=8,\\
E_8(a_1) & \text{ if } n=9, 10, 11,\\
E_8 & \text{ if } n \gest 12.
\end{cases} \quad
d_{E_8}^{(n)}(D_7(a_2)) = 
\begin{cases}
A_4 + 2A_1 &  \text{ if } n=1,\\
E_8(b_6) &  \text{ if } n=2,\\
E_8(a_5) & \text{ if } n = 3,\\
E_8(a_4) & \text{ if } n = 4,\\
E_8(a_2) & \text{ if } n = 5,\\
E_8(a_1) & \text{ if } n = 6, 7,\\
E_8 & \text{ if } n \gest 8.
\end{cases}
$$
$$
d_{E_8}^{(n)}(A_7) = 
\begin{cases}
D_4(a_1) + A_2 &  \text{ if } n=1,\\
D_7(a_2) &  \text{ if } n= 2,\\
E_8(b_5) &  \text{ if } n= 3,\\
E_8(a_4) & \text{ if } n=4,\\
E_8(a_3) & \text{ if } n=5,\\
E_8(a_2) & \text{ if } n=6,\\
E_8(a_1) & \text{ if } n=7,\\
E_8 & \text{ if } n \gest 8.
\end{cases} \quad
d_{E_8}^{(n)}(E_6(a_1) + A_1) = 
\begin{cases}
A_4 + A_1 &  \text{ if } n=1,\\
E_6(a_1) + A_1 &  \text{ if } n=2,\\
E_8(b_5) & \text{ if } n = 3,\\
E_8(a_4) & \text{ if } n = 4,\\
E_8(a_3) & \text{ if } n = 5,\\
E_8(a_2) & \text{ if } n = 6,\\
E_8(a_1) & \text{ if } n=7,8,\\
E_8 & \text{ if } n \gest 9.
\end{cases}
$$
$$
d_{E_8}^{(n)}(E_7(a_3)) = 
\begin{cases}
A_4 &  \text{ if } n=1,\\
D_5 + A_2 &  \text{ if } n= 2,\\
E_8(a_6) &  \text{ if } n= 3,\\
E_8(b_4) & \text{ if } n=4,\\
E_8(a_4) & \text{ if } n=5,\\
E_8(a_2) & \text{ if } n=6, 7,\\
E_8(a_1) & \text{ if } n=8, 9,\\
E_8 & \text{ if } n \gest 10.
\end{cases} \quad
d_{E_8}^{(n)}(E_8(b_6)) = 
\begin{cases}
D_4(a_1) + A_2 &  \text{ if } n=1,\\
D_7(a_2) &  \text{ if } n=2,\\
E_8(b_5) & \text{ if } n = 3,\\
E_8(a_4) & \text{ if } n = 4,\\
E_8(a_3) & \text{ if } n = 5,\\
E_8(a_2) & \text{ if } n = 6,\\
E_8(a_1) & \text{ if } n=7,8,\\
E_8 & \text{ if } n \gest 9.
\end{cases}
$$
$$
d_{E_8}^{(n)}(D_7(a_1)) = 
\begin{cases}
A_3 + A_2 &  \text{ if } n=1,\\
D_5 + A_2 &  \text{ if } n= 2,\\
E_8(a_6) &  \text{ if } n= 3,\\
E_8(b_4) & \text{ if } n=4,\\
E_8(a_4) & \text{ if } n=5,\\
E_8(a_2) & \text{ if } n=6,7,\\
E_8(a_1) & \text{ if } n=8,9,\\
E_8 & \text{ if } n \gest 10.
\end{cases} \quad
d_{E_8}^{(n)}(E_6 + A_1) = 
\begin{cases}
D_4(a_1) &  \text{ if } n=1,\\
D_5 + A_1 &  \text{ if } n=2,\\
E_6 + A_1 & \text{ if } n = 3,\\
E_8(b_5) & \text{ if } n = 4,\\
E_8(b_4) & \text{ if } n = 5,\\
E_8(a_3) & \text{ if } n = 6, 7,\\
E_8(a_2) & \text{ if } n=8,\\
E_8(a_1) & \text{ if } n=9, 10, 11,\\
E_8 & \text{ if } n \gest 12.
\end{cases}
$$
$$
d_{E_8}^{(n)}(E_7(a_2)) = 
\begin{cases}
D_4(a_1) &  \text{ if } n=1,\\
D_5+A_1 &  \text{ if } n= 2,\\
E_8(b_6) &  \text{ if } n= 3,\\
E_8(b_5) & \text{ if } n=4,\\
E_8(b_4) & \text{ if } n=5,\\
E_8(a_3) & \text{ if } n=6,7,\\
E_8(a_2) & \text{ if } n=8,\\
E_8(a_1) & \text{ if } n=9,10,11,\\
E_8 & \text{ if } n \gest 12.
\end{cases} \quad
d_{E_8}^{(n)}(E_8(a_6)) = 
\begin{cases}
D_4(a_1) + A_1 &  \text{ if } n=1,\\
E_8(a_7) &  \text{ if } n=2,\\
E_8(a_6) & \text{ if } n = 3,\\
E_8(a_5) & \text{ if } n = 4,\\
E_8(a_4) & \text{ if } n = 5,\\
E_8(a_3) & \text{ if } n = 6,\\
E_8(a_2) & \text{ if } n=7,\\
E_8(a_1) & \text{ if } n=8,9,\\
E_8 & \text{ if } n \gest 10.
\end{cases}
$$
$$
d_{E_8}^{(n)}(D_7) = 
\begin{cases}
2A_2 &  \text{ if } n=1,\\
D_6(a_2) &  \text{ if } n= 2,\\
D_7(a_2) &  \text{ if } n= 3,\\
D_7 & \text{ if } n=4,\\
E_8(a_5) & \text{ if } n=5,\\
E_8(a_4) & \text{ if } n=6,\\
E_8(a_3) & \text{ if } n=7,\\
E_8(a_2) & \text{ if } n=8,9,\\
E_8(a_1) & \text{ if } n=10,11,\\
E_8 & \text{ if } n \gest 12.
\end{cases} \quad
d_{E_8}^{(n)}(E_8(b_5)) = 
\begin{cases}
D_4(a_1) &  \text{ if } n=1,\\
E_7(a_5) &  \text{ if } n=2,\\
E_8(b_6) & \text{ if } n = 3,\\
E_8(b_5) & \text{ if } n = 4,\\
E_8(b_4) & \text{ if } n = 5,\\
E_8(a_3) & \text{ if } n = 6, 7,\\
E_8(a_2) & \text{ if } n=8,\\
E_8(a_1) & \text{ if } n=9,10, 11\\
E_8 & \text{ if } n \gest 12.
\end{cases}
$$
$$
d_{E_8}^{(n)}(E_7(a_1)) = 
\begin{cases}
A_3 &  \text{ if } n=1,\\
D_4 + A_2 &  \text{ if } n= 2,\\
D_5 + A_2 &  \text{ if } n= 3,\\
D_7(a_1) & \text{ if } n=4,\\
E_8(b_5) & \text{ if } n=5,\\
E_8(b_4) & \text{ if } n=6,\\
E_8(a_4) & \text{ if } n=7,\\
E_8(a_3) & \text{ if } n=8,\\
E_8(a_2) & \text{ if } n=9,10,\\
E_8(a_1) & \text{ if } n=11,12, 13,\\
E_8 & \text{ if } n \gest 14.
\end{cases} \quad
d_{E_8}^{(n)}(E_8(a_5)) = 
\begin{cases}
2A_2 &  \text{ if } n=1,\\
D_6(a_2) &  \text{ if } n=2,\\
D_7(a_2) & \text{ if } n = 3,\\
E_8(a_6) & \text{ if } n = 4,\\
E_8(a_5) & \text{ if } n = 5,\\
E_8(a_4) & \text{ if } n = 6,\\
E_8(a_3) & \text{ if } n=7,\\
E_8(a_2) & \text{ if } n=8,9,\\
E_8(a_1) & \text{ if } n=10, 11,\\
E_8 & \text{ if } n \gest 12.
\end{cases}
$$
$$
d_{E_8}^{(n)}(E_8(b_4)) = 
\begin{cases}
A_2 + 2A_1 &  \text{ if } n=1,\\
D_4 + A_2 &  \text{ if } n= 2,\\
D_5 + A_2 &  \text{ if } n= 3,\\
D_7(a_1) & \text{ if } n=4,\\
E_8(b_5) & \text{ if } n=5,\\
E_8(b_4) & \text{ if } n=6,\\
E_8(a_4) & \text{ if } n=7,\\
E_8(a_3) & \text{ if } n=8,\\
E_8(a_2) & \text{ if } n=9,10,\\
E_8(a_1) & \text{ if } n=11,12, 13,\\
E_8 & \text{ if } n \gest 14.
\end{cases} \quad
d_{E_8}^{(n)}(E_7) = 
\begin{cases}
A_2 &  \text{ if } n=1,\\
D_4 + A_1 &  \text{ if } n=2,\\
E_6(a_3) + A_1 & \text{ if } n = 3,\\
D_5 + A_2 & \text{ if } n = 4,\\
E_8(b_6) & \text{ if } n = 5,\\
E_8(b_5) & \text{ if } n = 6,\\
E_8(a_5) & \text{ if } n=7,\\
E_8(b_4) & \text{ if } n=8,\\
E_8(a_4) & \text{ if } n=9,\\
E_8(a_3) & \text{ if } n=10,11,\\
E_8(a_2) & \text{ if } n=12,13,\\
E_8(a_1) & \text{ if } n \in [14,17],\\
E_8 & \text{ if } n \gest 18.
\end{cases}
$$
$$
d_{E_8}^{(n)}(E_8(a_4)) = 
\begin{cases}
A_2 + A_1 &  \text{ if } n=1,\\
A_4 + 2A_1 &  \text{ if } n= 2,\\
E_8(a_7) &  \text{ if } n= 3,\\
E_8(b_6) & \text{ if } n=4,\\
E_8(a_6) & \text{ if } n=5,\\
E_8(a_5) & \text{ if } n=6,\\
E_8(a_4) & \text{ if } n=7,8,\\
E_8(a_3) & \text{ if } n=9,\\
E_8(a_2) & \text{ if } n=10, 11,\\
E_8(a_1) & \text{ if } n \in [12, 14],\\
E_8 & \text{ if } n \gest 15.
\end{cases} \quad
d_{E_8}^{(n)}(E_8(a_3)) = 
\begin{cases}
A_2 &  \text{ if } n=1,\\
A_3 + A_2 + A_1 &  \text{ if } n=2,\\
E_6(a_3) + A_1 & \text{ if } n = 3,\\
D_5 + A_2 & \text{ if } n = 4,\\
E_8(b_6) & \text{ if } n = 5,\\
E_8(b_5) & \text{ if } n = 6,\\
E_8(a_5) & \text{ if } n=7,\\
E_8(b_4) & \text{ if } n=8,\\
E_8(a_4) & \text{ if } n=9,\\
E_8(a_3) & \text{ if } n=10,11,\\
E_8(a_2) & \text{ if } n=12,13,\\
E_8(a_1) & \text{ if } n \in [14,17],\\
E_8 & \text{ if } n \gest 18.
\end{cases}
$$
$$
d_{E_8}^{(n)}(E_8(a_2)) = 
\begin{cases}
2A_1 &  \text{ if } n=1,\\
A_3 + 2A_1 &  \text{ if } n= 2,\\
A_4 + A_2 + A_1 &  \text{ if } n= 3,\\
E_8(a_7) & \text{ if } n=4,\\
D_7(a_2) & \text{ if } n=5,\\
D_7(a_1) & \text{ if } n=6,\\
E_8(a_6) & \text{ if } n=7,\\
E_8(a_5) & \text{ if } n=8,\\
E_8(b_4) & \text{ if } n=9,\\
E_8(a_4) & \text{ if } n=10, 11,\\
E_8(a_3) & \text{ if } n=12,\\
E_8(a_2) & \text{ if } n=13,14,15,\\
E_8(a_1) & \text{ if } n\in [16,19],\\
E_8 & \text{ if } n \gest 20.
\end{cases} \quad
d_{E_8}^{(n)}(E_8(a_1)) = 
\begin{cases}
A_1 &  \text{ if } n=1,\\
A_2 + 3A_1 &  \text{ if } n=2,\\
D_4(a_1) + A_2 & \text{ if } n = 3,\\
D_5(a_1) + A_2 & \text{ if } n = 4,\\
E_8(a_7) & \text{ if } n = 5,\\
D_7(a_2) & \text{ if } n = 6,\\
E_8(b_6) & \text{ if } n=7,\\
E_8(a_6) & \text{ if } n=8,\\
E_8(b_5) & \text{ if } n=9,\\
E_8(a_5) & \text{ if } n=10,\\
E_8(b_4) & \text{ if } n=11,\\
E_8(a_4) & \text{ if } n=12,13,\\
E_8(a_3) & \text{ if } n=14,15,\\
E_8(a_2) & \text{ if } n\in [16,18],\\
E_8(a_1) & \text{ if } n \in [19,23],\\
E_8 & \text{ if } n \gest 24.
\end{cases}
$$
$$
d_{E_8}^{(n)}(E_8) = 
\begin{cases}
0 &  \text{ if } n=1,\\
4A_1 &  \text{ if } n=2,\\
2A_2 + 2A_1 & \text{ if } n = 3,\\
2A_3 & \text{ if } n = 4,\\
A_4 + A_3 & \text{ if } n = 5,\\
E_8(a_7) & \text{ if } n = 6,\\
A_6 + A_1 & \text{ if } n=7,\\
A_7 & \text{ if } n=8,\\
E_8(b_6) & \text{ if } n=9,\\
E_8(a_6) & \text{ if } n=10, 11,\\
E_8(a_5) & \text{ if } n=12, 13,\\
E_8(b_4) & \text{ if } n=14,\\
E_8(a_4) & \text{ if } n \in [15,17],\\
E_8(a_3) & \text{ if } n=18, 19,\\
E_8(a_2) & \text{ if } n \in [20,23],\\
E_8(a_1) & \text{ if } n \in [24,29],\\
E_8 & \text{ if } n \gest 30.
\end{cases}
$$

\begin{bibdiv}
\begin{biblist}[\resetbiblist{9999999}]*{labels={alphabetic}}

\bib{Ach03}{article}{
  author={Achar, Pramod N.},
  title={An order-reversing duality map for conjugacy classes in Lusztig's canonical quotient},
  journal={Transform. Groups},
  volume={8},
  date={2003},
  number={2},
  pages={107--145},
  issn={1083-4362},
  review={\MR {1976456}},
  doi={10.1007/s00031-003-0422-x},
}

\bib{Aub95}{article}{
  author={Aubert, Anne-Marie},
  title={Dualit\'e{} dans le groupe de Grothendieck de la cat\'egorie des repr\'esentations lisses de longueur finie d'un groupe r\'eductif $p$-adique},
  language={French, with English summary},
  journal={Trans. Amer. Math. Soc.},
  volume={347},
  date={1995},
  number={6},
  pages={2179--2189},
  issn={0002-9947},
  review={\MR {1285969}},
  doi={10.2307/2154931},
}

\bib{BGWX25}{article}{
  author={Bai, Zhanqiang},
  author={Gao, Fan},
  author={Wang, Yutong},
  author={Xie, Xun},
  title={Gelfand--Kirillov dimensions and annihilator varieties of highest weight modules of exceptional Lie algebras},
  status={preprint, available at https://arxiv.org/pdf/2509.24346},
}

\bib{BMW25}{article}{
  author={Bai, Zhanqiang},
  author={Ma, Jia-Jun},
  author={Wang, Yutong},
  title={On the annihilator variety of a highest weight module for classical Lie algebras},
  journal={J. Lond. Math. Soc. (2)},
  volume={112},
  date={2025},
  number={2},
  pages={Paper No. e70256},
  issn={0024-6107},
  review={\MR {4941342}},
  doi={10.1112/jlms.70256},
}

\bib{BJ13}{article}{
  author={Ban, Dubravka},
  author={Jantzen, Chris},
  title={The Langlands quotient theorem for finite central extensions of $p$-adic groups},
  journal={Glas. Mat. Ser. III},
  volume={48(68)},
  date={2013},
  number={2},
  pages={313--334},
  issn={0017-095X},
  review={\MR {3151110}},
  doi={10.3336/gm.48.2.07},
}

\bib{BJ16}{article}{
   author={Ban, Dubravka},
   author={Jantzen, Chris},
   title={The Langlands quotient theorem for finite central extensions of
   $p$-adic groups II: intertwining operators and duality},
   journal={Glas. Mat. Ser. III},
   volume={51(71)},
   date={2016},
   number={1},
   pages={153--163},
   issn={0017-095X},
   review={\MR{3516189}},
   doi={10.3336/gm.51.1.09},
}

\bib{BMSZ23}{article}{
  author={Barbasch, Dan},
  author={Ma, Jia-Jun},
  author={Sun, Binyong},
  author={Zhu, Chen-Bo},
  title={On the notion of metaplectic Barbasch-Vogan duality},
  journal={Int. Math. Res. Not. IMRN},
  date={2023},
  number={20},
  pages={17822--17852},
  issn={1073-7928},
  review={\MR {4659865}},
  doi={10.1093/imrn/rnad097},
}

\bib{BV83}{article}{
  author={Barbasch, Dan},
  author={Vogan, David},
  title={Primitive ideals and orbital integrals in complex exceptional groups},
  journal={J. Algebra},
  volume={80},
  date={1983},
  number={2},
  pages={350--382},
  issn={0021-8693},
  review={\MR {691809}},
}

\bib{BV85}{article}{
  author={Barbasch, Dan},
  author={Vogan, David A., Jr.},
  title={Unipotent representations of complex semisimple groups},
  journal={Ann. of Math. (2)},
  volume={121},
  date={1985},
  number={1},
  pages={41--110},
  issn={0003-486X},
  review={\MR {782556}},
  doi={10.2307/1971193},
}

\bib{BouL2}{book}{
  author={Bourbaki, Nicolas},
  title={Lie groups and Lie algebras. Chapters 4--6},
  series={Elements of Mathematics (Berlin)},
  note={Translated from the 1968 French original by Andrew Pressley},
  publisher={Springer-Verlag, Berlin},
  date={2002},
  pages={xii+300},
  isbn={3-540-42650-7},
  review={\MR {1890629}},
  doi={10.1007/978-3-540-89394-3},
}

\bib{BD}{article}{
  author={Brylinski, Jean-Luc},
  author={Deligne, Pierre},
  title={Central extensions of reductive groups by {$\mathbf{K}_2$}},
  journal={Publ. Math. Inst. Hautes \'Etudes Sci.},
  number={94},
  date={2001},
  pages={5--85},
  issn={0073-8301},
  review={\MR {1896177}},
  doi={10.1007/s10240-001-8192-2},
}

\bib{BFrG2}{article}{
  author={Bump, Daniel},
  author={Friedberg, Solomon},
  author={Ginzburg, David},
  title={Small representations for odd orthogonal groups},
  journal={Int. Math. Res. Not.},
  date={2003},
  number={25},
  pages={1363--1393},
  issn={1073-7928},
  review={\MR {1968295}},
  doi={10.1155/S1073792803210217},
}

\bib{BFrG}{article}{
  author={Bump, Daniel},
  author={Friedberg, Solomon},
  author={Ginzburg, David},
  title={Lifting automorphic representations on the double covers of orthogonal groups},
  journal={Duke Math. J.},
  volume={131},
  date={2006},
  number={2},
  pages={363--396},
  issn={0012-7094},
  review={\MR {2219245}},
}

\bib{BG1}{article}{
  author={Bump, Daniel},
  author={Ginzburg, David},
  title={Symmetric square $L$-functions on ${\rm GL}(r)$},
  journal={Ann. of Math. (2)},
  volume={136},
  date={1992},
  number={1},
  pages={137--205},
  issn={0003-486X},
  review={\MR {1173928}},
  doi={10.2307/2946548},
}

\bib{BGH96}{article}{
  author={Bump, Daniel},
  author={Ginzburg, David},
  author={Hoffstein, Jeffrey},
  title={The symmetric cube},
  journal={Invent. Math.},
  volume={125},
  date={1996},
  number={3},
  pages={413--449},
  issn={0020-9910},
  review={\MR {1400313}},
  doi={10.1007/s002220050082},
}

\bib{Cai19}{article}{
  author={Cai, Yuanqing},
  title={Fourier coefficients for theta representations on covers of general linear groups},
  journal={Trans. Amer. Math. Soc.},
  volume={371},
  date={2019},
  number={11},
  pages={7585--7626},
  issn={0002-9947},
  review={\MR {3955529}},
  doi={10.1090/tran/7429},
}

\bib{CJLZ24}{article}{
  author={Chen, Cheng},
  author={Jiang, Dihua},
  author={Liu, Dongwen},
  author={Zhang, Lei},
  title={Arithmetic branching law and generic $L$-packets},
  journal={Represent. Theory},
  volume={28},
  date={2024},
  pages={328--365},
  review={\MR {4806406}},
  doi={10.1090/ert/672},
}

\bib{CK24}{article}{
  author={Ciubotaru, Dan},
  author={Kim, Ju-Lee},
  title={The wavefront set: bounds for the Langlands parameter},
  status={Math. Annalen (2025), available at https://arxiv.org/abs/2403.14261v2},
  doi={10.1007/s00208-025-03278-4},
}

\bib{CMBO24}{article}{
  author={Ciubotaru, Dan},
  author={Mason-Brown, Lucas},
  author={Okada, Emile},
  title={The wavefront sets of unipotent supercuspidal representations},
  journal={Algebra Number Theory},
  volume={18},
  date={2024},
  number={10},
  pages={1863--1889},
  issn={1937-0652},
  review={\MR {4810074}},
  doi={10.2140/ant.2024.18.1863},
}

\bib{CMBO25}{article}{
  author={Ciubotaru, Dan},
  author={Mason-Brown, Lucas},
  author={Okada, Emile},
  title={Wavefront sets of unipotent representations of reductive $p$-adic groups II},
  journal={J. Reine Angew. Math.},
  volume={823},
  date={2025},
  pages={191--253},
  issn={0075-4102},
  review={\MR {4912263}},
  doi={10.1515/crelle-2025-0022},
}

\bib{CMBO21}{article}{
  author={Ciubotaru, Dan},
  author={Mason-Brown, Lucas},
  author={Okada, Emile},
  title={Wavefront sets of unipotent representations of reductive p-adic groups I},
  status={to appear in Amer. J. Math., available at https://arxiv.org/abs/2112.14354v5},
}

\bib{CM93}{book}{
  author={Collingwood, David H.},
  author={McGovern, William M.},
  title={Nilpotent orbits in semisimple Lie algebras},
  series={Van Nostrand Reinhold Mathematics Series},
  publisher={Van Nostrand Reinhold Co., New York},
  date={1993},
  pages={xiv+186},
  isbn={0-534-18834-6},
  review={\MR {1251060}},
}

\bib{FK86}{article}{
  author={Flicker, Yuval Z.},
  author={Kazhdan, David A.},
  title={Metaplectic correspondence},
  journal={Inst. Hautes \'{E}tudes Sci. Publ. Math.},
  number={64},
  date={1986},
  pages={53--110},
  issn={0073-8301},
  review={\MR {876160}},
}

\bib{FG15}{article}{
  author={Friedberg, Solomon},
  author={Ginzburg, David},
  title={Metaplectic theta functions and global integrals},
  journal={J. Number Theory},
  volume={146},
  date={2015},
  pages={134--149},
  issn={0022-314X},
  review={\MR {3267113}},
  doi={10.1016/j.jnt.2014.04.001},
}

\bib{FG18}{article}{
  author={Friedberg, Solomon},
  author={Ginzburg, David},
  title={Descent and theta functions for metaplectic groups},
  journal={J. Eur. Math. Soc. (JEMS)},
  volume={20},
  date={2018},
  number={8},
  pages={1913--1957},
  issn={1435-9855},
  review={\MR {3854895}},
  doi={10.4171/JEMS/803},
}

\bib{FG20}{article}{
  author={Friedberg, Solomon},
  author={Ginzburg, David},
  title={Classical theta lifts for higher metaplectic covering groups},
  journal={Geom. Funct. Anal.},
  volume={30},
  date={2020},
  number={6},
  pages={1531--1582},
  issn={1016-443X},
  review={\MR {4182832}},
  doi={10.1007/s00039-020-00548-y},
}

\bib{GG18}{article}{
  author={Gan, Wee Teck},
  author={Gao, Fan},
  title={The Langlands-Weissman program for Brylinski-Deligne extensions},
  language={English, with English and French summaries},
  note={L-groups and the Langlands program for covering groups},
  journal={Ast\'erisque},
  date={2018},
  number={398},
  pages={187--275},
  issn={0303-1179},
  isbn={978-2-85629-845-9},
  review={\MR {3802419}},
}

\bib{GLT25}{article}{
  author={Gao, Fan},
  author={Liu, Baiying},
  author={Tsai, Wan-Yu},
  title={Quasi-admissible, raisable nilpotent orbits, and theta representations},
  journal={Sci. China Math.},
  volume={68},
  date={2025},
  number={9},
  pages={2031--2070},
  issn={1674-7283},
  review={\MR {4948185}},
  doi={10.1007/s11425-024-2359-6},
}

\bib{GaTs}{article}{
  author={Gao, Fan},
  author={Tsai, Wan-Yu},
  title={On the wavefront sets associated with theta representations},
  journal={Math. Z.},
  volume={301},
  date={2022},
  number={1},
  pages={1--40},
  issn={0025-5874},
  review={\MR {4405642}},
  doi={10.1007/s00209-021-02894-5},
}

\bib{Gin0}{article}{
  author={Ginzburg, David},
  title={Certain conjectures relating unipotent orbits to automorphic representations},
  journal={Israel J. Math.},
  volume={151},
  date={2006},
  pages={323--355},
  issn={0021-2172},
  review={\MR {2214128}},
  doi={10.1007/BF02777366},
}

\bib{GJR04}{article}{
   author={Ginzburg, David},
   author={Jiang, Dihua},
   author={Rallis, Stephen},
   title={On the nonvanishing of the central value of the Rankin-Selberg
   $L$-functions},
   journal={J. Amer. Math. Soc.},
   volume={17},
   date={2004},
   number={3},
   pages={679--722},
   issn={0894-0347},
   review={\MR{2053953}},
   doi={10.1090/S0894-0347-04-00455-2},
}

\bib{GJR05}{article}{
   author={Ginzburg, David},
   author={Jiang, Dihua},
   author={Rallis, Stephen},
   title={On the nonvanishing of the central value of the Rankin-Selberg
   $L$-functions. II},
   conference={
      title={Automorphic representations, $L$-functions and applications:
      progress and prospects},
   },
   book={
      series={Ohio State Univ. Math. Res. Inst. Publ.},
      volume={11},
      publisher={de Gruyter, Berlin},
   },
   isbn={978-3-11-017939-2},
   isbn={3-11-017939-3},
   date={2005},
   pages={157--191},
   review={\MR{2192823}},
   doi={10.1515/9783110892703.157},
}

\bib{GRS03}{article}{
  author={Ginzburg, D.},
  author={Rallis, S.},
  author={Soudry, D.},
  title={On Fourier coefficients of automorphic forms of symplectic groups},
  journal={Manuscripta Math.},
  volume={111},
  date={2003},
  number={1},
  pages={1--16},
  issn={0025-2611},
  review={\MR {1981592}},
  doi={10.1007/s00229-003-0355-7},
}

\bib{GRS11}{book}{
  author={Ginzburg, David},
  author={Rallis, Stephen},
  author={Soudry, David},
  title={The descent map from automorphic representations of ${\rm GL}(n)$ to classical groups},
  publisher={World Scientific Publishing Co. Pte. Ltd., Hackensack, NJ},
  date={2011},
  pages={x+339},
  isbn={978-981-4304-98-6},
  isbn={981-4304-98-0},
  review={\MR {2848523}},
  doi={10.1142/9789814304993},
}

\bib{GGS17}{article}{
  author={Gomez, Raul},
  author={Gourevitch, Dmitry},
  author={Sahi, Siddhartha},
  title={Generalized and degenerate Whittaker models},
  journal={Compos. Math.},
  volume={153},
  date={2017},
  number={2},
  pages={223--256},
  issn={0010-437X},
  review={\MR {3705224}},
  doi={10.1112/S0010437X16007788},
}

\bib{GGS21}{article}{
  author={Gomez, Raul},
  author={Gourevitch, Dmitry},
  author={Sahi, Siddhartha},
  title={Whittaker supports for representations of reductive groups},
  language={English, with English and French summaries},
  journal={Ann. Inst. Fourier (Grenoble)},
  volume={71},
  date={2021},
  number={1},
  pages={239--286},
  issn={0373-0956},
  review={\MR {4275869}},
}

\bib{HC99}{book}{
  author={Harish-Chandra},
  title={Admissible invariant distributions on reductive $p$-adic groups},
  series={University Lecture Series},
  volume={16},
  note={With a preface and notes by Stephen DeBacker and Paul J. Sally, Jr.},
  publisher={American Mathematical Society, Providence, RI},
  date={1999},
  pages={xiv+97},
  isbn={0-8218-2025-7},
  review={\MR {1702257}},
  doi={10.1090/ulect/016},
}

\bib{HLLS24}{article}{
  author={Hazeltine, Alexander},
  author={Liu, Baiying},
  author={Lo, Chi-Heng},
  author={Shahidi, Freydoon},
  title={On the upper bound of wavefront sets of representations of $p$-adic groups},
  status={preprint, available at https://arxiv.org/abs/2403.11976v2},
}

\bib{How1}{article}{
  author={Howe, Roger},
  title={The Fourier transform and germs of characters (case of ${\rm Gl}_{n}$ over a $p$-adic field)},
  journal={Math. Ann.},
  volume={208},
  date={1974},
  pages={305--322},
  issn={0025-5831},
  review={\MR {342645}},
  doi={10.1007/BF01432155},
}

\bib{Jia14}{article}{
  author={Jiang, Dihua},
  title={Automorphic integral transforms for classical groups I: Endoscopy correspondences},
  conference={ title={Automorphic forms and related geometry: assessing the legacy of I. I. Piatetski-Shapiro}, },
  book={ series={Contemp. Math.}, volume={614}, publisher={Amer. Math. Soc., Providence, RI}, },
  date={2014},
  pages={179--242},
  review={\MR {3220929}},
  doi={10.1090/conm/614/12253},
}

\bib{JiLi16}{article}{
  author={Jiang, Dihua},
  author={Liu, Baiying},
  title={Fourier coefficients for automorphic forms on quasisplit classical groups},
  conference={ title={Advances in the theory of automorphic forms and their $L$-functions}, },
  book={ series={Contemp. Math.}, volume={664}, publisher={Amer. Math. Soc., Providence, RI}, },
  date={2016},
  pages={187--208},
  review={\MR {3502983}},
  doi={10.1090/conm/664/13062},
}

\bib{JL25}{article}{
  author={Jiang, Dihua},
  author={Liu, Baiying},
  title={On wavefront sets of global Arthur packets of classical groups: upper bound},
  journal={J. Eur. Math. Soc. (JEMS)},
  volume={27},
  date={2025},
  number={9},
  pages={3841--3888},
  issn={1435-9855},
  review={\MR {4939527}},
  doi={10.4171/jems/1446},
}

\bib{JLX20}{article}{
   author={Jiang, Dihua},
   author={Liu, Baiying},
   author={Xu, Bin},
   title={A reciprocal branching problem for automorphic representations and
   global Vogan packets},
   journal={J. Reine Angew. Math.},
   volume={765},
   date={2020},
   pages={249--277},
   issn={0075-4102},
   review={\MR{4129361}},
   doi={10.1515/crelle-2019-0016},
}

\bib{JLXZ16}{article}{
   author={Jiang, Dihua},
   author={Liu, Baiying},
   author={Xu, Bin},
   author={Zhang, Lei},
   title={The Jacquet-Langlands correspondence via twisted descent},
   journal={Int. Math. Res. Not. IMRN},
   date={2016},
   number={18},
   pages={5455--5492},
   issn={1073-7928},
   review={\MR{3567248}},
   doi={10.1093/imrn/rnv312},
}

\bib{JLZ}{article}{
  author={Jiang, Dihua},
  author={Liu, Dongwen},
  author={Zhang, Lei},
  title={Arithmetic wavefront sets and generic $L$-packets},
  status={preprint, available at https://arxiv.org/abs/2207.04700},
}

\bib{JiZh20}{article}{
  author={Jiang, Dihua},
  author={Zhang, Lei},
  title={Arthur parameters and cuspidal automorphic modules of classical groups},
  journal={Ann. of Math. (2)},
  volume={191},
  date={2020},
  number={3},
  pages={739--827},
  issn={0003-486X},
  review={\MR {4088351}},
  doi={10.4007/annals.2020.191.3.2},
}

\bib{Jos85}{article}{
  author={Joseph, Anthony},
  title={On the associated variety of a primitive ideal },
  journal={J. Algebra},
  volume={93},
  date={1985},
  number={2},
  pages={509--523},
  issn={0021-8693},
  review={\MR {0786766}},
  doi={10.1016/0021-8693(85)90217-3},
}

\bib{Kap17-1}{article}{
  author={Kaplan, Eyal},
  title={The double cover of odd general spin groups, small representations, and applications},
  journal={J. Inst. Math. Jussieu},
  volume={16},
  date={2017},
  number={3},
  pages={609--671},
  issn={1474-7480},
  review={\MR {3646283}},
  doi={10.1017/S1474748015000250},
}

\bib{KLZ23}{article}{
  author={Kaplan, Eyal},
  author={Lapid, Erez},
  author={Zou, Jiandi},
  title={Classification of irreducible representations of metaplectic covers of the general linear group over a non-archimedean local field},
  journal={Represent. Theory},
  volume={27},
  date={2023},
  pages={1041--1087},
  review={\MR {4663361}},
  doi={10.1090/ert/659},
}

\bib{KOW}{article}{
  author={Karasiewicz, Edmund},
  author={Okada, Emile},
  author={Wang, Runze},
  title={The stable wave front set of theta representations},
  status={preprint, available at https://arxiv.org/abs/2411.02073},
}

\bib{Li3}{article}{
  author={Li, Wen-Wei},
  title={La formule des traces pour les rev\^etements de groupes r\'eductifs connexes. II. Analyse harmonique locale},
  language={French, with English and French summaries},
  journal={Ann. Sci. \'Ec. Norm. Sup\'er. (4)},
  volume={45},
  date={2012},
  number={5},
  pages={787--859},
  issn={0012-9593},
  review={\MR {3053009}},
  doi={10.24033/asens.2178},
}

\bib{LX21}{article}{
  author={Liu, Baiying},
  author={Xu, Bin},
  title={On top Fourier coefficients of certain automorphic representations of $\mathrm {GL}_n$},
  journal={Manuscripta Math.},
  volume={164},
  year={2021},
  number={1-2},
  pages={1--22},
  issn={0025-2611},
  doi={10.1007/s00229-019-01176-z},
  url={https://doi-org.ezproxy.lib.purdue.edu/10.1007/s00229-019-01176-z},
}

\bib{LX23}{article}{
   author={Liu, Baiying},
   author={Xu, Bin},
   title={Automorphic descent for symplectic groups: the branching problems
   and $L$-functions},
   journal={Amer. J. Math.},
   volume={145},
   date={2023},
   number={3},
   pages={807--859},
   issn={0002-9327},
   review={\MR{4596178}},
}

\bib{MW87}{article}{
   author={M\oe glin, C.},
   author={Waldspurger, J.-L.},
   title={Mod\`eles de Whittaker d\'eg\'en\'er\'es pour des groupes
   $p$-adiques},
   language={French},
   journal={Math. Z.},
   volume={196},
   date={1987},
   number={3},
   pages={427--452},
   issn={0025-5874},
   review={\MR{0913667}},
   doi={10.1007/BF01200363},
}

\bib{Oka21}{article}{
  author={Okada, Emile T.},
  title={The wavefront set over a maximal unramified field extension},
  status={preprint, available at https://arxiv.org/abs/2107.10591},
}

\bib{Sag01}{book}{
  author={Sagan, Bruce E.},
  title={The symmetric group},
  series={Graduate Texts in Mathematics},
  volume={203},
  edition={2},
  note={Representations, combinatorial algorithms, and symmetric functions},
  publisher={Springer-Verlag, New York},
  date={2001},
  pages={xvi+238},
  isbn={0-387-95067-2},
  review={\MR {1824028}},
  doi={10.1007/978-1-4757-6804-6},
}

\bib{SYZ25}{article}{
   author={Shan, Peng},
   author={Yan, Wenbin},
   author={Zhao, Qixian},
   title={Cyclotomic level maps and associated varieties of simple affine Vertex algebras},
   date={2025},
   status={preprint, available at https://arxiv.org/abs/2507.09254v1},
}

\bib{SZ18}{article}{
  author={Silberger, Allan J.},
  author={Zink, Ernst-Wilhelm},
  title={Langlands classification for $L$-parameters},
  journal={J. Algebra},
  volume={511},
  date={2018},
  pages={299--357},
  issn={0021-8693},
  review={\MR {3834776}},
  doi={10.1016/j.jalgebra.2018.06.012},
}

\bib{Spa82}{book}{
  author={Spaltenstein, Nicolas},
  title={Classes unipotentes et sous-groupes de Borel},
  language={French},
  series={Lecture Notes in Mathematics},
  volume={946},
  publisher={Springer-Verlag, Berlin-New York},
  date={1982},
  pages={ix+259},
  isbn={3-540-11585-4},
  review={\MR {0672610}},
}

\bib{Tak2}{article}{
  author={Takeda, Shuichiro},
  title={On a certain metaplectic Eisenstein series and the twisted symmetric square $L$-function},
  journal={Math. Z.},
  volume={281},
  date={2015},
  number={1-2},
  pages={103--157},
  issn={0025-5874},
  review={\MR {3384864}},
  doi={10.1007/s00209-015-1476-x},
}

\bib{Tsa24}{article}{
   author={Tsai, Cheng-Chiang},
   title={Geometric wave-front set may not be a singleton},
   journal={J. Amer. Math. Soc.},
   volume={37},
   date={2024},
   number={1},
   pages={281--304},
   issn={0894-0347},
   review={\MR{4654614}},
   doi={10.1090/jams/1031},
}

\bib{Wal03}{article}{
  author={Waldspurger, J.-L.},
  title={La formule de Plancherel pour les groupes $p$-adiques (d'apr\`es Harish-Chandra)},
  language={French, with French summary},
  journal={J. Inst. Math. Jussieu},
  volume={2},
  date={2003},
  number={2},
  pages={235--333},
  issn={1474-7480},
  review={\MR {1989693}},
  doi={10.1017/S1474748003000082},
}

\bib{Wei18a}{article}{
  author={Weissman, Martin H.},
  title={L-groups and parameters for covering groups},
  language={English, with English and French summaries},
  note={L-groups and the Langlands program for covering groups},
  journal={Ast\'erisque},
  date={2018},
  number={398},
  pages={33--186},
  issn={0303-1179},
  isbn={978-2-85629-845-9},
  review={\MR {3802418}},
}

\bib{Zel80}{article}{
  author={Zelevinsky, A. V.},
  title={Induced representations of reductive ${\germ p}$-adic groups. II. On irreducible representations of ${\rm GL}(n)$},
  journal={Ann. Sci. \'{E}cole Norm. Sup. (4)},
  volume={13},
  date={1980},
  number={2},
  pages={165--210},
  issn={0012-9593},
  review={\MR {584084}},
}

\bib{Zou23}{article}{
  author={Zou, Jiandi},
  title={Local metaplectic correspondence and applications},
  journal={Math. Z.},
  volume={305},
  date={2023},
  number={3},
  pages={Paper No. 43, 33},
  issn={0025-5874},
  review={\MR {4652947}},
  doi={10.1007/s00209-023-03368-6},
}

\bib{Zou25}{article}{
  author={Zou, Jiandi},
  title={Gelfand--Graev representation as a Hecke algebra module of simple types of a finite central cover of ${\rm GL}(r)$},
  status={preprint, available at https://arxiv.org/abs/2502.07262v1},
}

\end{biblist}
\end{bibdiv}

\end{document}